\documentclass[sn-mathphys-num]{sn-jnl}


\usepackage{graphicx}%
\usepackage{multirow}%
\usepackage{amsmath,amssymb,amsfonts}%
\usepackage{amsthm}%
\usepackage{mathrsfs}%
\usepackage[title]{appendix}%
\usepackage{xcolor}%
\usepackage{textcomp}%
\usepackage{manyfoot}%
\usepackage{booktabs}%
\usepackage{algorithm}%
\usepackage{algorithmicx}%
\usepackage{algpseudocode}%
\usepackage{listings}%

\usepackage{mathrsfs} 
\usepackage{bm}
\usepackage{a4wide} 
\usepackage{longtable}
\usepackage{tcolorbox}

\usepackage[shortlabels]{enumitem} 

\usepackage{ifthen}
\newboolean{show_comments}
\setboolean{show_comments}{true} 

\DeclareRobustCommand{\nick}[1]{
\ifthenelse{\boolean{show_comments}}
{\begingroup\color{blue}{[\textbf{Nick:} #1]}\endgroup}
{}
}
\DeclareRobustCommand{\maxim}[1]{
\ifthenelse{\boolean{show_comments}}
{\begingroup\color{green}{[\textbf{Maxim:} #1]}\endgroup}
{}
}

\setlength{\delimitershortfall}{-0.1pt}
\numberwithin{equation}{section}
\allowdisplaybreaks[4]

\newtheoremstyle{mythm}
{5pt}
{5pt}
{\itshape}
{}
{\bfseries}
{.}
{.5em}
{}

\newtheoremstyle{mydef}
{5pt}
{5pt}
{}
{}
{\bfseries}
{.}
{.5em}
{}

\theoremstyle{mythm}
\newtheorem{proposition}{Proposition}[section]
\newtheorem{theorem}[proposition]{Theorem}
\newtheorem{lemma}[proposition]{Lemma}
\newtheorem{corollary}[proposition]{Corollary}
\theoremstyle{mydef}
\newtheorem{remark}[proposition]{Remark}
\newtheorem{example}[proposition]{Example}
\newtheorem{definition}[proposition]{Definition}

\renewenvironment{proof}{\smallskip\noindent\emph{\textbf{Proof.}}%
  \hspace{1pt}}{\hspace{-5pt}{\nobreak\quad\nobreak\hfill\nobreak%
    $\square$\vspace{2pt}\par}\smallskip\goodbreak}

\newenvironment{proofof}[1]{\smallskip\noindent{\textbf{Proof~of~#1.}}%
  \hspace{1pt}}{\hspace{-5pt}{\nobreak\quad\nobreak\hfill\nobreak%
    $\square$\vspace{2pt}\par}\smallskip\goodbreak}

\newtheorem{assumption}{Assumption}  

\newcommand{\Lip}{\mathrm{Lip}}
\renewcommand{\epsilon}{\varepsilon}
\renewcommand{\phi}{\varphi}

\newcommand{\spt}{\mathop{\rm spt}}

\renewcommand{\d}{\,d}
\renewcommand{\div}{\mathop{\rm div}}

\newcommand{\id}{\mathrm{id}}

\renewcommand{\mathbf}[1]{\bm{#1}}
\newcommand{\dom}{\mathrm{dom}}

\newcommand{\F}{\mathbb F}
\newcommand{\R}{\mathbb{R}}

\newcommand{\T}{\intercal}

\begin{document}

\title{Super-duality and necessary optimality conditions\\ of order ``infinity''
in optimal control theory}
\author*[1]{\fnm{Nikolay} \sur{Pogodaev}}\email{nikolay.pogodaev@unipd.it}
\equalcont{These authors contributed equally to this work.}

\author*[2]{\fnm{Maxim} \sur{Staritsyn}}\email{starmaxmath@gmail.com}
\equalcont{These authors contributed equally to this work.}

\affil[1]{\orgdiv{Department of Mathematics}, \orgname{University of Padova}, \orgaddress{\street{Via Trieste 63}, \city{Padova}, \postcode{35121}, \country{Italy}}}
\affil[2]{\orgdiv{Research Center for Systems and Technologies (SYSTEC)  and Advanced Production and Intelligent Systems (ARISE) Associated Laboratory, Department of Electrical and Computer Engineering}, \orgname{Faculty of Engineering, University of Porto}, \orgaddress{\street{s/n Rua Dr. Roberto Frias}, \city{Porto}, \postcode{4200-465}, 
\country{Portugal}}}

\abstract{
We systematically introduce an approach to the analysis and (numerical) solution of a broad class of nonlinear unconstrained optimal control problems, involving ordinary and distributed systems. Our approach relies on exact representations of the increments of the objective functional, drawing inspiration from the classical Weierstrass formula in Calculus of Variations. 
While such representations are straightforward to devise for state-linear problems (in vector spaces), they can also be extended to nonlinear models (in metric spaces) by immersing them into suitable linear ``super-structures''. 
We demonstrate that these increment formulas lead to necessary optimality conditions of an arbitrary order. Moreover, they enable to formulate optimality conditions of ``infinite order'',  incorporating a kind of feedback mechanism.
As a central result, we rigorously apply this general technique to the optimal control of nonlocal continuity equations in the space of probability measures. 
}

\keywords{Optimal control, Optimality conditions, Exact increment formulas, Nonlocal continuity equations}

\maketitle

\tableofcontents

\section{Introduction}\label{sec:Int}


A philosophical goal of this study is to bridge the gap between two foundational approaches in the mathematical theory of optimal control: Pontryagin's Maximum Principle (PMP) and the Dynamic Programming (DP) method. This involves refining the concept of a local extremum provided by the PMP~--- a first-order necessary optimality condition (NOC)~--- by reducing the set of non-optimal extrema, thereby bringing the notion of ``extremality'' somewhat closer to global optimality.

A natural and widely recognized approach involves developing higher-order NOCs, which are naturally expected to be more sensitive to the structure of nonlinear problems. However, in practice, this process often halts at second-order conditions due to the overwhelming complexity of higher-order variational analysis.

In this paper, we propose an alternative path originated in specific \emph{exact} representations of the increment in the objective functional, capturing cost variations of any order. These representations can be interpreted as ``infinite-order" variations of the functional, leading to non-standard necessary optimality conditions that remain local in nature but can approach the global optimum more closely than all archetypic NOCs. 

The exact increment formulas for \emph{state-linear} problems are derived by simple duality arguments. For fully \emph{nonlinear} problems, these formulas can be obtained by first ``linearizing'' the system, following the striking idea from Statistical Mechanics \cite{koopman1931hamiltonian}~--- a transformation of a nonlinear system in a metric (``physical'') space \( \mathcal{X} \) into a linear system in an infinite-dimensional Banach space \( \bm{X} \) of test functions (``observables''). We extend this approach by recognizing that the resulting model can be represented as a dynamical system on the dual space \( \bm{X}' \), allowing for a variational analysis of the corresponding control problem.

Our optimality principles, which closely resemble PMP in their form, incorporate a kind of feedback control mechanism similar to the DP approach. However, unlike the (practically utopian) solution to the nonlinear Hamilton-Jacobi equation, required by the latter, our method operates with a linear transport equation,  which always admits an explicit characteristic representation.

\subsection{Circle of ideas}

The paper deals with continuous-time controlled dynamical systems on various metric spaces \( \mathcal{X} = (\mathcal{X}, d_{\mathcal X}) \), acting in the natural (forward) or reverse (backward) timelines. A forward dynamical system on \( \mathcal{X} \) is identified with its \emph{(forward) flow},\footnote{
In the literature, the flows $\Phi$ and $\Psi$ are also referred to as forward/backward evolution operators, Cauchy operators, or propagators. 
} i.e., a two-parametric family \( \Phi \doteq (\Phi_{s,t})_{s \leq t} \) of maps \( \Phi_{s,t} \colon \mathcal{X} \to \mathcal{X} \), satisfying the algebraic axioms:
\begin{equation}
  \label{eq:family}
  \Phi_{\tau,t} \circ \Phi_{s,\tau} = \Phi_{s,t}, \quad \Phi_{t,t} = \id,
\end{equation}
for any \( s \leq \tau \leq t \).
Similarly, a backward dynamical system is represented by a \emph{backward flow}, \( \Psi = (\Psi_{s,t})_{t\leq s} \), satisfying~\eqref{eq:family} for any \( s \geq \tau \geq t \). Given the current position \( (s, \mathrm{x}) \), \( \Phi_{s,t}(\mathrm{x}) \) represents the system state at a future time \( t > s \), while \( \Psi_{s,t}(\mathrm{x}) \) represents the state at a past time \( t < s \).\footnote{These definitions of the forward/backward dynamics admit the presence of the corresponding ``arrow of time''. Such systems would be more appropriately referred to as \emph{semi-dynamical}, following, e.g., \cite{saperstone1981semidynamical}. However, for the sake of brevity, we adopt a simplified terminology.}

A highly desirable property of a dynamical system is its linearity. A forward (backward) system is said to be \emph{linear} if \( \mathcal{X} \) is a vector space and \( \Phi_{s,t} \) (respectively, \( \Psi_{s,t} \)) are linear maps. When \( \mathcal{X} \) lacks the structure of a vector space, any system on \( \mathcal{X} \) is inherently \emph{nonlinear}.

The first element of our approach is a transformation of a nonlinear system \( (\mathcal X, \Phi) \) on a metric space to a linear ``super-system'' in a dual Banach space.
To illustrate this passage, let \( \mathcal{X} = \mathbb{R}^n \) and define \( \Phi \) as the flow of a sufficiently regular time-dependent vector field \( f \colon I \times \mathcal X \to \mathcal X \), i.e., \( t \mapsto \Phi_{s,t}(\mathrm{x}) \) is a unique solution to the Cauchy problem:
\begin{equation}\label{iode}
    \dot{x} = f_t(x), \quad x(s) = \mathrm{x} \in \mathcal X.
\end{equation}
Although the state space is linear, the system $(\Phi, \mathcal X)$ inherits nonlinearity of the driving field $f$. 

Now, note that $\Phi$ generates a flow $\bm \Phi$ on the space $\mathcal{M}(\mathcal X)$ of finite signed (countably additive) Radon measures on \( \mathcal X \), which is isomorphic to the dual $\bm X'$ of the Banach space \(\bm X \doteq C_0(\mathcal X)\) of continuous functions \( \mathcal X \to \R \) vanishing at infinity. 
This flow of \emph{linear} maps $\bm \Phi_{s,t}\colon \bm X' \to \bm X'$ is defined by the action:
\begin{equation}\label{pushforward}
  \langle\bm{\Phi}_{s,t}(\mu), \phi\rangle \doteq \langle \mu, \Phi_{s,t}\phi\rangle \qquad \mu \in \bm X', \quad  \phi \in \bm X, 
\end{equation}
or, equivalently, using the \emph{pushforward} functor  \( \sharp \):
\[
  \bm{\Phi}_{s,t}(\mu)(E) = (\Phi_{s,t\sharp}\mu)(E) \doteq \mu(\Phi_{s,t}^{-1}(E))\quad \forall \mbox{  Borel set \( E \subset \mathbb{R}^n \)}.
\]

It is straightforward to verify that the family \( \bm{\Phi} \doteq (\bm \Phi_{s,t})_{s\leq t}\) satisfies the chain rule \eqref{eq:family}, i.e., \((\bm X', \bm{\Phi})\) does form a (linear) dynamical system. Moreover, the original flow \( \Phi \) is recovered from $\bm \Phi$ by taking \( \mu \in \delta(\mathcal X) \doteq \{\delta_{\mathrm x}\colon {\rm x} \in \mathcal X\} \), where \(\delta_{\mathrm{x}}\) stands for the Dirac measure concentrated at the point $\rm x$. 

We say that the nonlinear system \( (\mathcal X, \Phi) \) is \emph{immersed} into \( (\bm X', \bm \Phi) \).

\smallskip

The second ingredient of our analysis is a duality argument provided by the observation that, alongside the forward flow \( \bm \Phi \) on \( \bm X' \), the family \( \Phi \) induces a backward flow \( \bm \Psi \) on the pre-dual space \( \bm X \) via the \emph{pullback}~--- an operation, dual to the pushforward:
\begin{equation}
  \bm \Psi_{t,s}(\phi) = \Phi_{s,t}^\star \phi  \doteq  \phi\circ \Phi_{s,t}, \qquad \phi \in \bm X.\label{pull}
\end{equation}
The identities
\[
  \left<\bm\Phi_{s,\tau}(\mu), \bm \Psi_{t,\tau}(\phi)\right> \doteq \int \bm \Psi_{t,\tau}(\phi)\,\d \bm\Phi_{s,\tau}(\mu )\doteq \int \phi\circ \Phi_{\tau,t}\d \Phi_{s,\tau\sharp}\mu = \int \phi\circ\Phi_{s,t}\d \mu
\]
show that, for any fixed \( \mu \), \( \phi \) and \( s \le t\), the map \( \tau\mapsto  \left<\bm\Phi_{s,\tau}\mu, \bm \Psi_{t,\tau} \phi\right>\) remains constant, justifying the interpretation of  \((\bm X, \bm \Psi)\) as the \emph{adjoint system} of \((\bm X',\bm \Phi)\). 

Finally, when the vector field \( f \), the initial measure $\vartheta \in \bm X'$ and the cost function $\ell \in \bm X$ are sufficiently regular, an appropriate restriction of the domains of \( \mathbf{\Phi} \) and \( \bm \Psi\)  leads to the partial differential equation (PDE) representations for the corresponding trajectories \( \mu_t = \bm\Phi_{0,t}\vartheta \) and \( \bm p_t = \bm\Psi_{t,T}\ell \):
\[
  \partial_t \mu_t + \div (f_t\,\mu_t) = 0, \quad \mu_0 = \vartheta,\]
and
\[
  \partial_t \bm p_t + f_t \cdot \nabla \bm p_t = 0, \quad \bm p_T = \ell.
\]
The first PDE, known as the \emph{continuity} \emph{equation}, should be understood in the sense of distributions. The second, called the (non-conservative) \emph{transport equation}, is to be interpreted in the mild sense (see~Section~\ref{ssec:cnlp}).


Assume now that \( f \) can be controlled. That is, given a collection of functions \( u\colon t \mapsto u_t \), \( I \to U \), valued in a compact subset \( U \) of a certain topological vector space (see \S~\ref{ssec:controls}), we assume that the original vector field is modulated by \( u \), i.e., \( f_t \doteq f_t(u_t) \). This naturally implies the dependence: \( \Phi = \Phi^u \), $\bm \Phi = \bm \Phi^u$ and $\bm \Psi= \bm \Psi^u$.

Given a time moment \( T > 0 \), an initial condition \( \mathrm{x}_0 \in \mathcal X \), and a cost \( \ell \in C^1_c(\mathcal X) \), consider the following optimal control problem on the interval \( I \doteq [0, T] \): 
\[
    \inf\big\{\mathcal{I}[u] \doteq \ell(x^u(T))\colon x^u(t) = \Phi^u_{0,t}(\mathrm{x}_0), \ \, u \in \mathcal{U}\big\}.\tag{${P}$}
\]

By immersing \((\mathcal X, \Phi^u) \) into \((\bm X, \bm \Phi^u)\), we arrive at the optimal control problem:
\[
    \inf\big\{\mathcal{J}[u] \doteq \langle \mu^u_T, \ell \rangle\colon \mu^u_t = \bm{\Phi}^u_{0,t} \vartheta, \ \, u \in \mathcal{U}\big\}.\tag{${\bm{LP}}'$}
\]
Obviously, any solution of \( (\mathbf{LP}') \) corresponding to the initial condition \( \vartheta = \delta_{\mathrm{x}_0} \) is a solution to \( (P) \). 

What do we gain by passing from \( (P) \) to \( (\mathbf{LP}') \)? In contrast to \( (P) \), the latter problem is \emph{linear} in the corresponding {state variable $\mu$}. Therefore, the increment of its cost for a pair of controls \( u, \bar{u} \in \mathcal{U} \) can be represented exactly as in \cite{pogodaevExactFormulaeIncrement2024}:
\begin{equation}
  \label{eq:exact}
  \mathcal{J}[u] - \mathcal{J}[\bar{u}] = \int_0^T \left<\nabla \bar{\bm p}_t, f_t(u_t)-f_t(\bar{u}_t)\right>_{\mu_t} \, d t. 
\end{equation}
Here, \( \mu \doteq \mu^u \), \( \bar{\bm p} \doteq \bm p^{\bar u}\), and \( \left<\cdot,\cdot\right>_{\mu_t} \) denotes the scalar product in the Lebesgue space \( L^2_{\mu_t}(\mathbb{R}^n;\mathbb{R}^n) \).

\medskip

The \emph{exact} representation~\eqref{eq:exact} has several useful consequences:
\begin{enumerate}
  \item From~\eqref{eq:exact}, we can easily derive the first-order variation formula:
\begin{equation*}
  \mathcal{J}[u] - \mathcal{J}[\bar{u}] = \int_0^T \left<\nabla \bar{\bm p}_t, f_t(u_t)-f_t(\bar{u}_t)\right>_{\bar{\mu}_t} \, d t + o(\|u-\bar{u}\|), 
\end{equation*}
leading to the classical PMP, as discussed in Section~\ref{sec:odes}. Note that here, in contrast to~\eqref{eq:exact}, the integrand is evaluated along the reference trajectory \( \bar{\mu}_t = \Phi^{\bar{u}}_{0,t} \vartheta \). 

\item Taking \( \vartheta = \delta_{\mathrm{x}_0} \), we can return to the original problem \( (P) \) and express the corresponding exact increment formula as follows:
\begin{equation*}
  \mathcal{J}[u] - \mathcal{J}[\bar{u}] = \int_I \nabla \bar{\bm p}_t\left(x(t)\right)\cdot \left[f_t\left(x(t),u_t\right)-f_t\left(x(t),\bar{u}_t\right)\right] \, d t,
\end{equation*}
where \( x=x^u \) is a trajectory of~\eqref{iode} provided by the ``target'' control \( u \). It can be shown that this formula captures necessary conditions for the optimality of the reference control $\bar u$ of an \emph{arbitrary order}, provided by the corresponding regularity of the data \( \ell \) and \( f \).

\item Moreover, as it stands, the representation~\eqref{eq:exact} can be interpreted as a variation of the functional of ``order infinity'', enabling non-standard variational analysis. Given \( \bar{u} \) as a reference control tested for optimality, the pointwise minimization of the integrand,
\[
    \bar{H}_t(x(t), u_t) = \min_{\mathrm{u} \in U} \bar{H}_t(x(t), \mathrm{u}), \quad x = x^{u}; \quad \bar{H}_t(\mathrm{x}, \mathrm{u}) \doteq \nabla \bar{\bm p}_t\left(\mathrm{x}\right)\cdot f_t\left(\mathrm{x}, \mathrm{u}\right),
\]
offers the construction of a new control \( u \) in the form of a feedback loop, ensuring the property \( \mathcal{J}[u] \leq \mathcal{J}[\bar{u}] \). The optimality of \( \bar{u} \) then implies that \( \mathcal{J}[u] = \mathcal{J}[\bar{u}] \), and
\[
    \bar{H}_t(x(t), \bar{u}_t) = \min_{\mathrm{u} \in U} \bar{H}_t(x(t), \mathrm{u}),
\]
thus forming a type of ``feedback'' necessary optimality condition (see Section~\ref{ssec:cfbl}).

\item Finally, the above construction of \( u \) can be used to design a numerical algorithm for solving \( (\bm{LP}') \) \cite{pogodaevExactFormulaeIncrement2024}. The algorithm generates a sequence \( \left\{u_n\right\} \) of controls such that \( \mathcal{J}[u_{n+1}] \leq \mathcal{J}[u_n] \) for all \( n \in \mathbb{N} \). Unlike gradient-based methods, such algorithms do not require additional line search to ensure monotonicity. 
\end{enumerate}

\medskip

In this paper, we focus on the first three topics of this list, with a brief discussion of the fourth in the concluding section.

\subsection{Organization of the paper. Contribution and novelty}

For the classical optimal control problem \( (P) \), as described above, the results are largely based on our previous work~\cite{pogodaevExactFormulaeIncrement2024}. In Section~\ref{sec:odes}, we concentrate on the relationship between our approach and classical results of optimal control theory, such as PMP and second-order optimality conditions~--- questions that were not thoroughly addressed before. 

Already in~\cite{pogodaevExactFormulaeIncrement2024}, it became evident that the linear structure of the underlying space \( \mathcal{X} \) is redundant, and the results can be generalized to cases where \( \mathcal{X} \) is an arbitrary Riemannian manifold. 

In fact, \( \mathcal{X} \) can even be infinite-dimensional and need not possess any (e.g., Banach) manifold structure. As an important example of such a situation, we address in Section~\ref{sec:nonloc_cont} a fully nonlinear optimal control problem on the space \( \mathcal{X} = \mathcal{P}_2(\mathbb{R}^n) \) of probability measures with a finite second moment, where the dynamics are described by a nonlocal continuity equation.

This line of generalization culminates in Section~\ref{sec:generapp}, where we propose a systematic approach to the variational analysis of nonlinear control problems on an abstract metric space \( \mathcal{X} \), with dynamics represented by a general-form flow map \( \Phi \). 

We believe that our method is applicable to the optimal control of other prominent classes of distributed dynamical systems on metric or measure spaces, such as parabolic equations (e.g., nonlocal Fokker-Planck-Kolmogorov equations) and non-conservative hyperbolic equations (e.g., nonlocal balance equations), as discussed in the concluding Section~\ref{sec:concl}.

For completeness, the paper is supplemented with several appendices. These cover key definitions and necessary facts on manifold geometry (Appendix~\ref{app:geom}), the concept of weak*-continuous functions (Appendix~\ref{app:meas-w*}), as well as generalized (Appendix~\ref{appe:GenU}) and feedback (Appendix~\ref{ssec:fbm}) controls. To maintain a smoother narrative, the proofs of most technical results are also provided in Appendices~\ref{sec:Wflowdiff} and \ref{app:4}.

\subsection{Notations and conventions}

\begin{longtable}[\textwidth]{l p{218pt}}
General notations:\\[0.2cm]
$\mathbb F \in \{\R, \mathbb C\}$ & Scalar field\\
$\mathcal X = (\mathcal X, d_\mathcal X)$ & Base metric space\\
$\bm X$, $\bm Y$ etc. & Topological vector spaces (TVS) over $\mathbb F$ \\
$\bm X'$ & Continuous dual of a TVS $\bm X$\\
$\langle \cdot, \cdot\rangle$ & Duality pairing $\bm X' \times \bm X \to \mathbb F $\\
 \(\mathcal L(\bm X, \bm Y)\) & The space of linear bounded maps $\bm X \to \bm Y$\\
$C_b(\mathcal X)$ & The space of continuous bounded \break functions $\mathcal X \to \mathbb F$\\
$C_0(\R^n)$ & The space of continuous functions $\R^n \to \mathbb F$ vanishing at infinity\\
$C^1(\R^n)$ & The space of continuously differentiable functions $\R^n \to \mathbb F$\\
$\Lip_0(\R^n;\R^m)$, \(\displaystyle\|\cdot\|_{\bm{Lip}_0}\) & The space of Lipschitz functions $f\colon \R^n \to \R^m$ with $f(0) = 0$, and the corresponding norm\\
$\bm{\mathcal F}(\R^n)\simeq \Lip_0(\R^n;\R)'$ & The Arens-Eels (Lipschitz free) space over $\R^n$\\
$L^{\rm p}(I;\bm X)$, $\rm p\geq 1$ & The Lebesgue space of measurable $\rm p$-integrable functions $I \subset\R \to \bm X$\\
$L^\infty_{w*}(I;\bm X)$ & The space of weakly* measurable functions $I \to \bm X$\\ 
$L^\mathrm{p}_\mu=L^{\mathrm{p}}_\mu(\R^n;\R^n)$, \( p\ge 1 \) & The space of \( \mathrm{p} \)-integrable functions w.r.t. a measure $\mu$\\
$\langle \cdot, \cdot\rangle_\mu$, \( \|\cdot\|_\mu \) & Scalar product and norm in $L^2_\mu$ \\
$\mathcal{M}(\mathcal X)$ & The space of
signed bounded Radon measures on the space $\mathcal X$\\
$rba(\mathcal X)$ & The space of signed finite regular finitely additive measures on $\mathcal X$\\
$\mathcal P \doteq \mathcal P(\mathcal X)$ & The space of Borel probability measures on $\mathcal X$\\
$\mathcal P_2 \doteq \mathcal P_2(\R^n)$ & The space of Borel probability measures on $\R^n$ with finite 2-moment\\
\( W_1 \), \(W_2\) & \( L^1 \) and \(L^2\)-Kantorovich (Wasserstein) metrics on the space $\mathcal P_2$ \\
$\mathrm x \cdot \mathrm y$, $\nabla \phi$, $Df$, $\div \rho \doteq \nabla \cdot \rho$ & Scalar product, gradient, Jacobian, and divergence operator in $\R^n$\\
$\bm \nabla \xi$, $\bm D F$ & Gradient and Jacobian in $\mathcal P_2$\\
$F_\star$, $F^\star$ & Pushforward and pullback operations 
\\
\( F_{\sharp}\mu \doteq \mu \circ F^{-1} \) & Pushforward of a measure \( \mu\) through a function \(F\)
\\[0.2cm]
Control-theoretical notations:\\[0.2cm]
$I \doteq [0,T]$ & Time horizon\\
$\Delta$ & The set $\{(s,t) : 0 \leq s \leq t \leq T\}$\\
$\mathcal X=\{{\rm x}\}$ & State space and its elements\\
$\bm{\mathcal D}$ & The space of test function\\
$\Phi \doteq (\Phi_{s,t})_{(s,t) \in \Delta}$ &  The flow (propagator) of homeomorphisms $\mathcal X \to \mathcal X$ on $I$\\
$U=\{\mathrm u\}$ &  Control space and its elements\\
$x\colon t \mapsto x_t \doteq \Phi_{0,t}({\rm x})$, $I \to \mathcal X$ & Trajectories of $\Phi$\\
$u\colon t \mapsto u_t$, $I \to U$ & Control functions\\
$\mathcal U$ & The class of admissible controls\\
$\bar u$ & Given (reference) control\\
$u$ 
& Searched (target) control\\[0.2cm]
Optimal control problems:\\[0.2cm]
 $(P)$ & Basic (nonlinear) optimal control problem  \\ 
$(\bm{LP}')$ & Linear super-problem on \( \mathbf{X}' \)\\  
$(\bm{LP})$ & Dual of $(\bm{LP}')$\\  
$(\bm{LP}'|P)$ & Immersion of $(P)$ into $(\bm{LP}')$\\
$(\widetilde P)$ & Control-relaxation of the classical problem  $(P)$ on $\R^n$
\end{longtable}


\section{(Almost) classical setup}
\label{sec:odes}

To fix ideas, we begin with a rigorous exposition of the outlined concepts for a prototypical problem in Euclidean space, which serves as a foundation for more complex scenarios: 
\begin{align}
 \text{Minimize} \quad & \mathcal I[v] \doteq \ell\left(x(T)\right) &\nonumber\\
\text{subject to:} \quad & \dot x = f_t(x, v), \quad x(0) = \mathrm{x}_0 ,\tag{$P$}\\
& v \in L^\infty(I;U).\nonumber
\end{align}
Here, \( U \subset \mathbb{R}^m \) is a given compact set, the cost function $\ell$ belongs to the class \( C^1_c(\mathbb{R}^n) \), and the vector field $f$ satisfies the following regularity assumptions:
\begin{tcolorbox}
\begin{assumption}
\label{a0}
~
\begin{itemize}
  \item \( f\colon I \times (\mathbb{R}^n \times U)\to \mathbb{R}^n \) is a Carath\'{e}odory map. 
	
  \item \( f \) is bounded and Lipschitz in \( \mathrm x \), i.e., there exists \( M > 0 \) such that
	    \begin{gather*}
        \left|f_t(\mathrm x,\mathrm u)\right|\le M,\quad
	\left|f_{t}(\mathrm x,\mathrm u)-f_{t}(\mathrm x',\mathrm u)\right|\le M|\mathrm x-\mathrm x'|,
	  \end{gather*}
		for all \(\mathrm x,\mathrm x'\in \mathbb{R}^n \), \(t\in  I \), and \(\mathrm u \in U\).
  
  \item \( f \) is differentiable in \( \mathrm x \) and the derivative is Lipschitz:
  \begin{gather*}
    \left|Df_t(\mathrm x,\mathrm u) - Df_t(\mathrm x',\mathrm u)\right|\le M|\mathrm x-\mathrm x'|,
  \end{gather*}
  for all \(\mathrm x,\mathrm x'\in \mathbb{R}^n \), \(t\in  I \), and \(\mathrm u \in U\).
\end{itemize}
\end{assumption}
\end{tcolorbox}

\subsection{Generalized controls}

It is often beneficial to work with dynamical systems that are linear in the control variable. To achieve this, we use the standard relaxation approach, which involves transitioning to \emph{generalized controls}, i.e. Young measures $u$ on $I \times U$ with disintegration \( (u_t)_{t \in I} \subset \mathcal P(U) \)  (see Appendix~\ref{appe:GenU}). The class of these controls will serve as the actual set $\mathcal{U}$ of admissible inputs for the rest of this section.

\begin{remark}
    
    Any ordinary control \( v \in L^\infty(I;U) \) can be realized as a generalized control \( u[v] \) with disintegration \( (\delta_{v(t)})_{t \in I} \). In this sense, the role of generalized controls in mathematical control theory is analogous to that of mixed strategies in game theory. 
\end{remark}

A trajectory corresponding to \( u \in \mathcal U \) is a unique solution to the ODE
\begin{align}\label{Rlode}
\dot{x}(t) = f_t\left(x(t),u_t\right) \doteq \int_U f_t(x(t),\mathrm{u}) \, \mathrm{d} u_t(\mathrm{u}), \quad {x}(0) = \mathrm{x}_0,
\end{align}
where, with a slight abuse of notation, we use the same symbol \( f \) for the mapping \( I \times \mathbb{R}^n \times \mathcal{P}(U) \to \mathbb{R}^n \). Note that the dependence \(\upsilon \mapsto f_t(x,\upsilon) \) is linear as a map from the space of \emph{measures} to the space of vector fields. We emphasize, however, that a linear combination \( \lambda_1 u^1 + \lambda_2 u^2 \) of generalized controls is a generalized control only if it is a convex combination, i.e., \( \lambda_1, \lambda_2 \geq 0 \) and \( \lambda_1 + \lambda_2 = 1 \).

The reachable sets \( \{x_t^{u} \colon t \in I, \ u \in \mathcal{U}\} \subset \mathbb{R}^n \) of the generalized control system are compact at each time moment \( t \in I \). Moreover, since $x^v = x^{u[v]}$, the cost functional $\mathcal{I} \colon L^\infty(I;U) \to \mathbb{R}$ can be extended to a map $\widetilde{\mathcal{I}} \colon \mathcal{U} \to \mathbb{R}$ in a continuous manner (w.r.t. the weak* topology of $\mathcal{U}$). As a result, we have a {control-linear} problem $(\widetilde{P})$, which provides a relaxation of $(P)$ in the sense that $\inf (P) = \min(\widetilde{P})$. 

All results of this section, derived for $(\widetilde{P})$, are  translated to the original formulation by setting \( u_t = \delta_{v(t)} \).


\subsection{
Immersion}\label{ssec:cnlp}

Our analysis of the problem $(\widetilde P)$ begins with its `linearization'' according to the strategy discussed in the Introduction.

Let \( u \in \mathcal{U} \). Recall from~\cite{ABressan_BPiccoli_2007a} that, under the stated assumptions, the (time-dependent) vector field \( f_t(u_t) \doteq f_t(\cdot, u_t) \) generates a (non-autonomous) flow \( \Phi^{u} \) of \( C^1 \)-diffeomorphisms, \( \Phi^{u}_{s,t} \colon \mathbb{R}^n \to \mathbb{R}^n \). This flow induces forward and backward flows, \( \bm \Phi^{u} \) and \( \bm \Psi^{u} \), on the vector spaces \( \mathcal{M}(\mathbb{R}^n) \) and \( C_0(\mathbb{R}^n) \), respectively:
\begin{equation}
\label{eq:systems}
\bm \Phi^{u}_{s,t}(\mu) \doteq \Phi^{u}_{s,t\sharp} \mu, 
\quad 
\bm \Psi^{u}_{t,s}(\phi) \doteq \phi \circ \Phi^{u}_{t,s}.
\end{equation}

To proceed, we will need differential representations of \( \bm \Phi^{u} \) and \( \bm \Psi^{u} \). For this purpose, their domains are restricted as follows:
\[
\dom(\bm \Phi^{u}) = \mathcal{P}_2(\mathbb{R}^n), \quad \dom(\bm \Psi^{u}) = C^1_c(\mathbb{R}^n),
\]
where \( \mathcal{P}_2 = (\mathcal{P}_2(\mathbb{R}^n), W_2) \) denotes the space of probability measures with finite second moments, endowed with the 2-Kantorovich distance (see Section~\ref{sec:nonloc_cont} for detailed definitions). Note that, under the regularity assumptions \ref{a0}, the sets \( \mathcal{P}_2(\mathbb{R}^n) \) and \( C^1_c(\mathbb{R}^n) \) are invariant under the actions of \( \bm \Phi^{u} \) and \( \bm \Psi^{u} \), respectively, for all \( u \in \mathcal{U} \).

The desired differential representations are now established as follows: For any \( \vartheta \in \mathcal{P}_2(\mathbb{R}^n) \), the curve \( \mu = \mu^u \colon I \to \mathcal{P}_2 \), defined by 
\begin{equation}
\label{LC}
\mu_t = \bm \Phi^{u}_{0,t} \vartheta, \quad t \in I,
\end{equation}
is a unique continuous \emph{distributional solution} to the initial value problem~\cite[Proposition 2.12]{ambrosioGradientFlowsMetric2005}:
\begin{equation}
\label{eq:cont}
\partial_t \mu_t + \div\left(f_t(u_t) \, \mu_t\right) = 0, \quad \mu_0 = \vartheta.
\end{equation}
This means that, for any test function \( \phi \in C^1_c(\mathbb{R}^n) = \dom(\bm \Psi^{u}) \), the following holds for a.a.\ \( t \in I \):
\begin{equation}
    \frac{d}{dt} \int \phi \, d\mu_t = \int \nabla \phi \cdot f_t(u_t) \, d\mu_t.\label{eq:contW}
\end{equation}

Similarly, for any \( \ell \in C^1_c(\mathbb{R}^n) \), the curve \( \bm{p} \colon I \to C^1_c(\mathbb{R}^n) \), introduced as 
\begin{equation}
\label{LCadj}
\bm{p}_t = \bm \Psi^{u}_{T,t}(\ell) \doteq \ell \circ \Phi^{u}_{t,T}, \quad  t \in I,
\end{equation}
is a continuous \emph{mild solution} to the backward problem: 
\begin{equation}
\label{eq:transp}
\partial_t \bm{p}_t + \nabla \bm{p}_t \cdot f_t(u_t) = 0, \quad \bm{p}_T = \ell,
\end{equation}
that is, for any \( \mathrm{x} \in \mathbb{R}^n \), the following holds for a.a.\ \( t \in I \):
\[
\frac{\partial}{\partial t} \bm{p}_t(\mathrm{x}) = -\nabla \bm{p}_t(\mathrm{x}) \cdot f_t(\mathrm{x}, u_t).
\]

\begin{remark}[Operator notation]\label{opr}
It is often convenient to write the PDEs in~\eqref{eq:cont} and~\eqref{eq:transp} in a concise form: 
\begin{align*}
\partial_t \mu_t = \mathfrak{L}_t'(u_t) \, \mu_t,
\quad \text{and} \quad 
\left\{\partial_t + \mathfrak{L}_t(u_t)\right\} \bm{p}_t = 0,
\end{align*}
using the linear differential operators, known as the Lie derivative of \( \phi \) w.r.t.\ the vector field \( f_t(u_t) \):
\[
\mathfrak{L}_t(u_t) \, \phi \doteq \nabla \phi \cdot f_t(u_t), \quad  C^1_c(\mathbb{R}^n) \to C_c(\mathbb{R}^n),
\]
and their formal adjoint:
\[
\mathfrak{L}_t'(u_t) \, \mu = -\div(f_t(u_t)\mu), \quad \mathcal{P}_2(\mathbb{R}^n) \to \mathcal{M}(\mathbb{R}^n).
\]
This formalism will be instrumental in subsequent sections as we transition to more general frameworks.
\end{remark}

To complete the first step, it remains to represent the cost functional \( \widetilde{\mathcal{I}} \) as a linear form \( \langle \mu_T, \ell \rangle \). In this way, we transform the nonlinear optimal control problem \( (\widetilde{P}) \), originally defined over solutions to an ODE, into a \emph{bilinear} optimal control problem for the continuity equation in the space of probability measures:
\begin{align}
\text{Minimize} \quad & \mathcal{J}[u] \doteq \langle \mu_T, \ell \rangle \nonumber \\
\text{subject to: } \ \, & \partial_t \mu_t + \mathrm{div}\left(f_t(\cdot, u_t) \mu_t \right) = 0, \quad \mu_0 = \vartheta, \tag{\(\mathbf{LP}'\)} \\
& u \in \mathcal{U}. \nonumber
\end{align}

Finally, note that any control process \((x, v)\) in the original problem \( (P) \) can be reconstructed from \((\bm{LP}')\) by setting \( \vartheta = \delta_{\mathrm{x_0}} \) and \( u = u[v] \).

\begin{remark}[Random initial state. Double relaxation]
The problem $(\bm{LP}')$ is equivalent to the problem $(\widetilde{P})$ with an uncertain initial state (and the same deterministic control vector field). Indeed, let $\mathrm x_0$ be a random variable $ \Omega \to \R^n$ on a complete probability space $(\Omega, \mathcal F, \mathbb P)$. Then the curve $t \mapsto \mu_t$ of probability measures $\mu_t$, defined as the laws $x(t)_\sharp\mathbb P$ of the random state $x(t)\colon \Omega \to \R^n$, will be the unique distributional solution to the same continuity equation on $\mathcal P(\R^n)$ with $\vartheta \doteq (\mathrm x_0)_\sharp \mathbb P$ \cite{cavagnariLagrangianApproachTotally2023}.

This observation introduces a different type of probabilistic relaxation of the classical optimal control problem, provided by a ``mixed'' framework. Overall, the transition from \((P)\) to \((\bm{LP}')\) represents a kind of double relaxation, achieved by ``mixing'' both states and controls. 

The \((\delta_{x_0}, u[v])\)-version of the bi-linear problem \((\mathbf{LP}')\) can be viewed as a ``canonical representation'' of the (equivalent) fully nonlinear problem \((P)\). This demonstrates that, in a sense, bi-linear problems form the only fundamental class of optimal control problems.

\end{remark}
\begin{remark}[Alternative domain choices]
The presented specification of \( \dom(\bm \Phi^{u}) = \mathcal{P}_2(\mathbb{R}^n) \) and \( \dom(\bm \Psi^{u}) = C^1_c(\mathbb{R}^n) \) is not mandatory. For instance, one could instead take \( \dom(\bm \Phi^{u}) = \mathcal{P}_c(\mathbb{R}^n) \) (the set of compactly supported probability measures) and \( \dom(\bm \Psi^{u}) = C^1_b(\mathbb{R}^n) \). More general settings, such as \( \dom(\bm \Phi^{u}) \) being the space of all nonnegative measures or the space of signed regular measures with finite first moments, are explored in~\cite{BOGACHEV20153854,rippa2012,pogodaevNonlocalBalanceEquations2022}. 

The choice of \( \dom(\bm \Phi^{u}) \) and \( \dom(\bm \Psi^{u}) \) is constrained by the need to satisfy certain properties, which are crucial for our analysis. Specifically:
\begin{enumerate}
    \item \( \dom(\bm \Phi^{u}) \) and \( \dom(\bm \Psi^{u}) \) must be invariant under the corresponding dynamical systems.
    \item \( \dom(\bm \Phi^{u}) \) must include \( \delta(\mathbb{R}^n) \) (Dirac measures), ensuring that the original problem \( (\widetilde{P}) \) can be recovered.
    \item The bilinear maps \( \mu \mapsto \langle \mu, \phi \rangle \) and \( \phi \mapsto \langle \mu, \phi \rangle \) must be continuous on \( \dom(\bm \Phi^{u}) \) and \( \dom(\bm \Psi^{u}) \), respectively, where \( \dom(\bm \Phi^{u}) \) is equipped with the weak* topology of \( \mathcal{M}(\mathbb{R}^n) \) and \( \dom(\bm \Psi^{u}) \) with the strong topology of \( C_b(\mathbb{R}^n) \).
    \item Elements of \( \dom(\bm \Psi^{u}) \) must be admissible test functions for~\eqref{eq:cont} when \( \vartheta \in \dom(\bm \Phi^{u}) \).
\end{enumerate}
\end{remark}

\subsection{Exact increment formula}
\label{subsec:ee_simple}

As a second step, we use the linearity of $(\mathbf{LP}')$ to derive an exact increment formula for the cost functional $\widetilde{\mathcal{I}}$ of the problem $(\widetilde{P})$ (and, consequently, for the objective functional $\mathcal I$ of the problem $(P)$).

Fix an arbitrary pair of generalized controls $\bar u, u \in \mathcal U$, and denote by $\mu = \mu^u$,  $\bar \mu = \mu^{\bar u}$ and $\bar{\bm p} =\bm p^{\bar u}$ the corresponding solutions to the forward and backward PDEs \eqref{eq:cont} and \eqref{eq:transp} (having explicit representations \eqref{LC} and \eqref{LCadj}). 

Provided by assumptions \ref{a0}, it is easy to show (refer, e.g., to~\cite{pogodaevExactFormulaeIncrement2024}) that the map
\(
    t \mapsto \langle \mu_t, \bar{\bm p_t} \rangle
\)
is absolutely continuous on $I$, and its derivative enjoys the product rule:
\begin{align}\label{prodr}
    \frac{d}{dt}\langle \mu_t, \bar{\bm p}_t \rangle = \langle \mu_t, \partial_t \bar{\bm p}_t\rangle + \frac{d}{dt}\langle \mu_t, \bar{\bm p}_\tau \rangle\Big|_{\tau = t}.
\end{align}
Combining this relation with \eqref{eq:contW} and \eqref{eq:transp}, and recruiting the operator notation introduced by Remark~\ref{opr}, we derive:
\[
        \frac{d}{dt}\langle \mu_t, \bar{\bm p}_t \rangle = \left\langle\mu_t, \left\{\partial_t + \mathfrak L_t(u_t)\right\} \bar{\bm p}_t \right \rangle = \big\langle \mu_t, \mathfrak{L}_t(u_t - \bar u_t) \,  \bar{\bm{p}}_t \big\rangle.
\]
Taking $\bar u = u$ in this expression shows that the action map $t \mapsto \langle \bar \mu_t, \bar{\bm p}_t \rangle$ is constant on $I$. We call this relationship the \emph{duality argument}.

Now, consider the increment 
$\mathcal J[u] - \mathcal J[\bar u] $
of the cost functional in the problem $(\bm{LP}')$ on the pair $(\bar u, u)$. Having in mind the duality argument 
\[
    \langle \bar{\mu}_T, \bar{\bm p}_T\rangle = \langle \bar{\mu}_0, \bar{\bm p}_0\rangle,
\]
we can express: 
\begin{align*}
\mathcal J[u] - \mathcal J[\bar u] & = \langle \mu_T - \bar{\mu}_T, \bar{\bm p}_T\rangle - \underbrace{\langle\mu_0 - \bar{\mu}_0, \bar{\bm p}_0\rangle}_\text{$ = 0$}\\
&= \langle \mu_T, \bar{\bm p}_T\rangle - \langle \mu_0, \bar{\bm p}_0\rangle\\
&= \int_I \frac{\d}{\d{t}} \langle \mu_t, \bar{\bm p}_t\rangle \d t\\
& = \int_I \big\langle \mu_t, \mathfrak{L}_t(u_t - \bar u_t) \,  \bar{\bm{p}}_t \big\rangle \d t.
\end{align*}

Expanding the operator notation and abbreviating by \(\bar{H}_t = H_t\big|_{\psi = \nabla\bar{\bm p}_t}\) a contraction of the Hamilton-Pontryagin functional 
\[
H_t({\mathrm x},\psi,\upsilon) \doteq \psi \cdot f_t({\mathrm x},\upsilon), \quad {\mathrm x}, \psi \in \mathbb{R}^n, \quad \upsilon\in \mathcal P(U),
\]
to the vector field $\nabla\bar{\bm p}_t$, we finally come to the exact increment formula in the linear problem \((\mathbf{LP}')\): 
\begin{align}\label{eq:LCexact}
  \mathcal J[u] - \mathcal J[\bar{u}]
    = \int_I \big\langle\mu_t, \bar{H}_t(\cdot, u_t - \bar{u}_t)\big\rangle \d t.
\end{align}

\begin{remark}[\({L}^2_{\mu}\)-regularity of $\nabla \bar{\bm p}$]
The inclusion \( \nabla \bar{\bm p}_t \in L^2_{\mu_t}\) is another consequence of the representation \(\bar{\bm p}_t = \ell \circ \bar \Phi_{t,T}\), where $\bar \Phi \doteq \Phi^{\bar {u}}$.
Indeed, the gradient $\nabla \bar{\bm p}_t$ is computed by the chain rule as
  \[
    \nabla \bar{\bm p}_t = \left[D \bar \Phi_{t,T}\right]^\T \nabla \ell(\bar\Phi_{t,T}),
  \]
and
  \[
    \int |\nabla \bar{\bm p}_t|^2\d \mu_t \doteq  \int \left|\nabla \bar{\bm p}_t(\bar \Phi_{0,t})\right|^2\d \vartheta \leq \int\left|\nabla \ell\circ \bar\Phi_{0,T}\right|^2 \left|D \bar\Phi_{t,T}\circ \Phi_{0,t}\right|^2\d \vartheta.
  \]
  The inclusion $\ell \in C^1_c(\R^n)$ implies that the support $\spt|\nabla \ell|$ of the function $|\nabla \ell|$ is a compact set. Recalling that $\bar \Phi_{0,T}$ is a homeomorphism of \(\R^n\) onto itself, the preimage $\bar \Phi_{0,T}^{-1}(\spt|\nabla \ell|)$ is also compact. It remains to notice that, under assumptions \ref{a0}, the map $\left|D \bar\Phi_{t,T}\circ \Phi_{0,t}\right|^2$ remains continuous, and estimate:
  \begin{gather*}
    \int\left|\nabla \ell\circ \bar\Phi_{0,T}\right|^2 \left|D \bar\Phi_{t,T}\circ \Phi_{0,t}\right|^2\d \vartheta = \int_{\bar \Phi_{0,T}^{-1}(\spt|\nabla \ell|)} \left|\nabla \ell\circ \bar\Phi_{0,T}\right|^2 \left|D \bar\Phi_{t,T}\circ \Phi_{0,t}\right|^2\d \vartheta\\
    \leq \sup_{{\mathrm x}\in \bar \Phi_{0,T}^{-1}(\spt|\nabla \ell|)} \left|\nabla \ell(\bar\Phi_{0,T}({\mathrm x}))\right|^2\left|(D \bar\Phi_{t,T}\circ \Phi_{0,t})({\mathrm x})\right|^2 < +\infty.
  \end{gather*}
  \end{remark}
The representation \eqref{eq:LCexact} includes, as a special case, the desired exact increment formula for the cost functional \(\widetilde{\mathcal{I}}\) in the relaxed problem \((\widetilde{P})\). Specifically, taking in this expression \(\mu_t = \delta_{x(t)}\), we obtain:
\begin{align}
\widetilde{\mathcal{I}}[u] - \widetilde{\mathcal{I}}[\bar{u}]
  & = \int_I \bar{H}_t\left(x(t), u_t - \bar{u}_t\right) \, dt\nonumber \\
  & = \int_I \left[\mathfrak{L}_t(u_t - \bar u_t) \,  \bar{\bm{p}}_t\right]\left(x(t)\right) \, dt\nonumber \\
  & \doteq \int_I \nabla \bar{\bm{p}}_t\left(x(t)\right) \cdot f_t\left(x(t), u_t - \bar{u}_t\right) \, dt.\label{incr}
\end{align}

This expression can be viewed as a control-theoretical analog of the classical Weierstrass formula in the Calculus of Variations~\cite{lavrentiev1938}. 

\begin{remark}[Connection to the Weierstrass formula]

Recall that the Weierstrass formula represents the increment \( \mathcal{F}(\gamma) - \mathcal{F}(\bar \gamma) \) of the functional 
\[
\mathcal{F}(\gamma) = \int_0^1 L(\gamma, \dot{\gamma}) \, \mathrm{d} t
\]
for a pair  \( (\gamma, \bar{\gamma}) \) of $C^1$ curves defined over some region \( Q \) and sharing common boundary points. In this representation, the curve \( \bar{\gamma} \) belongs to a certain \emph{field of extremals} $\bar \Gamma$ covering \( Q \). This field acts as a coordinate system: at any moment, $\gamma$ hits some curve from $\bar \Gamma$. By summing up the ``deviations'' between the tangents to \( \gamma \) and the corresponding field curves at the intersection points, we can exactly compute the value \( \mathcal{F}(\gamma) - \mathcal{F}(\bar{\gamma}) \).

In our framework, the field $\bar \Gamma$ is replaced by the flow \( \bar{\Phi} \) of~\eqref{Rlode}, corresponding to the \emph{reference} control \( \bar{u} \). The flow provides a covering of the state space by a \emph{reference field} of trajectories \( t \mapsto \bar{\Phi}_{0,t}(\mathrm{x}) \) when $\mathrm{x}$ runs over \(\mathbb{R}^n \), and the information about this field is encoded in the super-adjoint \( \bar{\mathbf{p}} \). 

For any other control \( u \), the associated trajectory \( x = x^u \) hits, at each time moment, some curve belonging to the reference field. This enables us to trace the ``deviations'' between the tangents to \( x \) and the corresponding field curves at the intersection points, ultimately leading to the formula \eqref{incr}.
\end{remark}

Representations of the form \eqref{eq:LCexact} are well-known in the context of classical state-linear and linear-quadratic problems, particularly in numerical algorithms for optimal control \cite{GK-SIAM1972, srochko1982computational}. However, the extension \eqref{incr} to a general nonlinear problem $(\widetilde{P})$ was obtained only recently \cite{pogodaevExactFormulaeIncrement2024}.  

Despite their utility, the theoretical importance of exact increment formulas remains somewhat underestimated. For instance, it is never explicitly stated in the literature that \eqref{incr} provides a direct and elegant pathway to the PMP and higher-order optimality conditions.

\subsection{1-variation and PMP}\label{sec:pmp-c}

We now explore the relationship between \eqref{incr} and classical results in optimal control theory.

Let $\bar{u}$ be a given (reference) control, suspected to be optimal, $\bar x \doteq x^{\bar u}$ be the corresponding solution to the ODE \eqref{Rlode}, while $\bar \mu$ and $\bar{\bm p}$ be introduced as above. 

Note that the expression \eqref{eq:LCexact} holds for any element $u \in \mathcal{U}$. In particular, $u$ can be replaced by a convex combination:  
\begin{equation}
    \label{eq:perturb}
    u^\varepsilon = \bar{u} + \epsilon (u - \bar{u}), \quad \epsilon \in [0,1].  
\end{equation}
Controls of this form are called \emph{weak variations} of $\bar u$.  

Substituting \eqref{eq:perturb} into \eqref{eq:LCexact}, dividing by $\varepsilon$, and taking the limit as $\epsilon \to 0$, yields the first variation formula:
\begin{equation}
    \label{eq:LCfirst}
    \frac{\d}{\d \epsilon} \left\{ \mathcal{J}[u^\varepsilon] - \mathcal{J}[\bar{u}] \right\} \Big|_{\varepsilon = 0} =
    \int_I \left\langle \bar{\mu}_t, \bar{H}_t(u_t - \bar{u}_t) \right\rangle \d t.
\end{equation}
Here, we employ the fact that \( \mu^\varepsilon \doteq \mu^{u^\varepsilon} \to \bar{\mu} \) as \( \varepsilon \to 0 \) (in the sense of weak* convergence of probability measures), and drop ``running'' arguments for brevity. 

Using \eqref{eq:LCfirst} and reproducing the machinery of \cite{Pogodaev2018}, we arrive at the optimality condition:
\[
  \min_{\upsilon \in \mathcal{P}(U)} \left\langle \bar{\mu}_t, \bar{H}_t(\upsilon - \bar{u}_t) \right\rangle \doteq \min_{\upsilon \in \mathcal{P}(U)} \left\langle \bar{\mu}_t, H_t(\cdot, \nabla \bm{p}_t(\cdot), \upsilon - \bar{u}_t) \right\rangle = 0 \quad \text{for a.a. } t \in I,
\]
which implies, upon specifying \( \bar{\mu}_t = \delta_{\bar{x}(t)} \),
\begin{gather}
\label{SPMP}
  \min_{\upsilon \in \mathcal{P}(U)} H_t\left(\bar{x}(t), \nabla \bar{\bm{p}}_t(\bar{x}(t)), \upsilon - \bar{u}_t \right) = 0 \quad \text{for a.a. } t \in I.
\end{gather}

In parallel, we recall from  \cite[Theorem~7.1]{Gamkrelidze1978} the PMP for the problem \( (\widetilde{P}) \):
\begin{equation}
\label{pmp}
  \min_{\upsilon \in \mathcal{P}(U)} H_t\left( \bar{x}(t), \bar{p}(t), \upsilon - \bar{u}_t \right) = 0 \quad \text{for a.a. } t \in I,
\end{equation}
where \( \bar{p} \doteq p^{\bar{u}} \) denotes the co-trajectory of the reference state \( \bar{x} \), i.e., a solution to the backward ODE
\begin{equation}
\label{eq:NLadjoint}
    \dot{p} = - Df_t(\bar{x}(t), \bar{u}_t)^\T p, \quad p(T) = \nabla \ell(\bar{x}(T)).
\end{equation}

\begin{remark}
    As demonstrated in \cite[p. 116]{Gamkrelidze1978}, the condition \eqref{pmp} is equivalent to the usual minimum condition:
    \[
        \min_{\mathrm u \in U} H_t\left(\bar{x}(t), \bar p(t), \delta_{\mathrm u}- \bar u_t\right) = 0 
        \quad \text{for a.a. } t \in I.
    \]
\end{remark}

Comparing \eqref{SPMP} and \eqref{pmp} suggests that the equality 
$$
\nabla \bar{\bm{p}} \circ \bar{x} = \bar{p}
$$
must hold. This fact will be rigorously established below.

\begin{remark}[Geometric language]
  In what follows, we will primarily use the language of geometric control theory, involving such objects as the \emph{pullback} \( F^\star \) and \emph{pushforward} \( F_\star \)  of tensors through a map \( F \) between manifolds. The corresponding definitions and notations are collected in Appendix~\ref{app:geom}, and further details can be found, e.g., in \cite{agrachevControlTheoryGeometric2004}.

  Although in this section the manifold is merely \( \mathbb{R}^n \), and the geometric language is not strictly necessary, this language will become natural when formulating the results in Section~\ref{sec:nonloc_cont}.
\end{remark}
\begin{proposition}
  \label{prop:NLdiff}
  Let \( u \in \mathcal{U} \) be fixed, \( (x, p) \) be the corresponding state and adjoint trajectories, and $\bm p$ be defined by \eqref{LCadj}. Then, the equality \( \nabla \bm{p}_t(x(t)) = p(t) \) holds for all \( t \in I \).
\end{proposition}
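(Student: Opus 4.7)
The plan is to use the explicit formula \( \bm{p}_t = \ell \circ \Phi^u_{t,T} \) from \eqref{LCadj} to derive a closed-form expression for \( \nabla \bm{p}_t(x(t)) \), then show that this expression satisfies the same linear backward Cauchy problem \eqref{eq:NLadjoint} as \( p \), so that uniqueness of solutions delivers the identity.

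First, under assumption (A1), the flow \( \Phi^u_{t,T} \colon \R^n\to\R^n \) is a \( C^1 \)-diffeomorphism (this is the classical fact cited from Bressan--Piccoli at the start of Section~\ref{ssec:cnlp}), so with \( \ell\in C^1_c(\R^n) \) the chain rule yields
\[
  \nabla \bm{p}_t(\mathrm x) = \bigl[D\Phi^u_{t,T}(\mathrm x)\bigr]^\T \nabla\ell\bigl(\Phi^u_{t,T}(\mathrm x)\bigr).
\]
Evaluating at \( \mathrm x=x(t) \) and using the cocycle identity \eqref{eq:family}, which gives \( \Phi^u_{t,T}(x(t)) = \Phi^u_{t,T}\circ\Phi^u_{0,t}(\mathrm x_0) = x(T) \), we obtain
\[
  q(t) \doteq \nabla \bm{p}_t\bigl(x(t)\bigr) = R(t)^\T\,\nabla\ell\bigl(x(T)\bigr),\qquad R(t)\doteq D\Phi^u_{t,T}(x(t)).
\]

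The main step is to derive an ODE for \( R \) as a function of the \emph{initial} time \( t \). I would do this by reducing it to the standard variational equation in the \emph{final} time via the cocycle identity. Setting \( M(s) \doteq D\Phi^u_{0,s}(\mathrm x_0) \), the classical variational equation reads \( \dot M(s) = Df_s(x(s),u_s)\,M(s) \), \( M(0)=I \), for a.e.\ \( s\in I \). Differentiating \( \Phi^u_{0,T}=\Phi^u_{t,T}\circ\Phi^u_{0,t} \) at \( \mathrm x_0 \) gives \( M(T) = R(t)\,M(t) \), i.e.\ \( R(t) = M(T)\,M(t)^{-1} \). Using \( \frac{d}{dt}M(t)^{-1} = -M(t)^{-1}\dot M(t)\,M(t)^{-1} \) we get, a.e.\ on \( I \),
\[
  \dot R(t) = -R(t)\,Df_t\bigl(x(t),u_t\bigr).
\]
Transposing and right-multiplying by the constant vector \( \nabla\ell(x(T)) \) yields \( \dot q(t) = -Df_t(x(t),u_t)^\T q(t) \), and the terminal value is \( q(T) = I\cdot \nabla\ell(x(T)) = p(T) \). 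Hence \( q \) solves the very Cauchy problem \eqref{eq:NLadjoint}, and uniqueness (guaranteed by boundedness of \( t\mapsto Df_t(x(t),u_t) \), which follows from (A1) together with boundedness of \( U \) and the Carathéodory structure of \( f \)) gives \( q\equiv p \) on \( I \).

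The one step to handle carefully is the differentiation of \( R \) in the initial-time argument: unlike the classical forward variational equation, it requires joint regularity of the flow in both its time arguments. The cocycle trick above sidesteps this by expressing \( R \) algebraically through the forward-time variational matrix \( M \), whose a.e.\ differentiability in \( s \) is standard. A minor point is the passage to generalized controls \( u_t\in\P(U) \), which is harmless since \( f_t(\cdot,u_t) \) and \( Df_t(\cdot,u_t) \) inherit the Carathéodory-plus-Lipschitz structure from their ordinary-control counterparts by linearity in the disintegration measure.
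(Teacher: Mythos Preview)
Your proof is correct. The argument differs from the paper's in its organization: the paper works in geometric language, writing \( \mathrm{d}\bm p_t = \Phi_{t,T}^{\star}(\mathrm{d}\bm p_T) \) via the commutativity of pullback and exterior derivative, and then tests both \( \nabla\bm p_t(x(t)) \) and \( p(t) \) against an arbitrary solution \( w \) of the linearized equation, using that \( w(T) = (\Phi_{t,T})_{\star,x(t)}w(t) \). Your route is more direct and computational: you express \( q(t)=\nabla\bm p_t(x(t)) \) as \( R(t)^{\T}\nabla\ell(x(T)) \) with \( R(t)=M(T)M(t)^{-1} \), derive the backward ODE for \( R \) from the forward variational equation for \( M \), and invoke uniqueness for the linear adjoint system. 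Both arguments ultimately exploit the same duality between the variational and adjoint equations; yours is the more elementary presentation, while the paper's choice of geometric language is deliberate, as it sets up the template that is reused verbatim in the Wasserstein setting (Proposition~\ref{prop:NLCfirst}).
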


The proof of this assertion relies on the following lemma:
\begin{lemma}[\!\!{\cite[Theorem 2.3.2]{ABressan_BPiccoli_2007a}}]
  Let \( u \in \mathcal U\), and \( \Phi \) be the flow of the vector field \( (t,\mathrm{x}) \mapsto f_t(\mathrm{x}, u_t) \).
  Then, for each \( s, t \in I \), the map \( \Phi_{s,t} \colon \mathbb{R}^n \to \mathbb{R}^n \) is differentiable. Its derivative at a point \( \mathrm{x} \in \mathbb{R}^n \) is given by the map \( (\Phi_{s,t})_{\star,\mathrm{x}} \colon T_{\mathrm{x}} \mathbb{R}^n \to T_{\Phi_{s,t}(\mathrm{x})} \mathbb{R}^n \), which acts as \( w(s) \mapsto w(t) \), where \( w \) satisfies the linear equation
  \begin{equation}
    \label{eq:NLlin}
    \dot{w}(t) = D_\mathrm{x} f_t\left( \Phi_{s,t}(x), u_t \right) w(t).
  \end{equation}
\end{lemma}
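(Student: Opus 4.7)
My plan is to establish differentiability of $\Phi_{s,t}$ by identifying a candidate derivative with the solution of the variational equation \eqref{eq:NLlin}, and then estimating the nonlinear remainder against the square of the perturbation. I fix $s \in I$ and $\mathrm{x} \in \mathbb{R}^n$, abbreviate $x(t) = \Phi_{s,t}(\mathrm{x})$ and, for an arbitrary increment $\mathrm{h} \in \mathbb{R}^n$, $x_\mathrm{h}(t) = \Phi_{s,t}(\mathrm{x}+\mathrm{h})$. Under \ref{a0}, both curves are absolutely continuous solutions of $\dot{y} = f_t(y, u_t)$, and a Grönwall argument based on the uniform $M$-Lipschitz bound on $f_t(\cdot, u_t)$ yields
\[
|x_\mathrm{h}(t) - x(t)| \le e^{M|t-s|}|\mathrm{h}|, \qquad t \in I.
\]

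Next I set $A(t) \doteq D_x f_t(x(t), u_t)$, which by \ref{a0} satisfies $|A(t)| \le M$ and, as a composition of a Carathéodory map with a continuous curve, is measurable in $t$. Standard linear Carathéodory theory then produces a unique absolutely continuous solution $w \colon I \to \mathbb{R}^n$ of $\dot{w} = A(t) w$ with $w(s) = \mathrm{h}$, and the map $\mathrm{h} \mapsto w(t)$ is linear — it is the action of the fundamental solution matrix at time $t$ on $\mathrm{h}$. This gives the required linear candidate for the differential on $T_\mathrm{x} \mathbb{R}^n \simeq \mathbb{R}^n$.

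To show that this candidate is actually the derivative of $\mathrm{x} \mapsto \Phi_{s,t}(\mathrm{x})$, I examine the remainder $\xi(t) \doteq x_\mathrm{h}(t) - x(t) - w(t)$, for which $\xi(s) = 0$ and
\[
\dot{\xi}(t) = A(t)\xi(t) + R(t), \qquad R(t) \doteq f_t(x_\mathrm{h}(t),u_t) - f_t(x(t),u_t) - A(t)\bigl(x_\mathrm{h}(t)-x(t)\bigr).
\]
The Lipschitz bound on $D_x f_t$ combined with the first-order Taylor identity
\[
f_t(y,u_t) - f_t(x,u_t) - D_x f_t(x,u_t)(y-x) = \int_0^1 \bigl[D_x f_t(x+\tau(y-x),u_t) - D_x f_t(x,u_t)\bigr](y-x)\,d\tau
\]
gives $|R(t)| \le \tfrac{M}{2}|x_\mathrm{h}(t) - x(t)|^2 \le \tfrac{M}{2}e^{2M|t-s|}|\mathrm{h}|^2$. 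Applying Grönwall to $|\xi(t)|$ then yields $|\xi(t)| \le C(T)|\mathrm{h}|^2$ uniformly in $t \in I$, so $|\xi(t)|/|\mathrm{h}| \to 0$ as $\mathrm{h} \to 0$. This is precisely the required differentiability of $\Phi_{s,t}$ at $\mathrm{x}$, and identifies $(\Phi_{s,t})_{\star,\mathrm{x}}\mathrm{h} = w(t)$.

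The main obstacle — and the reason this is invoked from Bressan-Piccoli rather than being a one-liner — is that the time regularity is only measurable: $u$ is a Young measure, $A(\cdot)$ is merely $L^\infty$ in $t$, and every step (existence of $w$, Grönwall, Taylor remainder estimate) must be executed in the Carathéodory framework rather than the classical $C^1$-in-$t$ one. A secondary point worth verifying is the case $s > t$, not explicitly part of the forward flow defined in \eqref{eq:family}: since the constructed fundamental matrix is invertible (it solves a linear homogeneous ODE), and $\Phi_{s,t}\circ\Phi_{t,s} = \mathrm{id}$ as a semi-flow identity extended by inversion of diffeomorphisms, the statement for $s > t$ is recovered from the inverse function theorem once differentiability and invertibility for $s \le t$ are in hand.
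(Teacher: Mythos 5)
The paper does not prove this lemma at all: it is quoted verbatim as a known result, \cite[Theorem 2.3.2]{ABressan_BPiccoli_2007a}, and no argument appears in the text. There is therefore no in-paper proof to compare against; your decision to supply a from-scratch proof is the only possible response.

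That said, your argument is correct and is essentially the textbook proof found in the cited reference: set up the variational (linearized) equation along the reference trajectory, estimate the nonlinear remainder $R(t)$ by the Lipschitz modulus of $D_{\mathrm x} f_t$ applied to $|x_\mathrm{h}(t)-x(t)|^2$, and close with Gr\"onwall to get $|\xi(t)|\le C(T)|\mathrm{h}|^2$. Two small points worth making explicit. First, under the relaxed dynamics $f_t(x,u_t)=\int_U f_t(x,\mathrm u)\,du_t(\mathrm u)$, the bounds and Lipschitz constants in \ref{a0} pass through the integral against the probability measure $u_t$, so $A(t)=D_{\mathrm x}f_t(x(t),u_t)$ is indeed $M$-bounded and the remainder estimate is unchanged; you implicitly rely on this and it is fine to state it. Second, for $s>t$ your appeal to the inverse function theorem is valid, but a slightly more uniform way is to observe that the backward flow is the (forward) flow of the time-reversed vector field, so the identical Gr\"onwall/Taylor argument applies without invoking invertibility of the differential separately; either route closes the gap.
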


\begin{proofof}{Proposition \ref{prop:NLdiff}}
  Let \( \Phi \) be the flow of \( (t,\mathrm{x}) \mapsto f_t(\mathrm{x}, u_t) \). The representation formula~\eqref{LCadj} yields:
  \[
    \bm{p}_t = \ell \circ \Phi_{t,T} = \Phi_{t,T}^\star \bm{p}_T, \quad t \in I.
  \]
  By differentiating with respect to \( t \) and applying Lemma~\ref{lem:commute}, we obtain:
  $$
    \mathrm{d} \bm{p}_t = \mathrm{d} \left( \Phi_{t,T}^\star \bm{p}_T \right) = \Phi_{t,T}^\star \left( \mathrm{d}\bm{p}_T \right).
  $$

  Let \( w \) be any function satisfying~\eqref{eq:NLlin}.
  According to Proposition~\ref{prop:NLdiff}, we have \( w(T) = (\Phi_{t,T})_{\star,\mathrm{x}} w(t) \).
  Substituting this into the previous identity and recalling the definition of the pullback for \( 1 \)-forms, we get
  \[
    (\mathrm{d} \bm{p}_t)_{\mathrm{x}}(w(t)) = (d \bm{p}_T)_{\Phi_{t,T}(\mathrm{x})}(w(T)).
  \]
  Replacing \( \mathrm{x} \) with \( x(t) \) results in:
  \[
    (\mathrm{d} \bm{p}_t)_{x(t)}(w(t)) = (d \bm{p}_T)_{x(T)}(w(T)),
  \]
  or equivalently,
  \[
    \nabla \bm{p}_t(x(t)) \cdot w(t) = \nabla \bm{p}_T(x(T)) \cdot w(T).
  \]
  Now, defining \( \widetilde{p}(t) \doteq \nabla \bm{p}_t(x(t)) \), the identity becomes:
  \[
    \widetilde{p}(t) \cdot w(t) = \widetilde{p}(T) \cdot w(T) = \ell \cdot w(T).
  \]
  By direct computation, one can check that 
  \( p(t) \cdot w(t) = \ell\cdot w(T)\), for all \( t\in I \).
  Thus,
  \[
    \widetilde{p}(t) \cdot w(t) = p(t) \cdot w(t).  
  \]
  Since \( w \) was an arbitrary solution to~\eqref{eq:NLlin}, we conclude that \( \widetilde{p} = p \), as required.
\end{proofof}

We recognize that the adjoint trajectory \( \bar{p} \) corresponds to the linear approximation of \( \bar{\bm{p}} \) in the neighborhood of the corresponding state trajectory \( \bar{x} \):
\[
\bar{\bm{p}}_t(\mathrm{x}) - \bm{p}_t(\bar{x}(t)) \approx \nabla \bar{\bm{p}}_t(\bar{x}(t)) \cdot (\mathrm{x} - \bar{x}(t)) \doteq \bar{p}(t) \cdot (\mathrm{x} - \bar{x}(t)).
\]

It is now natural to suppose that the quadratic approximation of \( \bm{p} \) should give rise to the second-order adjoint, leading to a second-order variation formula and the corresponding NOC. The next section delves into this ansatz.

\subsection{2-variation and second-order optimality conditions}\label{ssec:2nd}

In addition to~\ref{a0}, suppose that \( f \) and \( \ell \) are smooth in the variable \( \mathrm{x} \), which enables the application of certain useful results from the framework of Chronological Calculus~\cite[Chapter~2]{agrachevControlTheoryGeometric2004}. 

In this formalism, both vector fields and diffeomorphisms are treated as linear operators on the Fr\'{e}chet space \( C^{\infty}(\mathbb{R}^n) \). A vector field \( g\colon \mathbb{R}^n\to \mathbb{R}^n  \) is associated with the differential operator 
  \[
    \widehat{g} \doteq g^1 \partial_{\mathrm{x}^1} + \cdots g^n \partial_{\mathrm{x}^n}\colon C^{\infty}(\mathbb{R}^n)\to C^{\infty}(\mathbb{R}^n)
  \]
  and a diffeomorphism \( \Phi\colon \mathbb{R}^n\to \mathbb{R}^n \) is associated with the map 
  \[
    \widehat{\Phi} \phi \doteq \Phi^\star\phi = \phi\circ \Phi, \quad \phi\in C^{\infty}(\mathbb{R}^n).
  \]

We require the following facts:
  \begin{itemize}
    \item Let \( \bar{u} \) be a fixed generalized control.  
      Under our assumptions the family of vector fields \( f_t\doteq f_t(\cdot, \bar u_t) \) generates the backward flow \( \Phi_{t,T} \) satisfying 
      \[
        \frac{d}{dt}{\widehat{\Phi}}_{t,T} = - \widehat{f}_t\circ \widehat{\Phi}_{t,T}, \quad \widehat{\Phi}_{T,T}=\id,
      \]
      for a.e. \( t\in I \).
      In particular, it follows from \( \bar{\mathbf{p}}_t = \ell\circ\Phi_{t,T} =\widehat{\Phi}_{t,T}\ell \) that 
      \begin{equation}
        \label{eq:strong}
        \frac{d}{dt} \bar{\mathbf{p}}_t(x) = - \nabla \bar{\mathbf{p}}_t(x)\cdot f_t(x) \quad \forall x\in \mathbb{R}^n. 
      \end{equation}
      Note that this is a stronger condition than~\eqref{eq:transp}, because the latter holds only in the mild sense.

    \item Let \( A_t, B_t \colon C^\infty(\mathbb{R}^n)\to C^{\infty}(\mathbb{R}^n) \) be two continuous families of linear operators which are differentiable at \( t_0 \). Then the family \( A_t \circ B_t \) is differentiable at \( t_0 \) and satisfies the Leibnitz rule (see~\cite[Section~2.3]{agrachevControlTheoryGeometric2004}):
      \[
        \frac{d}{dt}\Big|_{t=t_0}(A_t\circ B_t) = \left(\frac{d}{dt}\Big|_{t=t_0} A_t\right)\circ B_t + A_t \circ \left(\frac{d}{dt}\Big|_{t=t_0} B_t\right).
      \]
      We apply this formula to the families \( A_t = \partial_{\mathrm{x}^i}\circ \partial_{\mathrm{x}^j} \) with \( 1\le i,j\le n \) and \( B_t = \widehat{\Phi}_{t,T} \):  
      \[
        \frac{d}{dt}\Big|_{t=t_0}\left(\partial_{\mathrm{x}^i}\circ \partial_{\mathrm{x}^j}\circ \widehat{\Phi}_{t,T}\right) = \partial_{\mathrm{x}^i}\circ \partial_{\mathrm{x}^j}\circ \left(\frac{d}{dt}\Big|_{t=t_0} \widehat{\Phi}_{t,T}\right).
      \]
      In particular, if \( \widehat{\Phi}_{t,T} \) is differentiable at \( t_0 \) (which holds for a.e. \( t_0\in I \)), then both \( t\mapsto \bar{\mathbf{p}}_t(\mathrm{x}) \) and  \( t\mapsto\partial_{\mathrm{x}^i}\partial_{\mathrm{x}^j}\bar{\mathbf{p}}_t(\mathrm{x})\) are differentiable at \( t_0 \) for any \( \mathrm{x}\in \mathbb{R}^n \) and
      \begin{equation}
        \label{eq:commute}
        \frac{d}{dt}\Big|_{t=t_0}\left(\partial_{\mathrm{x}^i}\partial_{\mathrm{x}^j}\bar{\mathbf{p}}_t(\mathrm{x})\right) = \partial_{\mathrm{x}^i}\partial_{\mathrm{x}^j}\left(\frac{d}{dt}\Big|_{t=t_0}\bar{\mathbf{p}}_t(\mathrm{x})\right) \quad \forall \mathrm{x}\in \mathbb{R}^n.
      \end{equation}
      Remark that~\eqref{eq:commute} is not an obvious identity, since \(t\mapsto \bar{\mathbf{p}}_t(x)\) is merely absolutely continuous. 
  \end{itemize}

\begin{proposition}\label{propo2ord}
  In addition to~\ref{a0}, suppose that \( f \) and \( \ell \) are \( C^\infty \) maps in \( \mathrm x \). 
  Then, for \( u^{\varepsilon} \) defined as in~\eqref{eq:perturb}, it holds
  \begin{align}
    \widetilde{\mathcal I}[u^{\varepsilon}] - \widetilde{\mathcal I}[\bar{u}]
    &= \varepsilon\int_I \bar{p}(t)\cdot f_t \left(\bar{x}(t), u_t - \bar{u}_t \right)\, dt \notag\\
    &+\varepsilon^2\int_I \left( \bar{P}(t)^{\T} f_t\left(\bar{x}(t),u_t - \bar{u}_t \right) + Df_t\left(\bar{x}(t),u_t - \bar{u}_t \right)^{\T}\bar{p}(t)  \right)\cdot \bar{y}(t)\, dt\notag \\
    &+ o(\varepsilon^2),
    \label{eq:2order}
  \end{align}
where \( \bar{p} \) is the co-trajectory corresponding to \( \bar{u} \), \( \bar{P} \) satisfies the matrix Riccati equation
\begin{equation}
  \label{eq:riccati}
\dot P = - Df[t]^{\T} P - P Df[t] - D^2H[t], \quad P(T)=-D^2\ell(\bar{x}(T)),
\end{equation}
and \( \bar{y} \) is a solution of the linearized equation
\begin{equation}
  \label{eq:lin}
\dot y = Df[t] \, y + f_t(\bar{x},u_{t}-\bar{u}_{t}),\quad y(0) = 0,
\end{equation}
with
\[
  Df[t] \doteq Df_t(\bar{x}(t), \bar{u}_t), \quad
D^2H[t] \doteq D^2H_t(\bar{x}(t),\bar{p}(t),\bar{u}_t).
\]

Moreover, it holds
\[
  \bar{p}(t) = \nabla \bar{\bm p}_t(\bar{x}(t)),\quad
  \bar{P}(t) = D^2 \bar{\bm p}_t(\bar{x}(t)), \quad t \in I,
\]
where $D^2$ stands for the Hessian matrix.
\end{proposition}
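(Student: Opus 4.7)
The plan is to apply the exact increment formula \eqref{incr} with the target control set to the weak variation $u^\varepsilon$, and then perform a second-order Taylor expansion in $\varepsilon$. Since $u^\varepsilon_t - \bar u_t = \varepsilon(u_t - \bar u_t)$, \eqref{incr} becomes
\begin{align*}
\widetilde{\mathcal I}[u^\varepsilon] - \widetilde{\mathcal I}[\bar u] = \varepsilon \int_I \nabla \bar{\bm p}_t(x^\varepsilon(t)) \cdot f_t(x^\varepsilon(t), u_t - \bar u_t) \, dt,
\end{align*}
with $x^\varepsilon \doteq x^{u^\varepsilon}$, so the task reduces to expanding the integrand to first order in $\varepsilon$ and controlling the remainder uniformly in $t$.

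First I would establish the classical state expansion $x^\varepsilon(t) = \bar x(t) + \varepsilon \bar y(t) + o(\varepsilon)$ uniformly on $I$, with $\bar y$ satisfying \eqref{eq:lin}, via a standard Gronwall argument based on the $C^1$-smoothness of $f$ from~\ref{a0}. Then I would Taylor-expand both factors of the integrand around $\bar x(t)$, using the promoted $C^\infty$ regularity of $\ell$ and $f$, invoking Proposition~\ref{prop:NLdiff} to identify $\nabla \bar{\bm p}_t(\bar x(t)) = \bar p(t)$, and setting $\bar P(t) \doteq D^2 \bar{\bm p}_t(\bar x(t))$. Collecting the first-order cross-terms and rewriting them via the transpose identities $\bar p \cdot (Df\, \bar y) = (Df^\T \bar p) \cdot \bar y$ and $(\bar P \bar y) \cdot f = (\bar P^\T f) \cdot \bar y$ yields exactly the $\varepsilon$-linear and $\varepsilon^2$-integrands in \eqref{eq:2order}; the $o(\varepsilon^2)$ remainder is controlled by the uniform bounds on $\bar y$, $\bar p$, and the Hessians, together with compactness of the relevant reachable set.

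The main remaining task — and the step I expect to be the technical crux — is verifying that $\bar P$ satisfies the Riccati equation \eqref{eq:riccati}. For this I would start from the strong pointwise identity \eqref{eq:strong}, differentiate it twice in $\mathrm x$, and use \eqref{eq:commute} to swap $\partial_t$ with $\partial_{\mathrm x^i}\partial_{\mathrm x^j}$, obtaining an explicit PDE for the Hessian field $\mathrm x \mapsto D^2 \bar{\bm p}_t(\mathrm x)$. Evaluating along the characteristic $\dot{\bar x}(t) = f_t(\bar x(t), \bar u_t)$ and forming the total time derivative, the third-order spatial derivatives of $\bar{\bm p}$ cancel by Schwarz's symmetry, leaving precisely the right-hand side $-Df[t]^\T \bar P - \bar P\, Df[t] - D^2 H[t]$, with $D^2 H[t]_{ij} = \bar p_k\, \partial_{\mathrm x^i}\partial_{\mathrm x^j} f^k_t(\bar x(t),\bar u_t)$. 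The terminal value $\bar P(T) = D^2 \ell(\bar x(T))$ is then immediate from $\bar{\bm p}_T = \ell$, closing the argument.
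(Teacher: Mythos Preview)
Your proposal is correct and follows essentially the same route as the paper: start from the exact increment formula \eqref{incr} specialized to $u^\varepsilon$, Taylor-expand $\nabla\bar{\bm p}_t$ and $f_t$ around $\bar x(t)$ using $x^\varepsilon-\bar x=\varepsilon\bar y+o(\varepsilon)$, identify $\nabla\bar{\bm p}_t(\bar x(t))=\bar p(t)$ via Proposition~\ref{prop:NLdiff}, and then verify the Riccati equation for $\bar P(t)\doteq D^2\bar{\bm p}_t(\bar x(t))$ by differentiating the strong transport identity \eqref{eq:strong} twice in $\mathrm x$ and invoking \eqref{eq:commute}. One small correction: the cancellation of the third-order spatial derivatives $\bar{\bm p}_{,kij}f^k$ is not a Schwarz-symmetry effect but an algebraic one --- the same term appears in the total-time derivative of $D^2\bar{\bm p}_t(\bar x(t))$ (via the chain rule through $\dot{\bar x}=f$) and in the twice-differentiated transport equation, and subtracting the two eliminates it.
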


\begin{proof}
  By plugging \( u^{\varepsilon} \) into \eqref{eq:LCexact} and abbreviating \( x^{\varepsilon} = x^{u^{\varepsilon}} \), we obtain:
  \[
   \widetilde{\mathcal I}[u^{\varepsilon}] - \widetilde{\mathcal I}[\bar{u}]
   = \epsilon\int_I \nabla \bar{\bm p}_t(x^{\varepsilon}(t))\cdot f_t\left(x^{\varepsilon}(t), u^{\varepsilon}_t - \bar{u}_t\right) \d t.
 \]

Our regularity assumptions on \( f \) and \( \ell \), together with the representation~\eqref{LCadj}, imply that \( \mathrm x \mapsto \nabla \bm p_t(\rm x) \) is $C^\infty$, and therefore,
  \[
    \nabla \bar{\bm p}_t(x^{\varepsilon}(t)) = \nabla \bar{\bm p}_t(\bar{x}(t)) + D^2 \bar{\bm p}_t(\bar{x}(t)) \left[x^{\varepsilon}(t) - \bar{x}(t)\right] + O \left( \left|x^{\varepsilon}(t) - \bar{x}(t) \right|^2 \right).
  \]
  On the other hand, from the variational formula, established, e.g., in~\cite[Section~2.7]{agrachevControlTheoryGeometric2004}, it follows that \[
  x^{\varepsilon}(t) - \bar{x}(t) = \varepsilon \bar{y}(t) + o(\varepsilon).
  \]
  Therefore,
  \[
    \nabla \bar{\bm p}_t(x^{\varepsilon}(t)) = \nabla\bar{\bm p}_t(\bar{x}(t)) + \varepsilon D^2 \bar{\bm p}_t(\bar{x}(t)) \, \bar{y}(t) + O(\varepsilon^2).
  \]
  Similarly, 
  \begin{align*}
    f_t(x^{\varepsilon}(t), u^{\varepsilon}_t - \bar{u}_t) = & \
    \varepsilon f_t(x^{\varepsilon}(t), u_t - \bar{u}_t)\\ = & \ 
    \varepsilon f_t(\bar{x}(t), u_t - \bar{u}_t) + \varepsilon^2 Df_t(\bar{x}(t), u_t - \bar{u}_t) \, \bar{y}(t) + O(\varepsilon^3).
  \end{align*}

Combining these expansions, we conclude that the first-order term is equal to
  \[
    \nabla \bar{\bm p}_t(\bar{x}(t)) \cdot f_t(x^{\varepsilon}(t), u^{\varepsilon}_t - \bar{u}_t)
  \]
  and the second-order term to
  \[
    f_t(\bar{x}(t), u_t - \bar{u}_t) \cdot D^2 \bar{\bm p}_t(\bar{x}(t)) \, \bar{y}(t) + \nabla \bar{\bm p}_t(\bar{x}(t)) \cdot Df_t(\bar{x}(t), u_t - \bar{u}_t) \, \bar{y}(t).
  \]

  By Proposition~\ref{prop:NLdiff}, we have \( \nabla \bar{\bm p}_t(\bar{x}(t)) = \bar{p}(t) \), and to complete the proof it remains to verify that the map
  \[
    t \mapsto P(t) \doteq D^2 \bar{\bm p}_t(\bar{x}(t))
  \]
  satisfies the Riccati equation~\eqref{eq:riccati}. This can be checked by direct computation.

Indeed, \( P \) obviously satisfies the terminal condition. Let us compute the time derivative of \( P(t) \).
To that end, we will use a tensor partial derivative notation \( \frac{\partial}{\partial x^{j}} T = T_{,j} \) and adopt the Einstein summation convention, summing over terms with the same upper and lower indices.

With this notation, the coordinates of \( P(t) \) are given by \( P_{ij}(t) = \bar{\bm p}_{,ij}(t,\bar x(t)) \).
Hence
\begin{equation}
  \label{eq:P1}
  \frac{d}{dt} P_{ij}(t) = \left(\partial_t \bar{\bm p}_{,ij} + \bar{\bm p}_{,kij} f^k\right)(t,\bar x(t)) \doteq a_{ij}(t),
\end{equation}
for a.e. \( t \in I \). 

On the other hand, we know from~\eqref{eq:strong} that the equality
\[
\partial_t \bar{\bm p}_t + \nabla \bar{\bm p}_t \cdot f_t(\cdot,\bar{u}_t) = 0
\]
holds on $\R^n$ for a.a. \( t \in I \). In coordinates, this reads:
\[
b \doteq \partial_t \bar{\bm p} + \bar{\bm p}_{,k} f^{k} = 0.
\]
Let us compute \( b_{,ij} \).
For the first derivative, we have
\[
b_{,i} = (\partial_t \bar{\bm p})_{,i} + \bar{\bm p}_{,ki} f^{k} + \bar{\bm p}_{,k} f^k_{,i}.
\]
Now, for the second derivative
\[
b_{,ij} = (\partial_t \bar{\bm p})_{,ij} + \bar{\bm p}_{,kij} f^{k} + \bar{\bm p}_{,ki} f^k_{,j} + \bar{\bm p}_{,kj} f^k_{,i} + \bar{\bm p}_{,k} f^k_{,ij} = 0.
\]
Along the trajectory \(\bar x \), this reads:
\begin{align*}
b_{,ij}\big|_{(t,\bar x(t))}  = & \left((\partial_t \bar{\bm p})_{,ij} + \bar{\bm p}_{,kij} f^{k}\right)\big|_{(t,\bar x(t))}\\ & +
P_{ki} f^k_{,j}\big|_{(t,\bar x(t))} + P_{kj} f^k_{,i}\big|_{(t,\bar x(t))} + \bar{p}_{k}(t) f^k_{,ij}\big|_{(t,\bar x(t))}\\ 
 = & 0,
\end{align*}
where we use the notation \( P_{ij}(t) = \bar{\bm p}_{,ij}\big|_{(t,\bar x(t))} \) and the established identity 
\[
    \bar{p}_k(t) = \bar{\bm p}_{,k}\big|_{(t,\bar x(t))}.
\]
Therefore,
\begin{align*}
  a_{ij}(t) & = a_{ij}(t) - b_{,ij}\big|_{(t,\bar x(t))}\\
  & = \left(\partial_t \bm p_{,ij} - (\partial_t \bm p)_{,ij}\right)\big|_{(t,\bar x(t))} - \left(P_{ki} f^k_{,j} + P_{kj}f^k_{,i} + p_{k}f^k_{,ij}\right)\big|_{(t,\bar{x}(t))}.
\end{align*}
It follows from~\eqref{eq:commute} that 
\[
    \left(\partial_t \bm p_{,ij} - (\partial_t \bm p)_{,ij}\right)\big|_{(t,\bar x(t))} = 0.
\]

Thus, \eqref{eq:P1} can be rewritten as
\[
\dot P = - (D f_t(\bar x))^{\T} P - P Df_t(\bar x) - D^2 H_t(\bar x,\bar p, \bar u_t)
\]
in the matrix notation. This observation finishes the proof.
\end{proof}

\begin{remark}
  If \( \bar{u} \) is optimal and \( u \) satisfies the minimum condition~\eqref{pmp}, then the second-order variation formula~\eqref{eq:2order} implies that 
\[
\int_I \left( \bar{P}(t)^{\T} f_t\left( \bar{x}(t), u_t - \bar{u}_t \right) + Df_t\left( \bar{x}(t), u_t - \bar{u}_t \right)^{\T} \bar{p}(t) \right) \cdot \bar{y}(t) \, dt \geq 0,
\]
which coincides with Formula (6.2) in~\cite{frankowskaPointwiseSecondorderNecessary2017}. From this inequality, one may deduce a second-order necessary optimality condition similar to~\cite[Theorem~3.1]{frankowskaPointwiseSecondorderNecessary2017} by employing, for instance, the standard needle variation technique.

\end{remark}

Following this progression, one can establish the relationship between higher-order terms in the Taylor expansion of $\bar{\bm{p}}$ near $\bar{x}$~--- serving as ``higher-order adjoints'' of $\bar{x}$~--- and the corresponding higher-order variation formulas. However, this approach is generally avoided because the resulting analysis becomes technically intricate, and the derived NOCs are expected to be challenging to apply in practice.

An alternative path, pursued in the next section, is to work directly with the representation \eqref{incr}. This representation captures the entirety of the function \( \bar{\bm{p}} \circ \bar{x} \) and, consequently~--- when contracted to the class \eqref{eq:perturb} of weak variations of \( \bar{u} \)~--- can be naturally interpreted as an ``infinite-order'' variation of the objective functional \( \widetilde{\mathcal{I}} \) w.r.t. the control variation \( \epsilon \mapsto u^\varepsilon \).

In this context, the function \( \bar{\bm{p}} \), which encapsulates information about the adjoints of \( \bar{x} \) at all orders, can be treated as an ``infinite-order adjoint'' of \( \bar{x} \). Since, at the same time, \( \bar{\bm{p}} \) acts as the adjoint of the corresponding state trajectory \( \bar{\bm{\mu}} \) in the ``super-version'' \( (\bm{LP}') \) of \( (P) \), we will refer to it as the \emph{super-adjoint} of \( \bar{x} \).

\subsection{$\infty$-variation and feedback optimality conditions}\label{ssec:cfbl}

We now return to the problem \( (\bm{LP}') \), and suppose that the function \( u \), participating  in \eqref{eq:LCexact}, meets the following condition for a.e. \( t \in I \):
\begin{align}
    \left\langle \mu_t, \bar{H}_t(u_t) \right\rangle = \min_{\upsilon \in \mathcal{P}(U)} \left< \mu_t, \bar{H}_t(\upsilon) \right>.%
    \label{clucomp}
\end{align}
As immediately follows from the representation \eqref{eq:LCexact}, the inequality \[ \mathcal{J}[u] \leq \mathcal{J}[\bar{u}] \] holds. Moreover, if \( \bar{u} \) is optimal for \( (\bm{LP}') \), we must have equality: 
\[ 
    \mathcal{J}[u] = \mathcal{J}[\bar{u}],
\]
which, due to the non-positivity of the integrand, implies the following condition:
\begin{align}
    \left\langle \mu_t, \bar{H}_t(\bar{u}_t) \right\rangle = \left\langle \mu_t, \bar{H}_t(u_t) \right\rangle = \min_{\upsilon \in \mathcal{P}(U)} \left< \mu_t, \bar{H}_t(\upsilon) \right> \quad \text{for a.a. } \ t \in I. \label{clFBM}
\end{align}

As a particular case of the problem $(\widetilde{P})$, we have:
\begin{align}
    \bar{H}_t(x(t), \bar{u}_t) = \bar{H}_t(x(t), u_t) = \min_{\upsilon \in \mathcal{P}(U)} \bar{H}_t(x(t), \upsilon) \quad \text{for a.a. } t \in I \label{lfbm}
\end{align}
which follows from the condition:
\begin{align}
    \bar{H}_t(x(t), u_t) = \min_{\upsilon \in \mathcal{P}(U)} \bar{H}_t(x(t), \upsilon) \quad \text{for a.a. } t \in I. \label{lu-com}
\end{align}

These straightforward arguments lead to the conjecture:
\begin{theorem}[Feedback NOC]\label{thm:fbc}
    Assume that \( \bar{u} \) is optimal for \( (\bm{LP}') \) {\rm (}respectively, for \( (\widetilde{P}) \){\rm )}. Then, the relations \eqref{clFBM} (resp., \eqref{lfbm}) hold for any \( u \in \mathcal{U} \) satisfying \eqref{clucomp} {\rm (}resp., \eqref{lu-com}\textrm{)} for a.a. \( t \in I \), and \( \mathcal{J}[u] = \mathcal{J}[\bar{u}] \) {\rm (}resp., \( \widetilde{\mathcal{I}}[u] = \widetilde{\mathcal{I}}[\bar{u}] \){\rm )}.
\end{theorem}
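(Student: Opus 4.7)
The plan is to read the conclusion directly off the exact increment formula \eqref{eq:LCexact}, using the pointwise minimization \eqref{clucomp} as an upper bound on the integrand and the assumed optimality of $\bar{u}$ as a squeeze from below. No needle or weak variations are required: the argument is essentially a one-line consequence of the ``$\infty$-variation'' representation.

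Concretely, I would fix any $u \in \mathcal{U}$ satisfying \eqref{clucomp} a.e. on $I$ and let $\mu = \mu^u$, $\bar{\bm p} = \bm p^{\bar u}$ be the corresponding state and super-adjoint trajectories. Since $\bar{u}_t \in \mathcal{P}(U)$ is a competing test in the minimization \eqref{clucomp}, I would obtain the pointwise inequality
\[
\langle \mu_t, \bar H_t(\cdot, u_t)\rangle \;\le\; \langle \mu_t, \bar H_t(\cdot, \bar u_t)\rangle \qquad\text{for a.a. } t\in I,
\]
which rewrites as $\langle \mu_t, \bar H_t(\cdot, u_t - \bar u_t)\rangle \le 0$ by linearity of $f_t(\mathrm{x},\cdot)$ on $\mathcal{P}(U)$. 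Integrating over $I$ and invoking \eqref{eq:LCexact} then gives $\mathcal{J}[u] - \mathcal{J}[\bar u] \le 0$. Combined with the optimality of $\bar u$ for $(\bm{LP}')$, this forces the equality $\mathcal{J}[u] = \mathcal{J}[\bar u]$, and the integral of a non-positive measurable function equals zero only if the function itself vanishes a.e.; thus $\langle \mu_t, \bar H_t(\cdot, u_t)\rangle = \langle \mu_t, \bar H_t(\cdot, \bar u_t)\rangle$ for a.a. $t$. Plugging this back into \eqref{clucomp} recovers \eqref{clFBM}.

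The statement for $(\widetilde P)$ follows by specializing $\vartheta = \delta_{\mathrm{x}_0}$, so that $\mu_t = \delta_{x(t)}$ and each pairing $\langle\mu_t,\cdot\rangle$ collapses to evaluation along the reference-free trajectory $x = x^u$; the same chain of inequalities then turns \eqref{lu-com} into \eqref{lfbm}. I do not expect a genuine obstacle: the nontrivial content has already been absorbed into the derivation of \eqref{eq:LCexact} and the $L^2_{\mu_t}$-regularity of $\nabla \bar{\bm p}_t$ established in the preceding remark, which together legitimize the integration and the ``almost everywhere'' manipulations. The only point worth a brief sentence is that for $u \in \mathcal{U}$ the integrand $t\mapsto \langle \mu_t, \bar H_t(\cdot, u_t - \bar u_t)\rangle$ is measurable, so that the vanishing of its integral does imply its a.e. vanishing.
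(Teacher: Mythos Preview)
Your proposal is correct and follows essentially the same approach as the paper: the argument there is precisely the chain \eqref{clucomp} $\Rightarrow$ non-positivity of the integrand in \eqref{eq:LCexact} $\Rightarrow$ $\mathcal{J}[u]\le\mathcal{J}[\bar u]$, then optimality forces equality and hence a.e.\ vanishing of the integrand, which is \eqref{clFBM}; the $(\widetilde P)$ case is obtained by the same specialization $\vartheta=\delta_{\mathrm{x}_0}$.
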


This theorem provides a non-classical necessary condition for the optimality of \( \bar{u} \), closely resembling PMP. The only distinction is that \( \bar{\mu} \) (respectively, \( \bar{x} \)) is replaced by a trajectory \( \mu \) (resp., \( x \)) of a new ``comparison process'', which is derived by solving \eqref{clucomp} (resp., \eqref{lu-com}). 

It should be noted, however, that \eqref{clucomp} involves a feedback loop \( \mu = \mu^u \), making it an operator equation on \( \mathcal{U} \). This justifies attributing the derived optimality principle to the class of ``feedback NOCs'' \cite{Dykhta2014}.

Finally, note that any PMP-extremal \( \bar{u} \) is itself a trivial solution to \eqref{clucomp}. In Section~\ref{ssec:infinite}, we demonstrate that the set of solutions \( u \in \mathcal{U} \) to \eqref{clucomp} is non-empty for any \( \bar{u} \) within a much broader class of optimal control problems in metric spaces. Furthermore, in Appendix~\ref{ssec:fbm}, we show that the desired ``comparison process'' can be synthesized using a ``model-predictive'' discretization approach similar to \cite{krasovskii2011game}.

\begin{remark}[The full context of $(\bm{LP}')$ is not needed]\label{re:00}
As one might have noticed, the formulation of the increment formula \eqref{incr} and the subsequent variational analysis of the problem $(\widetilde{P})$ does not require the full statement of its ``super-counterpart''. What we really need is the family $(\mathfrak L_t(u_t))_{t \in I}$ of differential operators and the class \( \bm{\mathcal D} \doteq \dom(\bm \Phi^{u}) \) of test functions.
\end{remark}

\section{Mean-field control setup}
\label{sec:nonloc_cont}

We now take the next step up the generalization ladder by applying our approach to a nonlinear version of the super-problem \( (\bm{LP}') \) from Section~\ref{sec:odes}: 
\begin{align}
  \text{Minimize}\quad & \mathcal I[u] \doteq  \ell(\mu_T^u)\nonumber \\
  \text{subject to}: \quad &\partial_t \mu_t + \mathrm{div}\left(F_t(\mu_t,u_t)\, \mu_t\right) = 0, \quad \mu_0=\vartheta,\label{eq:PDE}\\
  &u\in \mathcal{U}.\notag
\end{align}
This problem is formulated over the space \( \mathcal{P}_2 \doteq \mathcal{P}_2(\mathbb{R}^n) \) of probability measures \( \mu \) on \( \mathbb{R}^n \) with finite second moments \( \int |\mathrm x|^2 \d \mu({\rm x)} \). Throughout this section, we adopt the notations:
$$\int = \int_{\mathbb{R}^n}, \quad \iint = \int_{\mathbb{R}^n \times \mathbb{R}^n},\quad \text{ etc.}
$$ 

We initially adhere to the class \( \mathcal{U} \) of generalized controls, where \( U \) is a compact set. The map \( \ell \colon \mathcal{P}_2 \to \mathbb{R} \) is a given cost functional, and \[ F \colon I \times (\mathbb{R}^n \times \mathcal{P}_2 \times U) \to \mathbb{R}^n \] is a controlled \emph{nonlocal vector field}. 

The PDE \eqref{eq:PDE} is commonly referred to as a \emph{nonlocal continuity equation}; as before, it is understood in the distributional sense  \eqref{eq:contW}, provided by setting $f_t({\rm u})=F_t(\mu_t, {\rm u})$.

\begin{remark}
As above, the measure \( \vartheta \) can be interpreted as the distribution law of a random initial state \( x_0 \) on a probability space \( (\Omega, \mathcal{F}, \mathbb{P}) \), giving rise to an ensemble of sample paths \( x_t \colon \Omega \to \mathbb{R}^n \) with laws \( \mu_t = (x_t)_\sharp \mathbb{P} \). The term ``nonlocal'' reflects the fact that the driving vector field depends on the distribution \( \mu_t \) of the samples \( x_t \) across the entire space \( \mathbb{R}^n \).

In this framework, the choice of the control set \( \mathcal{U} \) --- being independent of the phase variable \( x \in \mathbb{R}^n \) and acting uniformly and equivalently on all samples \( x \) --- aligns the problem with the field of ensemble control \cite{liEnsembleControlFiniteDimensional2011}.
\end{remark}

Recall that \( \mathcal{P}_2 \) is naturally equipped with the \( L_2 \)-Kantorovich (Wasserstein) distance:
\[
  W_2(\mu, \nu) = \left(\inf_{\Pi \in \Gamma(\mu, \nu)} \iint |x-y|^2 \d \Pi(x, y)\right)^{1/2},
\]
where \( \Gamma(\mu, \nu) \) denotes the set of probability measures \( \Pi \) on \( \mathbb{R}^n \times \mathbb{R}^n \) whose marginal projections coincide with \( \mu \) and \( \nu \), i.e., \( \pi^1_{\sharp} \Pi = \mu \) and \( \pi^2_{\sharp} \Pi = \nu \).
The elements of \( \Gamma(\mu, \nu) \) are called transport plans between \( \mu \) and \( \nu \). It is well known that the metric space \( (\mathcal{P}_2, W_2) \) is complete. Moreover, \( \mu_k \overset{W_2}{\to} \mu \) if and only if 
$$\int \phi \, \d\mu_k \to \int \phi \, \d\mu$$ 
for all quadratically bounded continuous functions  
\(
  |\phi(x)| \leq C \left(1+|x|^2\right)\), \(C > 0
\)
(see~\cite{ambrosioGradientFlowsMetric2005, panaretosInvitationStatisticsWasserstein2020, santambrogioOptimalTransportApplied2015} for further details).

The stated problem now replaces the base problem \( (\widetilde{P}) \) from the previous section, with the PDE \eqref{eq:PDE} serving as the ``ordinary system''. For simplicity, we will denote this problem by \( (P) \) (dropping the ``tilde''). 

This new nonlinear problem will be immersed into the corresponding linear super-problem $(\bm{LP}')$ of the next-level abstraction. As explained in Remark~\ref{re:00}, our analysis does not require the complete formulation of the super-problem; we only need to figure out the generating family of the flow in \eqref{eq:PDE} and an appropriate class of test functions.

\subsection{Differentiability in $\mathcal P_2$}\label{ssec:dif-W}

Along with the metric structure, \( \mathcal{P}_2 \) admits a kind  of \emph{differential} structure~\cite{ottoGeometryDissipativeEvolution2001,ambrosioGradientFlowsMetric2005}, provided by a natural notion of tangent space. 
The \emph{tangent space} to \( \mathcal{P}_2 \) at a point \( \mu \) is a weighted Lebesgue space \( {L}^2_{\mu} \doteq {L}^2_{\mu}(\mathbb{R}^n;\mathbb{R}^n) \) modulo the following equivalence relation: functions \( v_1,v_2 \in {L}^2_{\mu} \) are set to be  equivalent if \( \mathrm{div}(v_1\mu) = \mathrm{div}(v_2\mu) \) in the sense of distributions, i.e.,
\[
\int (v_1 \cdot \nabla\eta) \d \mu =
\int (v_2 \cdot \nabla\eta) \d \mu \quad \forall \eta \in C^1_c(\mathbb{R}^n).
\]

\begin{remark}
  The equivalence relation on \( L^2_{\mu} \) can be explained as follows. 
  It is known~\cite{ambrosioGradientFlowsMetric2005} that for any absolutely continuous curve \( t \mapsto \mu_t \) on \( \mathcal{P}_2 \), there exists a family \( v_t \in L^2_{\mu} \) (for a.a. \( t \)) such that the equation
  \[
    \partial_t \mu_t + \div(v_t \mu_t) = 0
  \]
  holds in the distributional sense. 
  This family is non-unique because any other family \( w_t \in L^2_{\mu} \) such that \( \div(v_t \mu_t) = \div(w_t \mu_t) \) for a.e. \( t \in{I}\) produces the same curve.
  Introducing the above equivalence relation on \( L^2_{\mu} \), enables treating all such families as equivalent. 
  The corresponding equivalence class \( v_t \in L^2_{\mu} \) (we use the same notation for both the equivalence class and its representatives) can be considered as a unique tangent vector to the curve \( \mu_t \) at the time \( t \). 
  This tangent vector is usually denoted by \( \dot\mu_t \).
\end{remark}

The explicit definition of the tangent space endows  $\mathcal P_2$ with a structure akin to that of a Hilbert manifold, leading to the following ``geometric'' concept of differentiability.

\begin{definition}[Differentiable functions]\label{def:diff_fn}
~%
  \begin{enumerate}
    \item (\emph{Differentiability}) A function \( \phi\colon \mathcal{P}_2\to \mathbb{R} \) is called \emph{differentiable} at \( \mu\in \mathcal{P}_2 \) if there exists a linear bounded map \( \mathbf{d}\phi_{\mu} \colon {L}^2_{\mu}\to \mathbb{R}\) such that for any \( v\in L^2_{\mu} \) it holds
\[
  \lim_{\varepsilon\to 0+}\frac{1}{\varepsilon}\left|\phi\left((\id+\varepsilon v)_\sharp \mu\right)- \phi(\mu)- \varepsilon\, \mathbf{d}\phi_{\mu}(v) \right| = 0.
\]
The map \( \mathbf{d}\phi_{\mu} \) is termed the \emph{differential} (or \emph{derivative}) of \( \phi \) at \( \mu \).

Since \( {L}^2_{\mu} \) is a Hilbert space, for any \( v\in {L}^2_{\mu} \), there exists a unique element \( w\in {L}^2_{\mu} \) such that \( \mathbf{d}\phi_{\mu}(v) = \left< v,w \right>_{\mu} \).
This element \( w \) is called the \emph{gradient} of \( \phi \) at \( \mu \), and is denoted by \( \bm \nabla \phi(\mu) \).

 \item (\emph{Uniform differentiability}) We say that a function \( \phi\colon \mathcal{P}_2\to \mathbb{R} \) is \emph{uniformly differentiable} if it is differentiable at every \( \mu\in \mathcal{P}_2 \), and there exists \( C>0 \) such that
  \[
    \left|\phi\left((\id +\varepsilon v)_{\sharp}\mu\right) - \phi(\mu) - \varepsilon\mathbf{d}\phi_{\mu} (v)\right|\le C\|v\|^{2}_{\mu} \varepsilon^2,
  \]
  for all \( \mu\in \mathcal{P}_2 \), \( v\in {L}^2_{\mu} \) and \( \varepsilon\in \mathbb{R} \). 

  \item (\emph{Uniformly equidifferentiable families}) A family $(\phi_\alpha)_{\alpha\in \mathcal A}$ of functions \( \phi_\alpha\colon \mathcal{P}_2\to \mathbb{R} \) is called uniformly equidifferentiable if all $\phi_\alpha$ are uniformly differentiable with a common constant $C>0$, independent of $\alpha$. 
  \end{enumerate}
\end{definition}

\begin{definition}[Differentiability of maps]
\label{def:diff_mp}
\quad
    \begin{enumerate}
        \item (\emph{Differentiability}) A map \( \Phi \colon \mathcal{P}_2 \to \mathcal{P}_2 \) is called \emph{differentiable} if, for any point $\mu\in \mathcal{P}_2$, there exists a linear bounded map \(\Phi_{\star,\mu} \colon {L}^2_{\mu} \to L^2_{\Phi(\mu)} \) such that for any \( v \in {L}^2_{\mu} \) the function \( w = \Phi_{\star,\mu}v \) satisfies the relation:
\[
  \lim_{\varepsilon\to 0+}\frac{1}{\varepsilon}W_1\left(\Phi\left((\id+\varepsilon v)_\sharp \mu\right), (\id +\varepsilon w)_\sharp \Phi(\mu)\right) = 0.
\]
The map \(\Phi_{\star,\mu} \) is called the \emph{derivative} of \( \Phi \) at \( \mu \).

\item (\emph{Uniform differentiability}) A map \( \Phi\colon \mathcal{P}_2\to \mathcal{P}_{2} \) is \emph{uniformly differentiable} if it is differentiable at any \( \mu\in \mathcal{P}_2 \), and there exists \( C>0 \) such that
  \[
    \|v\|_{\Phi(\mu)}\le C\|w\|_{\mu}\quad \mbox{ and }\quad
    W_1\left(\Phi\left((\id+\varepsilon v)_{\sharp}\mu\right),(\id+\varepsilon w)_{\sharp}\Phi(\mu)\right)\le C\|v\|^{2}_{\mu}\varepsilon^2,
  \]
  for all \( \mu\in \mathcal{P}_2 \), \( v\in {L}^2_{\mu} \), and \( \varepsilon\in \mathbb{R} \), where \( w \doteq \Phi_{\star,\mu} v \).

  \item (\emph{Uniformly equidifferentiable families}) A family $(\Phi_\alpha)_{\alpha\in \mathcal A}$ of maps \( \Phi_\alpha\colon \mathcal{P}_2\to \mathcal{P}_2 \) is called uniformly equidifferentiable if all $\Phi_\alpha$ are uniformly differentiable with a common constant $C>0$, independent of $\alpha$. 
    \end{enumerate}
\end{definition}

\begin{remark}
  \label{rem:stronger}
  One might be tempted to use \( W_2 \) in the above definition. However, in this case, we may not be able to prove the differentiability of nonlocal flows, as discussed in Remark~\ref{rem:problem}.
\end{remark}

As in the case of Riemannian manifolds, differentiable maps \( \Phi \colon \mathcal{P}_2 \to \mathcal{P}_2 \) act on functions, vector fields and \( 1 \)-forms. Precisely, we can introduce the corresponding pushforward and pullback operations as follows:
  \begin{enumerate}
    \item A \emph{pullback} of a function \( \phi\colon \mathcal{P}_2 \to \mathbb{R}\) through $\Phi$ is the function \( \Phi^{\star}\phi\colon \mathcal{P}_2\to \mathbb{R} \) defined by \( \Phi^{\star}\phi\doteq \phi\circ \Phi \).
    \item A \emph{pushforward} of a vector field \( v \) on \( \mathcal{P}_2 \) is the vector field \( \Phi_{\star}v \) on \( \mathcal{P}_2 \) defined by
    \[
    (\Phi_{\star} v)_{\Phi(\mu)} \doteq \Phi_{\star,\mu}(v_{\mu}).
    \]

    \item A \emph{pullback} of a \( 1 \)-form \( \omega \) on \( \mathcal{P}_2 \) is the \( 1 \)-form \( \Phi^{\star}\omega \) on \( \mathcal{P}_2 \) defined by
  \[
    (\Phi^{\star}\omega)_\mu(v) \doteq \omega_{\Phi(\mu)}(\Phi_{\star,\mu}v), \quad v\in L_{\mu}^2.
  \]
  \end{enumerate}
  
  Remark that, in the above definitions, the regularity of functions, vector fields, and \( 1 \)-forms does not play an essential role. A key factor is the regularity of \( \Phi \).

  \begin{lemma}\label{lem:Wcommut}
    The following statements hold:
    \begin{enumerate}[{\rm (i)}]
      \item Let \( \Phi\colon \mathcal{P}_2\to \mathcal{P}_2 \) be uniformly differentiable and \( \phi\colon \mathcal{P}_2\to \mathbb{R} \) be uniformly differentiable and Lipschitz w.r.t. the distance \( W_1 \).
  Then, the pullback \( \Phi^{\star}\phi\colon \mathcal{P}_2\to \mathbb{R} \) is uniformly differentiable, moreover
\[
  \Phi^\star(\mathbf{d} \phi) = \mathbf{d}(\Phi^\star \phi).
\]

\item Given $\phi$ as above, let a family $(\Phi_t)_{t \in I}$ be uniformly equidif\-fe\-ren\-ti\-able. Then, such is the family \(\left(\Phi^{\star}_t\phi\right)_{t\in I}\).
    \end{enumerate}
\end{lemma}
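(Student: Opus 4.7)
The plan is to prove the chain rule directly from the definitions by composing the two uniform approximations supplied by the hypotheses. Fix $\mu \in \mathcal{P}_2$ and $v \in L^2_\mu$, set $\nu \doteq \Phi(\mu)$ and $w \doteq \Phi_{\star,\mu}v \in L^2_{\nu}$. The candidate differential at $\mu$ is the linear map $v\mapsto (\mathbf{d}\phi)_{\nu}(w)$; note that it is linear and bounded as a composition of the bounded operators $\Phi_{\star,\mu}\colon L^2_\mu\to L^2_{\nu}$ and $(\mathbf{d}\phi)_{\nu}\colon L^2_{\nu}\to \R$, and by the pullback convention this is precisely $(\Phi^{\star}(\mathbf{d}\phi))_{\mu}(v)$.

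First I would write the increment as a two-step telescoping:
\[
  (\Phi^{\star}\phi)\bigl((\id+\varepsilon v)_\sharp \mu\bigr)-(\Phi^{\star}\phi)(\mu)-\varepsilon\,(\mathbf{d}\phi)_{\nu}(w)
   = A_{\varepsilon}+B_{\varepsilon},
\]
where
\[
  A_{\varepsilon}\doteq \phi\!\left(\Phi\bigl((\id+\varepsilon v)_\sharp\mu\bigr)\right)-\phi\!\left((\id+\varepsilon w)_\sharp \nu\right),
  \qquad
  B_{\varepsilon}\doteq \phi\!\left((\id+\varepsilon w)_\sharp\nu\right)-\phi(\nu)-\varepsilon\,(\mathbf{d}\phi)_{\nu}(w).
\]
Then I would bound $A_{\varepsilon}$ using the $W_1$-Lipschitz property of $\phi$ (with constant $L$) together with the uniform differentiability of $\Phi$, which yields
\[
  |A_{\varepsilon}|\le L\,W_1\!\left(\Phi((\id+\varepsilon v)_\sharp \mu),(\id+\varepsilon w)_\sharp \nu\right)\le L\,C_{\Phi}\,\|v\|_{\mu}^{2}\varepsilon^{2}.
\]
For $B_{\varepsilon}$, the uniform differentiability of $\phi$ at $\nu$ combined with the norm estimate $\|w\|_{\nu}\le C_{\Phi}\|v\|_{\mu}$ from Definition~\ref{def:diff_mp} gives
\[
  |B_{\varepsilon}|\le C_{\phi}\|w\|_{\nu}^{2}\varepsilon^{2}\le C_{\phi}C_{\Phi}^{2}\|v\|_{\mu}^{2}\varepsilon^{2}.
\]
Adding the two bounds produces a constant $C\doteq LC_{\Phi}+C_{\phi}C_{\Phi}^{2}$ independent of $\mu$, $v$ and $\varepsilon$, which simultaneously establishes uniform differentiability of $\Phi^{\star}\phi$ and identifies its differential with $\Phi^{\star}(\mathbf{d}\phi)$. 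Claim (ii) then follows by the same chain of estimates: replacing $\Phi$ by $\Phi_{t}$ produces the family constant $C_{t}=LC_{\Phi_{t}}+C_{\phi}C_{\Phi_{t}}^{2}$, and the uniform bound on $C_{\Phi_{t}}$ supplied by the equidifferentiability of $(\Phi_{t})_{t\in I}$ yields a bound on $C_{t}$ independent of $t$.

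The only subtle point I expect is matching the metric used to measure the $\Phi$-error with the regularity assumed on $\phi$: the displacement error produced by Definition~\ref{def:diff_mp} is measured in $W_{1}$, which is exactly why the hypothesis in (i) requires $\phi$ to be $W_{1}$-Lipschitz rather than merely $W_{2}$-Lipschitz. This dovetailing, highlighted in Remark~\ref{rem:stronger}, is what makes the estimate of $A_{\varepsilon}$ work; the rest is a routine combination of the two approximation inequalities.
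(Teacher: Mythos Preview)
Your proof is correct and follows essentially the same approach as the paper: the same telescoping into the ``$A_\varepsilon$'' and ``$B_\varepsilon$'' pieces, the same use of the $W_1$-Lipschitz bound on $\phi$ for the first and the uniform differentiability of $\phi$ (combined with $\|w\|_{\nu}\le C_{\Phi}\|v\|_{\mu}$) for the second, yielding the identical composite constant $LC_{\Phi}+C_{\phi}C_{\Phi}^{2}$. Your treatment of part (ii) and the remark on why $W_1$ is the right metric also match the paper's reasoning.
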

\begin{proof}
  \textbf{1.} The Lipschitz continuity of \( \phi \) with respect to the distance \( W_1 \) implies that
   \[
     \left|\phi\left(\Phi((\id + \varepsilon v)_\sharp\mu)\right)-\phi\left((\id + \varepsilon w)_\sharp \Phi(\mu)\right)\right|\le
     \Lip_{W_1}(\phi) W_1 \left(\Phi((\id + \varepsilon v)_\sharp\mu),(\id + \varepsilon w)_\sharp \Phi(\mu)\right),
   \]
   for any \( v\in {L}^2_{\mu} \) and \( w\in L_{\Phi(\mu)}^2 \).
   If \( w = \Phi_{\star,\mu}(v) \), the uniform differentiability of \( \Phi \) implies that
   \[
     \|w\|_{\Phi(\mu)}\le C_1\|v\|_{\mu}\quad \text{ and }\quad
    W_1 \left(\Phi((\id + \varepsilon v)_\sharp\mu),(\id + \varepsilon w)_\sharp \Phi(\mu)\right)\le C_1\|v\|^2_{\mu}\varepsilon^2,
  \]
  for some \( C_1>0 \), and all \( \mu\in \mathcal{P}_2\),  \(v\in {L}^2_{\mu}\), and \(\varepsilon\in \mathbb{R}\).
   In particular,
  \[
    \left|\phi\left(\Phi((\id + \varepsilon v)_\sharp\mu)\right)-\phi\left((\id + \varepsilon w)_\sharp \Phi(\mu)\right)\right|\le \Lip_{W_1}(\phi) \, C_1 \|v\|^2_{\mu}\varepsilon^{2},
  \]
  for all \( \mu\in \mathcal{P}_2\),  \(v\in {L}^2_{\mu}\), and \(\varepsilon\in \mathbb{R}\). Since \( \phi \) is also uniformly differentiable, it holds
  \[
    \left| \phi\left((\id + \varepsilon w)_\sharp \Phi(\mu)\right) - \phi(\Phi(\mu)) - \varepsilon (\mathbf{d}\phi)_{\Phi(\mu)}(w)\right| \le C_2\|w\|^2_{\Phi(\mu)}\varepsilon^{2},
  \]
  for each \( \mu\in \mathcal{P}_2\),  \(w\in L^2_{\Phi(\mu)}\), and \(\varepsilon\in \mathbb{R}\).
  Taking \( w = \Phi_{\star,\mu}(v) \), and combining the above inequalities, we get
  \[
    \left| \phi\left(\Phi((\id + \varepsilon v)_\sharp\mu)\right) - \phi(\Phi(\mu)) - \varepsilon (\mathbf{d}\phi)_{\Phi(\mu)}(w)\right| \le \left(\Lip_{W_1}(\phi )C_1 + C_1^2C_2\right)\|v\|^2_{\mu}\varepsilon^{2}.
\]
Thus, \( v \mapsto (\mathbf{d}\phi )_{\Phi(\mu)}(\Phi_{\star,\mu}(v))\) satisfies the approximation property of the derivative of \( \Phi^{\star}\phi \) at \( \mu \).
  It remains to note that this map is linear and bounded as the composition of linear bounded maps.

  \textbf{2.} Due to the uniformity of the constants $C_1$ and $C_2$ w.r.t. $t \in I$, all previous estimates remain intact, leading to the second assertion.  
\end{proof}

\subsection{Test functions}

In this section, we present an important example of uniformly differentiable functions. Specifically, we demonstrate that every element of the class \( \bm{\mathcal D} \), defined below belongs, to this category (cf.~\cite{CardMaster2019}).

\begin{definition}
  We say that a function \( \phi\colon \mathcal{P}_2\to \mathbb{R} \) belongs to the class \( \bm{\mathcal D} \) if it satisfies the following properties:
\begin{enumerate}[(i)]
  \item There exists a map \( \frac{\delta \phi}{\delta \mu}\colon \mathcal{P}_2 \times \mathbb{R}^n\to \mathbb{R} \), called the flat derivative of \( \phi \), such that
\[
  \phi(\mu_{1}) - \phi(\mu_{0}) = \int_0^1\int \frac{\delta \phi}{\delta\mu}\left((1-t)\mu_0 + t\mu_1,\mathrm{x}\right)\d (\mu_1-\mu_0)(\mathrm{x})\d t
\]
and
    \[
    \int \frac{\delta \phi}{\delta \mu}(\mu,\mathrm{x})\d \mu(\mathrm{x}) = 0
        \]
        for all \( \mu_0,\mu_1\in \mathcal{P}_c \).

\smallskip
        
      \item The map \( \frac{\delta \phi}{\delta \mu} \) is continuous, bounded, and differentiable in \( \mathrm{x} \).

\smallskip

      \item The map \( \nabla\frac{\delta \phi}{\delta \mu} \colon \mathcal{P}_2 \times \mathbb{R}^n\to \mathbb{R}^n\) (the gradient of \( \frac{\delta \phi}{\delta \mu} \) in \( \mathrm{x} \)) is Lipschitz continuous and bounded.
\end{enumerate}
\end{definition}

The elements \(\phi \in \bm{\mathcal D}\) will be referred to as \emph{test functions}.\footnote{Strictly speaking, a more consistent terminology would be ``super test functions,'' emphasizing the distinction from test function --- elements of the space $ C^1_c(\mathbb{R}^n)$ --- appearing in the definition \eqref{eq:contW} of the distributional solution to the continuity equation. However, we will omit the qualifier ``super'' for brevity.}
Some of their remarkable properties are collected in the following lemma.

\begin{lemma}
  \label{lem:testlip}
  Any function \( \phi \in  \bm{\mathcal D}\) has the following properties:
  \begin{enumerate}[{\rm (1)}]
    \item \( \phi \) is Lipschitz w.r.t. the distances \( W_1 \) and \( W_2 \).
    \item \( \phi \) is differentiable in the sense of Definition~\ref{def:diff_fn} and its gradient \( \bm \nabla \phi \) coincides with \( \nabla\frac{\delta \phi}{\delta \mu}  \).
    \item More specifically, \( \phi \) is uniformly differentiable, i.e.,   for all \( \mu\in \mathcal{P}_2 \), \( v \in {L}^2_{\mu} \), and \( \varepsilon\in \mathbb{R} \), we have
    \begin{displaymath}
    \left|\phi \left((\id+\varepsilon v)_{\sharp}\mu  \right) - \phi(\mu) - \varepsilon \mathbf{d}\phi_{\mu}(v)\right| \le 2\Lip(\bm\nabla \phi)\|v\|^2_{\mu} \varepsilon^2.
  \end{displaymath}
    \item The composition \( t\mapsto \phi(\mu_t) \) with any absolutely continuous curve \( \mu\colon I\to \mathcal{P}_2 \) is absolutely continuous, and
\begin{equation}
    \frac{d}{dt} \phi(\mu_t) = (\mathbf{d}\phi)_{\mu_t}(\dot \mu_t) = \left< \bm \nabla \phi(\mu_t), \dot \mu_t \right>_{\mu_t},\label{form-diff}
\end{equation}
for a.e. \( t\in I \). In fact, this holds even if \( \phi \) is merely Lipschitz continuous and differentiable.
  \end{enumerate}
\end{lemma}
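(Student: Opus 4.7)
The plan is to establish (1) first, then use it to bootstrap (2)--(3), and finally combine everything with the structure theory of absolutely continuous curves in $\mathcal{P}_2$ to deduce (4).

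For (1), I would start from the integral representation of the increment in the definition of $\bm{\mathcal D}$. Fix $\mu_0,\mu_1\in\mathcal{P}_2$ and any transport plan $\Pi\in\Gamma(\mu_0,\mu_1)$. Rewriting
\[
  \int \frac{\delta\phi}{\delta\mu}(\mu_s,\mathrm{x})\,\mathrm{d}(\mu_1-\mu_0)(\mathrm{x})
  = \iint\left[\frac{\delta\phi}{\delta\mu}(\mu_s,\mathrm{y}) - \frac{\delta\phi}{\delta\mu}(\mu_s,\mathrm{x})\right]\mathrm{d}\Pi(\mathrm{x},\mathrm{y}),
\]
with $\mu_s=(1-s)\mu_0+s\mu_1$, and using the uniform bound on $\nabla\tfrac{\delta\phi}{\delta\mu}$, the integrand is bounded by $\|\nabla\tfrac{\delta\phi}{\delta\mu}\|_\infty|\mathrm{y}-\mathrm{x}|$. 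Optimizing over $\Pi$ yields $|\phi(\mu_1)-\phi(\mu_0)|\le \|\nabla\tfrac{\delta\phi}{\delta\mu}\|_\infty W_1(\mu_0,\mu_1)$. Since $W_1\le W_2$, the same constant works for $W_2$.

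For (2) and (3), take $\mu_1=(\id+\varepsilon v)_\sharp\mu$ with $v\in L^2_\mu$ and apply the flat-derivative formula with the plan $\Pi=(\id,\id+\varepsilon v)_\sharp\mu$. A one-parameter Taylor expansion of $\frac{\delta\phi}{\delta\mu}(\mu_s,\cdot)$ in $\mathrm{x}$ gives
\[
  \phi(\mu_1)-\phi(\mu)=\varepsilon\int_0^1\!\!\int_0^1\!\!\int v(\mathrm{x})\cdot\nabla\tfrac{\delta\phi}{\delta\mu}\bigl(\mu_s,\mathrm{x}+\tau\varepsilon v(\mathrm{x})\bigr)\,\mathrm{d}\mu(\mathrm{x})\,\mathrm{d}\tau\,\mathrm{d}s.
\]
Isolating the candidate first-order term $\varepsilon\int v\cdot\nabla\tfrac{\delta\phi}{\delta\mu}(\mu,\cdot)\,\mathrm{d}\mu$ produces a remainder that I would bound by using joint Lipschitz continuity of $\nabla\tfrac{\delta\phi}{\delta\mu}$ together with $W_1(\mu_s,\mu)\le s\,W_1(\mu_1,\mu)\le s\varepsilon\int|v|\,\mathrm{d}\mu\le s\varepsilon\|v\|_\mu$ (via the interpolation plan $(1-s)(\id,\id)_\sharp\mu+s\Pi$) and Cauchy--Schwarz on the $\tau\varepsilon|v(\mathrm{x})|$ contribution. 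Performing the $s$- and $\tau$-integrals gives the stated $O(\|v\|_\mu^2\varepsilon^2)$ bound, and the linear term identifies $\bm\nabla\phi(\mu)=\nabla\tfrac{\delta\phi}{\delta\mu}(\mu,\cdot)$ and $\mathbf{d}\phi_\mu(v)=\langle\bm\nabla\phi(\mu),v\rangle_\mu$.

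For (4), the Lipschitz property from (1) immediately yields absolute continuity of $t\mapsto\phi(\mu_t)$. To identify the derivative, I would invoke the superposition principle for absolutely continuous curves in $\mathcal{P}_2$: for a.e.\ $t$, the tangent vector $\dot\mu_t\in L^2_{\mu_t}$ exists and satisfies $W_2\bigl(\mu_{t+h},(\id+h\dot\mu_t)_\sharp\mu_t\bigr)=o(h)$. Splitting
\[
  \phi(\mu_{t+h})-\phi(\mu_t)=\bigl[\phi(\mu_{t+h})-\phi((\id+h\dot\mu_t)_\sharp\mu_t)\bigr]+\bigl[\phi((\id+h\dot\mu_t)_\sharp\mu_t)-\phi(\mu_t)\bigr],
\]
the first bracket is $o(h)$ by $W_1$-Lipschitz continuity, and the second is $h\,\mathbf{d}\phi_{\mu_t}(\dot\mu_t)+o(h)$ by (uniform) differentiability at $\mu_t$. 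Dividing by $h$ and letting $h\to0$ yields~\eqref{form-diff}. The same splitting shows that the final claim requires only pointwise differentiability and Lipschitz continuity, not uniformity.

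The main obstacle is the remainder estimate in step (2)--(3): tracking the $W_1$-distance between intermediate interpolants $\mu_s$ and the base point $\mu$ while juggling the joint Lipschitz constant of $\nabla\tfrac{\delta\phi}{\delta\mu}$ in both arguments, and verifying the interpolation bound $W_1(\mu_s,\mu)\le s\,W_1(\mu_1,\mu)$ via a convex combination of transport plans. Once this is in place, (4) follows cleanly from the $(\id+h\dot\mu_t)_\sharp\mu_t$ approximation.
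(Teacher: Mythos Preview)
Your proposal is correct and follows essentially the same route as the paper: part~(1) via the flat-derivative integral and Kantorovich--Rubinstein, parts~(2)--(3) via Taylor expansion plus the Lipschitz bound on $\nabla\tfrac{\delta\phi}{\delta\mu}$ together with a convexity estimate for the interpolants $\mu_s$ (the paper defers this to Lemma~\ref{lem:O2vf}, whose argument is exactly your scheme), and part~(4) via the identical $(\id+h\dot\mu_t)_\sharp\mu_t$ splitting using \cite[Proposition~8.4.6]{ambrosioGradientFlowsMetric2005}. The only cosmetic adjustment is that the Lipschitz hypothesis on $\nabla\tfrac{\delta\phi}{\delta\mu}$ is stated with respect to $W_2$ on $\mathcal{P}_2$, so in the remainder estimate you should use $W_2(\mu_s,\mu)\le\sqrt{s}\,\varepsilon\|v\|_\mu$ (the same interpolation plan $(1-s)(\id,\id)_\sharp\mu+s\Pi$ gives it) rather than the $W_1$ version---this changes nothing in the structure or in the final bound.
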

\begin{proof}
  \textbf{1.} Recall that 
\[
  \phi(\mu_{1}) - \phi(\mu_{0}) = \int_0^1\int \frac{\delta \phi}{\delta\mu}\left((1-t)\mu_0 + t\mu_1,x\right)\d (\mu_1-\mu_0)(x)\d t.
\]
By the third property of test functions, \( \nabla\frac{\delta \phi}{\delta \mu} \) is bounded. Therefore, \( \mathrm{x} \mapsto \frac{\delta \phi}{\delta\mu}\left(\mu,\mathrm{x}\right)  \) is Lipschitz, uniformly for all \( \mu\in \mathcal{P}_2 \).
Now the \( W_1 \)-Lipschitz continuity of \( \phi \) follows from the dual representation of \( W_1 \):
\[
  W_1(\mu_1,\mu_2) = \sup\left\{\int \phi\d(\mu_1-\mu_2)\;\colon\;\phi\in C(\mathbb{R}^n;\mathbb{R}),\;\Lip(\phi)\le 1 \right\}.
\]
Since \( W_1(\mu_0,\mu_1)\le W_2(\mu_1,\mu_2) \), the function \( \phi \) is Lipschitz w.r.t. \( W_2 \) as well.

\textbf{2.}  Differentiability (assertion (2)) follows from~\cite[Proposition 2.1]{chertovskihOptimalControlNonlocal2023}. 
  The proof of assertion (3) is similar to that of Lemma~\ref{lem:O2vf} below.

   \textbf{3.} It remains to prove Assertion (4). The map \( t\mapsto \phi(\mu_t) \) is absolutely continuous as a composition of a Lipschitz and an absolutely continuous function.
Let \( v_t\doteq\dot \mu_t \).
Thanks to~\cite[Proposition 8.4.6]{ambrosioGradientFlowsMetric2005}, for a.e. \( t \), it holds \( W_2(\mu_{t+h}, (\id +h v_t)_{\sharp}\mu_t) = o(h) \) as \( h\to 0 \). Fix some \( t \) with this property.
Then
\begin{equation}
\phi(\mu_{t+h}) - \phi(\mu_t) = \left[\phi(\mu_{t+h}) - \phi((\id+h v_t)_{\sharp}\mu_t)\right] + \left[\phi\left((\id+hv_t)_{\sharp}\mu_t\right) - \phi(\mu_t)\right].
  \label{eq:xixi}
\end{equation}
The first difference from the right is \( o(h) \), because
\[
\left|\phi(\mu_{t+h}) - \phi((\id+hv_t)_{\sharp}\mu_t)\right|\le \Lip(\phi)\, W_2(\mu_{t+h},(\id+hv_t)_{\sharp}\mu_t) = o(h).
\]
The latter formula holds due to the Lipschitz continuity of \( \phi \) and the choice of \( t \).
The second difference in the right-hand side of~\eqref{eq:xixi} equals
\[
h(\mathbf{d} \phi)_{\mu_t}(v_t) + o(h),
\]
due to the differentiability of \( \phi \).
\end{proof}

\subsection{Metric properties of flows}

We begin with the standard regularity assumptions on \( F \) that date back at least to~\cite{piccoliTransportEquationNonlocal2013}.

\begin{tcolorbox}
\begin{assumption}
    \label{F1}
    ~
\begin{itemize}
	\item \( F \) is measurable in \( t \);
	\item \( F \) is bounded and Lipschitz in \( \mathrm{x} \) and \( \mu \), i.e., there exists \( M > 0 \) such that
	    \begin{gather*}
        \left|F_t(\mathrm{x},\mu)\right|\le M,\quad
	\left|F_{t}(\mathrm{x},\mu)-F_{t}(\mathrm{x}',\mu')\right|\le M\left(|\mathrm{x}-\mathrm{x}'|+W_{2}(\mu,\mu')\right),
	  \end{gather*}
		for all \( \mathrm{x},\mathrm{x}'\in \mathbb{R}^n \), \( \mu,\mu'\in \mathcal{P}_2 \), \( t\in I \).
\end{itemize}
\end{assumption}
\end{tcolorbox}
\begin{proposition}
  \label{prop:basic}
  Let Assumptions \ref{F1} hold.
\begin{enumerate}[\rm (1)]
  \item For each \( \mu\in \mathcal{P}_{2} \), there exists a unique map \(X^{\mu}\colon  I \times I\times \mathbb{R}^d \to \mathbb{R}^d \) satisfying the differential equation
\[
  \frac{d}{dt} X^{\mu}_{s,t} = F_t\left(X^{\mu}_{s,t},X^{\mu}_{s,t\sharp}\mu\right),\quad X^{\mu}_{s,t}=\id.
				\]
  \item The map \( \Phi\colon  I \times  I \times \mathcal{P}_2\to \mathcal{P}_2 \), defined by the rule \( \Phi_{s,t}(\mu) \doteq (X^{\mu}_{s,t})_\sharp \mu \), is the flow of the dynamical system 
        \begin{equation}
          \partial_t \varrho_t + \div(F_t(\varrho_t)\,\varrho_t) = 0.
          \label{eq:nlcont}
        \end{equation}
In other words, \(t \mapsto \mu_t = \Phi_{s,t}(\vartheta) \) is a unique distributional solution to the equation~\eqref{eq:nlcont} with the initial condition \( \mu_{s} = \vartheta \).

\item The map \( (s,t,\mu)\mapsto \Phi_{s,t}(\mu) \) is \( M \)-Lipschitz as a function of \( t \) or \( s \), and \( L \)-Lipschitz as a function of \( \mu \), with some \( L>0 \) depending only on \( M \) and \( T \).
\end{enumerate}
\end{proposition}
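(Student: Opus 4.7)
The plan is to reduce the nonlocal problem to a fixed-point equation on curves of measures, and then read off (1), (2), (3) from the resulting Grönwall estimates. Fix $s\in I$ and $\mu \in \mathcal{P}_2$, and work in the complete metric space $\mathcal{C}_s \doteq C([s,T]; \mathcal{P}_2)$ endowed with the uniform $W_2$-distance. For any $\nu \in \mathcal{C}_s$, assumption \ref{F1} guarantees that the non-autonomous field $(t,\mathrm{x}) \mapsto F_t(\mathrm{x},\nu_t)$ is Carathéodory, bounded by $M$ and $M$-Lipschitz in $\mathrm{x}$, hence generates a unique flow $Y^\nu_{s,t}$ of homeomorphisms. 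I would then define the operator $\mathcal{T}\colon \mathcal{C}_s \to \mathcal{C}_s$ by $\mathcal{T}(\nu)_t \doteq Y^\nu_{s,t\sharp}\mu$, observing that a fixed point of $\mathcal{T}$ is exactly $t \mapsto X^\mu_{s,t\sharp}\mu$ for some $X^\mu$ satisfying the ODE in~(1). A standard Grönwall bound applied to $|Y^{\nu^1}_{s,t}(\mathrm{x})-Y^{\nu^2}_{s,t}(\mathrm{x})|$, using $|F_t(\mathrm{x},\nu^1_t)-F_t(\mathrm{x},\nu^2_t)|\le M\,W_2(\nu^1_t,\nu^2_t)$, yields $W_2(\mathcal{T}(\nu^1)_t,\mathcal{T}(\nu^2)_t) \le Me^{M(t-s)}\int_s^t W_2(\nu^1_\tau,\nu^2_\tau)\,d\tau$, which makes $\mathcal{T}$ a contraction on a short subinterval $[s,s+h]$ with $h=h(M)>0$. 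Concatenating uniqueness steps extends the fixed point to all of $I$, completing~(1).

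For~(2) I would verify that $\mu_t \doteq X^\mu_{s,t\sharp}\mu$ solves~\eqref{eq:nlcont} distributionally: for $\phi \in C^1_c(\mathbb{R}^n)$, differentiating $\int \phi(X^\mu_{s,t}(\mathrm{y}))\,\d\mu(\mathrm{y})$ under the integral sign and using the ODE produces exactly~\eqref{eq:contW} with $f_t=F_t(\cdot,\mu_t)$. Uniqueness of the distributional solution can be obtained from a superposition-principle argument in the spirit of~\cite{ambrosioGradientFlowsMetric2005}: every narrowly continuous solution $\tilde\mu$ is represented as the pushforward of $\mu$ by a characteristic flow driven by $F_t(\cdot,\tilde\mu_t)$, which by the fixed-point uniqueness from~(1) must coincide with $X^\mu_{s,\cdot}$. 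The semigroup identity~\eqref{eq:family} then follows from uniqueness applied with initial time $\tau$.

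For~(3), Lipschitz continuity in $t$ (or $s$) is immediate from $|F|\le M$: the diagonal plan $(X^\mu_{s,t},X^\mu_{s,t'})_\sharp\mu$ yields $W_2(\Phi_{s,t}\mu,\Phi_{s,t'}\mu)\le \|X^\mu_{s,t}-X^\mu_{s,t'}\|_{L^2_\mu}\le M|t-t'|$. The main technical obstacle is Lipschitz dependence on $\mu$: given $\mu,\mu'\in \mathcal{P}_2$ with an optimal $W_2$-plan $\Pi\in \Gamma(\mu,\mu')$, I would use the coupling $(X^\mu_{s,t}\circ\pi^1,\,X^{\mu'}_{s,t}\circ\pi^2)_\sharp\Pi$ to bound
\[
W_2^2(\Phi_{s,t}\mu,\Phi_{s,t}\mu') \le \iint |X^\mu_{s,t}(\mathrm{x})-X^{\mu'}_{s,t}(\mathrm{x}')|^2\,\d\Pi(\mathrm{x},\mathrm{x}').
\]
Differentiating this integrand in $t$ and applying \ref{F1} produces a self-referential estimate in which the right-hand side involves both the integrand itself and $W_2(\Phi_{s,t}\mu,\Phi_{s,t}\mu')$; the delicate point is that the latter is already dominated by the integral, which closes the loop. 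A single Grönwall application then gives the desired constant $L=L(M,T)$, uniform in $(s,t)$.
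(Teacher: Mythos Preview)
Your proposal is correct and follows the standard route. The paper itself does not supply a proof but simply cites~\cite{piccoliTransportEquationNonlocal2013}; the fixed-point argument on $C([s,T];\mathcal{P}_2)$, the superposition-principle uniqueness, and the coupling-plus-Gr\"onwall estimate for Lipschitz dependence on $\mu$ that you outline are precisely the ingredients of that reference's proof.
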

For a proof, see, e.g.,~\cite{piccoliTransportEquationNonlocal2013}.

\subsection{Differential properties of flows}

To study the differential properties of \( \Phi \), we require an additional regularity assumption:

\begin{tcolorbox}
\begin{assumption}
\label{F2}
~
\begin{itemize}
  \item \( F \) is differentiable in \( \mathrm{x} \) and \( \mu \), and the corresponding derivatives are Lipschitz and bounded:
  \begin{gather*}
    \left|DF_t(x,\mu)\right|\le M, \quad \left|\mathbf{D}F(x,\mu,y)\right|\le M,\\
    \left|DF_t(\mathrm{x},\mu) - DF_t(\mathrm{x}',\mu')\right|\le M\left(|\mathrm{x}-\mathrm{x}'|+W_{2}(\mu,\mu')\right),\\
    \left|\mathbf{D}F_t(\mathrm{x},\mu,\mathrm{y}) - \mathbf{D}F_t(\mathrm{x}',\mu',\mathrm{y}')\right|\le M\left(W_{2}(\mu,\mu') + |\mathrm{x}-\mathrm{x}'|+ |\mathrm{y}-\mathrm{y}'|\right),
  \end{gather*}
  for all \( \mathrm{x},\mathrm{x}',\mathrm{y},\mathrm{y}'\in \mathbb{R}^n \), \( \mu,\mu'\in \mathcal{P}_2 \), \( t\in I \).
\end{itemize}
\end{assumption}
\end{tcolorbox}

The following two technical results are central to the subsequent analysis. 
To streamline the presentation, their proofs are deferred to Section~\ref{sec:Wflowdiff} of the Appendix.
\begin{proposition}[Pushforward of vectors]
  \label{prop:Wflowdiff}
  Let the nonlocal vector field \( F \) satisfy \ref{F1} and \ref{F2}, and let \( \Phi \) denote its flow.
  Then, for each \( a,b\in I \), the function \( \Phi_{a,b} \colon \mathcal{P}_2 \to \mathcal{P}_2 \) is uniformly differentiable with a constant \( C>0 \) depending only on \( F \).
  Its derivative \( (\Phi_{a,b})_{\star,\mu}\colon {L}^2_{\mu}\to L^{2}_{\Phi_{a,b}(\mu)} \) acts as \( v_a \mapsto v_b \).
Here, the time-dependent vector field \( v_t\in L^{2}_{\mu_t} \) is defined along the curve \(t \mapsto \mu_{t}= \Phi_{a,t}(\mu)\) as
 \( v_t \doteq w_t\circ (X^{\mu}_{a,t})^{-1} \), where \( w \) is a unique solution to the linear equation
  \begin{align}
    \partial_t w_t(\mathrm{x}) = & \ DF_t\left(X^{\mu}_{a,t}(\mathrm{x}),X^{\mu}_{a,t\sharp}\mu\right)w_t(\mathrm{x}) \nonumber\\
    + &  \,\int\mathbf{D}F_t\left(X^{\mu}_{a,t}(\mathrm{x}),X^{\mu}_{a,t\sharp}\mu, X^{\mu}_{a,t}(\mathrm{y})\right)w_t(\mathrm{y})\d\mu(\mathrm{y}),\label{eq:Ww}
  \end{align}
  with the initial condition \( w_{a}(\mathrm{x}) = v_a(\mathrm{x}) \).
\end{proposition}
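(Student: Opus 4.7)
The plan is to construct the candidate derivative explicitly via the linear ODE~\eqref{eq:Ww}, exploit the fact that both measures appearing in the $W_{1}$ comparison are pushforwards of the same reference measure $\mu$, and close the argument by a Grönwall inequality for the pointwise defect in $L^{1}(\mu)$. The choice of $W_{1}$ rather than $W_{2}$ (cf.~Remark~\ref{rem:stronger}) is crucial: pointwise quadratic remainders need only be integrable in $L^{1}$, which is what the regularity in~\ref{F2} actually supplies.

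\textbf{Step 1: existence and norm bound for $w$.} For fixed $\mu\in \mathcal{P}_{2}$ and $v\in L^{2}_{\mu}$, the right-hand side of~\eqref{eq:Ww} is globally Lipschitz in $w$ with constants depending only on $M$, so Banach's contraction principle in $C(I;L^{2}(\mu;\mathbb{R}^{n}))$ yields a unique solution. Multiplying~\eqref{eq:Ww} by $w_{t}(\mathrm{x})$, integrating against $\mu$, and using $\|w_{t}\|_{L^{1}(\mu)}\le\|w_{t}\|_{L^{2}(\mu)}$ (since $\mu$ is a probability) together with the bounds $|DF|,|\mathbf{D}F|\le M$ from~\ref{F2}, gives
\begin{equation*}
\tfrac{d}{dt}\|w_{t}\|^{2}_{L^{2}(\mu)}\le 4M\,\|w_{t}\|^{2}_{L^{2}(\mu)},
\end{equation*}
so by Grönwall $\|w_{b}\|_{L^{2}(\mu)}\le C\|v\|_{\mu}$. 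With $v_{b}\doteq w_{b}\circ(X^{\mu}_{a,b})^{-1}$ and $\mu_{b}=X^{\mu}_{a,b\sharp}\mu$, the change of variables yields $\|v_{b}\|_{\mu_{b}}=\|w_{b}\|_{L^{2}(\mu)}$, producing the linear-operator bound required by Definition~\ref{def:diff_mp}.

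\textbf{Step 2: natural coupling and defect ODE.} Set $\mu^{\varepsilon}\doteq(\id+\varepsilon v)_{\sharp}\mu$, $T^{\varepsilon}(\mathrm{x})\doteq X^{\mu^{\varepsilon}}_{a,b}(\mathrm{x}+\varepsilon v(\mathrm{x}))$ and $S^{\varepsilon}(\mathrm{x})\doteq X^{\mu}_{a,b}(\mathrm{x})+\varepsilon w_{b}(\mathrm{x})$. Then $\Phi_{a,b}(\mu^{\varepsilon})=T^{\varepsilon}_{\sharp}\mu$, $(\id+\varepsilon v_{b})_{\sharp}\Phi_{a,b}(\mu)=S^{\varepsilon}_{\sharp}\mu$, and the joint plan $(T^{\varepsilon},S^{\varepsilon})_{\sharp}\mu$ gives
\begin{equation*}
W_{1}\bigl(\Phi_{a,b}(\mu^{\varepsilon}),(\id+\varepsilon v_{b})_{\sharp}\Phi_{a,b}(\mu)\bigr)\le \|z^{\varepsilon}_{b}\|_{L^{1}(\mu)},\qquad z^{\varepsilon}_{t}(\mathrm{x})\doteq X^{\mu^{\varepsilon}}_{a,t}(\mathrm{x}+\varepsilon v(\mathrm{x}))-X^{\mu}_{a,t}(\mathrm{x})-\varepsilon w_{t}(\mathrm{x}).
\end{equation*}
Differentiating $z^{\varepsilon}_{t}$ in $t$, expanding $F_{t}$ to first order in its spatial argument (via $DF$) and in its measure argument (via $\mathbf{D}F$ along the flat path $\sigma\mapsto(1-\sigma)\mu_{t}+\sigma\mu^{\varepsilon}_{t}$), then subtracting $\varepsilon\dot{w}_{t}$ from~\eqref{eq:Ww}, and using the preliminary Grönwall estimates $|X^{\mu^{\varepsilon}}_{a,t}(\mathrm{x}+\varepsilon v(\mathrm{x}))-X^{\mu}_{a,t}(\mathrm{x})|\le C\varepsilon(|v(\mathrm{x})|+\|v\|_{\mu})$ and $W_{2}(\mu^{\varepsilon}_{t},\mu_{t})\le C\varepsilon\|v\|_{\mu}$ (consequences of Proposition~\ref{prop:basic}(3)), the Lipschitz remainders from~\ref{F2} produce
\begin{equation*}
|\dot{z}^{\varepsilon}_{t}(\mathrm{x})|\le M|z^{\varepsilon}_{t}(\mathrm{x})|+M\int|z^{\varepsilon}_{t}(\mathrm{y})|\,\d\mu(\mathrm{y})+C\varepsilon^{2}\bigl(|v(\mathrm{x})|^{2}+|w_{t}(\mathrm{x})|^{2}+\|v\|^{2}_{\mu}\bigr).
\end{equation*}
Integrating against $\mu$ and invoking Grönwall on $\|z^{\varepsilon}_{t}\|_{L^{1}(\mu)}$ (with $z^{\varepsilon}_{a}\equiv 0$) produces $\|z^{\varepsilon}_{b}\|_{L^{1}(\mu)}\le C'\varepsilon^{2}\|v\|^{2}_{\mu}$, completing the proof.

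\textbf{Main obstacle.} The delicate step is the second-order Taylor expansion of the nonlocal component $\mu\mapsto F_{t}(\mathrm{x},\mu)$: one must convert the flat-derivative integral along the interpolating curve into an integral over $\mu$ against $w_{t}$ that matches~\eqref{eq:Ww} exactly, controlling the cross-terms between spatial and measure perturbations by $O(\varepsilon^{2})$ in $L^{1}(\mu)$. This rests on the joint Lipschitz continuity of $\mathbf{D}F$ in its three arguments from~\ref{F2} and on the previously obtained $L^{2}$-control of $w_{t}$. It is precisely at this point that working in $W_{1}$ pays off: an analogous $W_{2}$ estimate would force an $L^{2}(\mu)$ control of the remainder, which the available Lipschitz regularity of $\mathbf{D}F$ does not directly provide.
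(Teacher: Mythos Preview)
The proposal is correct and follows essentially the same route as the paper's proof, which is split into a quadratic $L^{1}(\mu)$ Taylor estimate for $F$ (precisely your ``main obstacle''), well-posedness and $L^{2}$ bounds for~\eqref{eq:Ww}, and a defect estimate. The only organizational difference is that the paper bounds the defect $\varphi^{0}+\varepsilon w-\varphi^{\varepsilon}$ via a Bressan--Piccoli contraction-principle inequality applied to the fixed-point map of the nonlocal flow, whereas you derive a differential inequality for $z^{\varepsilon}_{t}$ and close it by Gr\"onwall directly; the content is the same, and both versions hinge on the flat-derivative expansion producing an $L^{1}(\mu)$ rather than $L^{2}(\mu)$ linear dependence on the defect.
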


Proposition~\ref{prop:Wflowdiff} describes how the map \( \Phi_{a,b,\star} \) pushes forward vector fields.
Now, we need to understand how \( \Phi_{a,b}^{\star} \) pulls back \( 1 \)-forms.
Recall that any \( 1 \)-form \( \omega \) on the space of measures can be characterized by a vector field \( p \) with a property that \( \omega_{\mu}(v_{\mu})=\left< p_{\mu},v_{\mu} \right>_{\mu}\), for any vector field \( v \).

\begin{proposition}[Pullback of covectors]
  \label{prop:Wpull}
Let the nonlocal vector field \( F \) satisfy assumptions~\ref{F1}
and \ref{F2}, and let \( \Phi \) denote its flow.
  The map \( (\Phi_{a,b})^{\star}_{\mu}\colon L^2_{\Phi_{a,b}(\mu)}\to L^{2}_{\mu} \) acts as \(
  p_b \mapsto p_a \).
  Here, the time-dependent vector field \( p_t\in L^2_{\mu_t} \) is defined along the curve \( \mu_t = \Phi_{a,t}(\mu) \) by the formula
  \[
  p_t({\rm x}) \doteq \int {\rm y}\d \gamma^{\rm x}_t({\rm y}), 
  \]
with \( \gamma \) being a unique solution to the Hamiltonian equation
  \begin{equation}
    \label{eq:ham}
\partial_t \gamma_t + \mathrm{div}_{({\rm x,y})}\left( {\mathbb J}_{2n}\bm \nabla\mathcal{H}_t(\gamma_{t})\right) = 0,\quad \gamma_b = (\id, p_b)_{\sharp}\mu_b,
  \end{equation}
where
\[
\mathcal{H}_t(\gamma) = \iint{\rm  y}\cdot F_t(\pi^1_{\sharp}\gamma,{\rm x})\d \gamma({\rm x,y}),
\]
the family \( \gamma^{\rm x}_t \) is obtained by disintegration of \( \gamma_t \) w.r.t. \(\mu_t\), and \( \mathbb J_{2n} \) is the symplectic matrix $\left(\begin{array}{cc}
    0 & \id  \\
   -\id  & 0
\end{array}\right)$ of dimension \( {2n} \).
\end{proposition}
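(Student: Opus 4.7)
The plan is to verify the defining duality identity of the pullback,
\[
    \langle (\Phi_{a,b})^{\star}_{\mu}\, p_b,\, v_a \rangle_{\mu} = \langle p_b,\, (\Phi_{a,b})_{\star,\mu}\, v_a \rangle_{\Phi_{a,b}(\mu)}, \qquad v_a \in L^2_{\mu},
\]
with the candidate $p_a(\mathrm{x}) \doteq \int \mathrm{y}\,\d\gamma^{\mathrm{x}}_a(\mathrm{y})$ substituted on the left-hand side. To this end, fix an arbitrary $v_a \in L^2_\mu$, let $(\mu_t, v_t)_{t\in[a,b]}$ be generated from $(\mu, v_a)$ by the forward dynamics of Proposition~\ref{prop:Wflowdiff} (so that $v_b = (\Phi_{a,b})_{\star,\mu} v_a$), and let $\gamma_t$ be the solution of the Hamiltonian equation~\eqref{eq:ham} on $[a,b]$. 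Introduce the time-dependent pairing
\[
    J(t) \doteq \iint \mathrm{y}\cdot v_t(\mathrm{x})\, \d \gamma_t(\mathrm{x},\mathrm{y}), \qquad t\in[a,b].
\]
Once $J$ is shown to be constant on $[a,b]$, the two boundary evaluations close the argument: at $t=b$, the terminal condition $\gamma_b=(\id,p_b)_\sharp\mu_b$ gives $J(b)=\langle p_b, v_b\rangle_{\Phi_{a,b}(\mu)}$; at $t=a$, disintegrating $\gamma_a$ against $\pi^1_\sharp\gamma_a = \mu$ (which will be a consequence of the computation below) gives $J(a)=\langle p_a,v_a\rangle_\mu$.

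The constancy of $J$ rests on two preliminary computations. First, computing the flat derivative of $\mathcal{H}_t(\gamma) = \iint \mathrm{y}\cdot F_t(\mathrm{x},\pi^1_\sharp\gamma)\,\d\gamma$ w.r.t. $\gamma$ (which picks up contributions both from the bilinear integrand and from the first marginal appearing inside $F_t$) and taking the $(\mathrm{x},\mathrm{y})$-gradient yields
\[
    \mathbb{J}_{2n}\bm\nabla\mathcal{H}_t(\gamma)(\mathrm{x},\mathrm{y}) = \Bigl(\, F_t(\mathrm{x},\mu),\; -DF_t(\mathrm{x},\mu)^{\T}\mathrm{y} - \iint \mathbf{D}F_t(\mathrm{x}',\mu,\mathrm{x})^{\T}\mathrm{y}'\,\d\gamma(\mathrm{x}',\mathrm{y}')\Bigr),
\]
with $\mu = \pi^1_\sharp\gamma$. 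Since the $\mathrm{x}$-component of this Hamiltonian velocity equals $F_t(\mathrm{x},\mu_t)$ along $\gamma_t$, one automatically gets $\pi^1_\sharp\gamma_t \equiv \mu_t$ for every $t\in[a,b]$. Second, transferring~\eqref{eq:Ww} from the Lagrangian variable $\mathrm{x}$ to the Eulerian variable $\mathrm{z}=X^{\mu}_{a,t}(\mathrm{x})$ via the chain rule and the change of variables $\mathrm{z}'=X^\mu_{a,t}(\mathrm{y})$ in the integral produces
\[
    \partial_t v_t(\mathrm{z}) + D v_t(\mathrm{z})\, F_t(\mathrm{z},\mu_t) = DF_t(\mathrm{z},\mu_t)\, v_t(\mathrm{z}) + \int \mathbf{D}F_t(\mathrm{z},\mu_t,\mathrm{z}')\, v_t(\mathrm{z}')\, \d\mu_t(\mathrm{z}').
\]

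With these in hand, differentiate $J(t)$ using the continuity-equation chain rule applied to the observable $(\mathrm{x},\mathrm{y})\mapsto \mathrm{y}\cdot v_t(\mathrm{x})$: the $\mathrm{x}$-convective contribution cancels the $Dv_t\,F_t$ term in $\partial_t v_t$, and the $DF_t^{\T}\mathrm{y}$ piece of the $\mathrm{y}$-direction cancels the $DF_t\,v_t$ term. What survives is the ``nonlocal'' difference
\[
    \iint \mathrm{y}\cdot\!\!\int \mathbf{D}F_t(\mathrm{x},\mu_t,\mathrm{z}')\, v_t(\mathrm{z}')\,\d\mu_t(\mathrm{z}')\,\d\gamma_t(\mathrm{x},\mathrm{y}) - \iint\iint \mathrm{y}'\cdot \mathbf{D}F_t(\mathrm{x}',\mu_t,\mathrm{x})\, v_t(\mathrm{x})\,\d\gamma_t(\mathrm{x}',\mathrm{y}')\,\d\gamma_t(\mathrm{x},\mathrm{y}),
\]
and a Fubini argument invoking $\pi^1_\sharp\gamma_t = \mu_t$ (to integrate out the dummy $\mathrm{y}$ in the outer integral of the second term, collapsing $\d\gamma_t(\mathrm{x},\mathrm{y})$ into $\d\mu_t(\mathrm{x})$) followed by a relabelling of variables shows that these two expressions coincide. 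Hence $dJ/dt = 0$, finishing the proof. The main obstacle is precisely this Wasserstein-gradient calculation for $\mathcal{H}_t$: because $\mathcal{H}_t$ depends on $\gamma$ both through its bilinear integrand and through $\pi^1_\sharp\gamma$ inside $F_t$, two nonlocal terms appear, and it is the nontrivial cancellation between them that makes $J$ conservative and the pullback well-defined. A secondary technical point is to establish well-posedness of~\eqref{eq:ham} on $\mathbb{R}^{2n}$ and sufficient regularity of $t\mapsto v_t$, $t\mapsto \gamma_t$ for the chain rule to apply literally; both follow from a fixed-point argument analogous to that of Proposition~\ref{prop:basic} combined with the regularity granted by Assumptions~\ref{F1}--\ref{F2} and Proposition~\ref{prop:Wflowdiff}.
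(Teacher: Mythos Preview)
Your approach is the same as the paper's: both prove that the pairing \(J(t)=\iint \mathrm{y}\cdot v_t(\mathrm{x})\,\d\gamma_t(\mathrm{x},\mathrm{y})\) is conserved along the flow, then read off the duality relation at the endpoints. The paper outsources the conservation computation to \cite[Section~3.4]{chertovskihOptimalControlNonlocal2023}, whereas you carry out the Hamiltonian-gradient calculation and the cancellation explicitly; your version of that computation is correct.

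There is, however, a genuine gap in the regularity step you flag as ``secondary.'' Your Eulerian rewriting of~\eqref{eq:Ww} introduces the spatial Jacobian \(Dv_t(\mathrm{z})\), and your differentiation of \(J(t)\) treats \((\mathrm{x},\mathrm{y})\mapsto \mathrm{y}\cdot v_t(\mathrm{x})\) as an admissible test function for the continuity equation satisfied by \(\gamma_t\). Both steps require \(v_t\) to be (at least) \(C^1\) in \(\mathrm{x}\). For a generic \(v_a\in L^2_\mu\), Proposition~\ref{prop:Wflowdiff} only produces \(v_t\in L^2_{\mu_t}\); nothing in that proposition or in Assumptions~\ref{F1}--\ref{F2} upgrades an \(L^2\) initial vector to a spatially differentiable one, so your claim that this regularity ``follows from Proposition~\ref{prop:Wflowdiff}'' is not correct.

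The paper closes exactly this gap by an approximation argument you omit: it first takes \(v_a\) continuously differentiable (then each \(v_t\) is \(C^1\), and your computation is valid), obtains the conservation identity in that case, and then chooses a sequence \(v_a^k\to v_a\) in \(L^2_\mu\) of \(C^1\) vectors. Since \((\Phi_{a,t})_{\star,\mu}\colon L^2_\mu\to L^2_{\mu_t}\) is bounded, \(v_t^k\to v_t\) in \(L^2_{\mu_t}\), and the identity \(\langle p_t,v_t^k\rangle_{\mu_t}=\langle p_a,v_a^k\rangle_{\mu_a}\) passes to the limit. Adding this density-plus-continuity step to your write-up makes the argument complete.
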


\begin{remark}
  Above, we introduced two vector fields \( v_t \) and \( p_t \), acting along the curve \( t \mapsto \mu_t = \Phi_{a,t}(\mu) \).
  The first one is uniquely determined by an initial tangent vector \( v_a\in L^2_{\mu_a} \), while the second one
  is uniquely determined by a terminal tangent vector \( p_{b}\in L^2_{\mu_b} \).
  No matter which \( v_a \) and \( p_b \) are selected, the corresponding vector fields \( v_t \) and \( \psi_t \) satisfy the identity
  \[
    \left< p_t,v_t \right>_{\mu_t} = \text{const} \quad \forall t\in [a,b]
  \]
  (see Fig.~\ref{fig:diff}).
  We do not know whether the curves \( t\mapsto v_t \) and \( t\mapsto p_t \) are themselves trajectories of certain dynamical systems.
  However,  they can be expressed as follows: \( v_t = w_t\circ (X^{\mu}_{0,t})^{-1} \) and \( p_t(\mathrm{x}) = \displaystyle \int \mathrm y\d\gamma_t^{\rm x}(\mathrm y) \), where \( w_t \) satisfies~\eqref{eq:Ww} and \( \gamma_t \) obeys~\eqref{eq:ham}.
  The details are discussed in the proof of Proposition~\ref{prop:Wpull}.
\end{remark}

\begin{figure}
  \centering
  \includegraphics[width=0.9\textwidth]{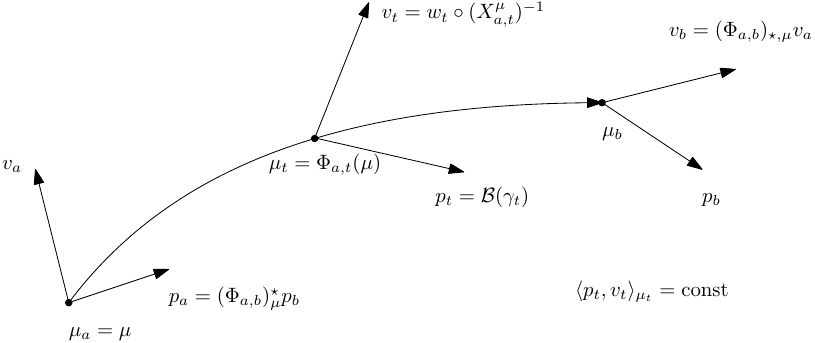}
  \caption{Pushforward of vectors and pullback of covectors.}
  \label{fig:diff}
\end{figure}

\begin{remark}
  Let \( \mu\in \mathcal{P}_2(\mathbb{R}^n) \) and \( \gamma \) be any lift of \( \mu \) to \( \mathcal{P}_2(\mathbb{R}^n \times \mathbb{R}^n) \), that is, \( \gamma \) is an arbitrary transport plan such that \( \pi^1_\sharp \gamma = \mu \).
  The quantity 
  \[
    \mathcal{B}(\gamma)(\mathrm x) \doteq \int {\rm y}\d \gamma^{\rm x}(\mathrm y) 
  \]
  is well-known in the theory of Wasserstein spaces~\cite{ambrosioGradientFlowsMetric2005}, where it is called the \emph{barycentric projection}.
  In fact, if we introduce the set 
  \[
    \mathrm{Lift}_\mu\doteq \left\{\gamma\in \mathcal{P}_2(\mathbb{R}^n \times \mathbb{R}^n)\;\colon\; \pi^1_\sharp \gamma = \mu\right\},
  \]
comprising all possible lifts of \( \mu \), then, as one can easily check, the barycentric projection becomes a well-defined map from \( \mathrm{Lift}_{\mu} \) to the tangent space \( L^2_{\mu} \), i.e., \( \mathcal{B}\colon \textrm{Lift}_{\mu}\to L^2_{\mu} \).

Finally, as noted in~\cite[Section 3.3]{chertovskihOptimalControlNonlocal2023}, any solution \( \gamma_t \) of the Hamiltonian equation~\eqref{eq:ham} is a lift of the initial trajectory \( \mu_t = \Phi_{a,t}(\mu) \) to \( \mathcal{P}_2(\mathbb{R}^n \times \mathbb{R}^n) \). 
\end{remark}

\begin{remark}
The proposed geometric approach essentially relies on the ``local'' structure of the space $\mathcal{P}_2$. However, the actual implementation of our control-theoretical framework only requires differentiating test functions $\phi \in \bm{\mathcal{D}}$ along the trajectories of the control system. This observation allows Definitions~\ref{def:diff_fn} and \ref{def:diff_mp} to be replaced by a purely analytical notion of ``differentiability w.r.t. the flow'', which does not exploit the local properties of the state space. This concept will be introduced in Section~\ref{sec:generapp} for an abstract dynamical system on a general metric space.
\end{remark}

\subsection{Regularity assumptions}

We are now ready to state basic regularity assumptions for the main optimization problem of this section.

\begin{tcolorbox}
\begin{assumption}
\label{a3}
~
\begin{itemize}
  \item \( F\colon I \times (\mathbb{R}^n \times \mathcal{P}_2 \times U) \to \mathbb{R}^n \) is a Carath\'eodory map.
  \item For each \( u \in \mathcal{U} \), the nonlocal vector field \( (t,\mathrm x,\mu)\mapsto F_t(\mathrm{x},\mu,u_t)  \) satisfies the assumptions \ref{F1} and \ref{F2} with a constant \( M \), common for all \( u \).
  \item The cost function \( \ell\colon \mathcal{P}_2\to \mathbb{R} \) belongs to the class \( \bm{\mathcal D} \).
\end{itemize}
\end{assumption}
\end{tcolorbox}

For the remainder of this section, we work under Assumption \ref{a3}.

\subsection{Super-adjoint equation}

We associate equation~\eqref{eq:PDE} with a family of  flows, \( \Phi^u \), parameterized by the generalized control \( u \in {\mathcal U} \).  Formula~\eqref{form-diff} enables differentiation of a test function \( \phi \in \bm{\mathcal D} \) along the flow, thereby introducing a family \( \mathfrak{L}^u \) of linear operators acting on test functions:  
\begin{align}\label{Lnlc}
\begin{array}{c}
\displaystyle\left(\mathfrak{L}_t^u\phi\right)(\mu) \doteq \frac{\partial}{\partial h}\big|_{h=0}\phi\left(\Phi_{t,t+h}[u](\mu)\right) 
  = \left<\bm \nabla \phi(\mu), F_t(\mu,u_t)\right>_{\mu},\\[0.4cm]
  \mu \in \mathcal{P}_2, \ \text{a.a. } t \in I.
  \end{array}
\end{align}

Following the general approach, we define the super-adjoint \( \bm p \) as a mild solution to the backward problem:
\begin{equation}
  \label{eq:Wadjoint}
  \left\{\partial_t + \mathfrak{L}_t^u\right\}\bm p 
  \doteq \partial_t \bm p_t(\mu) + \left< \bm \nabla \bm p_t(\mu), F_t(\mu,u_t) \right>_{\mu} = 0, 
  \quad \bm p_T(\mu) = \ell(\mu).
\end{equation}

Drawing parallels with the classical case in Section~\ref{ssec:cnlp}, a promising candidate for the role of a mild solution is the composition:
\begin{equation}\label{p-repr}
  \bm p_t(\mu) = \ell(\Phi_{t,T}(\mu)), \quad (t, \mu) \in I \times \mathcal{P}_2.
\end{equation}
Validation of this ansatz involves the following technical result, showing that the gradient \( \bm \nabla \bm p_t \) is continuous in the sense that the \( 1 \)-form \( \mathbf{d}\bm p_t \) maps bounded Lipschitz vector fields to continuous functions.

\begin{proposition}
  \label{prop:dp_cont}
  Let \( u \in {\mathcal U}\), and \( \Phi \doteq \Phi^u \) be the flow of \( F_t(\cdot,u_t) \). 
Let \( g \) be a time-dependent vector field on \( \mathcal{P}_2 \) satisfying Assumption \ref{F1}.
Then, 
\begin{enumerate}[{\rm (1)}]
  \item The family $\{\mathbf{p}_t\}_{t \in I}$, defined by~\eqref{p-repr}, is uniformly equidifferentiable. 

  \item For any \( s\in I \), the map \( I \times \mathcal{P}_2 \to \mathbb{R} \) given by 
    \[
      (t,\mu) \mapsto \left< \bm \nabla \mathbf{p}_{t}(\mu), g_s(\mu) \right>_{\mu}
    \]
    is continuous.

  \item For any compact set  \( \mathcal{K} \subset \mathcal{P}_2 \), the family of functions \( \mathcal{K} \to \mathbb{R} \) given by
    \[
    \mu \mapsto \left< \bm \nabla \mathbf{p}_{t}(\mu), g_t(\mu) \right>_{\mu}, \quad t\in I,
    \]
    is equicontinuous.
\end{enumerate}
\end{proposition}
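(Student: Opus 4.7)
The plan is to deduce all three assertions from the pulled-back representation $\mathbf{p}_t = \Phi_{t,T}^\star \ell$ by cascading Lemma~\ref{lem:Wcommut}, Proposition~\ref{prop:Wflowdiff}, and Proposition~\ref{prop:basic}. Since $\ell \in \bm{\mathcal D}$ is uniformly differentiable and $W_1$-Lipschitz by Lemma~\ref{lem:testlip}, and since Proposition~\ref{prop:Wflowdiff} asserts that the family $(\Phi_{t,T})_{t \in I}$ is uniformly equidifferentiable with a constant depending only on $F$, part~(ii) of Lemma~\ref{lem:Wcommut} applies verbatim and gives the uniform equidifferentiability of $(\mathbf{p}_t)_{t \in I}$, settling assertion~(1). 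Part~(i) of the same lemma then supplies the commutation identity
\[
  (\mathbf{d}\mathbf{p}_t)_\mu(v) = \big\langle \bm{\nabla} \ell(\Phi_{t,T}(\mu)),\, (\Phi_{t,T})_{\star,\mu} v \big\rangle_{\Phi_{t,T}(\mu)}, \quad v \in L^2_\mu.
\]
Invoking the explicit pushforward formula of Proposition~\ref{prop:Wflowdiff} and the change of variables $\Phi_{t,T}(\mu) = (X^\mu_{t,T})_\sharp\mu$, this rewrites the pairing in question as
\[
  J(t,s,\mu) \doteq \big\langle \bm{\nabla} \mathbf{p}_t(\mu),\, g_s(\mu) \big\rangle_\mu = \int \bm{\nabla}\ell\bigl(\Phi_{t,T}(\mu)\bigr)\!\bigl(X^\mu_{t,T}(\mathrm{x})\bigr) \cdot w^{t,s,\mu}_T(\mathrm{x})\, d\mu(\mathrm{x}),
\]
where $w^{t,s,\mu}$ is the solution of the linear equation~\eqref{eq:Ww} on $[t,T]$ with $w^{t,s,\mu}_t(\mathrm{x}) = g_s(\mu)(\mathrm{x})$.

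For assertion~(2) I would prove joint continuity of $(t,\mu) \mapsto J(t,s,\mu)$ by combining three ingredients: (a) joint $W_2$-continuity of $(t,\mu) \mapsto \Phi_{t,T}(\mu)$ and of $(t,\mu,\mathrm{x}) \mapsto X^\mu_{t,T}(\mathrm{x})$ coming from Proposition~\ref{prop:basic}; (b) continuity and boundedness of $(\mu,\mathrm{x}) \mapsto \bm{\nabla}\ell(\mu)(\mathrm{x}) = \nabla\frac{\delta \ell}{\delta \mu}(\mu,\mathrm{x})$, built into the definition of $\bm{\mathcal D}$; and (c) a Gr\"onwall-type stability estimate for~\eqref{eq:Ww} yielding pointwise-in-$\mathrm{x}$ continuous dependence of $w^{t,s,\mu}_T(\mathrm{x})$ on $(t,\mu)$, with an $\mathrm{x}$-uniform $L^\infty$ bound provided by the Lipschitz/boundedness hypotheses in Assumption~\ref{a3}. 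Passing to the limit in the integral defining $J$ along a sequence $(t_n,\mu_n) \to (t,\mu)$ is then handled by dominated convergence, using that uniform integrability is automatic thanks to these $L^\infty$ bounds. Assertion~(3) drops out because none of the above estimates depends qualitatively on the ``input-time'' index $s$: the very same argument establishes joint continuity of $(s,t,\mu) \mapsto J(t,s,\mu)$ on $I \times I \times \mathcal{P}_2$, and restricting to the diagonal $s=t$ and to the compact set $I \times \mathcal{K}$ upgrades continuity to uniform continuity, hence to equicontinuity of the family $\{\mu \mapsto J(t,t,\mu)\}_{t \in I}$ on $\mathcal{K}$.

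The main obstacle will be ingredient (c): the natural ambient Hilbert space $L^2_{\mu}$ itself varies with $\mu$, so the naive notion of ``$L^2$-convergence'' of $w^{t_n,s,\mu_n}_T$ is ill-posed. A clean way to bypass this, suggested by Proposition~\ref{prop:Wpull}, is to switch to the dual Hamiltonian representation: $\bm{\nabla}\mathbf{p}_t(\mu)$ is the barycentric projection of the solution $\gamma_t$ of the Hamiltonian system~\eqref{eq:ham} with terminal datum $(\id,\bm{\nabla}\ell(\Phi_{t,T}(\mu)))_\sharp\Phi_{t,T}(\mu)$. Then $J(t,s,\mu) = \iint \mathrm{y} \cdot g_s(\mu)(\mathrm{x})\, d\gamma_t(\mathrm{x},\mathrm{y})$, and standard $\mathcal{P}_2$-stability of~\eqref{eq:ham} with respect to its terminal condition transfers directly to continuity of $J$, sidestepping the variable-Hilbert-space issue altogether; equicontinuity for (3) then follows by the same compactness argument.
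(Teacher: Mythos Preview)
Your treatment of parts (1) and (2) is correct and matches the paper's approach exactly: part (1) is obtained by chaining Proposition~\ref{prop:Wflowdiff} with Lemmas~\ref{lem:Wcommut} and~\ref{lem:testlip}, and for part (2) the paper, like you, passes to the Hamiltonian lift of Proposition~\ref{prop:Wpull}, writes $\langle \bm\nabla\mathbf{p}_t(\mu),g_s(\mu)\rangle_\mu = \iint \mathrm{y}\cdot g_s(\mu,\mathrm{x})\,d\gamma_t[t,\mu](\mathrm{x},\mathrm{y})$, and establishes continuity by composing (i) the continuous map $(t,\mu)\mapsto \gamma[t,\mu]\in C(I;\mathcal{P}_2(\mathbb{R}^{2n}))$ (via Lemma~\ref{lem:terminal_map} and well-posedness of~\eqref{eq:ham}) with (ii) a continuous evaluation functional on $I\times\bm\Gamma$ (Lemma~\ref{lem:rhs}).

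Your argument for part (3), however, has a genuine gap. You claim that ``the very same argument establishes joint continuity of $(s,t,\mu)\mapsto J(t,s,\mu)$'', and then deduce equicontinuity by restricting to the compact diagonal $\{s=t\}\times\mathcal{K}$. But Assumption~\ref{F1} only requires $g$ to be \emph{measurable} in the time variable, not continuous; hence $s\mapsto g_s(\mu,\mathrm{x})$ need not be continuous, $(s,t,\mu)\mapsto J(t,s,\mu)$ is not jointly continuous, and the compactness upgrade fails. In particular, the diagonal map $t\mapsto J(t,t,\mu)$ is generally not even continuous in $t$ for fixed $\mu$, so equicontinuity cannot be read off from uniform continuity of a single continuous function.

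The paper sidesteps this by arguing equicontinuity \emph{directly}: for $\vartheta_0,\vartheta_0'\in\mathcal{K}$ it takes an optimal plan $\Pi$ between $\gamma_{t_0}[t_0,\vartheta_0]$ and $\gamma_{t_0}[t_0,\vartheta_0']$ and estimates $|f_{t_0}(\vartheta_0)-f_{t_0}(\vartheta_0')|$ using only (a) the \emph{uniform-in-$t_0$} Lipschitz and boundedness constants of $g_{t_0}$ supplied by~\ref{F1}, (b) the uniform bound $\sup_{I\times\mathcal{K}}\iint|\mathrm{y}|^2\,d\gamma_{t_0}[t_0,\vartheta_0]<\infty$, and (c) the \emph{uniform} continuity of $(t_0,\vartheta_0)\mapsto\gamma[t_0,\vartheta_0]$ on the compact set $I\times\mathcal{K}$. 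Your observation that ``none of the above estimates depends qualitatively on the input-time index $s$'' is exactly the right intuition---you just need to exploit it through a uniform modulus-of-continuity estimate rather than through joint continuity.
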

The first part of this assertion is a trivial consequence of Proposition~\ref{prop:Wflowdiff} and Lemmas~\ref{lem:Wcommut}, \ref{lem:testlip}. Proofs of the remaining parts are exposed in Appendix~\ref{app:pr-dp_cont}.

\begin{proposition}\label{lem:psol}
  The map \eqref{p-repr} is a mild solution to~\eqref{eq:Wadjoint}.
\end{proposition}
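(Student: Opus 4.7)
The plan is to verify the terminal condition and then establish, for each fixed $\mu\in\mathcal P_2$, that the map $t\mapsto \bm p_t(\mu)$ satisfies the required ODE for a.e.\ $t\in I$. The terminal condition $\bm p_T(\mu)=\ell(\mu)$ is immediate from $\Phi_{T,T}=\id$. For the differential identity I would exploit the semigroup property $\Phi_{t,T}=\Phi_{t+h,T}\circ\Phi_{t,t+h}$, which upon evaluating $\ell$ on both sides yields
\[
  \bm p_t(\mu)=\bm p_{t+h}\!\left(\Phi_{t,t+h}(\mu)\right),\qquad
  \bm p_{t+h}(\mu)-\bm p_t(\mu)=\bm p_{t+h}(\mu)-\bm p_{t+h}\!\left(\Phi_{t,t+h}(\mu)\right).
\]
Thus the increment in $t$ of $\bm p$ is converted to an increment in the measure argument of the single function $\bm p_{t+h}$, which is the natural object to differentiate by the Wasserstein chain rule.

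Next, I would apply Lemma~\ref{lem:testlip}(4) to $\phi=\bm p_{t+h}$ along the absolutely continuous curve $\tau\mapsto\mu_\tau\doteq\Phi_{t,\tau}(\mu)$, $\tau\in[t,t+h]$. The function $\bm p_{t+h}=\ell\circ\Phi_{t+h,T}$ is $W_2$-Lipschitz (by Proposition~\ref{prop:basic}(3) and the $W_1$-Lipschitz continuity of $\ell$ from Lemma~\ref{lem:testlip}(1)) and uniformly differentiable (Proposition~\ref{prop:dp_cont}(1)); the curve $\mu_\tau$ is $M$-Lipschitz in $\tau$ (again Proposition~\ref{prop:basic}(3)) and, being a distributional solution of the continuity equation, has tangent field $\dot\mu_\tau=F_\tau(\mu_\tau,u_\tau)$. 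This produces
\[
  \bm p_{t+h}\!\left(\Phi_{t,t+h}(\mu)\right)-\bm p_{t+h}(\mu)=\int_t^{t+h}\!\big\langle\bm\nabla\bm p_{t+h}(\mu_\tau),\,F_\tau(\mu_\tau,u_\tau)\big\rangle_{\mu_\tau}\d\tau.
\]

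Finally I divide by $h$ and pass to the limit $h\to 0^+$. I decompose the integrand as $G(\tau)+R_h(\tau)$, where $G(\tau)\doteq\langle\bm\nabla\bm p_t(\mu),F_\tau(\mu,u_\tau)\rangle_\mu$ is essentially bounded in $\tau$, so by the Lebesgue differentiation theorem $\tfrac{1}{h}\int_t^{t+h}G(\tau)\,\d\tau\to G(t)=\langle\bm\nabla\bm p_t(\mu),F_t(\mu,u_t)\rangle_\mu$ at a.e.\ $t$. The remainder $R_h(\tau)$ is uniformly bounded and tends to zero uniformly in $\tau\in[t,t+h]$ as $h\to 0$, because $W_2(\mu_\tau,\mu)\le Mh$ and the joint continuity (Proposition~\ref{prop:dp_cont}(2)) together with the equicontinuity in $\mu$ of the family $\{\mu'\mapsto\langle\bm\nabla\bm p_{t'}(\mu'),g(\mu')\rangle_{\mu'}\}$ (Proposition~\ref{prop:dp_cont}(3)) control both the shift of $\bm p_{t+h}$ back to $\bm p_t$ and the shift of $\mu_\tau$ back to $\mu$, uniformly over the relevant parameter ranges. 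The case $h\to 0^-$ is symmetric: writing $\Phi_{t-h,T}=\Phi_{t,T}\circ\Phi_{t-h,t}$ gives $\bm p_{t-h}(\mu)=\bm p_t(\Phi_{t-h,t}(\mu))$, after which the same chain-rule/averaging argument, now along $\sigma\mapsto\Phi_{t-h,\sigma}(\mu)$ on $[t-h,t]$, yields the same one-sided derivative.

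The routine steps are the semigroup manipulation and the application of Lemma~\ref{lem:testlip}(4). The main obstacle is the limit in the remainder $R_h$: three ingredients must be coupled — the continuity of $\bm\nabla\bm p_{t'}$ in $t'$, the continuity in the measure argument, and the fact that $u_\tau$ and hence $F_\tau(\cdot,u_\tau)$ are only measurable in $\tau$. This is precisely where Proposition~\ref{prop:dp_cont}(3) is essential, since its uniformity in $t$ lets one absorb the time-measurable control without requiring any regularity of $u$ in $\tau$, reducing the remainder estimate to the $W_2$-Lipschitz estimate on $\tau\mapsto\mu_\tau$ from Proposition~\ref{prop:basic}.
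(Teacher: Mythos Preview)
Your approach is sound in outline and shares with the paper the key semigroup identity $\bm p_t(\mu)=\bm p_{t+h}(\Phi_{t,t+h}(\mu))$, but the subsequent steps differ. Rather than integrating along the curve $\tau\mapsto\Phi_{t,\tau}(\mu)$ via Lemma~\ref{lem:testlip}(4) and then appealing to Lebesgue differentiation, the paper replaces $\Phi_{t,t+h}(\mu)$ directly by its first-order approximation $(\id+hF_t(\mu,u_t))_\sharp\mu$ (using $W_2(\Phi_{t,t+h}(\mu),(\id+hF_t(\mu,u_t))_\sharp\mu)=o(h)$ at a.e.\ $t$), invokes the uniform equidifferentiability of the family $(\bm p_s)_{s\in I}$ from Proposition~\ref{prop:dp_cont}(1) to obtain
\[
\bm p_{t+h}\big((\id+hF_t(\mu,u_t))_\sharp\mu\big)-\bm p_{t+h}(\mu)=h\big\langle\bm\nabla\bm p_{t+h}(\mu),F_t(\mu,u_t)\big\rangle_\mu+o(h),
\]
and then applies Proposition~\ref{prop:dp_cont}(2) at the \emph{single fixed} time $s=t$ to pass from $\bm\nabla\bm p_{t+h}$ to $\bm\nabla\bm p_t$. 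This is shorter and fits the stated propositions exactly.

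Your route incurs two extra burdens. First, the Lebesgue differentiation step is not quite immediate: the integrand $G(\tau)=\langle\bm\nabla\bm p_t(\mu),F_\tau(\mu,u_\tau)\rangle_\mu$ depends on the base point $t$ through $\bm\nabla\bm p_t(\mu)$, so one must argue that the full-measure set of Lebesgue points can be taken independently of $t$ (e.g., via the vector-valued Lebesgue differentiation theorem applied to $\tau\mapsto F_\tau(\mu,u_\tau)\in L^2_\mu$). Second, your remainder estimate needs continuity/equicontinuity statements in which the time index of $\bm p$ and the time index of the vector field $F_\tau(\cdot,u_\tau)$ are \emph{decoupled}; Proposition~\ref{prop:dp_cont}(2) fixes the time in $g$, and (3) ties the two times together, so neither applies verbatim. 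The proofs of those statements in the Appendix do support the decoupled versions (all bounds depend only on $M$), but this must be said explicitly. The paper sidesteps both issues by freezing the time in $F$ at $t$ from the outset via the flow approximation.
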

\begin{proof}
  Fix \( \mu\in \mathcal{P}_2 \).
  Proposition~\ref{prop:basic}(3) together with Lemma~\ref{lem:testlip}(1) imply that the map \( t\mapsto \mathbf{p}_t(\mu) \) is Lipschitz continuous.
  Therefore, for almost all \( t\in I \), the limit
  \begin{equation}
    \label{eq:tmp1}
    \partial_t \mathbf{p}_t(\mu) = \lim_{h\to 0}\frac{ \mathbf{p}_{t+h}(\mu) - \mathbf{p}_t(\mu)}{h}=-\lim_{h\to 0}\frac{\mathbf{p}_{t+h}(\Phi_{t,t+h}(\mu)) - \mathbf{p}_{t+h}(\mu)}{h}
  \end{equation}
  is well-defined.  
  At the same time, as the flow of \( F_t(\cdot, u_t) \), the map \( \Phi \) satisfies
  \begin{equation}
    \label{eq:tmp2}
   \lim_{h\to 0}\frac{W_2\left(\Phi_{t,t+h}(\mu), (\id +h F_t(\mu,u_t))_{\sharp}\mu\right)}{h} = 0,
  \end{equation}
  for almost all \( t\in I \).  
  Clearly, the set of all \( t\in I \) for which both~\eqref{eq:tmp1} and~\eqref{eq:tmp2} hold has full Lebesgue measure. Fix some \( t \) from this set.
  Due to Proposition~\ref{prop:basic}(3), the map \( \mu \mapsto \mathbf{p}_{t+h}(\mu) \) is Lipschitz with modulus \( L\Lip(\ell) \). 
  Therefore, using~\eqref{eq:tmp1} and~\eqref{eq:tmp2}, we may write 
  \[
    \partial_t \mathbf{p}_t(\mu) =-\lim_{h\to 0}\frac{\mathbf{p}_{t+h}\left((\id +h F_t(\mu,u_t))_{\sharp}\mu\right) - \mathbf{p}_{t+h}(\mu)}{h}.
  \]
  By Proposition~\ref{prop:dp_cont}(1), the family \( (\mathbf{p}_{t})_{t \in I} \) is uniformly equidifferentiable. 
  Thus,
  \[
    \partial_t \mathbf{p}_t(\mu) =-\lim_{h\to 0}\left< \bm \nabla \mathbf{p}_{t+h}(\mu), F_t(\mu,u_t) \right>_{\mu} = -\left< \bm \nabla \mathbf{p}_{t}(\mu), F_t(\mu,u_t) \right>_{\mu},
  \]
  where we use Proposition~\ref{prop:dp_cont}(2) to prove the last equality.
\end{proof}
\begin{proposition}
  \label{prop:sysdif}
  Let \( g \) be a time-dependent vector field on \( \mathcal{P}_2 \) satisfying Assumption~\ref{F1}, \( t\mapsto \mu_t \) be any of its trajectories, and \( \bm p \) be defined as in \eqref{p-repr}.
  Then, for a.a. \( t\in I \), the function \( t\mapsto \bm p_t(\mu_t) \) is absolutely continuous, and
  \begin{align}
    \frac{\d}{\d{t}} \bm p_t(\mu_t)
    &=\partial_t \bm p_t(\mu_t) + (\mathbf{d} \bm p_t)_{\mu_t}(g_t)\nonumber\\
    &=\left< \bm \nabla \bm p_t(\mu_t),g_t(\mu_t) - F_t(\mu_t,u_t)\right>_{\mu_t}.\label{der-pmu}
  \end{align}
\end{proposition}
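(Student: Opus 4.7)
The plan is to split the asserted equality into two parts. The second equality is immediate from Proposition~\ref{lem:psol}: evaluating the mild-solution identity at $\mu_t$ gives $\partial_t\bm p_t(\mu_t) = -\langle \bm\nabla\bm p_t(\mu_t), F_t(\mu_t,u_t)\rangle_{\mu_t}$, and by Lemma~\ref{lem:testlip}(2) applied to $\bm p_t \in \bm{\mathcal{D}}$ (together with Lemma~\ref{lem:Wcommut} and Proposition~\ref{prop:Wflowdiff}, which place $\bm p_t$ in the required regularity class), $(\mathbf{d}\bm p_t)_{\mu_t}(g_t) = \langle \bm\nabla\bm p_t(\mu_t), g_t(\mu_t)\rangle_{\mu_t}$. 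Adding the two yields the displayed right-hand side. So the real content is the first equality, the chain rule along the curve.

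For absolute continuity of $t \mapsto \bm p_t(\mu_t)$, I would combine three Lipschitz facts: (i) for fixed $\mu$, $t \mapsto \bm p_t(\mu) = \ell(\Phi_{t,T}(\mu))$ is Lipschitz with a modulus depending only on $F$, $\ell$, and $T$, by Proposition~\ref{prop:basic}(3) and Lemma~\ref{lem:testlip}(1); (ii) for fixed $t$, $\mu \mapsto \bm p_t(\mu)$ is Lipschitz w.r.t.\ $W_1$ (hence $W_2$) with a constant uniform in $t$, again via Proposition~\ref{prop:basic}(3) and Lemma~\ref{lem:testlip}(1); (iii) $t \mapsto \mu_t$ is Lipschitz in $W_2$ by the boundedness assumption on $g$ in \ref{F1}. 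Estimating $|\bm p_{t+h}(\mu_{t+h}) - \bm p_t(\mu_t)| \le |\bm p_{t+h}(\mu_{t+h}) - \bm p_{t+h}(\mu_t)| + |\bm p_{t+h}(\mu_t) - \bm p_t(\mu_t)|$ and bounding each term gives Lipschitz continuity of the composition, in particular absolute continuity.

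To compute the derivative at an a.a.\ $t \in I$, I would use the same decomposition
$$
\bm p_{t+h}(\mu_{t+h}) - \bm p_t(\mu_t) = \bigl[\bm p_{t+h}(\mu_{t+h}) - \bm p_{t+h}(\mu_t)\bigr] + \bigl[\bm p_{t+h}(\mu_t) - \bm p_t(\mu_t)\bigr].
$$
Divided by $h$, the second bracket converges to $\partial_t \bm p_t(\mu_t)$ by Proposition~\ref{lem:psol} (this holds at every $t$ of differentiability of $t \mapsto \bm p_t(\mu_t)$, which is a set of full measure). For the first bracket, I would insert $(\id + h g_t(\mu_t))_\sharp \mu_t$ as an intermediary: by \cite[Prop.~8.4.6]{ambrosioGradientFlowsMetric2005} applied to the absolutely continuous curve $\mu_\cdot$ driven by $g$, one has $W_2(\mu_{t+h}, (\id + h g_t(\mu_t))_\sharp\mu_t) = o(h)$ for a.e.\ $t$. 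Using the uniform-in-$t$ $W_1$-Lipschitz continuity of $\bm p_{t+h}$, the difference $\bm p_{t+h}(\mu_{t+h}) - \bm p_{t+h}((\id + h g_t(\mu_t))_\sharp\mu_t)$ is $o(h)$. Then the uniform equidifferentiability of the family $\{\bm p_s\}_{s \in I}$ from Proposition~\ref{prop:dp_cont}(1) gives
$$
\bm p_{t+h}\bigl((\id + h g_t(\mu_t))_\sharp\mu_t\bigr) - \bm p_{t+h}(\mu_t) = h\,(\mathbf{d}\bm p_{t+h})_{\mu_t}(g_t(\mu_t)) + O(h^2).
$$
Finally, $(\mathbf{d}\bm p_{t+h})_{\mu_t}(g_t(\mu_t)) = \langle \bm\nabla\bm p_{t+h}(\mu_t), g_t(\mu_t)\rangle_{\mu_t} \to \langle \bm\nabla\bm p_t(\mu_t), g_t(\mu_t)\rangle_{\mu_t} = (\mathbf{d}\bm p_t)_{\mu_t}(g_t(\mu_t))$ as $h \to 0$, by the continuity statement in Proposition~\ref{prop:dp_cont}(2) applied with $s=t$ fixed.

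The main obstacle, as in the proof of Proposition~\ref{lem:psol}, is not any single estimate but the coordination of the three regularity inputs --- uniform Lipschitz control of $\bm p_t$ in $\mu$ (to absorb the $o(h)$ error in replacing $\mu_{t+h}$ by $(\id+hg_t(\mu_t))_\sharp\mu_t$), uniform equidifferentiability in $t$ (to extract the directional derivative at the \emph{shifted} time $t+h$), and continuity of the pairing $s\mapsto \langle \bm\nabla\bm p_s(\mu_t), g_t(\mu_t)\rangle_{\mu_t}$ (to transfer that derivative back to time $t$). Once these three properties from Proposition~\ref{prop:dp_cont} and Lemma~\ref{lem:testlip} are in hand, the argument is essentially a metric-space restatement of the classical chain rule.
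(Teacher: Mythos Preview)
Your decomposition and treatment of the first bracket match the paper's proof essentially line for line, and your absolute-continuity argument is fine. The gap is in the second bracket. You claim that $\tfrac{1}{h}\bigl[\bm p_{t+h}(\mu_t)-\bm p_t(\mu_t)\bigr]\to\partial_t\bm p_t(\mu_t)$ and then ``evaluate the mild-solution identity at $\mu_t$'' to identify this limit as $-\langle\bm\nabla\bm p_t(\mu_t),F_t(\mu_t,u_t)\rangle_{\mu_t}$. But Proposition~\ref{lem:psol} only says that, for each \emph{fixed} $\mu$, the equality $\partial_t\bm p_t(\mu)=-\langle\bm\nabla\bm p_t(\mu),F_t(\mu,u_t)\rangle_\mu$ holds for a.e.\ $t$, with the exceptional null set depending on $\mu$. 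Since you are plugging in $\mu=\mu_t$, which moves with $t$, you cannot simply intersect these null sets; nothing prevents the ``diagonal'' set $\{t:t\in N_{\mu_t}\}$ from having positive measure.

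The paper closes this gap by a different route: it writes the second bracket as $\int_t^{t+h}\langle\bm\nabla\bm p_s(\mu_t),F_s(\mu_t,u_s)\rangle_{\mu_t}\,ds$ (valid for \emph{every} $t$, since $s\mapsto\bm p_s(\mu_t)$ is Lipschitz and its a.e.\ derivative agrees a.e.\ with the formula), defines $f(s,t)\doteq\langle\bm\nabla\bm p_s(\mu_t),F_s(\mu_t,u_s)\rangle_{\mu_t}$, and then proves a Lebesgue-differentiation lemma (Lemma~\ref{lem:carat}) for functions that are measurable in $s$ and \emph{equicontinuous} in $t$, concluding $\tfrac{1}{h}\int_t^{t+h}f(s,t)\,ds\to f(t,t)$ for a.e.\ $t$. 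The equicontinuity input is precisely Proposition~\ref{prop:dp_cont}(3), which you did not use. Without this ingredient (or some substitute that makes the null set in the mild-solution identity uniform in $\mu$), your argument for the second bracket is incomplete.
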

The proof of Proposition~\ref{prop:sysdif} we defer to Appendix~\ref{sec:Wflowdiff}.

\begin{corollary}\label{uni_p}
    The functional \eqref{p-repr} is the only solution to \eqref{eq:Wadjoint} satisfying the product rule \eqref{der-pmu}.
\end{corollary}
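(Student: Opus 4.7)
The plan is to exploit the product rule hypothesis along the characteristic flow of the reference control system, mirroring the classical uniqueness proof for first-order linear transport equations. Let $\bm q$ be any solution of~\eqref{eq:Wadjoint} that satisfies the product rule~\eqref{der-pmu}. Fix an arbitrary $(s,\mu)\in I\times\mathcal P_2$ and consider the curve $t\mapsto \mu_t \doteq \Phi_{s,t}(\mu)$ on $[s,T]$, which is a trajectory of the time-dependent vector field $g_t\doteq F_t(\cdot,u_t)$. By Assumption~\ref{a3}, this $g$ satisfies~\ref{F1}, so the product rule is indeed applicable along $(\mu_t)$.

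Applying~\eqref{der-pmu} to $\bm q$ (with this particular choice of $g$) yields
\[
\frac{d}{dt}\bm q_t(\mu_t) = \partial_t \bm q_t(\mu_t) + (\mathbf{d}\bm q_t)_{\mu_t}(g_t)
= \partial_t\bm q_t(\mu_t) + \langle \bm\nabla \bm q_t(\mu_t), F_t(\mu_t,u_t)\rangle_{\mu_t}.
\]
Since $\bm q$ solves~\eqref{eq:Wadjoint} at the point $\mu_t$, the right-hand side vanishes identically. Hence $t\mapsto \bm q_t(\mu_t)$ is constant on $[s,T]$. Using the terminal condition $\bm q_T=\ell$ and the definition of $\bm p$ in~\eqref{p-repr}, this gives
\[
\bm q_s(\mu) = \bm q_s(\mu_s) = \bm q_T(\mu_T) = \ell(\mu_T) = \ell(\Phi_{s,T}(\mu)) = \bm p_s(\mu),
\]
and since $(s,\mu)$ was arbitrary, $\bm q\equiv \bm p$.

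The only delicate point is what it means for $\bm q$ to ``satisfy the product rule''. I read this as the regularity hypothesis that along every trajectory of every vector field $g$ obeying~\ref{F1}, the composition $t\mapsto \bm q_t(\mu_t)$ is absolutely continuous with derivative decomposing as $\partial_t \bm q_t(\mu_t) + (\mathbf{d}\bm q_t)_{\mu_t}(g_t)$; once this is granted, the uniqueness argument collapses to the single line above and no further estimates are required. Thus the whole weight of the corollary lies on this hypothesis rather than on the proof itself, and no analytical obstacle remains.
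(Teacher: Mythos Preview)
Your proof is correct and follows essentially the same characteristic-flow argument as the paper: take an arbitrary solution, evaluate it along the trajectory $\mu_t=\Phi_{s,t}(\mu)$ of the reference vector field $g_t=F_t(\cdot,u_t)$, observe from~\eqref{der-pmu} that $t\mapsto \bm q_t(\mu_t)$ is constant, and read off the value from the terminal condition. Your closing remark about where the weight of the corollary lies is also accurate and matches the paper's phrasing of the hypothesis (``for any $g$ satisfying~\ref{F1}'').
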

\begin{proof}
  Let $\bm p$ be a solution to \eqref{eq:Wadjoint} such that~\eqref{der-pmu} holds true for any \( g \) satifying \ref{F1}. Let \( \Phi \) be the flow of \( F_t(\cdot,u_t) \) and $\bm q_{s,t} \doteq \bm p_t\circ \Phi_{s,t}$. By the differentiation rule \eqref{der-pmu} written for \( \mu_t\doteq \Phi_{s,t}(\vartheta) \) with any \( \vartheta\in \mathcal{P}_2 \), the mapping $t\mapsto \bm q_{s,t}$ is constant. Then, we can write:
\[
    \bm p_t \doteq \bm q_{t,t} = \bm q_{t, T} \doteq \ell\circ \Phi_{t, T},
\]
showing that $\bm p$ is uniquely defined.
\end{proof}
\begin{remark}[Linear case]
  When the driving vector field \( F \) is independent of \( \mu \), i.e. \( F_t(\mathrm{x}, \mu, \mathrm{u}) \equiv f_t(\mathrm{x},  \mathrm{u}) \), we have, in addition to the super-adjoint \eqref{p-repr}, the usual adjoint state given by a solution to the transport equation \eqref{eq:transp} with the terminal condition \( \frac{\delta \ell}{\delta \mu} \). Denote the former by \( \bm p \) and the latter by \( p \). One may easily check that these two objects are related as
\[
    \bm p_t(\mu) = \langle\mu, p_t\rangle, \ \ t \in I.
\]
\end{remark}

\subsection{Exact increment formula and 1-variation}

Consider a pair of generalized controls \( u \) and \( \bar{u} \).
Denote by \( \Phi \) and \( \bar{\Phi} \) the corresponding flows of \eqref{eq:PDE}, and abbreviate \( \mu_t = \Phi_{0,t}(\vartheta) \), \( \bar{\mu}_t = \bar{\Phi}_{0,t}(\vartheta) \) and \( \bar{ \mathbf{p}}_t = \ell(\bar{\Phi}_{t,T}(\vartheta)) \).

Provided by the established regularity of the super-adjoint function $\bm p$, and Definition \ref{p-repr}, we can represent the increment of the functional in the problem $(P)$ on the pair $(\bar{u}, {u})$ as in \eqref{eq:LCexact}, which is written explicitly as
\begin{align}
\mathcal{I}[u] - \mathcal{I}[\bar{u}] & \, = \int_I \left< \bm \nabla \bar{\bm{p}} _t(\mu_t),F_t(\mu_t,u_t)-F_t(\mu_t,\bar{u}_{t})\right>_{\mu_t}\d t\notag\\
& \, \doteq \int_I \mathbf{d}\bar{\bm p}_t \left(F_t(\cdot,u_t)-F_t(\cdot,\bar{u}_{t})\right)\big|_{\mu_t}\d t.\label{mf_incr}
\end{align}

Substituting the convex combination~\eqref{eq:perturb} into~\eqref{mf_incr}, we obtain 
\[
 \mathcal{I}[u^\epsilon] - \mathcal{I}[\bar{u}] \, = \epsilon \int_I \left< \bm \nabla \bar{\bm{p}} _t(\mu_t^\epsilon),F_t(\mu_t^\epsilon,u_t)-F_t(\mu_t^\epsilon,\bar{u}_{t})\right>_{\mu_t^\epsilon}\d t,
\]
where \( \mu^\epsilon \) is the trajectory corresponding to \( u^\epsilon \). As \( \epsilon\to 0 \), we have \( \mu^\epsilon_t\to \bar{\mu}_t \), for any \( t\in I \) (see, e.g.,~\cite{pogodaevNonlocalBalanceEquations2022}). 
Now, thanks to Proposition~\ref{prop:dp_cont}(3), we can pass to the limit obtaining the first-order formula
\begin{equation}
  \label{eq:Wfirst-order}
  \lim_{\epsilon\to 0+}\frac{\mathcal{I}[u^\epsilon] - \mathcal{I}[\bar{u}]}{\epsilon} \, = \int_I \left< \bm \nabla \bar{\bm{p}} _t(\bar{\mu}_t),F_t(\bar{\mu}_t,u_t)-F_t(\bar{\mu}_t,\bar{u}_{t})\right>_{\bar{\mu}_t}\d t.
\end{equation}

We now prove an analog of Proposition~\ref{prop:NLdiff} establishing the connection between the adjoint \( p \) and the super-adjoint \( \mathbf{p} \).

\begin{proposition}
  \label{prop:NLCfirst}
  Let \( u \in \mathcal U \), \( \mu_t \) and \( p_t \) be the corresponding primal and co-trajectories, and \( \bm p_t \) the associated solution to the super-adjoint system~\eqref{eq:Wadjoint}.
  Then \( \bm \nabla \bm p_t(\mu_t) = p_t \) for all \( t\in I \).
\end{proposition}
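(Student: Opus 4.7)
The plan is to mimic the proof of Proposition~\ref{prop:NLdiff}, replacing the Euclidean chain rule with the pullback/pushforward calculus developed in Section~\ref{sec:nonloc_cont}. The strategy rests on three ingredients already available: the representation~\eqref{p-repr} of $\bm p_t$ as a pullback, the commutativity of pullback with the differential established in Lemma~\ref{lem:Wcommut}, and the explicit description of pullbacks of covectors via the Hamiltonian system provided by Proposition~\ref{prop:Wpull}.

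First, I would rewrite $\bm p_t = \ell\circ \Phi_{t,T} = \Phi_{t,T}^\star \ell$. Since $\ell \in \bm{\mathcal D}$, Lemma~\ref{lem:testlip} guarantees that $\ell$ is uniformly differentiable and $W_1$-Lipschitz; Proposition~\ref{prop:Wflowdiff} ensures the uniform differentiability of $\Phi_{t,T}$. Hence Lemma~\ref{lem:Wcommut}(i) applies and yields
\[
  \mathbf{d}\bm p_t \;=\; \mathbf{d}(\Phi_{t,T}^\star \ell) \;=\; \Phi_{t,T}^\star(\mathbf{d}\ell).
\]

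Next, evaluating at $\mu_t$: for any $v \in L^2_{\mu_t}$, set $w \doteq (\Phi_{t,T})_{\star,\mu_t}v \in L^2_{\mu_T}$. By definition of the pullback of a $1$-form,
\[
  \langle \bm \nabla \bm p_t(\mu_t), v\rangle_{\mu_t}
  \;=\; (\mathbf{d}\bm p_t)_{\mu_t}(v)
  \;=\; (\mathbf{d}\ell)_{\mu_T}(w)
  \;=\; \langle \bm \nabla \ell(\mu_T), w\rangle_{\mu_T}.
\]
As this holds for every $v$, the element $\bm \nabla \bm p_t(\mu_t) \in L^2_{\mu_t}$ is, in the sense of the defining pairing, the pullback $(\Phi_{t,T})^\star_{\mu_t}\bm \nabla \ell(\mu_T)$ of the terminal covector $\bm \nabla \ell(\mu_T)$.

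Finally, Proposition~\ref{prop:Wpull} identifies this pullback as the vector field $p_t({\rm x}) = \int {\rm y}\,\mathrm{d}\gamma^{\rm x}_t({\rm y})$, where $\gamma_t$ is the unique solution to the Hamiltonian equation~\eqref{eq:ham} with terminal datum $\gamma_T = (\id,\bm \nabla \ell(\mu_T))_\sharp \mu_T$. This is precisely the co-trajectory $p_t$ associated with the control $u$ and the state curve $\mu_t$, whence $\bm \nabla \bm p_t(\mu_t) = p_t$ on $I$.

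The main obstacle I would expect is bookkeeping rather than mathematical depth: one must verify that the regularity hypotheses of Lemma~\ref{lem:Wcommut} really apply along the entire interval $I$ (which follows from Assumption~\ref{a3} and the uniformity constants in Proposition~\ref{prop:Wflowdiff}), and that the ``co-trajectory'' $p_t$ mentioned in the statement is indeed \emph{defined} via the Hamiltonian system~\eqref{eq:ham} with the terminal condition $\bm \nabla \ell(\mu_T)$, so that the identification at the last step is not a matter of coincidence but of definition. Once this convention is fixed, the argument is a direct transcription of the classical proof into the Wasserstein geometric language.
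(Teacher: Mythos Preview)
Your proposal is correct and follows essentially the same route as the paper: write $\bm p_t=\Phi_{t,T}^\star\ell$, commute $\mathbf{d}$ with the pullback via Lemma~\ref{lem:Wcommut}, evaluate the resulting identity at $\mu_t$ against an arbitrary tangent vector, and then identify $\bm\nabla\bm p_t(\mu_t)$ with the co-trajectory through the pullback-of-covectors characterization (Proposition~\ref{prop:Wpull}). The only cosmetic difference is that you invoke Proposition~\ref{prop:Wpull} explicitly for the final identification, whereas the paper phrases it as ``recall that the co-trajectory $p_t$ satisfies the same identity''; these are the same step.
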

\begin{proof}
  Let \( \Phi \) be the flow of \( (t,\mu)\mapsto F_t(\mu_t,u_t) \).
  Fix some \( t\in I \).
  The representation formula yields
\[
  \mathbf{p}_t = \ell \circ \Phi_{t,T} = \Phi_{t,T}^\star p_T.
\]
By differentiating with respect to \( \mu \) we obtain, thanks to Lemma~\ref{lem:Wcommut}, that
\[
  \mathbf{d} \mathbf{p}_t = \mathbf{d} \left(\Phi_{t,T}^\star \mathbf{p}_T\right) = \Phi_{t,T}^\star (\mathbf{d}\mathbf{p}_T).
\]
Let \( v_s \) with \( s\in [t,T] \) be the vector field from Proposition~\ref{prop:Wflowdiff}.
In this case, \( v_T = (\Phi_{t,T})_{\star,\mu}v_t \).
Plugging this formula in the previous identity and recalling the definition of the pullback for \( 1 \)-forms, we obtain:
\(
  (\mathbf{d} \mathbf{p}_t)_{\mu}(v_t) = (\mathbf{d}\mathbf{p}_{T})_{\Phi_{t,T}(\mu)}(v_T).
\)
Replacing \( \mu \) with \( \mu_t \) gives
\(
  (\mathbf{d} \mathbf{p}_t)_{\mu_t}(v_t) = (\mathbf{d}\mathbf{p}_{T})_{\mu_{T}}(v_{T})
\),
or, in the 'non-geometric' notation,
\[
  \left\langle\bm \nabla \mathbf{p}_t(\mu_t), v_t\right\rangle_{\mu_t} = \left\langle\bm \nabla \mathbf{p}_T(\mu_T), v_T\right\rangle_{\mu_T}.
\]
If we let \( \widetilde p_t \doteq \bm\nabla \mathbf{p}_t(\mu_t) \), then the latter identity can be written as
\(
\left\langle\widetilde p_t,v_t\right\rangle_{\mu_t} = \left< \ell, v_{T} \right>_{\mu_T}
\)
Recall that the co-trajectory \( p_t \) satisfies the same identity, that is,
\(
\left\langle\widetilde p_t,v_t\right\rangle_{\mu_t} = \left\langle p_t,v_t\right\rangle_{\mu_t}
\). 
Since the initial tangent vector \( v_t \) can be arbitrarily selected from \( L^2_{\mu_t} \), we conclude that \( \widetilde p_t= p_t \).
\end{proof}

With the representations~\eqref{eq:Wfirst-order} and \( \bar{p}_t=\mathbf{\nabla}\bar{\mathbf{p}}_t(\bar{\mu_t}) \) at hand, we can derive the PMP in a manner analogous to Section~\ref{sec:pmp-c}:
\begin{theorem}[PMP]
  \label{thm:W-PMP}
  Suppose that Assumptions~\ref{a3} hold. If \( \bar{u} \) is optimal in~\eqref{eq:PDE}, then there exists a solution \( \bar{\gamma} \) to the Hamiltonian system~\eqref{eq:ham} associated to the vector field \( F_t(\cdot,\bar{u}_t) \) such that the identities
  \[
  \left<\bar{p}_t, F_t(\bar{\mu}_t,\bar{u}_t)\right>_{\bar{\mu}_t} = \min_{\omega\in U} \left<\bar{p}_t,F_t(\bar{\mu}_t,\delta_{\omega}) \right>_{\bar{\mu}_t}
  \]
  hold for a.a. \( t\in I \), where \( \bar{p}_t = \mathcal{B}(\bar{\gamma}_t) \) and \( \bar{\mu}_t=\pi^1_\sharp\bar{\gamma}_t \).
\end{theorem}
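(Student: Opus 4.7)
The plan is to combine the first-order variation formula \eqref{eq:Wfirst-order}, the identification of the adjoint with the super-adjoint gradient from Proposition~\ref{prop:NLCfirst}, and the Hamiltonian representation of the adjoint furnished by Proposition~\ref{prop:Wpull}, finally extracting a pointwise minimum condition via a measurable selection argument applied to the optimality of $\bar{u}$.

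First I would exploit the convexity of $\mathcal U$: for every $u\in\mathcal U$ the weak variation $u^{\epsilon}=(1-\epsilon)\bar{u}+\epsilon u$ is admissible, and optimality of $\bar u$ gives $\mathcal I[u^{\epsilon}]\ge \mathcal I[\bar u]$. Dividing by $\epsilon$ and passing to the limit by means of \eqref{eq:Wfirst-order} (whose limit passage is legitimate thanks to Proposition~\ref{prop:dp_cont}(3) and the stability of the trajectories $\mu^{\epsilon}$), and then using Proposition~\ref{prop:NLCfirst} to rewrite $\bm\nabla\bar{\bm p}_t(\bar\mu_t)=\bar p_t$, yields the integrated variational inequality
\[
  \int_I \langle \bar p_t, F_t(\bar\mu_t,u_t)-F_t(\bar\mu_t,\bar u_t)\rangle_{\bar\mu_t}\,\d t \;\ge\; 0 \qquad \forall u\in\mathcal U.
\]
Next, Proposition~\ref{prop:Wpull} applied along the reference curve $\bar\mu_t=\bar\Phi_{0,t}(\vartheta)$ realises this same $\bar p_t$ as the barycentric projection $\bar p_t=\mathcal B(\bar\gamma_t)$ of a unique solution $\bar\gamma$ of the Hamiltonian system \eqref{eq:ham} associated with $F_t(\cdot,\bar u_t)$, with terminal data determined by $\bm\nabla\ell(\bar\mu_T)$. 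This takes care of the existence statement for $\bar\gamma$ and the representation $\bar\mu_t=\pi^1_\sharp \bar\gamma_t$.

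To convert the integrated inequality into the pointwise minimum condition, I would proceed by measurable selection. Set $\varphi_t(\omega)\doteq\langle\bar p_t,F_t(\bar\mu_t,\delta_\omega)\rangle_{\bar\mu_t}$. Under Assumptions~\ref{a3}, the map $t\mapsto\varphi_t(\cdot)$ is measurable and $\omega\mapsto\varphi_t(\omega)$ is continuous on the compact set $U$, so Filippov's selection theorem provides a measurable $\omega^{*}\colon I\to U$ with $\varphi_t(\omega^{*}(t))=\min_{\omega\in U}\varphi_t(\omega)$ for a.a.\ $t\in I$. Plugging $u_t=\delta_{\omega^{*}(t)}\in\mathcal U$ into the variational inequality, and noticing that since $\bar u_t\in\mathcal P(U)$ one has $\langle\bar p_t,F_t(\bar\mu_t,\bar u_t)\rangle_{\bar\mu_t}=\int_U\varphi_t(\omega)\,\d\bar u_t(\omega)\ge\min_\omega\varphi_t(\omega)$, one obtains an integrand that is everywhere non-positive and whose integral is non-negative; hence it vanishes a.e., which is precisely the claimed identity.

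The main obstacle I anticipate is the first step, namely the rigorous passage to the limit in the difference quotient $\epsilon^{-1}(\mathcal I[u^{\epsilon}]-\mathcal I[\bar u])$, because the nonlocal dependence of $F$ on $\mu$ couples the state equation to the variation. This is where Proposition~\ref{prop:dp_cont}(3) (equicontinuity of $\mu\mapsto\langle\bm\nabla\bar{\bm p}_t(\mu),F_t(\mu,\cdot)\rangle_\mu$ on compacts in $t$) combined with the stability $\mu_t^{\epsilon}\to\bar\mu_t$ in $W_2$ is essential; apart from this, the selection argument and the algebraic reduction to PMP are fairly routine given the machinery already assembled in the paper.
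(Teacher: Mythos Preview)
Your proposal is correct and follows essentially the same route as the paper: derive the first-order variation formula \eqref{eq:Wfirst-order}, invoke Proposition~\ref{prop:NLCfirst} to identify $\bm\nabla\bar{\bm p}_t(\bar\mu_t)=\bar p_t$, recover the Hamiltonian representation $\bar p_t=\mathcal B(\bar\gamma_t)$ via Proposition~\ref{prop:Wpull}, and then extract the pointwise minimum condition. The paper is terser at the last step, simply pointing to ``a manner analogous to Section~\ref{sec:pmp-c}'' (which in turn cites external machinery), whereas you spell out the Filippov selection and the non-positive-integrand-with-non-negative-integral argument explicitly; this is a welcome clarification rather than a genuinely different strategy.
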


To the best of our knowledge, the second-order variation and the corresponding necessary optimality conditions for the stated mean-field control problem have not been addressed in the literature. Theoretically, the second-order variational analysis could be derived from the exact representation \eqref{mf_incr} by analogy with Proposition~\ref{propo2ord}. However, this path and the resulting formulation are expected to be cumbersome to present.

In contrast, the feedback NOC in the spirit of Theorem~\ref{thm:fbc} follows trivially from \eqref{mf_incr} by applying the same arguments:
\begin{theorem}[Feedback NOC]
  \label{thm:W-FNOC}
  Suppose that Assumptions~\ref{a3} hold. Let \( \bar{u} \) be optimal in~\eqref{eq:PDE}. Denote by \( \bar{\mu} \) and \( \mathbf{\bar{p}} \) the corresponding trajectory and super-adjoint. Let a control \( u \) and the corresponding trajectory \( \mu \) satisfy
\[
  \left<\mathbf{\nabla}\bar{\mathbf{p}}_t(\mu_t), F_t({\mu}_t,{u}_t)\right>_{{\mu}_t}  = \min_{\omega\in U} \left<\mathbf{\nabla}\bar{\mathbf{p}}_t(\mu_t),F_t(\mu_t,\delta_{\omega}) \right>_{\mu_t},
\]
for a.a. \( t\in I \). Then, the relation 
\[
\left<\mathbf{\nabla}\bar{\mathbf{p}}_t(\mu_t), F_t({\mu}_t,{u}_t)\right>_{{\mu}_t} = 
  \left<\mathbf{\nabla}\bar{\mathbf{p}}_t(\mu_t), F_t({\mu}_t,\bar{u}_t)\right>_{{\mu}_t}
\]
holds for a.a. \( t\in I \). Moreover, \( \mathcal I[u] = \mathcal I[\bar{u}] \).
\end{theorem}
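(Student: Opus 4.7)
The plan is to mirror the argument used for Theorem~\ref{thm:fbc} in the classical setting, relying on the exact increment formula \eqref{mf_incr} as the central tool. Applied to the pair $(\bar u, u)$, it gives
\[
\mathcal{I}[u] - \mathcal{I}[\bar u] \; = \; \int_I \left<\bm\nabla \bar{\bm p}_t(\mu_t),\, F_t(\mu_t, u_t) - F_t(\mu_t, \bar u_t)\right>_{\mu_t}\d t.
\]
The key observation is that this formula is \emph{exact}, not a first-order approximation, and, importantly, the inner product is evaluated along the trajectory $\mu_t$ produced by the ``comparison'' control $u$, not along $\bar\mu_t$.

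First I would argue that the integrand is pointwise non-positive. Because $F_t(\mathrm{x},\mu,\cdot)$ extends linearly to generalized controls via $F_t(\mathrm{x},\mu,\upsilon) = \int_U F_t(\mathrm{x},\mu,\delta_{\omega})\d\upsilon(\omega)$, for any $\upsilon \in \mathcal{P}(U)$ we have
\[
\left<\bm\nabla \bar{\bm p}_t(\mu_t),\, F_t(\mu_t,\upsilon)\right>_{\mu_t} \; = \; \int_U \left<\bm\nabla \bar{\bm p}_t(\mu_t),\, F_t(\mu_t,\delta_{\omega})\right>_{\mu_t}\d\upsilon(\omega).
\]
Minimizing a linear functional over the convex set $\mathcal{P}(U)$ is achieved on its extreme points (Dirac measures), so
\[
\min_{\upsilon \in \mathcal{P}(U)} \left<\bm\nabla \bar{\bm p}_t(\mu_t),\, F_t(\mu_t,\upsilon)\right>_{\mu_t} \; = \; \min_{\omega\in U} \left<\bm\nabla \bar{\bm p}_t(\mu_t),\, F_t(\mu_t,\delta_{\omega})\right>_{\mu_t}.
\]
Since $\bar u_t \in \mathcal{P}(U)$ is admissible in the left-hand minimization, and since the hypothesis on $u$ states that $u_t$ actually realizes the right-hand minimum, we conclude
\[
\left<\bm\nabla \bar{\bm p}_t(\mu_t),\, F_t(\mu_t, u_t) - F_t(\mu_t, \bar u_t)\right>_{\mu_t} \; \le \; 0 \quad \text{for a.a. } t \in I.
\]

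Integrating, we obtain $\mathcal{I}[u] \le \mathcal{I}[\bar u]$. The optimality of $\bar u$ yields the reverse inequality, so $\mathcal{I}[u] = \mathcal{I}[\bar u]$. Substituting back into the exact increment formula gives
\[
\int_I \left<\bm\nabla \bar{\bm p}_t(\mu_t),\, F_t(\mu_t, u_t) - F_t(\mu_t, \bar u_t)\right>_{\mu_t}\d t \; = \; 0,
\]
with a non-positive integrand. Therefore the integrand vanishes for a.e. $t\in I$, which is exactly the claimed identity.

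The only delicate point is making sure the increment formula \eqref{mf_incr} is actually applicable with the control $u$ produced by solving the pointwise condition. Since both controls are required to lie in $\mathcal{U}$ and the corresponding trajectory $\mu_t$ is assumed to exist in the statement, this is granted by hypothesis; existence/selection of such a $u$ (which is the genuinely non-trivial issue and motivates the feedback interpretation) is explicitly deferred to Section~\ref{ssec:infinite} and Appendix~\ref{ssec:fbm}. Thus the proof itself reduces to the two-line chain above, but the conceptual content lies in recognizing that the \emph{exact} nature of \eqref{mf_incr} is what allows the passage from pointwise minimization to a bona fide necessary condition, bypassing any second- or higher-order variational analysis.
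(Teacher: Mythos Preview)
Your proposal is correct and follows essentially the same argument as the paper: the authors state that this result ``follows trivially from \eqref{mf_incr} by applying the same arguments'' as Theorem~\ref{thm:fbc}, i.e., the exact increment formula forces a non-positive integrand, optimality upgrades the inequality to equality, and hence the integrand vanishes a.e. Your added remark that the minimum over $\mathcal P(U)$ coincides with the minimum over Dirac measures is a useful clarification but does not alter the route.
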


This result fits the general framework, addressed in Section~\ref{ssec:infinite} for the next generation of the problem $(P)$, we are going to introduce.

\section{Abstract Setup}\label{sec:generapp}

We now aim to extend the immersion-duality argument, illustrated above, towards abstract control problems involving general-form dynamical systems in a metric space $\mathcal X = (\mathcal X, d_{\mathcal X})$. Recall that a forward dynamical system on $\mathcal X$ is represented by its flow $\Phi$, satisfying the chain rule \eqref{eq:family}. We restrict this definition to a fixed finite time horizon $I \doteq [0, T]$, and denote \[
\Delta \doteq \{(s,t) : 0 \leq s \leq t \leq T\}.
\] Since we are dealing with continuous-time dynamics, it is also natural to assume that the state space $\mathcal X$ is path connected, in particular, it enables non-identical flows.

\subsection{External generators and linear super-equation}\label{sec:G}
A key step in our analysis is to derive a ``differential representation'' of the dynamical system $(\mathcal{X}, \Phi)$. If the system were linear, such a representation would be provided by the standard concept of the \emph{generating family} of linear differential operators \cite[Section 4.9]{kolokoltsov2019differential}. However, when $\mathcal{X}$ lacks a linear structure, this classical definition is inapplicable.

A natural idea, then, is to introduce a kind of ``external'' generator for $\Phi$ by first transforming $(\mathcal{X}, \Phi)$ into a linear system on a ``lifted space'', whose generators are defined in the natural way. This paragraph is devoted to the implementation of this strategy.

Denote by $\bm{X} = C_b(\mathcal{X})$ the collection of bounded continuous test functions $\mathcal{X} \to \mathbb{F}$, where $\mathbb{F} \in \{\mathbb{R}, \mathbb{C}\}$. Recall that $\bm{X}$ is a Banach space, as it is equipped with the usual supremum norm, and the dual space $\bm{X}'$ is isomorphic to the collection \( rba(\mathcal{X}) \) of bounded regular finitely additive measures on \( \mathcal{X} \), endowed with the total variation norm \cite[Theorem~7.9.1]{bogachevMeasureTheoryVolume2007}.

As discussed in the Introduction, the system $(\mathcal{X}, \Phi)$ gives rise to two mutually adjoint linear systems:
\[
(\bm \Phi, \bm X') \quad \text{and} \quad (\bm \Psi, \bm X).
\]
The first (primal, forward) one is defined by the pushforward \eqref{pushforward}, and the second (dual, backward) system is introduced through the pullback \eqref{pull}. Note that $\bm{\Psi}_{t,s}$ are bounded (specifically, 1-Lipschitz) linear automorphisms of $\bm{X}$, and the systems are related by the following identity:
\begin{equation}\label{w-*Phi}
    \langle \bm \Phi_{s,t} \mu, \phi \rangle = \langle \mu, \bm{\Psi}_{t,s} \phi \rangle, \quad \mu \in \bm{X}', \quad \phi \in \bm{X}. 
\end{equation}

The system $(\bm X', \bm \Phi)$ serves as a ``distributed version'' of $(\mathcal{X}, \Phi)$ and is equivalent to the latter when the domain of $\vartheta$ is restricted to the set $\delta(\mathcal{X})$. We now show that $(\bm X', \bm \Phi)$ admits a differential representation, provided by the following definition:

\begin{definition}[Differentiability w.r.t. the flow]\label{def:metr_differentiable}
~
\begin{enumerate}
    \item A function $\phi\colon \mathcal{X} \to \mathbb{F}$ is said to be differentiable w.r.t. the flow $\Phi$ (\emph{$\Phi$-differentiable}, for short) at a point $\mathrm{x} \in \mathcal{X}$ if the following limit exists for a.e. $t \in [0,T)$:
    \begin{align}\label{L}
        ({\mathfrak{L}_t{\phi}})(\mathrm{x}) \doteq \partial_h \big|_{h=0}  \left(\bm{\Psi}_{t+h, t} \phi\right)(\mathrm{x}) = \lim_{h \to 0+}\frac{\phi\big(\Phi_{t, t+h}(\mathrm{x})\big) - \phi(\mathrm{x})}{h}.
    \end{align}
    If there exists a subset $J \subseteq I$ of full Lebesgue measure such that the condition \eqref{L} holds for all $t \in J$ and  $\mathrm{x} \in \mathcal{X}$, the function $\phi$ is said to be $\Phi$-differentiable on $\mathcal{X}$.

    \item A function $\phi \in \bm{X}$ that is $\Phi$-differentiable on $\mathcal{X}$ is called \emph{continuously $\Phi$-differentiable} on the same set if $\mathfrak{L}_t{\phi} \in \bm{X}$ for all $t \in J$.

    \item Let $\mathcal{A}$ be an index set. A family $(\phi_\alpha)_{\alpha \in \mathcal{A}}$ of functions $\phi_\alpha \in \bm{X}$ is \emph{uniformly equidifferentiable with respect to $\Phi$} if the relation
    \begin{gather}
        \lim_{h \to 0+}\sup_{\alpha \in \mathcal{A}}\frac{1}{h}\Big\|\left[\bm{\Psi}_{t+h, t}-\id - h\mathfrak{L}_t\right]\phi_\alpha\Big\|_{\bm{X}} = 0
    \end{gather}
    holds for a.a. $t \in [0,T)$. If $\mathcal{A}$ is a singleton, the function $\phi_\alpha \equiv \phi$ is called \emph{uniformly $\Phi$-differentiable}.
\end{enumerate}
\end{definition}

\begin{remark}Some aspects of Definition~\ref{def:metr_differentiable} deserve a comment:
\begin{enumerate}[(a)]
    \item Any $\phi$ satisfying Definition~\ref{def:metr_differentiable}(3) automatically satisfies Definition~\ref{def:metr_differentiable}(2), as this follows from the uniform limit theorem.

    \item The definition \eqref{L} reproduces the classical notion of the Lie derivative \cite{Lee2013}. While the original concept involves differentiating a tensor with respect to a vector field, its application to functions (0-tensors) requires only the knowledge of the flow, not the vector field itself. This flexibility allows for its adaptation to dynamical systems in abstract spaces.

    \item Any function, even a non-measurable one, is continuously differentiable with respect to the trivial flow $\Phi \equiv \id$ with $\mathfrak L_t \phi \equiv 0$.
\end{enumerate}
\end{remark}

Let $C^{1\text{-}\Phi} = C^{1\text{-}\Phi}(\mathcal{X})$ denote the collection of all uniformly $\Phi$-differentiable functions. It is easy to verify that $C^{1\text{-}\Phi}$ is a vector subspace of $\bm{X}$. The class  $C^{1\text{-}\Phi}$ is non-empty (since it contains constant maps), but --- as Examples~\ref{ex2} and \ref{ex_5} below show --- its actual richness essentially depends on the regularity of $\Phi$.

By rephrasing \eqref{L}, we conclude that the map $t \mapsto \bm{\Psi}_{t,s} \phi$ satisfies, for each $\phi \in C^{1\text{-}\Phi}$, any $s \in [0,T)$, and a.e. $t \in [s, T)$, the following ODE on $\bm{X}$:
\begin{align}
    \partial_t( \bm{\Psi}_{t,s} \phi) &\doteq \partial_t[ \phi(\Phi_{s,t})] = \partial_h \big|_{h=0} \phi\left(\Phi_{t, t+h} \circ \Phi_{s,t}\right) \notag\\
    &\doteq \partial_h \big|_{h=0} \left(\bm{\Psi}_{t+h, t} \phi\right)(\Phi_{s,t}) \doteq \left(\mathfrak{L}_t \phi\right)(\Phi_{s,t}) 
    = \bm{\Psi}_{t,s} \mathfrak{L}_t \phi.\label{psiT}
\end{align}
From this representation and the equality \eqref{w-*Phi}, we derive, for any fixed $\vartheta \in \bm X'$:
\[
    \frac{\d}{\d t}\langle \bm \Phi_{s,t} \vartheta, \phi \rangle = \langle \vartheta, \partial_t \bm{\Psi}_{t,s} \phi \rangle = \langle \vartheta, \bm{\Psi}_{t,s} {\mathfrak{L}_t}{\phi} \rangle \doteq \langle \bm \Phi_{s,t} \vartheta, {\mathfrak{L}_t} \phi \rangle. 
\]
When $\phi$ runs over an adequate subspace $\bm{\mathcal D} \subset C^{1\mbox{-}\Phi}$, this relation constitutes the distributional form of a linear differential equation in the dual space $\bm{X}'$, with the set $\bm{\mathcal D}$ representing the class of test functions. 

If we denote $\bm{x}_t \doteq \bm \Phi_{0,t} \vartheta$ and introduce the formal adjoint ${\mathfrak{L}_t}'$ of the operator ${\mathfrak{L}_t}$ via the relation:
\[
\langle {\mathfrak{L}_t}' \mu, \phi \rangle \doteq \langle \mu, \mathfrak{L}_t \phi \rangle, \quad \mu\in \bm X', \quad \varphi \in C^{1\text{-}\Phi},
\]
this equation can be expressed symbolically as
\begin{equation}\label{X}
\dot{\bm{x}}_t = {\mathfrak{L}_t}' \bm{x}_t, \quad \bm{x}_0 = \vartheta.
\end{equation}

Motivated by this discussion, one can say that the family $\mathfrak{L}=({\mathfrak{L}_t})_{t \in I}$ generates the forward flow $\bm{\Phi}$ on $\bm X'$ in the weak* sense. Returning to the original framework, we term $\mathfrak{L}$ an \emph{external generating family} of $\Phi$.

In the rest of the section, we exemplify the utility of Definition~\ref{def:metr_differentiable} and provide a rigorous interpretation of the ``PDE'' \eqref{X}. In particular, Proposition~\ref{propos:ext} below introduces natural (and sufficiently strong)  assumptions on the space $\bm{\mathcal D}$, which guarantee the existence and uniqueness of a solution to \eqref{X}, showing, in turn, that the linear operators $\mathfrak{L}_t$ generate the backward flow $\bm{\Psi}$ on $\bm{\mathcal D}$ in the conventional sense.

\subsubsection{Differentiability along the flow}\label{ssec:externa}

In general, the (uniform equi-) $\Phi$-differentiability of $\phi$ is a joint property of the pair $(\phi, \Phi)$. As a basic regularity assumption, we have:
\begin{tcolorbox}
\begin{assumption}\label{a4}
~
\begin{itemize}  
\item The flow map \(\Phi\colon \Delta \times \mathcal{X} \to \mathcal{X}\), defined for variables \((s, t, \mathrm{x})\), is Lipschitz continuous w.r.t. the second variable \(t \in [s, T]\), with a Lipschitz constant \(\mathrm{Lip}(\Phi)\) independent of \(s\).

\item Additionally, it is Lipschitz continuous in the third variable, uniformly in \((s, t) \in \Delta\), with the same Lipschitz constant.  
\end{itemize}
\end{assumption}
\end{tcolorbox}

With these hypotheses, for any Lipschitz function $\phi$, the map $t \mapsto (\bm{\Psi}_{t, s} \phi)(\mathrm{x}) \doteq \phi\left(\Phi_{s, t}(\mathrm{x})\right)$ remains Lipschitz on $[s, T]$ for all $\mathrm{x} \in \mathcal{X}$. In this case, the value $({\mathfrak{L}_t{\phi}})(\mathrm{x})$ exists at a.e. $t \in [s, T]$, and the following representation holds:
\[
   \left(\left[\bm{\Psi}_{t+h, t} -\id\right]\phi\right)(\mathrm{x}) \doteq {\phi}\big(\Phi_{t, t+h}(\mathrm{x})\big) - {\phi}(\mathrm{x}) = \int_t^{t+h} ({\mathfrak{L}_s{\phi}})(\mathrm{x}) \, ds.
\]
Hence, we can estimate:
\[
    \left\|\left[\bm{\Psi}_{t+h, t}-\id - h\mathfrak{L}_t\right]\phi\right\|_{\bm{X}} \leq  \sup_{\mathrm x \in \mathcal X} \int_t^{t+h}\big|[\mathfrak{L}_s- \mathfrak{L}_t]{\phi}(\mathrm{x})\big| \, ds,
\]
which shows that a sufficient condition for the uniform $\Phi$-differentiability of a Lipschitz function is a ``uniform (w.r.t. $\mathrm{x}$) Lebesgue differentiability'' of the map $t \mapsto (\mathfrak{L}_t \phi)(\mathrm{x })$:
\begin{equation}\label{Ls}
    \lim_{h\to 0^+}\frac{1}{h}\int_t^{t+h}\big\|[\mathfrak{L}_s- \mathfrak{L}_t]{\phi}\big\|_{\bm{X}} \, ds = 0.
\end{equation}
Similarly, a sufficient condition for the uniform $\Phi$-equidifferentiability of a family $(\phi_\alpha)_{\alpha \in \mathcal{A}}$ of uniformly Lipschitz functions $\phi_\alpha$ is derived as
\begin{equation}\label{Ls-2}
    \lim_{h\to 0^+}\frac{1}{h}\int_t^{t+h}\sup_{\alpha \in \mathcal{A}}\big\|[\mathfrak{L}_s- \mathfrak{L}_t]{\phi_\alpha}\big\|_{\bm{X}} \, ds = 0,
\end{equation}
for a.e. $t\in I$.

\begin{example}\label{ex2}
Remaining within assumptions \ref{a4}, let $\mathcal X = \R^n$, and $\Phi$ be the flow of the control-linear vector field \[f_t(\mathrm x) = \sum_{k=1}^m f^k(\mathrm x) \, u_k(t),\] where $f^k\colon \R^n \to \R^n$ are bounded and Lipschitz, and $u =(u_1, u_2, \ldots, u_m) \in L^\infty(I;U)$ with a compact $U \subset \R^m$.  Then, any function $\phi \in C^1(\R^n) \cap C_b(\R^n)$ with a bounded gradient belongs to $C^{1\text{-}\Phi}(\R^n)$. 

This conjecture follows from the fact that operators $\mathfrak{L}_t$ act on elements $\phi \in \bm{\mathcal D}$ as the usual Lie derivatives, $\mathfrak{L}_t \phi = \sum u_k(t)\, \nabla \phi \cdot f^k $, which meet the condition \eqref{Ls} due to the obvious estimate:
\[
     \int_t^{t+h}\big\|[\mathfrak{L}_s- \mathfrak{L}_t]{\phi}\big\|_{\bm X} \d s \leq \|\nabla \phi\|_{\bm X}\,\max_{k}\|f^k\|_\infty \sum_k \int_t^{t+h}\big|u_k(s)- u_k(t)\big| \d s
\]
and Lebesgue's differentiation theorem.
\end{example}

\begin{example}\label{ex0}
Let $\mathcal X = \mathcal P_2$, and $\Phi = \Phi$ be the flow of a nonlocal vector field \( F \colon I \times (\mathbb{R}^n \times \mathcal{P}_2 \times U) \to \mathbb{R}^n \) with the structure 
\[F_t(x, \mu) = \sum_{j=1}^m u_j(t) F^j(x, \mu),
\] where all $F^j$ satisfy assumptions \ref{F1} and \ref{F2}. Then, any function $\phi \in \bm{\mathcal D}$ satisfies \eqref{Ls}, implying the inclusion $\phi \in C^{1\text{-}\Phi}(\mathcal P_2)$. 
\end{example}

\begin{example}\label{ex_5} Let $\mathcal X$, $\Phi$, $f$ and $\phi$ be as in Example~\ref{ex2}. Assume, in addition, that $\nabla \phi$ and $D f$ are bounded. Then, the functional family $(\phi_\alpha)_\Delta$,  $\alpha \doteq (a, b)$,  
$\phi_\alpha \doteq \bm \Psi_{b,a} \phi \doteq \phi\circ \Phi_\alpha$, is uniformly $\Phi$-equidifferentiable. This follows from the estimates:  
\begin{align*}
    \big|[\mathfrak{L}_s-\mathfrak L_t] \phi_\alpha\big| & \leq \sum_k\left|\nabla \phi\circ \Phi_{\alpha}\right|\cdot |D \Phi_{\alpha}|\cdot \left|f^k\right|\cdot \big|u_k(t) - u_k(s)\big|,
\end{align*}
and the representation \cite[Theorems~2.3.2, 2.2.3]{ABressan_BPiccoli_2007a}:
\[
    D\Phi_{a,s} = \id + \displaystyle\int_a^s D f_{\tau}(\Phi_{a,\tau})D\Phi_{a,\tau}   \d \tau,
\]
giving the uniform bound:
\[
|D\Phi_\alpha| \leq  \exp\left\{\sum_k\||D f^k|\|_{\bm X}\|u_k\|_{L_1[a,b]}\right\}
\]
due to Gr\"{o}nwall's lemma.

\end{example}

\begin{example}\label{ex:6}
   In addition to the hypotheses of Example~\ref{ex0}, suppose that the maps $DF^j$ and $\bm D F^j$ are bounded. Then, for any $\phi \in \bm{\mathcal D}$, the family $(\phi \circ \Phi_{\alpha})_{\alpha \in \Delta}$ is uniformly equidifferentiable w.r.t. $\Phi$. This fact is established in Appendix~\ref{apex4}
\end{example}

\subsubsection{Well-posedness}
The following result, consistent with \cite[Theorem 4.10.1]{kolokoltsov2019differential}, demonstrates that the linear PDE \eqref{X} is well-defined under an appropriate regularity of the flow \( \Phi \) (cf. also~\cite{STEPANOV20171044,Gigli2014}).
\begin{proposition}\label{propos:ext}
Let $\bm{\mathcal D} \subset C^{1\text{-}\Phi}(\mathcal{X})$ consist of bounded Lipschitz functions.

\begin{enumerate}
\item Assume \ref{a4}. Then, for any $\vartheta \in \bm{X}'$, the function $\bm{x}\colon I \to \bm{X}'$, defined by the actions 
\[
\langle \bm{x}_t, \phi\rangle \doteq \langle \bm \Phi_{0, t}\vartheta,\phi \rangle \doteq \langle\vartheta,\bm{\Psi}_{t,0}\phi\rangle,\quad t \in I, \quad \phi \in \bm{X},
\]
is a distributional solution to the initial value problem \eqref{X} with the class $\bm{\mathcal D}$ of tests functions, namely, the following Newton-Leibniz formula holds: 
\begin{equation}
\langle \bm{x}_t - \vartheta, \phi \rangle 
= \int_0^t \langle \bm{x}_s, \mathfrak{L}_s \phi \rangle \d s,
\quad t \in I, \quad \phi \in \bm{\mathcal D}.\label{weak} 
\end{equation}

\item In addition to \ref{a4}, suppose the following regularity (see Remark~\ref{rem:a}(b) below):
\begin{tcolorbox}
\begin{assumption}
~
\label{a5}
    \begin{itemize}
    \item For any $\phi\in \bm{\mathcal D}$,  the family \((\phi_{\alpha} \doteq \bm{\Psi}_{b,a} \phi )_{\alpha = (a,b) \in \Delta}\) is uniformly equidifferentiable w.r.t. $\Phi$. 
    
    \item Moreover, the equality
        \[
        \lim_{h\to 0+}\left\|\mathfrak{L}_s \left(\bm{\Psi}_{t, s+h} - \bm{\Psi}_{t,s}\right)\phi\right\|_{\bm{X}} = 0 
        \]
        holds for all $\phi \in \bm{\mathcal D}$, $t \in (0, T]$, and a.a. $0 \leq s < t$.
    \end{itemize}
    \end{assumption}
    \end{tcolorbox}
   Then,
    \begin{enumerate}[2a.]
        \item  the family $\mathfrak{L} = (\mathfrak{L}_t)_{t \in I}$ of linear operators \eqref{L} generates the linear backward dynamical system $\bm{\Psi}$ on $\bm{\mathcal D}$, i.e., for any $\phi \in \bm{\mathcal D}$, every $t \in (0, T]$, and a.a. $0 \leq s < t$, it holds: 
        \begin{equation*}
        \lim_{h \to 0+}\frac{1}{h}\left\|\left(\bm{\Psi}_{t, s+h} - \bm{\Psi}_{t,s} + h \mathfrak{L}_s \bm{\Psi}_{t, s}\right)\phi\right\|_{\bm{X}} = 0. 
        \end{equation*} 
        \item  The function $\bm x$ is a unique solution to \eqref{weak} in the sense that any distributional solution $\bm{y}$  to \eqref{X}, such that all maps $s \mapsto \langle \bm{y}_s, \bm{\Psi}_{t,s} \phi\rangle$, $\phi \in \bm{\mathcal D}$, are absolutely continuous, coincides with $\bm{x}$ on $\bm{\mathcal D}$ (Remark~\ref{rem:a}(b)).
    \end{enumerate}

\end{enumerate}
\end{proposition}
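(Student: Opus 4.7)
The plan is to fix $\phi\in\bm{\mathcal{D}}$ and $\mathrm{x}\in\mathcal{X}$ and analyze the scalar map $\alpha_\mathrm{x}(t)\doteq (\bm{\Psi}_{t,0}\phi)(\mathrm{x})=\phi(\Phi_{0,t}(\mathrm{x}))$. Under \ref{a4} and the Lipschitz continuity of $\phi\in\bm{\mathcal{D}}$, the map $\alpha_\mathrm{x}$ is Lipschitz in $t$ uniformly in $\mathrm{x}$, hence absolutely continuous. Using the flow identity $\Phi_{0,t+h}=\Phi_{t,t+h}\circ\Phi_{0,t}$ together with the $\Phi$-differentiability of $\phi$ from Definition~\ref{def:metr_differentiable}, its a.e.\ derivative is $\dot\alpha_\mathrm{x}(\tau)=(\mathfrak{L}_\tau\phi)(\Phi_{0,\tau}(\mathrm{x}))=(\bm{\Psi}_{\tau,0}\mathfrak{L}_\tau\phi)(\mathrm{x})$ on the common full-measure set $J\subset I$ supplied by the definition of $C^{1\text{-}\Phi}$. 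The Newton-Leibniz formula for $\alpha_\mathrm{x}$, followed by integration against $\vartheta$ and Fubini (justified by the uniform bound $\|\mathfrak{L}_\tau\phi\|_{\bm X}\le \mathrm{Lip}(\phi)\cdot\mathrm{Lip}(\Phi)$ and the joint measurability of $(\tau,\mathrm{x})\mapsto\dot\alpha_\mathrm{x}(\tau)$ inherited from the continuity of $\Phi$), delivers the identity \eqref{weak}.

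\textbf{Part 2a.} I would perform a direct strong-norm calculation in $\bm X$. Using the semigroup identity $\bm{\Psi}_{t,s}=\bm{\Psi}_{s+h,s}\circ\bm{\Psi}_{t,s+h}$, write
\begin{align*}
\bm{\Psi}_{t,s+h}\phi-\bm{\Psi}_{t,s}\phi+h\mathfrak{L}_s\bm{\Psi}_{t,s}\phi
&= -\left(\bm{\Psi}_{s+h,s}-\id-h\mathfrak{L}_s\right)\bm{\Psi}_{t,s+h}\phi\\
&\quad - h\,\mathfrak{L}_s\left(\bm{\Psi}_{t,s+h}-\bm{\Psi}_{t,s}\right)\phi.
\end{align*}
After dividing by $h$ and taking the $\bm X$-norm, the first term vanishes in the limit by applying the uniform equidifferentiability of the family $\{\bm{\Psi}_{b,a}\phi\}_{(a,b)\in\Delta}$ (first bullet of \ref{a5}) to the sub-family indexed by varying $h$ with $(a,b)=(s+h,t)$, while the second term vanishes by the second bullet of \ref{a5}.

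\textbf{Part 2b.} For uniqueness, fix a distributional solution $\bm y$ with the stated absolute-continuity property, along with $t\in(0,T]$ and $\phi\in\bm{\mathcal{D}}$, and set $\zeta(s)\doteq\langle\bm{y}_s,\bm{\Psi}_{t,s}\phi\rangle$. The plan is to show that $\dot\zeta= 0$ a.e., which forces $\zeta(t)=\zeta(0)$, i.e.\ $\langle\bm{y}_t,\phi\rangle=\langle\vartheta,\bm{\Psi}_{t,0}\phi\rangle=\langle\bm{x}_t,\phi\rangle$. Splitting the difference quotient as
\[
\frac{\zeta(s+h)-\zeta(s)}{h}=\left\langle\frac{\bm{y}_{s+h}-\bm{y}_s}{h},\bm{\Psi}_{t,s+h}\phi\right\rangle+\left\langle\bm{y}_s,\frac{\bm{\Psi}_{t,s+h}\phi-\bm{\Psi}_{t,s}\phi}{h}\right\rangle,
\]
the second summand converges to $-\langle\bm{y}_s,\mathfrak{L}_s\bm{\Psi}_{t,s}\phi\rangle$ directly from Part~2a once $\|\bm{y}_s\|_{\bm X'}$ is known to be locally bounded. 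For the first summand, I would apply the distributional equation for $\bm y$ with the test function $\bm{\Psi}_{t,s+h}\phi$, rewriting it as $\frac{1}{h}\int_s^{s+h}\langle\bm{y}_\tau,\mathfrak{L}_\tau\bm{\Psi}_{t,s+h}\phi\rangle\,d\tau$; a Lebesgue-point argument combined with the uniform equidifferentiability (which replaces $\bm{\Psi}_{t,s+h}$ by $\bm{\Psi}_{t,s}$ modulo $o(1)$ in $\bm X$) yields the limit $+\langle\bm{y}_s,\mathfrak{L}_s\bm{\Psi}_{t,s}\phi\rangle$, so the two limits cancel. The main obstacle is precisely this last step: it requires that $\bm{\mathcal{D}}$ be invariant under the backward flow $\bm{\Psi}$ so that $\bm{\Psi}_{t,s+h}\phi$ is itself an admissible test function for $\bm y$, and that the difference quotient in $s$ can be paired with $\bm{y}_s$ in a way consistent with the $\bm X$-strong limit—both being structural conditions on $\bm{\mathcal{D}}$ alluded to in Remark~\ref{rem:a}(b) and carried by \ref{a5}.
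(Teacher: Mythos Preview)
Your Parts~1 and~2a are essentially the paper's proof: the same pointwise Newton--Leibniz computation for $t\mapsto\phi(\Phi_{0,t}(\mathrm x))$ followed by pairing with $\vartheta$, and the identical algebraic splitting $\bm\Psi_{t,s+h}-\bm\Psi_{t,s}+h\mathfrak L_s\bm\Psi_{t,s}=-(\bm\Psi_{s+h,s}-\id-h\mathfrak L_s)\bm\Psi_{t,s+h}-h\mathfrak L_s(\bm\Psi_{t,s+h}-\bm\Psi_{t,s})$.

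In Part~2b your route diverges. The paper uses a \emph{three}-term decomposition,
\[
\frac{\zeta(s+h)-\zeta(s)}{h}=\Big\langle \bm y_{s+h},\tfrac{1}{h}\big(\bm\Psi_{t,s+h}-\bm\Psi_{t,s}+h\mathfrak L_s\bm\Psi_{t,s}\big)\phi\Big\rangle-\big\langle \bm y_{s+h},\mathfrak L_s\bm\Psi_{t,s}\phi\big\rangle+\tfrac{1}{h}\big\langle \bm y_{s+h}-\bm y_s,\bm\Psi_{t,s}\phi\big\rangle,
\]
so that the increment of $\bm y$ is paired with the \emph{fixed} function $\bm\Psi_{t,s}\phi$, not with the $h$-dependent $\bm\Psi_{t,s+h}\phi$ you use. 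The first bracket vanishes by~2a and boundedness of $\|\bm y_{s+h}\|_{\bm X'}$, the second converges by weak$^*$ continuity of $\bm y$, and the third by the distributional equation at a Lebesgue point---no ``Lebesgue point with moving test function'' argument is needed. Your two-term split is workable but forces exactly the extra layer you describe (replacing $\bm\Psi_{t,s+h}$ by $\bm\Psi_{t,s}$ inside $\mathfrak L_\tau$ uniformly in $\tau$); the paper's rearrangement sidesteps this. You correctly flag the structural requirement that $\bm\Psi_{t,s}\phi$ act as an admissible test for $\bm y$; the paper handles this the same way, invoking \ref{a5} (which puts each $\bm\Psi_{t,s}\phi$ back into $C^{1\text{-}\Phi}$) together with implicit boundedness/weak$^*$-continuity of $\bm y$.
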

\begin{proof}
  \textbf{1.} First, note that, for each \( \phi \in \bm{\mathcal D} \), 
 the composition \( t \mapsto \phi \circ \Phi_{0, t} \) is Lipschitz by the definition of \( \bm{\mathcal D} \) and hypotheses \ref{a4}. Hence, we have for any \( s,t > 0 \):
\[
    \left|\left\langle \vartheta, (\bm \Psi_{s,0} - \bm \Psi_{t,0})\phi\right\rangle\right| = \left|\int \left(\phi\circ\Phi_{0,s} - \phi\circ\Phi_{0,t}\right)\d \vartheta\right| \leq \|\vartheta\|_{\bm X'} \, \Lip(\phi) \, \Lip(\Phi)\, |t-s|,
\]
where the integral w.r.t. a finitely additive measure is defined in the usual way \cite[\S 4.7(iv)]{bogachevMeasureTheoryVolume2007a}. Thus, the map \(t \mapsto \langle \bm{x}_t , \phi \rangle\) is Lipschitz on $I$.

Let us ensure that \( \bm x \) is indeed a solution to \eqref{weak}. To this end, we apply the relation \eqref{psiT} with $s=0$, written as
\[
\bm \Psi_{t,0} \phi = \phi + \int_0^t \bm \Psi_{s,0} \mathfrak L_s \phi\d s,
\]  
and employ the fact that the Lebesgue integral commutes with linear functionals \cite[Sec.~8, Propos.~7]{dinculeanu2014vector}, yielding:
\begin{align*}
\langle \bm x_t-  \vartheta, \phi\rangle \doteq \langle\vartheta,(\bm \Psi_{t,0}-\id)\phi \rangle = \ &  \Big< \vartheta, \int_0^t \bm \Psi_{s,0} \mathfrak L_s \phi \d s \Big>\\
 = \  & \int_0^t \left\langle \vartheta, \bm \Psi_{s,0} \mathfrak L_s \phi\right\rangle \d s \doteq \int_0^{t}\left< \bm x_s,  \mathfrak L_s \phi\right> \d s,
\end{align*}
as is claimed.

\smallskip

\textbf{2.} To prove Assertion 2a, let us express:
    \begin{align*}
        -\big(\bm \Psi_{t, s+h} - \bm \Psi_{t,s} + h\mathfrak L_s \bm \Psi_{t,s}\big)\phi = \big(\bm \Psi_{s+h,s}  {-} \id  - h\mathfrak L_s\big)\bm \Psi_{t,s+h}\phi + h\mathfrak L_s \big(\bm \Psi_{t, s+h} - \bm \Psi_{t, s}\big)\phi.
    \end{align*}
   Denoting  $\alpha \doteq (a,b)$ and $\phi_\alpha \doteq \bm \Psi_{(b,a)}\phi$, we have the estimate:
      \begin{align*}
       \frac{1}{h}\left\|(\bm \Psi_{t, s+h} - \bm \Psi_{t,s} + h\mathfrak L_s \bm \Psi_{t,s})\phi\right\|_{\bm X}  \leq & \frac{1}{h}\sup_{\alpha \in \Delta}\left\|\big(\bm \Psi_{s+h,s}  - \id  - h\mathfrak L_s\big)\phi_\alpha\right\|_{\bm X}\\ 
       & +\left\|\mathfrak L_s \big(\bm \Psi_{t, s+h} - \bm \Psi_{t, s}\big)\phi\right\|_{\bm X}.
    \end{align*}
   Passing to the limit as $h \to 0+$, both terms vanish due to assumption \ref{a5}.

   \smallskip

   \textbf{3.} It remains to prove Assertion 2b. Let \( \bm y \) be any solution \( I \to \bm X' \) to \eqref{weak}. Fix \( t \in (0,T] \) and \( \phi \in \bm{\mathcal D} \), denote \( \eta_s \doteq \langle \bm y_s, \bm \Psi_{t,s} \phi \rangle \), \( 0 \leq s \leq t \), and assume that \( \eta \) is absolutely continuous on \( [s, T] \). 
    
Let us show that \( \d \eta / \d s = 0 \). To this end, we represent:
    \[
      \frac{\left\langle \bm y_{s+h}, \bm \Psi_{t, s+h} \phi\right\rangle - \left\langle \bm y_s, \bm \Psi_{t,s} \phi\right\rangle}{h} =
      \]
      \[
      \big\langle \bm y_{s+h}, 1/h\left(\bm \Psi_{t,s+h} - \bm \Psi_{t,s} + h \mathfrak L_s \bm \Psi_{t, s}\right) \phi\big\rangle - \left\langle \bm y_{s+h}, \mathfrak L_s \bm \Psi_{t, s} \phi \right\rangle  +  1/h\langle \bm y_{s+h} - \bm y_s, \bm \Psi_{t,s} \phi\rangle,
    \]
when \( 0 < s < s+h < t \), and estimate the first term using the Cauchy-Schwartz inequality:
    \begin{align*}
        \left|\big\langle \bm y_{s+h}, \phi_{s,t,h}\big\rangle\right| \leq \left\|\phi_{s,t,h}\right\|_{\bm X}\|\bm y_{s+h}\|_{\bm X'},
    \end{align*}
    where \( \phi_{s,t,h} \doteq \frac{1}{h}\left(\bm \Psi_{t,s+h} - \bm \Psi_{t,s}   + h \mathfrak L_s \bm \Psi_{t, s}\right) \phi \).
      Letting \( h \to 0+ \), and recalling that the set \( \{\bm y_s\colon s \in I\} \) is \( \|\cdot\|_{\bm X'} \)-bounded, this term
     vanishes due to the previous assertion. The second term tends to the value \(-\left\langle \bm y_{s},\mathfrak L_s \bm \Psi_{t, s} \phi \right\rangle\) because \( \bm y \) is weakly* continuous, and the third expression tends to \( \left\langle \bm y_{s},\mathfrak L_s \bm \Psi_{t, s} \phi \right\rangle \) by the very definition of \( \bm y \). 
    
    Now, since \( s \mapsto \eta_s \) is constant, we have:
\(
    \langle \bm y_t, \phi \rangle \doteq \eta_t = \eta_0 \doteq \langle \vartheta , \bm \Psi_{t,0} \phi \rangle
\),
showing that \( \bm y \) coincides with \( \bm x \) as a functional \( \bm{\mathcal D} \to \F \).

\end{proof}

\begin{remark}\label{rem:a}
~
\begin{enumerate}[(a)]
    \item 
A solution $\bm x$ to the system \eqref{weak} need not be unique outside the subset \( \bm{\mathcal D} \). For example, if \( \bm{\mathcal D} \) is composed of constant functions, a solution is any curve \( \bm{y}\colon I \to  \bm X'\) satisfying the relation \( \bm{y}_t(\mathcal{X}) = \vartheta(\mathcal{X}) \), for all \( t \in I \).

\item The hypotheses \ref{a5} are joint assumptions on the pair \( (\Phi, \bm{\mathcal D}) \), and they are far from being straightforward. The following proposition illustrates a sufficient condition for such regularity in the classical setting. 
\end{enumerate}
\end{remark}

\begin{proposition}\label{propo:ex2}
    Let $\mathcal{X}$, $\Phi$, and $f$ be as in Example~\ref{ex_5}. Additionally, assume that \( Df_t \) is uniformly Lipschitz in \( t \in I \). Then: 
    \begin{enumerate}[i)]
        \item the class \( \bm{\mathcal D} \) of test functions can be specified as \( C^{1,1}_b(\R^n) \), the collection of bounded \( C^1 \)-functions with bounded Lipschitz gradients, and 
        \item with this choice of \( \bm{\mathcal D} \), assumptions \ref{a5} are fulfilled.
    \end{enumerate}
\end{proposition}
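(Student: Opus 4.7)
I would verify the two assertions separately, with the work concentrated in the second bullet of~\ref{a5}. Assertion (i) is immediate: every $\phi\in C^{1,1}_b(\R^n)$ is bounded and Lipschitz (its gradient being bounded), hence belongs to $C^{1\text{-}\Phi}(\R^n)$ by Example~\ref{ex2}. The first bullet of~\ref{a5} is precisely the conclusion of Example~\ref{ex_5}, whose only structural hypotheses---$\nabla\phi$ and $Df$ bounded---are met here.

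\textbf{The second bullet.} Since $\mathfrak L_s\psi=\nabla\psi\cdot f_s$ and $\|f\|_\infty<\infty$, it suffices to show that $\|\nabla(\bm\Psi_{t,s+h}\phi)-\nabla(\bm\Psi_{t,s}\phi)\|_{\bm X}\to 0$ as $h\to 0^+$. Using the chain-rule expression $\nabla(\bm\Psi_{t,s}\phi)(\mathrm x)=[D\Phi_{s,t}(\mathrm x)]^{\T}\nabla\phi(\Phi_{s,t}(\mathrm x))$, I would split the increment into $[D\Phi_{s+h,t}-D\Phi_{s,t}]^{\T}\nabla\phi(\Phi_{s+h,t})$ and $[D\Phi_{s,t}]^{\T}[\nabla\phi(\Phi_{s+h,t})-\nabla\phi(\Phi_{s,t})]$. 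The Lipschitz continuity of $\nabla\phi$ (from $\phi\in C^{1,1}_b$), combined with~\ref{a4}, bounds the second term by $O(h)$ in sup norm. For the first, I would invoke the semigroup identity $\Phi_{s,t}=\Phi_{s+h,t}\circ\Phi_{s,s+h}$ to expand
\begin{align*}
D\Phi_{s,t}(\mathrm x)-D\Phi_{s+h,t}(\mathrm x) &= \bigl[D\Phi_{s+h,t}(\Phi_{s,s+h}(\mathrm x))-D\Phi_{s+h,t}(\mathrm x)\bigr]\,D\Phi_{s,s+h}(\mathrm x)\\
&\quad+D\Phi_{s+h,t}(\mathrm x)\bigl[D\Phi_{s,s+h}(\mathrm x)-\id\bigr],
\end{align*}
and close the estimate via $|\Phi_{s,s+h}(\mathrm x)-\mathrm x|\le h\|f\|_\infty$ together with $\|D\Phi_{s,s+h}-\id\|_{\bm X}=O(h)$ (Gr\"{o}nwall applied to the variational equation).

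\textbf{The main obstacle.} The missing ingredient---and the only place where the hypothesis that $Df$ is Lipschitz is actually used---is the fact that $\mathrm x\mapsto D\Phi_{a,b}(\mathrm x)$ is Lipschitz with constant uniform in $(a,b)\in\Delta$. I would establish this by comparing two solutions $W_i(\tau)\doteq D\Phi_{a,\tau}(\mathrm x_i)$ of the linearized equation $\dot W=Df_\tau(\Phi_{a,\tau}(\mathrm x))\,W$, $W(a)=\id$, and applying Gr\"{o}nwall to $W_1-W_2$: the forcing $[Df_\tau(\Phi_{a,\tau}(\mathrm x_1))-Df_\tau(\Phi_{a,\tau}(\mathrm x_2))]W_1$ is bounded by $\Lip(Df)\,\|D\Phi\|_\infty\,|\mathrm x_1-\mathrm x_2|$ via the uniform Lipschitzness of $\Phi_{a,\tau}$ in $\mathrm x$. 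Chaining all the bounds then yields $\|\mathfrak L_s(\bm\Psi_{t,s+h}-\bm\Psi_{t,s})\phi\|_{\bm X}=O(h)$ at every $s\in(0,t)$, which is even stronger than the a.e.\ statement that~\ref{a5} requires.
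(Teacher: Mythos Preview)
Your proof is correct and follows the same overall architecture as the paper's: reduce the second bullet of~\ref{a5} to a sup-norm estimate on $\nabla(\bm\Psi_{t,s+h}\phi)-\nabla(\bm\Psi_{t,s}\phi)$, apply the chain rule, and split into a ``$\nabla\phi$ moves'' piece (handled by $\Lip(\nabla\phi)$ and the Lipschitz continuity of $\Phi$) and a ``$D\Phi$ moves'' piece (where the Lipschitz hypothesis on $Df$ enters).

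The one genuine difference is how the $D\Phi$ increment is controlled. The paper writes down directly the linear ODE satisfied by $y_\tau\doteq D\Phi_{s+h,\tau}-D\Phi_{s,\tau}$ on $[s+h,t]$, namely
\[
\dot y_\tau = Df_\tau(\Phi_{s+h,\tau})\,y_\tau + \bigl[Df_\tau(\Phi_{s+h,\tau})-Df_\tau(\Phi_{s,\tau})\bigr]D\Phi_{s,\tau},\qquad y_{s+h}=\id-D\Phi_{s,s+h},
\]
and closes with a single Gr\"onwall estimate, using $\Lip(Df)$ to bound the forcing term. Your route instead factors through the semigroup identity $D\Phi_{s,t}=D\Phi_{s+h,t}(\Phi_{s,s+h})\,D\Phi_{s,s+h}$ and pays for this by proving, as an intermediate lemma, that $\mathrm x\mapsto D\Phi_{a,b}(\mathrm x)$ is Lipschitz uniformly in $(a,b)$---which is itself a Gr\"onwall argument on the variational equation. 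Both arguments are equally short and use the Lipschitz hypothesis on $Df$ at exactly the same depth; the paper's version avoids the auxiliary spatial-Lipschitz lemma, while yours makes the role of the flow's group structure more transparent and yields a reusable regularity fact about $D\Phi$.
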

The proof is sketched in  Appendix~\ref{app:proofex2}. 
\begin{remark}
The passage $(\mathcal{X}, \Phi) \to (\bm{X}, \bm \Psi)$ underpins the theory of Koopman operators \cite{koopman1931hamiltonian, mauroy2020koopman} and Chronological Calculus \cite{agrachevControlTheoryGeometric2004, kipkaExtensionChronologicalCalculus2015}, which, in turn, align with the framework of Statistical Mechanics \cite{nestruev2003smooth}. In all these frameworks, a nonlinear dynamical system on some underlying space $\mathcal{X}$ is transformed into a linear system on the space of sufficiently regular test functions. In the mechanical context, $\mathcal X$ serves as the state space of a physical system, and the part of test functions is played by observables -- physical characteristics of the system state, measured in experiments.

The dual transformation $(\mathcal{X}, \Phi) \to (\bm{X}', \bm \Phi)$ performs a kind of inverse of the classical method of characteristics: given a nonlinear system, we search for a linear ``PDE" whose characteristics are exactly the trajectories of $\Phi$. To our knowledge, this idea was not systematically utilized before in the area of control theory.
\end{remark} 

\begin{remark}
    Any complete metric space \( \mathcal{X} \) admits a local differential structure, provided by the usual definition of the tangent space \cite{burago2001course,Kirchheim1994,Lytchak2005}. With this structure, one can define dynamics on \( \mathcal{X} \) by appropriately generalizing the concept of a differential equation and calculating its local flow $\Phi$ \cite{ColomboR009, LakshmikanthamDiffeqmetr,tabordiff}. This formalism is connected to the frameworks of quasi-differential equations \cite{Plotnikov1998, Panasyuk1985} and mutations \cite{aubin1993mutational}, and is somewhat analogous to the construction of dynamical systems on manifolds. 

Our approach has been developed in a somewhat opposite direction: in contrast to the cited works, we identified an external differential structure of a given global flow $\Phi$ without referring to the local properties of \(\mathcal{X}\) or requiring its completeness.

\end{remark}

\subsection{Optimal control}

Let us return to the framework of control theory, where the flow \( \Phi \) depends on a functional parameter \( u \colon t \mapsto u_t \) from a suitable class \( \mathcal{U} \). The triple \((\mathcal{X}, \Phi, \, \mathcal{U}) \) is termed a \emph{control system}, and its specific structure is discussed in subsequent paragraphs. 

\subsubsection{Control functions}\label{ssec:controls}

Previously, we operated with ``ordinary'' and generalized controls, being measurable (in an appropriate sense) functions $t \mapsto u_t$, $I \to \bm V'$, to the dual of the corresponding Banach space $\bm V$, subject to the so-called geometric constraint: \(u_t \in U\) for a.a. $t \in I$, where $U  \subset\bm V'$ was closed, convex and bounded in the norm $\|\cdot\|_{\bm V'}$.\footnote{For generalized controls, the triple $(\bm V, \bm V', U)$ is specified as $(\R^m, {\R^m}', \mathcal P(\mathrm U))$ with a fixed compact set $\mathrm U \subset \R^m$.} 

A natural progression, motivated by Examples~\ref{ex1} and \ref{ex:2} below, is to let $\bm V$ be any (separable) Banach space over the complete field $\mathbb K$ of scalars, and set:
\[
{\mathcal U}\doteq {L}^\infty_{w^*}(I; U),
\]
where \(L^\infty_{w*}(I; \bm V')\) stands for the Banach space of weakly* measurable functions $u\colon I \to \bm V'$, i.e., the functions having measurable actions $t \mapsto \langle u_t, \mathrm v\rangle$ on the elements $\mathrm{v} \in \bm V$ (see Appendix~\ref{app:meas-w*}).
\begin{remark}
    Recall that, as soon as $\bm V'$ is infinite-dimensional, the measurability of a function $I \to \bm V'$ can be understood in several (at least four) different senses, see, e.g., \cite[Sec.~II]{diestel1977vector}, and the notation $L^\infty(I, \bm V')$ is typically used for the space of functions, which are measurable in Bochner's sense. It worth stressing that the generic weakly* measurable function, $u\in L^\infty_{w*}(I, \bm V')$, doesn't  have to be an element of $L^\infty(I, \bm V')$ \cite[Sec. II, Example 6]{diestel1977vector}). In particular, $u$ is not essentially separably valued, and it is not approximated by simple functions. 
\end{remark}
\begin{remark}[Why do we need the weak* measurability?]
    The compactness of $\mathcal U$, which is necessary to guarantee the existence of the corresponding optimal control problem, 
 is usually established via the Banach-Alaoglu theorem \cite[Theorem 1.2.1]{diestel1977vector}. Application of this result to measurable controls $I \to \bm V'$ requires the classical duality for Lebesgue-Bochner spaces:
\begin{equation}
(L^1(I; \bm V))' = {L}^\infty(I; \bm V').
\label{d-strong}    
\end{equation}
However, the relation \eqref{d-strong} takes place if and only if $\bm V'$ has the Radon-Nikodym property \cite[Sec. IV.1, Theorem 1]{diestel1977vector}, which is true, say, if it is reflexive or separable. At the same time, some cases of control-theoretical interest, such as generalized controls and Lipschitz vector fields from Example~\ref{ex1} below, do not fit these conditions.  

At the same time, for any Banach space $\bm V$, there holds (refer to Appendix~\ref{app:meas-w*}) the following relation:
\begin{equation}
({L}^1(I; \bm V))' = L^\infty_{w*}(I; \bm V'),
\label{d-w}    
\end{equation}
established by the pairing 
\begin{equation}
\langle\!\langle u, v \rangle\!\rangle \doteq \int \left\langle u_t, v_t\right\rangle_{(\bm V', \bm V)} \d t,
\label{wp*}
\end{equation} 
and enabling the use of the Banach-Alaoglu theorem.\footnote{Recall that convergence in the weak* topology is a sort of pointwise convergence. In particular, $u^k\to u$ in $\sigma({L}^\infty_{w*}(I; \bm V'), {L}^1(I; \bm V))$ reads:
$\langle\!\langle u^k, v \rangle\!\rangle \to  \langle\!\langle u, v \rangle\!\rangle$ for all \(v \in {L}^1(I; \bm V)\).}
\end{remark}

Below, we show that the weak* topology on \( U \) can be metricized by some metric \( d_U \). On the resulting metric space \( (U, d_U) \), we can introduce the Borel \( \sigma \)-algebra and say that the map \( t \mapsto u_t \) is measurable if the preimages of Borel subsets of \( U \) are Borel in \( I \). Furthermore, we show that \( t \mapsto u_t \) is measurable in this sense if and only if it is weak* measurable.

\begin{proposition}
   Assume that $\bm V$ is separable. Then, 
   \begin{enumerate}
    \item $\mathcal U$ is compactly metrizable.
    \item Each representative of the class $u \in \mathcal U$ is measurable in the preimage sense. 
   \end{enumerate}
\end{proposition}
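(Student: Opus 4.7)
The plan is to exploit the duality \eqref{d-w} together with Banach--Alaoglu, and to reduce both assertions to two separability arguments: one for the pointwise target $(U, w^*)$ and one for the whole function space $\mathcal U$. Fixing a countable dense sequence $(v_k) \subset \bm V$, I would equip $U$ with the metric
\[
d_U(\mu, \nu) \doteq \sum_{k=1}^{\infty} 2^{-k}\,\frac{|\langle \mu-\nu, v_k\rangle|}{1 + \|v_k\|_{\bm V}},
\]
which generates the weak* topology on $U$ because $U$ is norm-bounded. By Banach--Alaoglu, $(U, d_U)$ is compact and therefore separable. Since $\bm V$ is separable and $|I|<\infty$, the space $L^1(I;\bm V)$ is separable as well, so the same construction applied to a countable dense subset of $L^1(I;\bm V)$ shows that every norm-bounded subset of the dual $L^\infty_{w^*}(I;\bm V')$ is weak*-compact and metrizable.

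For Part~1, I would note that $\mathcal U$ is norm-bounded in $L^\infty_{w^*}(I;\bm V')$ (since $U$ is norm-bounded in $\bm V'$), so it lies inside one of the weak*-compact metrizable balls described above. It therefore suffices to prove that $\mathcal U$ is weak*-closed in this ball. Assuming $u^n \to u$ in the $\sigma(L^\infty_{w^*}, L^1)$-topology with $u^n_t \in U$ a.e., I would use the support-function characterization $U = \{\mu : \langle \mu, \mathrm v\rangle \leq h_U(\mathrm v) \ \forall \mathrm v \in \bm V\}$, valid because $U$ is convex and weak*-closed. Testing against $v_t = \chi_A(t)\, v_k$ yields $\int_A \langle u^n_t, v_k\rangle\,\d t \leq |A|\, h_U(v_k)$; passing to the limit and applying Lebesgue's differentiation theorem to every $v_k$ simultaneously gives $\langle u_t, v_k\rangle \leq h_U(v_k)$ for a.e.\ $t\in I$ and all $k$. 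Density of $(v_k)$ and continuity of both sides in $\mathrm v$ complete the proof that $u_t \in U$ a.e.

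For Part~2, let $u\colon I \to U$ be any representative; by definition, $t \mapsto \langle u_t, \mathrm v\rangle$ is Lebesgue measurable for every $\mathrm v \in \bm V$. Fixing any $\mu_0 \in U$, each summand of
\[
d_U(u_t, \mu_0) = \sum_{k=1}^\infty 2^{-k}(1+\|v_k\|_{\bm V})^{-1}\,|\langle u_t - \mu_0, v_k\rangle|
\]
is measurable in $t$, and the series converges uniformly in $t$ because $U$ is norm-bounded. Hence the composition $t \mapsto d_U(u_t, \mu_0)$ is measurable, so preimages of closed $d_U$-balls are Borel in $I$. Since $(U, d_U)$ is separable, its Borel $\sigma$-algebra is generated by closed balls centred at a countable dense set, and Borel measurability of $u$ follows.

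I expect the main obstacle to be the weak*-closedness of $\mathcal U$ in Part~1: the transfer of the limit of integrated test inequalities $\int_A \langle u^n_t, v_k\rangle\,\d t \leq |A|\, h_U(v_k)$ into the \emph{pointwise} constraint $u_t \in U$ a.e.\ relies on running Lebesgue differentiation on a fixed full-measure subset of $I$ that is simultaneously good for every $v_k$, together with the convex/support-function description of $U$; everything else is a routine deployment of Banach--Alaoglu, Pettis-style measurability, and the standard separability of $L^1(I;\bm V)$.
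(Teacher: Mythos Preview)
Your proof is correct and follows essentially the same route as the paper: Banach--Alaoglu plus separability of $\bm V$ to metrize $(U,w^*)$, separability of $L^1(I;\bm V)$ to metrize the relevant ball in $L^\infty_{w^*}(I;\bm V')$, and the countable-sum argument for Part~2. In fact you supply more than the paper does: the paper dispatches the weak*-closedness of $\mathcal U$ in one sentence (``by the same line of arguments''), whereas you spell out the support-function/Lebesgue-differentiation step explicitly, which is the only non-formal ingredient.
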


\begin{proof} \textbf{1.} Notice that $U$ is compact in $\bm V'$ w.r.t. the induced weak* topology $\sigma(\bm V', \bm V)$ by the Banach-Alaoglu theorem. In addition, since $\bm V$ is separable, the weak* topology on $U$ is metrizable by a (non-translation-invariant) metric given by
\[
    d_{U}(\mathrm u^1, \mathrm u^2) = \sum_{i \in \mathbb N} 2^{-i} \left|\langle \mathrm u^1 - \mathrm u^2, \mathrm v_i\rangle\right|,
\]
for a dense sequence \(\{v_i\}_{i \in \mathbb N} \subset \bm V\). 
In other words, we can endow $U$ with the structure of a compact (and, consequently, separable)  metric space $(U, d_{U})$. By the same line of arguments, the set ${\mathcal U}$ is compact in the corresponding (weak* subspace) topology $\sigma({L}^\infty_{w^*}, {L}^1)$. Furthermore, since ${L}^1(I; \bm V)$ inherits the separability of $\bm V$, $\mathcal{U}$ can also be viewed as a compact (and separable) metric space. 

\textbf{2.} With a slight abuse of notation, we name a representative of the class $u \in \mathcal U$ by the same letter. Following \cite[Theorem I.4.20]{warga1972optimal}, the measurability of a function $u \colon I \to (U, d_{U})$ to a separable metric space is equivalent to the measurability of the functions $t \mapsto d_{U}(u_t, {\mathrm{u}})$ for all $\mathrm{u} \in \bm V'$. By leveraging the above definition of $d_{U}$ and the concept of weak* measurability, we observe that each $t \mapsto d_{U}(u_t, {\mathrm{u}})$ is a countable sum of measurable maps, and therefore, it is also measurable.
\end{proof}
\begin{example}[Lipschitz vector fields as controls]\label{ex1}
The right-hand side of a Carath\'{e}odory ODE may serve as a control input:
\begin{equation}
        \dot x = f_t(x) \doteq u_t(x), \quad x(0)={\rm x}_0 \in \R^n.\label{ODE-ex1}
\end{equation}
To fit our general construction, let $\bm V$ be the Arens-Eells (Lipschitz free) space $\bm{\mathcal F}(\R^n;\R^m)$ over $\R^n$ (see Remark~\ref{AES} in Appendix~\ref{app:proof-ex12}), and $U$ be a closed ball in the Banach space $\bm V' = \bm{Lip}_0(\R^n, \R^m)$, composed by Lipschitz functions $\mathrm u\colon \R^n \to \R^m$ with the property $\mathrm u(0)=0$, under the Lipschitz norm:
    \[
    \|\mathrm u\|_{\bm{Lip}_0} = 
    \sup\left\{\displaystyle\frac{|\mathrm u(\mathrm x)-\mathrm u(\mathrm y)|}{|\mathrm{x}-\mathrm{y}|}\colon \mathrm x \neq \mathrm y\right\}.
    \]
\end{example}

\begin{example}[$L^{\mathrm{p}}$ vector fields as controls]\label{ex:2}
    Let $\bm V = {L}^{\mathrm{p}}(\R^n; \R^m)$, $1\leq p < \infty$, and $U$ be a closed ball of $\bm V' = {L}^{\rm q}(\R^n; \R^m)$, $\frac{1}{\mathrm p}+\frac{1}{\rm q}=1$. Consider the ODE driven by ``pre-filtered'' feedback controls $u = u_t({\mathrm x})$:
    \begin{gather}
        \dot x = \sum_k f^k_t(x)\,  v^k_t(x)
        \quad x(0)={\rm x}_0 \in \R^n;\label{ODEex}\\ v^k_t \doteq  \eta * u_t^k, \quad u \, {=}\, 
        (u^1, \ldots, u^m)  \in \mathcal U,\notag
    \end{gather}
    where $f^k$ are sublinear Carath\'{e}odory vector fields, and the convolution kernel $\eta$ belongs to $C_c(\R^n)$ and is Lipschitz on its support.
\end{example}
\begin{proposition}\label{prop:ex12}
    The control systems, introduced in Examples \ref{ex1} and \ref{ex:2}, are well-posed, that is, the corresponding ODEs have unique solutions on $I$, for any $u$ from the corresponding set $\mathcal U$, and, moreover, the operators $u \mapsto x^{u}$ are continuous as $\mathcal U \to C_b(\R^n)$. 
\end{proposition}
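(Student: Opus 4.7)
The statement splits into well-posedness of the Cauchy problem on $I$ and continuity of the solution map $u\mapsto x^u$.

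For Example~\ref{ex1}, since $U$ is a closed ball in $\bm{Lip}_0(\R^n;\R^m)$ of some radius $R$, we have $\|u_t\|_{\bm{Lip}_0}\leq R$ uniformly in $t\in I$, giving $|u_t({\mathrm x})-u_t({\mathrm y})|\leq R|{\mathrm x}-{\mathrm y}|$ and $|u_t({\mathrm x})|\leq R|{\mathrm x}|$ (from $u_t(0)=0$). To invoke classical Carath\'eodory theory we need $t\mapsto u_t({\mathrm x})$ measurable for every fixed ${\mathrm x}\in\R^n$; this follows from weak-* measurability because each coordinate of the point-evaluation functional, $\bm{Lip}_0\ni \mathrm{w}\mapsto \mathrm{w}({\mathrm x})\cdot e_i$, coincides with pairing against a ``molecule'' $\delta_{\mathrm x}\otimes e_i$ lying in the predual $\bm{\mathcal F}(\R^n;\R^m)$. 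The Picard--Lindel\"of theorem then produces a unique absolutely continuous $x^u$, and the sublinear growth prevents blow-up on $I$. For Example~\ref{ex:2}, set $F^u_t({\mathrm x})\doteq \sum_k f^k_t({\mathrm x})(\eta\ast u^k_t)({\mathrm x})$. Measurability of $t\mapsto (\eta\ast u^k_t)({\mathrm x})=\langle u^k_t,\eta({\mathrm x}-\cdot)\rangle$ is immediate from weak-* measurability since $\eta({\mathrm x}-\cdot)\in L^p$; H\"older yields the uniform bound $|(\eta\ast u^k_t)({\mathrm x})|\leq \|\eta\|_{L^p}R$; and the Lipschitz property of $\eta$ on its compact support supplies a uniform Lipschitz estimate in ${\mathrm x}$. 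Combined with the stated regularity of the $f^k$, standard ODE theory applies once more.

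Continuity is proved by a Gr\"onwall argument. Let $u^k\to u$ weakly-* in $\mathcal U$, and denote $x^k\doteq x^{u^k}$, $x\doteq x^u$. In Example~\ref{ex1} split
\[
x^k(t)-x(t)=\int_0^t \bigl[u^k_s(x^k(s))-u^k_s(x(s))\bigr]\,\d s+\int_0^t \bigl[u^k_s(x(s))-u_s(x(s))\bigr]\,\d s\doteq A_k(t)+B_k(t).
\]
The first term obeys $|A_k(t)|\leq R\int_0^t |x^k(s)-x(s)|\,\d s$. For $B_k$, the curve $s\mapsto \chi_{[0,t]}(s)\,\delta_{x(s)}\otimes e_i$ is continuous in $s$ (via the isometric embedding $\R^n\hookrightarrow \bm{\mathcal F}(\R^n;\R^m)$ combined with continuity of $s\mapsto x(s)$), hence defines an element of $L^1(I;\bm V)$; testing the weak-* convergence against it yields, coordinatewise, $B_k(t)\to 0$ as $k\to\infty$ for every fixed $t$. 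Since $u^k_s,u_s$ are uniformly bounded in the sup norm when evaluated at $x(s)$, the maps $t\mapsto B_k(t)$ are equi-Lipschitz in $k$, and an Arzel\`a-Ascoli step upgrades pointwise to uniform convergence on $I$. Gr\"onwall's lemma finally gives $\sup_{t\in I}|x^k(t)-x(t)|\leq e^{RT}\|B_k\|_\infty\to 0$. The argument for Example~\ref{ex:2} is structurally identical, with $s\mapsto \chi_{[0,t]}(s)\,\eta(x(s)-\cdot)\otimes e_i\in L^1(I;L^p(\R^n;\R^m))$ (using continuity of translations in $L^p$ and continuity of $s\mapsto x(s)$) playing the role of $\delta_{x(s)}\otimes e_i$, and with an extra factor $f^k_t(x(s))$ absorbed into the test function by the pointwise boundedness of continuous $f^k_t$ on the compact set $\{x(s):s\in I\}$.

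The main technical obstacle is the construction of the predual test element whose pairing with $u^k-u$ reproduces precisely the running integrand $u^k_s(x(s))-u_s(x(s))$: verifying its strong measurability in the Bochner sense rests on the continuity of $s\mapsto x(s)$ together with the isometric embedding $\R^n\hookrightarrow \bm{\mathcal F}(\R^n;\R^m)$ for Example~\ref{ex1} and the strong continuity of translations in $L^p$ for Example~\ref{ex:2}. Once this test element is in hand, the remaining arguments reduce to routine Gr\"onwall and equicontinuity estimates.
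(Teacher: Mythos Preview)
Your proof is correct and follows the same strategy as the paper: split the difference $x^k-x$ into a Lipschitz-controlled part and a ``residual'' $B_k(t)=\int_0^t\langle u^k_s-u_s,\,\delta_{x(s)}\rangle\,\d s$ (resp.\ with $\eta(x(s)-\cdot)$ in Example~\ref{ex:2}), identify the integrand as a pairing against an element of $L^1(I;\bm V)$, and conclude via Gr\"onwall. One point where you are actually more careful than the paper: the paper bounds $|x(t)-\bar x(t)|\le e^T\,|\langle\!\langle u-\bar u,\delta_{\bar x(\cdot)}\rangle\!\rangle|$ directly, which tacitly replaces $|\int_0^t\ldots|$ by the full-interval pairing and applies Gr\"onwall as if the residual were monotone in $t$; your equi-Lipschitz/Arzel\`a--Ascoli step upgrading pointwise to uniform convergence of $B_k$ on $I$ is precisely what is needed to make that passage rigorous.
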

A proof of this simple assertion is placed in Appendix~\ref{app:proof-ex12}. For the rest of the manuscript, we always assume that $\mathcal U$ agrees with the exposed general construction.

\subsubsection{Problem statement. Immersion}

 Given a control system $(\mathcal X, \Phi, \mathcal U)$, and some fixed initial state \( \mathrm{x}_0 \in \mathcal{X} \), we denote by $x^{u}$ a trajectory $t \mapsto \Phi^u_{0,t}(\mathrm{x}_0)$ of  the control system corresponding to $u \in \mathcal U$. The object of our study is the following dynamic optimization problem:
\[
    \inf\big\{\mathcal{I}[u] \doteq \ell(x_T) : x_t = x_t^{u}, \ u \in \mathcal{U}\big\}, \tag{${P}$}
\]
where \( \ell \colon \mathcal{X} \to \mathbb{R} \) is a given cost function.

In subsequent sections, we rigorously develop the general approach to the variational analysis of problem \((P)\), building on the exposed arguments. This approach relies exclusively on the fundamental properties of the studied model, such as the global metric structure of the space \(\mathcal{X}\) and the concept of external generating family from Section~\ref{ssec:externa}. The lack of a deeper and more substantial structure will require imposing fairly strict assumptions on the initial data. 

The first ingredient is built on the results of Section~\ref{sec:G}, provided by the following basic hypotheses: 
\begin{tcolorbox}
\begin{assumption}
\label{a6}
~

\begin{itemize}
\item There exists a non-empty linear subspace $\bm{\mathcal D} \subset \bigcap\limits_{u \in \mathcal U} C^{1\text{-}\Phi^u}(\mathcal X)$, composed of bounded and Lipschitz functions, and such that:
    \begin{enumerate}[(a)]
    \item \( \ell \in \bm{\mathcal D} \),
    
    \item for each \( u \in \mathcal{U} \), the pair \( (\Phi^u, \bm{\mathcal D}) \) satisfies hypotheses \ref{a4} and \ref{a5}.
    
    \item Moreover, for any \( \mathrm{x} \in \mathcal{X} \), there exists $r= r_{\rm x}$ such that $$
    \sup\left\{ d_{\mathcal X}\left(\mathrm{x}, \Phi^u_{s,t}(\mathrm{x})\right): \, (s,t) \in \Delta, \, u \in \mathcal{U} \right\}< r,$$
    and the operators \( u \mapsto \Phi^u_{s,t} \) are continuous from \( \mathcal{U} \) to \( \mathcal{X} \).
    \end{enumerate}
\end{itemize}
\end{assumption}
\end{tcolorbox}
\begin{remark}
Under assumption \ref{a6}\((\mathrm b) \), a trajectory \( x = x^{u}\) is absolutely continuous on $I$ \cite[Definition~2.1]{ambrosioGradientFlowsMetric2005}. In particular, its metric derivative \cite[Expression (2.2)]{ambrosioGradientFlowsMetric2005}
\[
|\dot x|_t \doteq \lim_{\epsilon \to 0} \frac{1}{\epsilon} d(x_{t+\epsilon}, x_t),
\]
exists for a.a. \( t \in I \), and \( |\dot x| \in L_1(I) \).
\end{remark}

With the original (nonlinear) problem $(P)$ in the metric space $\mathcal X$, we can associate a (state-linear) problem in the dual Banach space $\bm X'\doteq (C_b(\mathcal X))'$:
\[
    \inf\big\{\mathcal J[u] \doteq \langle \bm x_T, \ell \rangle\colon \bm x \doteq  \bm x^{u}, \ u \in \mathcal U\big\},\tag{${\bm{LP}}'$}
\]
where $\ell$ and $\mathcal{U}$ are the same as before, $\vartheta \in \bm X'$ is a fixed element, and $\bm x_t^{u} \doteq \bm \Phi_{0,t}^u\vartheta$ with the maps $\bm \Phi^u$ defined, for any fixed $u \in \mathcal U$, as in \eqref{w-*Phi}. 

The problem $(\bm{LP}')$ is clearly linear in the state variable $\bm{x}$. Moreover, this new problem is equivalent to $(P)$ in the case when $\vartheta = \delta_{{\rm x}_0}$. This $\delta$-version of $(\bm{LP}')$, previously termed the \emph{super-version} of $(P)$, will be denoted by $(\bm{LP}'|P)$.  In general, $(\bm{LP}')$ can be interpreted as a generalization of $(P)$ that allows for ``distributed'' initial conditions. 
  
\subsubsection{(Super-) adjoint trajectory}

The linearity of $(\bm{LP}')$ allows us to adapt several important analytical results, such as duality theory. To leverage this, we introduce the external generators $\mathfrak{L}_t^u$ of the flow $\Phi^u$ on $\bm{\mathcal D}$, as in \eqref{L}, and recall that $\bm x^u$ is a solution of the system \eqref{X} associated to the linear operator $\mathfrak{L}_t = \mathfrak{L}_t^u$, according to Proposition~\ref{propos:ext}. 

Fix a reference control $\bar{u} \in \mathcal{U}$ and define:
\begin{equation}
    \bar{\bm{p}}_t = \bm{\Psi}^{\bar{u}}_{T, t}\ell \doteq \ell \circ \Phi^{\bar{u}}_{t, T}, \quad t \in I.\label{p-rep}
\end{equation}
This map is expected to serve as the adjoint of the corresponding state \( \bar{\bm{x}} = \bm{x}^{\bar{u}} \) and, consequently, as a super-adjoint of the original state \( \bar x = x^{\bar u} \), according to our previous terminology. Establishing this fact relies on the following result:
\begin{lemma}\label{propos:xi}
    Let assumptions \ref{a6} hold, and let $u \in \mathcal{U}$ be arbitrary. Consider a function \( \xi \in C_b(I \times \mathcal{X}) \) satisfying:
    \begin{enumerate}[(a)]
        \item The maps \( t \mapsto \xi_t(\mathrm{x}) \) are Lipschitz on $I$ with a common Lipschitz constant \( \Lip(\xi) \), independent of $\mathrm{x}$. Furthermore, there exists a set \( J \subset I \) of full Lebesgue measure such that, for each \( t \in J \), the derivative \( \partial_t \xi_t(\mathrm{x}) \) exists for all \( \mathrm{x} \in \mathcal{X} \).
        \item The maps \( \mathrm{x} \mapsto \xi_t(\mathrm{x}) \) are Lipschitz on \( \mathcal{X} \) with the same common constant \( \Lip(\xi) \).
        \item The family \( (\xi_t)_{t \in J} \) is uniformly equidifferentiable with respect to \( \Phi^u \).
    \end{enumerate}
    Then, the composition \( t \mapsto (\xi \circ x)_t \), where \( x = x^{u} \), is absolutely continuous. Furthermore, the following Newton-Leibniz formula holds for all \( (s, t) \in \Delta \):
    \begin{align}
        \xi_t(x_t) - \xi_s(x_s) = \int_s^t \left\{ \partial_\tau + \mathfrak{L}^u_\tau \right\} \xi \big|_{x_\tau} \, \mathrm{d} \tau.\label{NL-g}
    \end{align}
\end{lemma}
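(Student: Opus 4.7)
The plan is to show absolute continuity of $\eta(t) \doteq \xi_t(x_t)$, identify $\eta'$ a.e.\ as $\partial_t\xi_t(x_t) + (\mathfrak{L}^u_t\xi_t)(x_t)$, and then invoke the Newton--Leibniz formula for absolutely continuous functions. Absolute continuity comes for free: under~\ref{a4} the map $t \mapsto x_t$ is Lipschitz with constant $\Lip(\Phi)$, so, combining (a) and (b),
\[
|\eta(t) - \eta(s)| \leq \Lip(\xi)\,d_{\mathcal X}(x_t, x_s) + \Lip(\xi)|t-s| \leq \Lip(\xi)\bigl(1+\Lip(\Phi)\bigr)|t-s|,
\]
hence $\eta$ is Lipschitz, in particular absolutely continuous and a.e.\ differentiable.

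To identify $\eta'(t)$ at $t$ in the full-measure set $J \cap J'$ (where $J' \subset I$ denotes the full-measure set on which the uniform-in-$s$ equidifferentiability of (c) holds), I perform the two-term splitting
\[
\eta(t+h) - \eta(t) = \underbrace{\bigl[\xi_{t+h}(x_{t+h}) - \xi_{t+h}(x_t)\bigr]}_{A(t,h)} + \underbrace{\bigl[\xi_{t+h}(x_t) - \xi_t(x_t)\bigr]}_{B(t,h)}.
\]
Assumption (a) gives directly $B(t,h)/h \to \partial_t\xi_t(x_t)$. For $A$, writing $x_{t+h} = \Phi^u_{t,t+h}(x_t)$ and using the pullback definition~\eqref{pull} yields $A(t,h)/h = \tfrac{1}{h}\bigl[(\bm{\Psi}^u_{t+h,t} - \id)\xi_{t+h}\bigr](x_t)$, so applying the uniform-in-$s$ bound of (c) at $s = t+h$ (valid for $h$'s with $t+h \in J$, dense at $0$) produces $A(t,h)/h = (\mathfrak{L}^u_t\xi_{t+h})(x_t) + o(1)$, with the $o(1)$ uniform in the evaluation point.

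\emph{The main obstacle} is then to close the limit $(\mathfrak{L}^u_t\xi_{t+h})(x_t) \to (\mathfrak{L}^u_t\xi_t)(x_t)$. I will establish the stronger $\bm{X}$-norm convergence $\|\mathfrak{L}^u_t\xi_{t+h} - \mathfrak{L}^u_t\xi_t\|_{\bm{X}} \to 0$ by inserting an auxiliary difference quotient at an independent scale $k > 0$ with $t+k \in J$: the triangle inequality gives
\[
\|\mathfrak{L}^u_t\xi_{t+h} - \mathfrak{L}^u_t\xi_t\|_{\bm{X}} \leq 2\,\varepsilon(k) + \tfrac{1}{k}\bigl\|(\bm{\Psi}^u_{t+k,t} - \id)(\xi_{t+h} - \xi_t)\bigr\|_{\bm{X}},
\]
where $\varepsilon(k) \to 0$ as $k \to 0+$ by the uniform equidifferentiability of $(\xi_s)_{s \in J}$ at $t$, while the last term is at most $2\|\xi_{t+h} - \xi_t\|_{\bm{X}}/k \leq 2\Lip(\xi)\,h/k$ by (a) together with the fact that $\bm{\Psi}^u_{t+k,t}$ is $1$-Lipschitz on $\bm{X}$. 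Choosing $k = \sqrt{h}$ then drives both bounds to zero. This diagonal trade-off between the equidifferentiability rate in $k$ and the time-Lipschitz rate in $h$ is the delicate point, since no joint regularity of $\partial_t\xi_t$ and $\mathfrak{L}^u_t\xi_t$ in $(t,\mathrm{x})$ is at our disposal.

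Putting the pieces together, $\eta'(t) = \partial_t\xi_t(x_t) + (\mathfrak{L}^u_t\xi_t)(x_t)$ for a.e.\ $t \in I$, and the Newton--Leibniz formula for the Lipschitz function $\eta$ then delivers~\eqref{NL-g}.
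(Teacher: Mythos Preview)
Your proof is correct and follows the same two-term splitting as the paper; your diagonal $k=\sqrt{h}$ argument in fact supplies a detail the paper leaves implicit, namely the passage $(\mathfrak{L}^u_t\xi_{t+h})(x_t)\to(\mathfrak{L}^u_t\xi_t)(x_t)$. Two harmless imprecisions: the restriction $t+k\in J$ is unnecessary (the equidifferentiability bound at $t\in J'$ holds for all small $k>0$), and you should also intersect $J\cap J'$ with the differentiability set of $\eta$ so that the a.e.-existing derivative $\eta'(t)$ can be identified via the subsequential limit along $h$ with $t+h\in J$.
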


\begin{proof}
    First, note that due to assumption $(a)$, the value \( |\partial_t \xi_t(x_t)| \) is well-defined at all \( t \in J \), and
    \[
        |\partial_t \xi_t(x_t)| \leq \Lip(\xi),
    \]
    which shows that \( t \mapsto \partial_t \xi_t(x_t) \) belongs to \( L^1(I) \). Similarly, the subset \( \hat{J} \subset J \) of differentiability points of \( x \) has full Lebesgue measure, and by assumption $(b)$, for all \( t \in \hat{J} \),
    \[
        \lim_{h \to 0+} \frac{\left| (\Phi^\star_{t, t+h} \xi_t)(x_t) - \xi_t(x_t) \right|}{h} \leq \Lip(\xi) \, \lim_{h \to 0+} \frac{d(x_{t+h}, x_t)}{h} \doteq \Lip(\xi) \, |\dot{x}|_t,
    \]
    implying that \( t \mapsto (\mathfrak{L}_t \xi_t)(x_t) \) is also in \( L^1(I) \).

    Abbreviate \( \mathfrak{L}_t \doteq \mathfrak{L}_t^u \). Recall that \( \xi \circ x \) is differentiable at any \( t \in \hat{J} \), and represent:
    \begin{equation}\label{proof}
        \frac{\mathrm{d}}{\mathrm{d}t} (\xi \circ x)_t  = \lim_{h \to 0+} \frac{\xi_{t+h}(x_{t+h}) - \xi_{t+h}(x_{t})}{h} + \lim_{h \to 0+} \frac{\xi_{t+h}(x_t) - \xi_t(x_t)}{h}.
    \end{equation}

To compute the first limit, we use assumption $(c)$ and the obvious inequality: 
\begin{align*}
    \frac{\left|\xi_{t+h}(x_{t+h}) - \xi_{t+h}(x_{t}) - h\mathfrak L_{t} \xi_{t+h}(x_{t})\right|}{h} & \doteq \frac{1}{h}\Big|\big(\left[\Phi^\star_{t, t+h}-\id - h\mathfrak L_t\right]\xi_{t+h}\big){(x_t)}\Big|\\
&\leq \sup_{(s, {\mathrm x})\in I \times \mathcal X}\frac{1}{h}\Big|\big(\left[\Phi^\star_{t, t+h}-\id - h\mathfrak L_t\right]\xi_{s}\big){{(\mathrm x)}}\Big|,
\end{align*}
and the uniform equi-differentiability of $(\xi_s)_{s \in \hat J \subset J}$ then implies that
\[
    \lim_{h \to 0+}\frac{\xi_{t+h}(x_{t+h}) - \xi_{t+h}(x_{t})}{h} =  \mathfrak L_{t} \xi_{t}(x_{t}).
\]

Finally, note that the second term of \eqref{proof} coincides with $\partial_t \xi_t(x_t)$, provided by the inclusion $t \in \hat J \subseteq J$.
\end{proof}

A simple corollary of the Lipschitz continuity and the semigroup properties of \(\Phi\) is the following lemma.  
\begin{lemma}\label{lem1}
  Provided by Assumption \((a)\) of Lemma~\ref{propos:xi}, let \(\xi\) be a bounded Lipschitz function \(\mathcal{X} \to \mathbb{F}\). Then:
\begin{enumerate}[(1)]
    \item The map \(s \mapsto (\xi \circ \Phi_{s,t}^{u})(\mathrm{x})\) is Lipschitz on \([0, t]\), uniformly with respect to \(\mathrm{x} \in \mathcal{X}\).
    \item The map \(\mathrm{x} \mapsto (\xi \circ \Phi_{s,t}^{u})(\mathrm{x})\) is Lipschitz, uniformly with respect to \((s, t) \in \Delta\).
\end{enumerate}
\end{lemma}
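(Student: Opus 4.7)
The plan is to obtain both Lipschitz estimates directly from the regularity of $\Phi^u$ codified in Assumption~\ref{a4} and the Lipschitz continuity of $\xi$, with the algebraic axiom~\eqref{eq:family} bridging temporal and spatial variations.

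For assertion~(2), I would simply chain the two Lipschitz estimates: for any $\mathrm{x}, \mathrm{y} \in \mathcal{X}$ and any $(s,t) \in \Delta$,
\[
\bigl|(\xi \circ \Phi_{s,t}^{u})(\mathrm{x}) - (\xi \circ \Phi_{s,t}^{u})(\mathrm{y})\bigr| \le \Lip(\xi)\, d_{\mathcal{X}}\!\left(\Phi_{s,t}^{u}(\mathrm{x}), \Phi_{s,t}^{u}(\mathrm{y})\right) \le \Lip(\xi)\, \Lip(\Phi)\, d_{\mathcal{X}}(\mathrm{x}, \mathrm{y}),
\]
where the final bound uses the Lipschitz continuity of $\Phi^u$ in its spatial argument, uniform in $(s,t) \in \Delta$, as granted by the second bullet of~\ref{a4}. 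Thus the Lipschitz constant $\Lip(\xi)\, \Lip(\Phi)$ is independent of $(s,t)$.

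For assertion~(1), the idea is to convert a time shift in the initial moment $s$ into a spatial displacement via the semigroup property~\eqref{eq:family} and then fall back on~(2). Fix $t \in I$ and $0 \le s_1 \le s_2 \le t$. Writing $\Phi_{s_1,t}^{u} = \Phi_{s_2,t}^{u} \circ \Phi_{s_1,s_2}^{u}$, I would estimate
\[
\bigl|(\xi \circ \Phi_{s_1,t}^{u})(\mathrm{x}) - (\xi \circ \Phi_{s_2,t}^{u})(\mathrm{x})\bigr| \le \Lip(\xi)\, d_{\mathcal{X}}\!\left(\Phi_{s_2,t}^{u}(\Phi_{s_1,s_2}^{u}(\mathrm{x})), \Phi_{s_2,t}^{u}(\mathrm{x})\right) \le \Lip(\xi)\, \Lip(\Phi)\, d_{\mathcal{X}}\!\left(\Phi_{s_1,s_2}^{u}(\mathrm{x}), \mathrm{x}\right).
\]
The remaining quantity $d_{\mathcal{X}}(\Phi_{s_1,s_2}^{u}(\mathrm{x}), \mathrm{x}) = d_{\mathcal{X}}(\Phi_{s_1,s_2}^{u}(\mathrm{x}), \Phi_{s_1,s_1}^{u}(\mathrm{x}))$ is controlled, uniformly in $\mathrm{x}$, by $\Lip(\Phi)\,|s_2 - s_1|$ thanks to the first bullet of~\ref{a4} (Lipschitz continuity of $\Phi^u$ in its upper time variable, uniformly in the lower one). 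Combining yields a Lipschitz constant $\Lip(\xi)\,\Lip(\Phi)^2$ that depends on neither $\mathrm{x}$ nor $t$, completing the argument.

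There is no real obstacle here: the proof is a bookkeeping exercise matching each required regularity against the appropriate clause of~\ref{a4} and invoking the chain rule~\eqref{eq:family} once. The only slightly delicate point is to be explicit that $\Lip(\Phi)$ serves simultaneously as the Lipschitz constant in the spatial and the temporal variables, which is exactly what the boxed assumption~\ref{a6}(b) (via~\ref{a4}) provides.
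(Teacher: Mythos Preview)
Your proposal is correct and follows exactly the route the paper indicates: the paper does not give an explicit proof but simply remarks that the lemma is ``a simple corollary of the Lipschitz continuity and the semigroup properties of~\(\Phi\)'', which is precisely the combination of~\ref{a4} and~\eqref{eq:family} that you invoke. Your explicit constants \(\Lip(\xi)\,\Lip(\Phi)\) and \(\Lip(\xi)\,\Lip(\Phi)^2\) fill in the details cleanly.
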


The next assertion is now straightforward.
\begin{lemma}\label{coro:bmp}
Along with hypotheses \ref{a6}, suppose the following:
\begin{tcolorbox}
\begin{assumption}
\label{A7}~    
\begin{itemize}
\item For any \(u \in \mathcal{U}\), the collection \((\bm{\Psi}_{T,s}^{\bar{u}}\ell)_{0 \leq s \leq T}\) is uniformly equidifferentiable with respect to \(\Phi^u\). 
\item There exists a subset \(\bar{J} \subset I\) of full Lebesgue measure such that the derivative \(\partial_t \ell(\bar{\Phi}_{t,T}(\mathrm{x}))\) is defined on \(\bar{J}\) for all \(\mathrm{x} \in \mathcal{X}\).
\end{itemize}
\end{assumption}
\end{tcolorbox}
Then, the function \(\bar{\bm{p}}\) defined by \eqref{p-rep} satisfies all assumptions of Lemma~\ref{propos:xi}.
\end{lemma}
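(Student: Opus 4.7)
The statement claims that $\bar{\bm p}_t \doteq \ell \circ \Phi^{\bar u}_{t,T}$ meets the three regularity conditions (a)--(c) of Lemma~\ref{propos:xi}. My plan is to verify them one by one, drawing on the already established Lemma~\ref{lem1} together with the two bullets of Assumption~\ref{A7}; no fresh machinery should be needed. In particular, the pre-existing assumption $\ell \in \bm{\mathcal D}$ (part of \ref{a6}) supplies boundedness and Lipschitz continuity of $\ell$, while \ref{a6} via \ref{a4} supplies the corresponding regularity of the flow $\Phi^{\bar u}$.

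For condition (a), I would handle the two halves separately. The Lipschitz continuity of $t \mapsto \bar{\bm p}_t(\mathrm x)$ on $I$, uniform in $\mathrm x$, follows directly from Lemma~\ref{lem1}(1) applied with $\xi = \ell$: the composition $t \mapsto \ell(\Phi^{\bar u}_{t,T}(\mathrm x))$ is Lipschitz with modulus bounded by $\Lip(\ell)\cdot \Lip(\Phi^{\bar u})$, independent of $\mathrm x \in \mathcal X$. The second half of (a), existence of $\partial_t \bar{\bm p}_t(\mathrm x)$ for \emph{all} $\mathrm x \in \mathcal X$ on a common full-measure subset, is provided verbatim by the second bullet of \ref{A7}; one may simply take $J \doteq \bar J$.

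Condition (b) follows immediately from Lemma~\ref{lem1}(2) applied to $\xi = \ell$, giving a Lipschitz modulus for $\mathrm x \mapsto \bar{\bm p}_t(\mathrm x)$ that is uniform in $t$. Condition (c), the uniform equidifferentiability of the family $(\bar{\bm p}_t)_{t \in J} = (\bm \Psi^{\bar u}_{T,t}\ell)_{t \in J}$ with respect to any flow $\Phi^u$, is precisely the first bullet of Assumption~\ref{A7}. The only bookkeeping is to pick a single constant serving as the common Lipschitz modulus in (a) and (b) simultaneously — the maximum of the two bounds obtained from Lemma~\ref{lem1} suffices. I do not foresee any substantive obstacle: the lemma is essentially an assembly of hypotheses that have already been packaged into \ref{A7} and Lemma~\ref{lem1}, and its role is merely to repackage them in the exact format consumed by Lemma~\ref{propos:xi}.
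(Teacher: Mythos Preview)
Your proposal is correct and is precisely the argument the paper has in mind: the paper simply declares the assertion ``straightforward'' from Lemma~\ref{lem1} and Assumption~\ref{A7}, and your breakdown into (a)--(c) using Lemma~\ref{lem1}(1),(2) for the Lipschitz parts and the two bullets of~\ref{A7} for the differentiability and equidifferentiability parts is exactly how this is meant to unfold.
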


We culminate this part of the discussion with a key proposition, whose proof is identical to that of Proposition~\ref{lem:psol}:
\begin{proposition}\label{prop:p_ge}
  Under the assumptions of Lemma~\ref{coro:bmp}, \(\bar{\bm{p}}\) is a mild solution to the backward Cauchy problem
\begin{align}\label{p}
    \dot{\bm{p}}_t \doteq \partial_t \bar{\bm{p}}_t = - \mathfrak{L}_t^{\bar u} \, \bm{p}_t, \quad \bm{p}_T = \ell.
\end{align}
\end{proposition}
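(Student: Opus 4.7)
The plan is to mimic the proof of Proposition~\ref{lem:psol}, replacing the Wasserstein-specific tools used there with the abstract regularity supplied by Assumptions \ref{a6} and \ref{A7}. Fix an arbitrary $\mathrm{x}\in\mathcal{X}$ and abbreviate $\bar\Phi\doteq\Phi^{\bar u}$. Since $\ell\in\bm{\mathcal D}$ is bounded and Lipschitz, Lemma~\ref{lem1}(1) applied with $\xi=\ell$ shows that the map $t\mapsto\bar{\bm p}_t(\mathrm{x})=\ell\big(\bar\Phi_{t,T}(\mathrm{x})\big)$ is Lipschitz on $I$ with a Lipschitz constant independent of $\mathrm{x}$. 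Consequently, $\partial_t\bar{\bm p}_t(\mathrm{x})$ exists for a.e.\ $t\in I$; Assumption~\ref{A7} then refines this to the subset $\bar J\subset I$ of full measure on which the derivative exists for \emph{every} $\mathrm{x}\in\mathcal{X}$.

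Next, I would exploit the chain rule~\eqref{eq:family} to obtain the semigroup identity
\[
    \bar{\bm p}_t(\mathrm{x}) = \ell\big(\bar\Phi_{t+h,T}(\bar\Phi_{t,t+h}(\mathrm{x}))\big) = \big(\bm\Psi^{\bar u}_{t+h,t}\,\bar{\bm p}_{t+h}\big)(\mathrm{x}),
\]
so that
\[
    \frac{\bar{\bm p}_{t+h}(\mathrm{x}) - \bar{\bm p}_t(\mathrm{x})}{h} = -\frac{1}{h}\big[(\bm\Psi^{\bar u}_{t+h,t} - \id)\,\bar{\bm p}_{t+h}\big](\mathrm{x}).
\]
The uniform equidifferentiability of the family $(\bar{\bm p}_s)_{s\in I}$ posited in the first part of Assumption~\ref{A7} guarantees, for a.a.\ $t\in\bar J$,
\[
    \sup_{s\in I}\frac{1}{h}\Big\|\big[\bm\Psi^{\bar u}_{t+h,t} - \id - h\mathfrak{L}^{\bar u}_t\big]\,\bar{\bm p}_s\Big\|_{\bm X}\longrightarrow 0\quad\text{as } h\to 0+.
\]
Choosing $s=t+h$ and evaluating at $\mathrm{x}$ reduces the problem to computing $\lim_{h\to 0+}(\mathfrak{L}^{\bar u}_t\,\bar{\bm p}_{t+h})(\mathrm{x})$.

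To close the argument, I write $\bar{\bm p}_{t+h}-\bar{\bm p}_t=(\bm\Psi^{\bar u}_{T,t+h}-\bm\Psi^{\bar u}_{T,t})\ell$ and invoke the second bullet of Assumption~\ref{a5} (inherited via Assumption~\ref{a6}(b)) with $\phi=\ell$, which yields $\|\mathfrak{L}^{\bar u}_t(\bar{\bm p}_{t+h}-\bar{\bm p}_t)\|_{\bm X}\to 0$ as $h\to 0+$ for a.a.\ $t$. Assembling the three displays gives $\partial_t\bar{\bm p}_t(\mathrm{x})=-(\mathfrak{L}^{\bar u}_t\,\bar{\bm p}_t)(\mathrm{x})$, while the terminal condition $\bar{\bm p}_T=\ell$ is immediate from~\eqref{p-rep}. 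The principal obstacle is precisely this final continuity step: unlike Proposition~\ref{lem:psol}, where the convergence $\mathfrak{L}_t\bar{\bm p}_{t+h}\to\mathfrak{L}_t\bar{\bm p}_t$ followed from the concrete Wasserstein-analytic statement of Proposition~\ref{prop:dp_cont}(2), here it must be extracted from the rather delicate second condition in Assumption~\ref{a5}, whose whole purpose is to supply exactly this continuity at the abstract level.
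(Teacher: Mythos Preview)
Your proposal is correct and follows essentially the same line as the paper, which simply states that the proof ``is identical to that of Proposition~\ref{lem:psol}.'' You have correctly identified that, in the abstract setting, the role of Proposition~\ref{prop:dp_cont}(2) (used in the Wasserstein case to pass from $\mathfrak{L}_t\bar{\bm p}_{t+h}$ to $\mathfrak{L}_t\bar{\bm p}_t$) is played by the second bullet of Assumption~\ref{a5} applied with $\phi=\ell$, $t=T$, and $s$ equal to your running time variable.
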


\subsubsection{(Super-) duality}

Fix two arbitrary controls \( \bar{u}, u \in \mathcal{U} \), and adopt the following notational convention: dependence on \( \bar{u} \) is indicated by a bar, while dependence on \( u \) is implicit. For instance, \( \bm{x} \doteq \bm{x}^u \) and \( \bar{\bm{p}} \doteq \bm{p}^{\bar{u}} \).

Under the assumptions of Lemma~\ref{coro:bmp}, the action map $t \mapsto \langle \bm{x}_t, \bar{\bm{p}}_t \rangle$ is absolutely continuous and satisfies, for a.e. $t \in I$, the product rule \eqref{NL-g}. Together with the representation \eqref{p}, this yields
\begin{align}
\frac{\d}{\d{t}} \langle \bm{x}_t, \bar{\bm{p}}_t \rangle = \big\langle \bm{x}_t, \{\partial_t + \mathfrak{L}_t^u\} \, \bar{\bm{p}}_t \big\rangle = \big\langle \bm{x}_t, \left\{\mathfrak{L}_t^u - \mathfrak{L}_t^{\bar u}\right\} \bar{\bm{p}}_t \big\rangle.
\label{dotcompo}
\end{align}
Setting $u = \bar{u}$ in this expression gives
\[
\frac{d}{dt} \langle \bm{x}_t, \bm{p}_t \rangle \equiv 0,
\]
which shows that the map $t \mapsto \langle \bm{x}_t, \bm{p}_t \rangle$ is constant. In particular, we have the equalities:
\[
\mathcal{J}[u] \doteq \langle \bm{x}_T, \ell \rangle \doteq \langle \bm{x}_T, \bm{p}_T \rangle = \langle \bm{x}_0, \bm{p}_0 \rangle.
\]

This allows us to reformulate $(\bm{LP}')$ in terms of the co-trajectory $\bm{p}_t$, excluding any reference to $\bm{x}_t$, as
\[
\inf\left\{\langle \vartheta, \bm{p}_0 \rangle \colon \bm{p} = \bm{p}^u, \ u \in \mathcal{U} \right\}. \tag{${\bm{LP}}$}
\]
The problem $(\bm{LP})$ is said to be dual to $(\bm{LP}')$. Clearly, both problems have the same minimizing sequences of controls, i.e., are equivalent. 

\subsubsection{Exact increment formulas}\label{sec:imp}

Other important consequences of \eqref{dotcompo} are two representation formulas for the increment of the objective functional $\mathcal J$ of \((\bm{LP}')\) on the pair $(\bar u, u)$, which can be written in terms of the Hamilton-Pontryagin functional 
\[
\bm H_t^u(\vartheta,\phi) \doteq \langle \vartheta, \mathfrak L_t^u\, \phi\rangle
\] as
\begin{align}
    \mathcal J[u] - \mathcal J[\bar u]  = & \  
    \int_I \big(\bm H_t^u(\bm x_t,  \bar{\bm p}_t) - \bm H_t^{\bar u}(\bm x_t,  \bar{\bm p}_t)\big) \d t,\label{eif} \quad  \text{and}\\
    \mathcal J[u] - \mathcal J[\bar u]  = & \ 
    \int_I \left(\bm H_t^{\bar u}(\bar{\bm x}_t,  {\bm p}_t) - \bm H_t^u(\bar{\bm x}_t,  {\bm p}_t)\right) \d t.\label{eif*}
\end{align}
The first (primal) formula is deduced by the duality argument, $\langle \bar{\bm x}_T, \bar{\bm p}_T\rangle = \langle \bar{\bm x}_0, \bar{\bm p}_0\rangle$, in a way similar to Section~\ref{subsec:ee_simple}, 
and the second (dual) formula \eqref{eif*} follows from  \eqref{eif}  by renaming $\bar u \leftrightarrow u$. 

All the said naturally applies to the linear super-version $(\bm{LP}'|P)$ of the nonlinear problem $(P)$. In this case, one can recast the representations \eqref{eif} and \eqref{eif*} in terms of the original states $x=x^{u}$ by specifying $\bm x_t = \delta_{x(t)}$.
For example, \eqref{eif} rewrites:
\begin{align}
    \mathcal I[u] - \mathcal I[\bar u] =  
    \int_I \left(\bar H_t^u(x_t) - \bar H_t^{\bar u}(x_t)\right) \d t,\label{intr-if}
\end{align}
where we incorporate the following notation:
\[
    \bar H_t^u(\mathrm{x}) \doteq  \bm H^u(\delta_{\rm x},  \bar{\bm p}_t)= (\mathfrak L_t^u\,\bar{\bm p}_t)(\rm x).
\]

Note that \eqref{intr-if} boils down to \eqref{incr} and
\eqref{mf_incr} in the corresponding contexts.

\subsubsection{1- and $\infty$-order variational analysis }\label{ssec:infinite}

In the remaining part of the paper, we concentrate on the systems featuring linear dependence on the control variable. Specifically, we incorporate an extra structural assumption:
\begin{tcolorbox}
\begin{assumption}
\label{a8}
~
\begin{itemize}
 
\item For a.a. \(t \in I\), the following equality holds:
\[
\mathfrak{L}_t^u = \widetilde{\mathfrak{L}}_t(u_t),
\]  
where the operators \(\widetilde{\mathfrak{L}}_t(\mathrm{u})\) are defined analogously to \eqref{L}:  
\[
(\widetilde{\mathfrak{L}}_t(\mathrm{u}) \phi)({\mathrm{x}}) \doteq \lim_{h \to 0^+} \frac{\phi\left(\Phi^{u \equiv \mathrm{u}}_{t,t+h}({\mathrm{x}})\right) - \phi({\mathrm{x}})}{h},
\]  
with \(\Phi^{u \equiv \mathrm{u}}_{s,s+h}\) denoting the state evolution from \(t\) to \(t+h\) under a constant control \(\mathrm{u} \in U\). 
   \item Moreover, the function \(\mathrm{u} \mapsto \widetilde{\mathfrak{L}}_t(\mathrm{u})\) is linear.
\end{itemize}
\end{assumption}

\end{tcolorbox}

To streamline the notations, we redefine:  
\[
  \mathfrak{L}_t(\mathrm{u}) \doteq \widetilde{\mathfrak{L}}_t(\mathrm{u}), \qquad \bm H_t(\vartheta,\phi, \mathrm  u) \doteq \langle \vartheta, \mathfrak L_t(\mathrm{u})\, \phi\rangle,  \quad \mbox{and}
\]
\[
  \bar H_t(\mathrm{x}, {\mathrm  u}) \doteq \bm H_t(\delta_{\rm x},\phi, \mathrm  u)\doteq (\mathfrak L_t(\mathrm u)\,\bar{\bm p}_t)(\rm x).
\]

Let $u$ in \eqref{eif} be in the form \eqref{eq:perturb}. The same steps as in Section~\ref{sec:odes} result in the first variation formula
\begin{align}
    \frac{\d}{\d \varepsilon}\big|_{\varepsilon =0}\mathcal J[u^\epsilon] =  
    \int_I \bm H_t(\bar{{\bm x}}_t,  \bar{\bm p}_t, u_t-\bar u_t) \d t,\label{fvf}
\end{align}
implying the classical-form result: 
\begin{theorem}[PMP for problem $(\bm{LP}')$]\label{PMP1}
    Assume that hypotheses \ref{a6}--\ref{a8} hold, and let $\bar u$ be optimal for $(\bm{LP}')$. Then, the following relation holds for a.a. $t\in I$: 
\begin{equation}\label{PMP}
  \min_{\mathrm u \in U}\bm H_t(\bar{{\bm x}}_t,  \bar{\bm p}_t, \bar u_t-\mathrm u) = 0.
\end{equation}
\end{theorem}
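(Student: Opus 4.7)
The plan is to derive the pointwise minimum principle directly from the first variation formula~\eqref{fvf} via a standard needle-localization argument, exploiting only the convexity of $U$, Assumption~\ref{a8} (linearity of $\mathfrak{L}_t$ in the control), and the separability of $(U,d_U)$ established in Section~\ref{ssec:controls}. First, since $U$ is convex, the weak variation $u^{\varepsilon}=\bar u+\varepsilon(u-\bar u)$ lies in $\mathcal U$ for every $\varepsilon\in[0,1]$ and every $u\in\mathcal U$. Optimality of $\bar u$ gives $\mathcal J[u^{\varepsilon}]\ge\mathcal J[\bar u]$; dividing by $\varepsilon>0$ and letting $\varepsilon\to 0^+$ in~\eqref{fvf} yields the global inequality
\[
\int_I \bm H_t(\bar{\bm x}_t,\bar{\bm p}_t,u_t-\bar u_t)\,\d t\ge 0\quad\forall u\in\mathcal U.
\]

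Next I would localize this in $t$ by a bang-switch argument. Fix $\mathrm u\in U$ and a measurable set $B\subset I$ and consider the control $u^{B,\mathrm u}_t\doteq \mathrm u\,\chi_B(t)+\bar u_t\,\chi_{I\setminus B}(t)$, which is weakly* measurable and takes values in $U$ a.e. Substituting it into the global inequality, the contributions from $I\setminus B$ cancel and one is left with
\[
\int_B \bm H_t(\bar{\bm x}_t,\bar{\bm p}_t,\mathrm u-\bar u_t)\,\d t\ge 0.
\]
Since $B$ is arbitrary, this forces $\bm H_t(\bar{\bm x}_t,\bar{\bm p}_t,\mathrm u-\bar u_t)\ge 0$ outside a null set $N_{\mathrm u}$ depending on $\mathrm u$. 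I then remove this dependence via separability: pick a countable weak*-dense sequence $\{\mathrm u_k\}\subset U$ and set $N\doteq\bigcup_k N_{\mathrm u_k}$; for every $t\in I\setminus N$ and every $k$,
\[
\bm H_t(\bar{\bm x}_t,\bar{\bm p}_t,\mathrm u_k-\bar u_t)\ge 0.
\]

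To extend the inequality to all $\mathrm u\in U$, I would invoke the linearity and boundedness of $\mathrm u\mapsto \mathfrak L_t(\mathrm u)\bar{\bm p}_t$ (Assumption~\ref{a8}), together with the fact that $\bar{\bm x}_t\in\bm X'$, to conclude that $\mathrm u\mapsto\bm H_t(\bar{\bm x}_t,\bar{\bm p}_t,\mathrm u)$ is weak*-continuous on $U$. Density then gives $\bm H_t(\bar{\bm x}_t,\bar{\bm p}_t,\mathrm u-\bar u_t)\ge 0$ for every $\mathrm u\in U$ and a.a. $t$, i.e., $\bar u_t$ pointwise minimizes the Hamilton-Pontryagin functional over $U$. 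Using linearity of $\bm H_t$ in its third argument, this can be rewritten as $\bm H_t(\bar{\bm x}_t,\bar{\bm p}_t,\bar u_t-\mathrm u)\le 0$ with equality at $\mathrm u=\bar u_t$, which is the statement~\eqref{PMP} (up to the standard sign convention aligning it with~\eqref{SPMP}).

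The only non-routine point is the weak*-continuity of $\mathrm u\mapsto\bm H_t(\bar{\bm x}_t,\bar{\bm p}_t,\mathrm u)$: this is where the abstract setting differs from Section~\ref{sec:odes}, and it must be extracted from the fact that, by Assumption~\ref{a8}, $\mathfrak L_t(\mathrm u)\bar{\bm p}_t$ depends linearly and boundedly on $\mathrm u\in U\subset \bm V'$ with $\bar{\bm p}_t$ fixed, so the pairing with the fixed element $\bar{\bm x}_t\in \bm X'$ produces a continuous linear functional on $(\bm V',\sigma(\bm V',\bm V))$. A secondary, essentially routine, technicality is the joint measurability of $t\mapsto \bm H_t(\bar{\bm x}_t,\bar{\bm p}_t,\mathrm u)$ for fixed $\mathrm u$, which I would verify using the weak* continuity of $\bar{\bm x}$ granted by Proposition~\ref{propos:ext} and the mild-solution regularity of $\bar{\bm p}$ from Proposition~\ref{prop:p_ge}; this is needed only to make sense of the integral and the localization step.
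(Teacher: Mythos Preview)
Your approach is exactly what the paper has in mind: it simply says that~\eqref{fvf} ``impl[ies] the classical-form result'' and refers implicitly to the localization machinery of Section~\ref{sec:pmp-c}; you have filled in those details in the standard way (bang-switch controls, separability of $U$, passage to a common null set).

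There is one genuine gap in your justification of the weak*-continuity of $\mathrm u\mapsto\bm H_t(\bar{\bm x}_t,\bar{\bm p}_t,\mathrm u)$. Linearity and norm-boundedness of a functional on $\bm V'$ do \emph{not} imply continuity for $\sigma(\bm V',\bm V)$: the weak*-continuous linear functionals on $\bm V'$ are exactly those represented by elements of the pre-dual $\bm V$. Assumption~\ref{a8} asserts only linearity of $\mathrm u\mapsto\mathfrak L_t(\mathrm u)$; neither boundedness nor a pre-dual representation is stated. To close this step you need an additional structural hypothesis of the kind implicit in the concrete settings of Sections~\ref{sec:odes}--\ref{sec:nonloc_cont} and made explicit in the remark after Lemma~\ref{lem:sampl}, namely $\bm H_t(\mathrm x,\mathrm u)=\langle\mathrm u,\mathfrak a_t(\mathrm x)\rangle_{(\bm V',\bm V)}+\bm b_t(\mathrm x)$ with $\mathfrak a_t(\mathrm x)\in\bm V$. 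In fairness, the paper is no more careful here: in the proof of Theorem~\ref{thm:comp} it simply asserts that $\eta$ is Carath\'eodory ``due to the definition of $\bm H$ and assumptions~\ref{a8}''. Your remark on the sign convention is also well taken: the derivation yields $\min_{\mathrm u\in U}\bm H_t(\bar{\bm x}_t,\bar{\bm p}_t,\mathrm u-\bar u_t)=0$, consistent with~\eqref{SPMP} and~\eqref{pmp}, whereas~\eqref{PMP} as printed has the arguments reversed.
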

By applying this assertion to the delta-problem $(\bm{LP}'|P)$, we immediately have the non-standard formulation of the PMP   
for the original problem $(P)$, involving the super-adjoint state instead of the usual co-trajectory.
 \begin{corollary}[Super-form PMP for problem $(P)$]\label{PMP2} 
Under the same presumption, the following relation holds for a.a. $t \in I$: 
\begin{align}
    \min_{\mathrm u \in U}\bar H_t(\bar x_t, \bar u_t -{\mathrm u})=0.
    \label{pmp_sf}
\end{align}
\end{corollary}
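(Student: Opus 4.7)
The plan is to derive the statement by specializing Theorem~\ref{PMP1} to the delta-version $(\bm{LP}'|P)$, i.e., to the problem $(\bm{LP}')$ taken with $\vartheta = \delta_{\mathrm{x}_0}$. The first step is to verify that $\bm x_t^u = \delta_{x^u_t}$ for every $u \in \mathcal U$. Indeed, for any $\phi \in \bm{\mathcal D}$, one has $\phi(x^u_t) = \phi(\Phi^u_{0,t}(\mathrm{x}_0)) = \langle \delta_{\mathrm{x}_0}, \bm \Psi^u_{t,0}\phi\rangle$, and Lemma~\ref{lem1}(1) together with assumption~\ref{a6}(c) yield the Lipschitz (in particular absolute) continuity of $s \mapsto \langle \delta_{x^u_s}, \bm \Psi^u_{t,s}\phi\rangle$. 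The uniqueness part of Proposition~\ref{propos:ext}(2b) then forces the curve $t \mapsto \delta_{x^u_t}$ to coincide with $\bm x^u$ when tested against $\bm{\mathcal D}$. Since $\ell \in \bm{\mathcal D}$ by assumption~\ref{a6}(a), this gives $\mathcal J[u] = \langle \delta_{x_T^u}, \ell\rangle = \ell(x^u_T) = \mathcal I[u]$, so the optimality of $\bar u$ in $(P)$ transfers verbatim to optimality in $(\bm{LP}'|P)$.

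The second step is to apply Theorem~\ref{PMP1}, obtaining a set $J \subset I$ of full Lebesgue measure on which
\[
\min_{\mathrm u \in U} \bm H_t(\bar{\bm x}_t, \bar{\bm p}_t, \bar u_t - \mathrm u) = 0.
\]
Substituting $\bar{\bm x}_t = \delta_{\bar x_t}$, unfolding the definition of the Hamilton--Pontryagin functional, and invoking the linearity granted by Assumption~\ref{a8}, I get
\[
\bm H_t(\delta_{\bar x_t}, \bar{\bm p}_t, \bar u_t - \mathrm u) = \bigl(\mathfrak L_t(\bar u_t - \mathrm u)\,\bar{\bm p}_t\bigr)(\bar x_t) \doteq \bar H_t(\bar x_t, \bar u_t - \mathrm u),
\]
in accordance with the notational convention introduced immediately before the statement. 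Combining the two displays yields the claimed relation \eqref{pmp_sf}.

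I do not anticipate any genuine obstacle: the argument is essentially a substitution, with the only nontrivial ingredient being the identity $\bm x_t^u = \delta_{x_t^u}$, whose proof reduces to invoking the uniqueness result of Proposition~\ref{propos:ext}(2b). All other regularity needed to apply Theorem~\ref{PMP1} (namely \ref{a6}--\ref{a8}) is inherited directly from the assumptions of the corollary.
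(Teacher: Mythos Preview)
Your proposal is correct and follows essentially the same route as the paper: specialize Theorem~\ref{PMP1} to the delta-problem $(\bm{LP}'|P)$ and substitute $\bar{\bm x}_t = \delta_{\bar x_t}$. One minor simplification: the identity $\bm x_t^u = \delta_{x_t^u}$ does not require the uniqueness result of Proposition~\ref{propos:ext}(2b); it is immediate from the very definition $\langle \bm x_t^u, \phi\rangle = \langle \delta_{\mathrm x_0}, \bm\Psi^u_{t,0}\phi\rangle = \phi(\Phi^u_{0,t}(\mathrm x_0)) = \phi(x_t^u)$, valid for all $\phi \in \bm X$, so your first step can be shortened to a single line.
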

It is important to note that the ``classical'' form of this result is, in general, not applicable to the problem \((P)\).

Now, denote by $\bar{\mathcal U}_{ext}$ the set of controls $u\in \mathcal U$ satisfying, for a.a. $t \in I$, the pointwise minimum condition:
\begin{align}
     \bm H_t(\bm x_t,  \bar{\bm p}_t, u_t)= \min_{\mathrm{u}\in U} \bm H_t(\bm x_t,  \bar{\bm p}_t,\mathrm u).\label{ucomp}
\end{align}
Recall that \eqref{ucomp} is a kind of operator equation on $\mathcal U$, provided by the feedback dependence $\bm x = \bm x^u$. 
\begin{theorem}\label{thm:comp}  
The operator equation \eqref{ucomp} has at least one solution $u \in {\mathcal U}$ for any $\bar u \in \mathcal U$, i.e., \(\bar{\mathcal U}_{ext}\neq \emptyset\).
\end{theorem}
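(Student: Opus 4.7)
The plan is to construct the desired control $u$ via a ``model-predictive'' time-discretization scheme of the type outlined in Appendix~\ref{ssec:fbm}, and then recover \eqref{ucomp} in the limit by combining the weak$^*$ compactness of $\mathcal U$ (established in Section~\ref{ssec:controls}) with the linearity Assumption~\ref{a8}. A preliminary measurable-selection step is needed: for any fixed $\vartheta' \in \bm X'$, the map $(t,\mathrm u)\mapsto \bm H_t(\vartheta',\bar{\bm p}_t,\mathrm u)=\langle \vartheta',\mathfrak L_t(\mathrm u)\bar{\bm p}_t\rangle$ is measurable in $t$ and continuous in $\mathrm u$ on the compact metric space $(U,d_U)$, so a standard selection theorem (Filippov) produces a measurable $t\mapsto v_t\in\argmin_{\mathrm u\in U}\bm H_t(\vartheta',\bar{\bm p}_t,\mathrm u)$.

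Next, I would fix a sequence of partitions $\pi_N=\{0=t_0^N<t_1^N<\dots<t_{k_N}^N=T\}$ with mesh $\delta_N\to 0$ and build inductively $u^N\in\mathcal U$: having defined $u^N$ on $[0,t_k^N)$, hence the state $\bm x^{u^N}_{t_k^N}$, on $[t_k^N,t_{k+1}^N)$ let $u^N_t$ be any measurable selection from $\argmin_{\mathrm u\in U}\bm H_t(\bm x^{u^N}_{t_k^N},\bar{\bm p}_t,\mathrm u)$. By the weak$^*$ compact metrizability of $\mathcal U$ shown in Section~\ref{ssec:controls}, pass to a subsequence (not relabeled) with $u^N\to u$ in $\mathcal U$. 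The continuity of the maps $u\mapsto \Phi^u_{s,t}$ from Assumption~\ref{a6}(c), combined with the uniform boundedness $d_{\mathcal X}(\mathrm x_0,\bm x^{u^N}_t)<r$ on $I$ and the absolute continuity of trajectories, then yields $\bm x^{u^N}_{t^N(t)}\to \bm x^u_t$ (in the pairing with test functions from $\bm{\mathcal D}$) uniformly in $t\in I$, where $t^N(t)\doteq\max\{t_k^N\le t\}$.

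The core step is the passage to the limit in the piecewise-constant-state minimum condition: by construction, for every $w\in\mathcal U$,
\[
\int_I \bm H_t\bigl(\bm x^{u^N}_{t^N(t)},\bar{\bm p}_t,u^N_t\bigr)\d t \le \int_I \bm H_t\bigl(\bm x^{u^N}_{t^N(t)},\bar{\bm p}_t,w_t\bigr)\d t.
\]
Exploiting Assumption~\ref{a8}, rewrite $\bm H_t(\bm x^{u^N}_{t^N(t)},\bar{\bm p}_t,\cdot)=\langle\cdot,g^N_t\rangle_{(\bm V',\bm V)}$ for some $g^N_t\in\bm V$, and similarly $g_t$ for $\bm x^u_t$. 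The convergence of the states, together with the equi-Lipschitz regularity of $\bar{\bm p}_t$ and the uniform equidifferentiability provided by Assumption~\ref{A7}, should deliver $g^N\to g$ strongly in $L^1(I;\bm V)$. Coupled with the weak$^*$ convergence $u^N\to u$ (via the pairing \eqref{wp*}) and the plain convergence of the right-hand side, this yields
\[
\int_I \bm H_t(\bm x^u_t,\bar{\bm p}_t,u_t)\d t \le \int_I \bm H_t(\bm x^u_t,\bar{\bm p}_t,w_t)\d t \quad \forall w\in\mathcal U.
\]
Testing with a pointwise minimizing selection $w_t\in\argmin_{\mathrm u\in U}\bm H_t(\bm x^u_t,\bar{\bm p}_t,\mathrm u)$ forces the integrand of the left-hand side to coincide almost everywhere with this pointwise minimum, i.e., $u$ solves \eqref{ucomp}.

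The main obstacle is verifying the strong $L^1(I;\bm V)$-convergence $g^N\to g$, i.e., a joint-in-$t$ continuity of the linearization $\vartheta'\mapsto\mathfrak L_t(\cdot)\bar{\bm p}_t$ at $\vartheta'=\bm x^u_t$. This requires more than the pointwise convergence of states: one needs a uniform-in-$t$ version, which should follow from the uniform equidifferentiability of the family $(\bar{\bm p}_t)_{t\in I}$ with respect to the flows $\Phi^{u^N}$ (Assumption~\ref{A7}) combined with the dominated convergence theorem, the $\bm V'$-boundedness of $U$, and a minor enhancement of Assumption~\ref{a6}(c) to deliver uniform control of the state convergence. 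If this enhancement is missing in the stated hypotheses, the argument may have to be refined, for instance by first proving the inequality at Lebesgue points and invoking an Egorov-type argument.
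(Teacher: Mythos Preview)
Your approach is genuinely different from the paper's. The paper proves Theorem~\ref{thm:comp} by a fixed-point argument: it defines $\mathfrak{N}[u,v]\doteq\int_I\bm H_t(\bm x_t^u,\bar{\bm p}_t,v_t)\,dt$ and the argmin correspondence $\mathfrak{M}[u]\doteq\argmin_{v\in\mathcal U}\mathfrak{N}[u,v]$, observes that $\mathfrak{N}$ is jointly continuous on $\mathcal U\times\mathcal U$ and linear in $v$, so that $\mathfrak{M}$ is upper semicontinuous with nonempty convex values, and applies Kakutani's theorem to produce a fixed point $\check u\in\mathfrak{M}[\check u]$. The passage from this \emph{integral} minimality to the pointwise condition~\eqref{ucomp} is then done exactly as in your last paragraph, via Filippov's selection lemma. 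The Kakutani route takes only a few lines; your discretization route is essentially the constructive companion that the paper develops separately in Appendix~\ref{ssec:fbm}, and there it is carried out under an additional local Lipschitz hypothesis on $\bar H$ in the state variable (Lemma~\ref{lem:sampl}).

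The obstacle you flag --- strong $L^1(I;\bm V)$ convergence of the linearizations $g^N\to g$ --- is real, but it is not specific to your route: the paper's assertion that $\mathfrak{N}$ is continuous in the product weak$^*$ topology requires precisely that $u\mapsto g[u]$ be continuous from $\mathcal U$ to $L^1(I;\bm V)$ in norm (weak-by-weak convergence does not suffice for the bilinear pairing). The paper simply states this ``under the made assumptions'' without further elaboration. What your route adds on top is the time-discretization layer, replacing $\bm x^{u^N}_t$ by $\bm x^{u^N}_{t^N(t)}$; this extra error is harmless thanks to the uniform Lipschitz continuity of trajectories in $t$ from Assumption~\ref{a4}. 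So your argument is not wrong, but it reproduces the machinery of Appendix~\ref{ssec:fbm} while resting on the same unproved continuity step as the paper's short proof. The Kakutani formulation is preferable here because it isolates that step as a single clean hypothesis (joint continuity of $\mathfrak{N}$) rather than burying it inside a layered limit, and it avoids the measurable-selection bookkeeping on each subinterval.
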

\begin{proof}
Recall that $(\mathcal U =\mathcal U(U), d_U)$ is a compact metric space inheriting the convexity of $U$. 

\textbf{1.} Define the functional $\mathfrak N\colon {\mathcal U}\times {\mathcal U}\to \R$ and the set-valued mapping $\mathfrak M \colon {\mathcal U} \rightsquigarrow {\mathcal U}$ by 
\[
   \mathfrak N[u,v] \doteq \displaystyle\int_{I}\bm H_t(\bm x_t^{u},  \bar{\bm p}_t, v_t)\,dt \ \text{ and } \ \mathfrak M[u] \doteq  \left\{v\in {\mathcal U}\colon \mathfrak N[u, v]=\inf_{w\in \mathcal U} \mathfrak N[u, w]\right\}.
\]
Under the made assumptions, the mapping $\mathfrak N$ is continuous in the product topology and linear in the second argument. It follows that $\mathfrak M$ is upper semicontinuous \cite[Theorem~6, p.~53]{aubin1984differential}, and its values are convex subsets of $\mathcal U$. By applying Kakutani's theorem \cite[Corollary~1, p.~85]{aubin1984differential}, we conclude that there exists $\check u \in \mathcal U$ with the property: $\check u\in \mathfrak M[\check u]$. 

\textbf{2.} Denote \[\eta_t(\mathrm{u}) \doteq \bm H_t(x_t[\check u], \bar{\bm p}_t, \mathrm{u}),\qquad \alpha_t \doteq \displaystyle\min_{{\rm u} \in U}\eta_t(\mathrm u),
\]
and provide the obvious estimate
\begin{align*}
    \int_{I} \bm H_t(x_t[\check u], \bar{\bm p}_t, \check {u})\,dt 
    \doteq 
    \inf_{w\in \mathcal U}\int_I \eta_t(w_t) \d t 
    \geq \int_{I}\min_{\mathrm{u} \in U} \bm H_t(x_t[\check u], \bar{\bm p}_t, \mathrm{u}) \doteq \int_{I} \alpha_t \d t.\label{proof-aux}
 \end{align*}
Noticing that the function \( \eta \colon I \times U \to \R\) is Carath\'{e}odory due to the definition of $\bm H$ and assumptions \ref{a8}, and \( \alpha_t\in  \eta_t(U) \) for a.e. \( t \in I \), it follows from Filippov's lemma~\cite[Theorem 8.2.10]{Aubin2009} that there exists a function $\check w \in \mathcal U$ such that \(\alpha = \eta \circ \check w\). Therefore, the above estimate holds with equality, and $\check u$ is a desired solution of \eqref{ucomp}.
\end{proof}
Appendix~\ref{ssec:fbm} presents an approach for approximating the solution of \eqref{ucomp} using a simple sample-and-hold algorithm.

Provided by the previous assertion, trivial arguments from Section~\ref{ssec:cfbl} lead to the following 
feedback NOC (FNOC).
\begin{theorem}[FNOC for problem $(\bm{LP}')$]\label{thm}
    Under the premise of Theorem~\ref{PMP1}, the relations
\begin{align*}
    \bm H_t(\bm x_t,  \bar{\bm p}_t, \bar u_t) = \bm H_t(\bm x_t,  \bar{\bm p}_t, u_t)\doteq \min_{\mathrm u \in U}\bm H_t(\bm x_t^{u},  \bar{\bm p}_t, \mathrm u)
\end{align*}
    hold for any $u \in \bar{\mathcal U}_{ext}$ and a.e. $t \in I$. Moreover, $\mathcal J[u] = \mathcal J[\bar u]$.
\end{theorem}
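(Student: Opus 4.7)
The plan is to apply the primal exact increment formula \eqref{eif} to the pair $(\bar u, u)$ with $u\in \bar{\mathcal U}_{ext}$, and then use the optimality of $\bar u$ to squeeze the resulting inequalities into equalities. The argument is essentially the one sketched in Section~\ref{ssec:cfbl} for the classical setup, transplanted into the abstract framework.

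First, I would note that under assumption \ref{a8} the formula \eqref{eif} specialises to
\begin{equation*}
  \mathcal J[u]-\mathcal J[\bar u] = \int_I \big(\bm H_t(\bm x_t,\bar{\bm p}_t, u_t)-\bm H_t(\bm x_t,\bar{\bm p}_t,\bar u_t)\big)\d t,
\end{equation*}
where $\bm x = \bm x^u$ is the trajectory generated by $u$ (not by $\bar u$). Here one must check that the premises of Lemma~\ref{coro:bmp} and of the product rule \eqref{dotcompo} are in place for the pair $(\bar{\bm p},\bm x^u)$; this is ensured by assumption \ref{A7}, which was imposed uniformly in $u\in\mathcal U$.

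Second, since $u\in\bar{\mathcal U}_{ext}$ satisfies \eqref{ucomp}, the integrand in the above display is non-positive for a.a.\ $t\in I$, because $\bar u_t \in U$ is an admissible competitor in the pointwise minimum. Hence $\mathcal J[u]\le\mathcal J[\bar u]$. On the other hand, the optimality of $\bar u$ for $(\bm{LP}')$ (the premise of Theorem~\ref{PMP1}) gives the reverse inequality $\mathcal J[u]\ge\mathcal J[\bar u]$. Combining the two yields $\mathcal J[u]=\mathcal J[\bar u]$, which is the second assertion of the theorem.

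Third, equality of the integrals of a non-positive integrand to zero forces the integrand to vanish a.e.:
\begin{equation*}
    \bm H_t(\bm x_t,\bar{\bm p}_t, u_t)=\bm H_t(\bm x_t,\bar{\bm p}_t,\bar u_t)\quad \text{for a.a. } t\in I,
\end{equation*}
and combined with \eqref{ucomp} this yields the chain of equalities claimed in the statement. The only subtle point I foresee is checking that the exact increment formula \eqref{eif} really applies to the feedback-generated $\bm x^u$ (rather than to a fixed control), i.e.\ that the measurability of $t\mapsto u_t$ produced by Theorem~\ref{thm:comp} is enough for the product rule to hold; this is a matter of reapplying Lemma~\ref{propos:xi} to $\xi = \bar{\bm p}$ along the curve $\bm x^u$, which is legitimate because the regularity hypotheses \ref{a6}--\ref{A7} were assumed uniformly in $u\in\mathcal U$. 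No genuinely hard step is expected beyond this bookkeeping.
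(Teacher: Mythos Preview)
Your proof is correct and follows exactly the route the paper intends: the paper does not give a detailed proof of this theorem, merely stating that it follows from ``trivial arguments from Section~\ref{ssec:cfbl}'' applied via the exact increment formula~\eqref{eif}, which is precisely the three-step argument you spell out. Your extra remarks about verifying the applicability of~\eqref{eif} to the feedback-generated trajectory (via Lemma~\ref{propos:xi} and the uniformity of hypotheses~\ref{a6}--\ref{A7}) are more careful than what the paper writes explicitly, but entirely in line with its framework.
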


As before, all obtained results are directly applied to the problem $(\bm{LP}'|P)$ and, upon an appropriate translation, can be extended to the nonlinear context. Applying this strategy leads to the following optimality principle in the problem $(P)$.
\begin{corollary}[FNOC for problem $(P)$]
Assuming the conditions \ref{a6}--\ref{a8}, the optimality of $\bar{u}$ for $(P)$ implies that
\begin{align*}
    \bar{H}_t(x_t, \bar{u}_t) = \bar{H}_t(x_t, u_t)
\end{align*}
holds at a.e. $t \in I$, for all $u \in \mathcal{U}$ satisfying the operator equation:
\begin{equation}
    \bar{H}_t(x_t, u_t) = \min_{\mathrm{u} \in U} \bar{H}_t(x_t, \mathrm{u}); \quad x = x^u. \label{u-comp}
\end{equation}
\end{corollary}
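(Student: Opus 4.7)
The plan is to mirror, in the nonlinear setting, the three-line argument used for Theorem~\ref{thm} (FNOC for \((\bm{LP}')\)), the key tool being the exact increment formula~\eqref{intr-if} specialized to Dirac initial data. First, I would invoke the immersion \((\bm{LP}'|P)\) of \((P)\) with \(\vartheta=\delta_{\mathrm{x}_0}\). Then \(\bm x_t=\delta_{x_t}\) along any trajectory and, under assumption~\ref{a8}, the representation~\eqref{intr-if} becomes
\begin{equation*}
\mathcal I[u]-\mathcal I[\bar u]=\int_I\bigl(\bar H_t(x_t,u_t)-\bar H_t(x_t,\bar u_t)\bigr)\,\d t,
\end{equation*}
where \(x=x^u\) is the state of the nonlinear problem \((P)\) corresponding to \(u\) (not \(\bar u\)!). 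This is the crucial ``evaluation along the target trajectory'' feature, which the exact formula supplies for free and which the classical first-variation representation does not.

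Next, I would pick any \(u\in\mathcal U\) satisfying the operator equation~\eqref{u-comp}; the existence of such a \(u\) is guaranteed by Theorem~\ref{thm:comp} applied to the super-problem \((\bm{LP}'|P)\), because the operator equation~\eqref{ucomp} evaluated at \(\bm x_t=\delta_{x_t}\) coincides exactly with~\eqref{u-comp}. For this \(u\), pointwise in \(t\),
\begin{equation*}
\bar H_t(x_t,u_t)=\min_{\mathrm u\in U}\bar H_t(x_t,\mathrm u)\le \bar H_t(x_t,\bar u_t),
\end{equation*}
so the integrand above is nonpositive a.e., which forces \(\mathcal I[u]\le\mathcal I[\bar u]\). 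Combining this with the optimality hypothesis \(\mathcal I[u]\ge\mathcal I[\bar u]\) yields \(\mathcal I[u]=\mathcal I[\bar u]\), and, since a nonpositive integrable function with zero integral vanishes a.e., one obtains \(\bar H_t(x_t,u_t)=\bar H_t(x_t,\bar u_t)\) for a.a.\ \(t\in I\), which is the claimed equality.

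The main obstacle I anticipate is the justification of the step that applies Theorem~\ref{thm:comp} with the Dirac initial condition: one has to make sure that the functional \(\mathfrak N[u,v]=\int_I\bm H_t(\bm x_t^u,\bar{\bm p}_t,v_t)\,\d t\), when restricted to \(\vartheta=\delta_{\mathrm{x}_0}\), inherits the continuity and linearity required by the Kakutani-fixed-point argument in Theorem~\ref{thm:comp}; this reduces to the continuity of \(u\mapsto x^u\) (which is part of assumption~\ref{a6}(c)) together with the regularity of \((t,\mathrm x,\mathrm u)\mapsto(\mathfrak L_t(\mathrm u)\bar{\bm p}_t)(\mathrm x)\) coming from \ref{a6}--\ref{a8} and Lemma~\ref{coro:bmp}. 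Once this verification is in place, no further nonlinear machinery is needed: everything else is a transcription of the linear argument through the pointwise identification \(\bm x_t=\delta_{x_t}\).
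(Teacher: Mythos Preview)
Your proposal is correct and follows exactly the paper's intended route: specialize the FNOC for \((\bm{LP}')\) (Theorem~\ref{thm}) to the delta-problem \((\bm{LP}'|P)\) via \(\bm x_t=\delta_{x_t}\), then read off the pointwise statement from the exact increment formula~\eqref{intr-if} together with the nonpositive-integrand-with-zero-integral argument. Your discussion of Theorem~\ref{thm:comp} is pertinent but logically peripheral here, since the corollary is conditional on the existence of a \(u\) satisfying~\eqref{u-comp}; that theorem merely ensures the statement is non-vacuous.
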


Although these conditions are not yet sufficient for global optimality, they are demonstrated to be strictly stronger than the PMP even in bi-linear problems (see examples in \cite{CSP-neur, pogodaevExactFormulaeIncrement2024}).

\begin{remark}[Geometric interpretation of the feedback NOCs]

Just as PMP  in control-linear problems is naturally connected to the class of weak variations $u^\epsilon$ of the tested  control $\bar u$, the FNOCs are related to the following class of ``super-strong'' variations:
\begin{align}\label{varu}
    u \triangleright_s \bar u \doteq \left\{
    \begin{array}{cc}
    u_t,  & t \in [0,s),\\
    \bar u_t, & \ t \in [s,T],\end{array} 
    \right. \quad u \in \mathcal U.
\end{align}
Indeed, plugging \eqref{varu} into \eqref{eif} yields:
\[
    \mathcal J[u \triangleright_s \bar u] - \mathcal J[\bar u] =  
    \int_0^s \left[\bm H_t(\bm x_t,  \bar{\bm p}_t, u_t) - \bm H_t(\bm x_t,  \bar{\bm p}_t, \bar  u_t)\right] \d t = \langle \bm x_s, \bar{\bm p}_s\rangle - \langle \vartheta, \bar{\bm p}_0\rangle,
\]
showing that the integrand of \eqref{eif} is exactly the sensitivity $\frac{\d}{\d s} \mathcal J[u \triangleright_s \bar u]$ of the cost functional to the control variation \eqref{varu}. 

This observation gives a natural geometric interpretation of Theorem~\ref{thm}: set
\[
\gamma_s \doteq \bm x_T^{u \, \triangleright_s \bar u},
\] 
and note that the map \(s \mapsto \gamma_s\) defines a continuous curve on the \emph{reachable set} \[\mathcal R_T(\vartheta) \doteq \{\bm x_T^{u}\colon u \in {\mathcal U} \}
\] of the control system  at the final time moment $T$ (see Fig.~\ref{fig:attainable}); by construction, this curves connects the points $\bar{\bm x}_T \doteq \gamma_0$ and $\bm x_T \doteq \gamma_T$, and \[\mathcal J[u \triangleright_s \bar u] = \left\langle \bm \gamma_s, \ell\right\rangle.\] 

In order to check (or discredit) the optimality of $\bar u$, we are to let the map $s \mapsto \left\langle \gamma_s, \ell\right\rangle$ monotonically decrease at the possibly fastest rate. If this map is demonstrated to be absolutely continuous, this means to minimize the quantity:
\[
    \frac{\d}{\d s}\left\langle \gamma_s, \ell\right\rangle \doteq \bm H_s(\bm x_s,  \bar{\bm p}_s, u_s) - \bm H_s(\bm x_s,  \bar{\bm p}_s, \bar  u_s) 
\]
w.r.t. $u_s$ at a.e. $s$. This is, actually, what our feedback optimality principle does. 
\end{remark}

\begin{figure}
  \centering
  \includegraphics[width=0.55\textwidth]{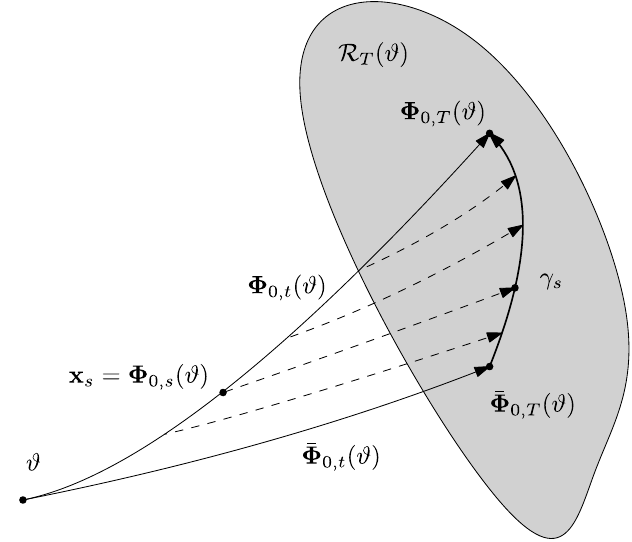}
  \caption{The curve \( \gamma_s \) on the reachable set \( \mathcal{R}_T(\vartheta) \) obtained by switching from \( u \) to \( \bar{u} \) at the time moment \( s \).}
  \label{fig:attainable}
\end{figure}

\section{Conclusions}\label{sec:concl}

We conclude the paper with a summary of the key findings and potential future directions.

\subsection{R\'{e}sum\'{e}. Advantages and shortcomings}

This paper introduces a new phenomenon, which we term ``super-duality'', and highlights its fundamental role in variational analysis, a role that has not been well understood previously. This discovery features elegant connections with classical results and leads to novel optimality principles, reinforcing the conventional concepts of local extremum in dynamic optimization. 


The presented approach is notable for its conceptual simplicity. 
It is particularly surprising that such a straightforward path to classical results has not been explored earlier. Equally unexpected is the fact that, during the ``classical period'' of control theory, the feedback optimality principle~--- although implicitly employed in computational methods \cite{srochko1982computational}~--- was not rigorously formalized. Even in the literature addressing feedback necessary conditions \cite{Dykhta2014}, our \emph{``exact''} version of this principle appears to be absent.

Like any sufficiently general approach, ours has a natural drawback: the regularity requirements we impose, such as uniform equidifferentiability, are significantly stricter than those typically accepted.

\subsection{Numerical algorithms}

A practical outcome of our feedback optimality principles, which lies beyond the scope of the current discussion, is the development of numerical methods for monotone descent. These methods involve the iterative application of feedback controls satisfying condition \eqref{u-comp}, supplemented, if necessary, by a sampling-and-hold algorithm to synthesize the corresponding states.

As evidenced by our computational experience \cite{chertovskihOptimalControlDistributed2023,
chertovskihOptimalControlDiffusion2024}, these algorithms significantly outperform classical indirect ``gradient'' methods based on PMP in state-linear problems involving classical, distributed, and stochastic equations. This advantage arises from the elimination of internal linear search procedures
, leading to a radical economy in recalculating the solutions to the state and adjoint systems.

Extending these numerical experiments to nonlinear settings remains a critical challenge for future research.

\subsection{Generalizations}

While our current focus has been on deterministic systems, the developed approach is applicable to the stochastic control framework, as preliminarily explored in \cite{chertovskihOptimalControlDiffusion2024}. Future work could build on this foundation, particularly by extending the methods to McKean-Vlasov control problems involving non-local Fokker-Planck-Kolmogorov equations.

Another promising avenue would be the possible extension of the results of Section~\ref{sec:nonloc_cont} to nonlocal balance equations on the space of measures, such as those described in \cite{pogodaevNonlocalBalanceEquations2022}. Here, one might begin with semi-linear models of the form \cite{Averboukh2024}, which are reducible to the problem of Section~\ref{sec:nonloc_cont} under an appropriate extension of the state space and a suitable characteristic representation. 

\appendix

\section{Elements of manifold geometry}
\label{app:geom}

  Let \( F \colon M \to N \) be a \( C^{1} \) map between smooth manifolds.
  Its \emph{differential} at a point \( x\in M \) is the linear map \( F_{*,x}\colon T_xM \to T_{F(x)}N \) defined as follows: if \( \gamma \) is a curve on \( M \) with the starting point \( x \) and the initial velocity \( \gamma'(0)=u \), then \( F\circ \gamma \) is a curve on \( N \) with the initial velocity \( (F\circ \gamma)'(0) = F_{*,x}u \).

Any \( C^1 \) map \( F \colon M \to N \) induces a \emph{pullback} operation for 'covariant tensors', such as functions and \( 1 \)-forms, and a \emph{pushforward} operation for 'contravariant tensors', such as vector fields:
  \begin{enumerate}[1)]
    \item A \emph{pullback} of a function \( g\colon N \to \mathbb{R}\) is the function \( F^{*}g\colon M\to \mathbb{R} \) defined by \( F^{*}g\doteq g\circ F \).
    \item A \emph{pushforward} of a vector field \( v \) on \( M \) is the vector field \( F_{*}v \) on \( N \) defined by
    \[
    (F_{*}v)_{F(x)} \doteq F_{*,x}(v_{x}).
    \]

    \item A \emph{pullback} of a \( 1 \)-form \( \omega \) on \( N \) is the \( 1 \)-form \( F^{*}\omega \) on \( M \) defined by
  \[
    (F^{*}\omega)_x(v) \doteq \omega_{F(x)}(F_{*,x}v), \quad v\in T_xM.
  \]
  \end{enumerate}

  We need one more operation \( \mathrm{d} \), called \emph{external derivative}, that maps functions to \( 1 \)-forms.
  If we associate vector fields, with differential operators, then this operation can be defined as \( \mathrm{d}g(v) \doteq v(g)  \), where \( g \) is a \( C^1 \) function, \( v \) is a vector field, and \( v(g) \) is the action of \( v \) on \( g \).

  We will use in the future an important property of the exterior derivative: it is commutative with pullbacks.
  More precisely, we have the following lemma.
  \begin{lemma}\label{lem:commute}
  If \( F\colon M\to N \) and \( g\colon N\to \mathbb{R} \) are \( C^{1} \) maps, then
  \[
    F^\star(\mathrm{d}g) = \mathrm{d}(F^\star g).
  \]
\end{lemma}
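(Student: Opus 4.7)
The plan is to verify the identity pointwise by unpacking the definitions on both sides and applying the curve-based description of the differential $F_{*,x}$.

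First, I would fix an arbitrary point $x \in M$ and an arbitrary tangent vector $v \in T_x M$, and compute each side as a real number. For the left-hand side, the definition of the pullback of a $1$-form gives
\[
\bigl(F^\star(\mathrm{d}g)\bigr)_x(v) \;=\; (\mathrm{d}g)_{F(x)}\bigl(F_{*,x}v\bigr),
\]
and then the definition of the exterior derivative of a function (as the operator associating to a vector field its action on the function) gives
\[
(\mathrm{d}g)_{F(x)}(w) \;=\; w(g), \qquad w \in T_{F(x)}N.
\]
So the left-hand side reduces to $(F_{*,x}v)(g)$. For the right-hand side, since $F^\star g = g \circ F$ is just a $C^1$ function on $M$, the same definition of $\mathrm{d}$ yields $\bigl(\mathrm{d}(F^\star g)\bigr)_x(v) = v(g\circ F)$.

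The remaining step is to show $(F_{*,x}v)(g) = v(g\circ F)$, and this is exactly the chain rule phrased in the curve language. Choose any $C^1$ curve $\gamma \colon (-\epsilon,\epsilon) \to M$ with $\gamma(0)=x$ and $\gamma'(0)=v$. By the definition of $F_{*,x}$ recalled in the appendix, the curve $F\circ\gamma$ on $N$ satisfies $(F\circ\gamma)'(0) = F_{*,x}v$. Then
\[
(F_{*,x}v)(g) \;=\; \frac{d}{dt}\bigg|_{t=0} g\bigl((F\circ\gamma)(t)\bigr) \;=\; \frac{d}{dt}\bigg|_{t=0} (g\circ F)\bigl(\gamma(t)\bigr) \;=\; v(g\circ F),
\]
where the first and third equalities use the curve definition of a tangent vector acting on a function. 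Combining the three displays yields $\bigl(F^\star(\mathrm{d}g)\bigr)_x(v) = \bigl(\mathrm{d}(F^\star g)\bigr)_x(v)$, and since $x$ and $v$ were arbitrary, the two $1$-forms coincide.

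There is no real obstacle here; the statement is essentially a tautology once one has adopted the curve-based definition of $F_{*,x}$ used in the appendix. The only small subtlety is to make sure the proof uses only the definitions stated in the excerpt (external derivative as $\mathrm{d}g(v)\doteq v(g)$, pullback of a $1$-form through $F_{*,x}$, and $F_{*,x}$ as the map of initial velocities), so that the lemma rests on a self-contained chain of identities rather than on a coordinate computation.
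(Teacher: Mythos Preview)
Your proof is correct and entirely standard. The paper itself does not prove this lemma; it is stated in Appendix~\ref{app:geom} as a well-known fact and the reader is referred to any textbook on smooth manifolds for details. Your argument --- unpacking both sides via the definitions of pullback and exterior derivative, then invoking the curve-based chain rule $(F_{*,x}v)(g)=v(g\circ F)$ --- is exactly the self-contained verification one would supply, and it uses only the notions introduced in the appendix.
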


\begin{remark}
  This exposition of geometric structures is far from being complete.
  We refer for further details to any book on smooth manifolds, e.g.,~\cite{novikovModernGeometricStructures2006}.
  Remark that in the case we are interested in, i.e., \( M=N= \mathbb{R}^n \), the above notion of differential \( F_{*,x} \) coincides with the usual notion of derivative \( DF(x) \) at a point, while the exterior derivative simply maps \( g\colon \mathbb{R}^n\to \mathbb{R} \) into the \( 1 \)-form \( \mathrm{d}g = \frac{\partial g}{\partial{x^1}}\mathrm{d} x^1 +\cdots + \frac{\partial g}{\partial{x^n}}\mathrm{d} x^n\).
\end{remark}

\section{Weakly* measurable vector-functions}\label{app:meas-w*}

Let $\bm V'$ be the dual of a Banach space $\bm V$. The following definition can be found in \cite[Section~II.1, Definition 1]{diestel1977vector}: 
\emph{A function $u\colon I \to \bm V'$, is said to be weakly* measurable if, for any $\mathrm v \in \bm V$, the action $t \mapsto \langle u_t, \mathrm  v\rangle$, $I \to \mathbb F$, is measurable w.r.t. the Borel 
sigma-algebras $\mathcal B_{\R}$ and $\mathcal B_{\mathbb F}$. } 

We say that two weakly* measurable functions $u$ and $u'$ are equivalent and write $u \sim u'$ if and only if, for all $\mathrm v \in \bm V$, the equality
\[
\langle u_t, \mathrm v\rangle = \langle u_t', \mathrm v\rangle
\]
holds for all $t \in \Omega$, where $\Omega = \Omega(\mathrm v)$ is a subset of $I$ of full Lebesgue measure.\footnote{It is easily checked that $\sim$ is indeed an equivalence relation.}

Denote by ${L}^\infty_{w^*}(I; \bm V')$ the factor of weakly* measurable functions $I \to \bm V'$ modulo the relation $\sim$, such that there exists $c \geq 0$ with the property that, for any $\mathrm v \in \bm V$, the estimate
\[
\left|\langle u_t, \mathrm v\rangle\right| \leq c\|{\rm v}\|_{\bm V}
\]
holds for all $t\in \Omega(\mathrm v)$. Evidently, ${L}^\infty_{w^*}$ is a linear space, and the infimum among all constants $c$ from the above estimate serves as a norm $\|\cdot\|_{{L}^\infty_{w^*}}$ on this space. In particular, if $\bm V$ is separable and $u \in {L}_\infty^{w^*}(I; \bm V')$, the map \[\xi_u \colon t \mapsto \|u_t\|_{\bm V'}\] is in ${L}^\infty(I; \R_+)$, and
\[\|u\|_{{L}^\infty_{w^*}(I; \bm V')} = \|\xi_u\|_{L^\infty(I; \R_+)},\] see e.g. \cite[Remark~10.1.15]{papageorgiou2009handbook}.
 
The following result can be found in \cite[p. 95]{tulcea1969} and \cite[Theorem~10.1.16]{papageorgiou2009handbook}:
\begin{proposition}
The duality relation \eqref{d-w} holds for any Banach space $\bm V$, provided by the pairing \eqref{wp*}.
\end{proposition}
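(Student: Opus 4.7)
The plan is to establish the two inclusions realizing the pairing \eqref{wp*} as an isometric isomorphism, handling the (possibly non-separable) general case via the equivalence relation baked into the definition of $L^\infty_{w*}$.

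\textbf{Easy direction: $L^\infty_{w*}(I;\bm V') \hookrightarrow (L^1(I;\bm V))'$.} Given $u \in L^\infty_{w*}(I;\bm V')$ and $v \in L^1(I;\bm V)$, I first show that $t \mapsto \langle u_t, v_t\rangle$ is measurable and integrable. By Pettis's theorem, $v$ is essentially separable-valued, so there is a sequence of $\bm V$-valued simple functions $v_n = \sum_i \mathrm{v}_i^n \chi_{E_i^n}$ with $v_n(t) \to v(t)$ a.e.\ and $\|v_n(t)\|_{\bm V} \le 2\|v(t)\|_{\bm V}$. Each scalar map $t \mapsto \langle u_t, v_n(t)\rangle = \sum_i \chi_{E_i^n}(t) \langle u_t, \mathrm{v}_i^n\rangle$ is measurable by weak* measurability of $u$, and the pointwise estimate $|\langle u_t, v(t)\rangle| \le \|u\|_{L^\infty_{w*}} \|v(t)\|_{\bm V}$ (valid off a $v$-dependent null set) yields both measurability of the limit and the bound $|\langle\!\langle u, v\rangle\!\rangle| \le \|u\|_{L^\infty_{w*}} \|v\|_{L^1(I;\bm V)}$.

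\textbf{Hard direction: $(L^1(I;\bm V))' \hookrightarrow L^\infty_{w*}(I;\bm V')$.} Given $\Lambda \in (L^1(I;\bm V))'$, I reconstruct $u$ slicewise via a scalar Radon--Nikod\'ym argument. For each $\mathrm{v} \in \bm V$, set $\nu_{\mathrm{v}}(E) \doteq \Lambda(\chi_E \, \mathrm{v})$ for Lebesgue-measurable $E \subseteq I$; countable additivity follows from continuity of $\Lambda$ combined with dominated convergence in $L^1(I;\bm V)$ (whose hypotheses are trivially met because $\|\chi_E \, \mathrm{v}\|_{L^1} = |E| \, \|\mathrm{v}\|_{\bm V}$), and $|\nu_{\mathrm{v}}(E)| \le \|\Lambda\| \, \|\mathrm{v}\|_{\bm V} \, |E|$ gives absolute continuity w.r.t.\ Lebesgue measure with an $L^\infty$-density $g_{\mathrm{v}} \in L^\infty(I;\mathbb{F})$ satisfying $\|g_{\mathrm{v}}\|_\infty \le \|\Lambda\| \, \|\mathrm{v}\|_{\bm V}$. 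I then define $u$ by $\langle u_t, \mathrm{v}\rangle \doteq g_{\mathrm{v}}(t)$ on the full-measure set $\Omega(\mathrm{v})$ where $g_{\mathrm{v}}$ is defined. Linearity of $\Lambda$ forces $g_{\lambda_1 \mathrm{v}_1 + \lambda_2 \mathrm{v}_2} = \lambda_1 g_{\mathrm{v}_1} + \lambda_2 g_{\mathrm{v}_2}$ a.e., so $u_t \in \bm V'$ with $\|u_t\|_{\bm V'} \le \|\Lambda\|$ off a $(\mathrm{v}_1, \mathrm{v}_2, \lambda_1, \lambda_2)$-dependent null set, which precisely places $u$ in $L^\infty_{w*}(I;\bm V')$ with norm $\le \|\Lambda\|$. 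By construction $\Lambda(\chi_E \, \mathrm{v}) = \langle\!\langle u, \chi_E \, \mathrm{v}\rangle\!\rangle$; extending by linearity to simple functions and then by density (and the easy direction already proved) gives $\Lambda = \langle\!\langle u, \cdot\rangle\!\rangle$, together with the reverse inequality $\|\Lambda\| \le \|u\|_{L^\infty_{w*}}$, completing the isometric identification.

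\textbf{Main obstacle.} When $\bm V$ is not separable, one cannot extract a single exceptional Lebesgue-null subset of $I$ outside which $t \mapsto u_t$ is simultaneously linear and norm-bounded in \emph{all} of $\bm V$. This is exactly why the space $L^\infty_{w*}(I;\bm V')$ in Appendix~\ref{app:meas-w*} is defined modulo the equivalence relation that tolerates $\mathrm{v}$-dependent null sets; the Radon--Nikod\'ym construction above naturally lives at the level of that quotient, and the delicate point is to verify that the assignment $\Lambda \mapsto u$ is well-defined as a map into equivalence classes and agrees isometrically with the embedding of the easy direction. A secondary technicality is handling the Bochner-integrability of $\chi_E \, \mathrm{v}$ for arbitrary (possibly non-separable) $\bm V$, which is straightforward since such a function is finitely valued.
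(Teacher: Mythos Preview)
The paper does not prove this proposition; it cites \cite{tulcea1969} and \cite[Theorem~10.1.16]{papageorgiou2009handbook}. Your outline follows the standard scalar Radon--Nikod\'ym route underlying those references, and the easy direction is fine.

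The hard direction has a genuine gap, exactly where your ``Main obstacle'' paragraph tries to dismiss it. Your construction produces a family $\{g_{\mathrm v}\}_{\mathrm v\in\bm V}\subset L^\infty(I;\mathbb F)$ of densities that are linearly compatible and uniformly bounded \emph{modulo null sets}; it does not produce a function $u\colon I\to\bm V'$. By the definition in Appendix~\ref{app:meas-w*}, $L^\infty_{w*}(I;\bm V')$ consists of equivalence classes of \emph{actual functions} $I\to\bm V'$: the relation $\sim$ identifies two such functions when their pairings with each fixed $\mathrm v$ agree off a $\mathrm v$-dependent null set, but it does not enlarge the space to include abstract compatible families of scalar densities. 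You still owe a representative $t\mapsto u_t\in\bm V'$ with $\langle u_t,\mathrm v\rangle=g_{\mathrm v}(t)$ a.e.\ for every $\mathrm v$. When $\bm V$ is separable this is easy (work on a countable $\mathbb Q$-linear dense set, discard the countable union of exceptional null sets, and extend by continuity), but for general $\bm V$---which is what the proposition asserts---one needs the lifting theorem: pick a linear lifting $\rho\colon L^\infty(I;\mathbb F)\to\mathcal L^\infty(I;\mathbb F)$ and set $\langle u_t,\mathrm v\rangle\doteq\rho(g_{\mathrm v})(t)$, so that $u_t\in\bm V'$ with $\|u_t\|\le\|\Lambda\|$ for \emph{every} $t$. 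This is precisely the substance of the Ionescu--Tulcea argument the paper defers to, and without it your proof does not reach the stated generality.
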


\section{Young-Warga-Gamkrelidze generalized controls}\label{appe:GenU}

The notion of generalized control from Section~\ref{sec:odes} dates back to the works of L. Young \cite{young1969}, J.~Warga \cite{warga1972optimal} and R.V.~Gamkrelidze \cite{Gamkrelidze1978}. 

A \emph{generalized control} \( u \) is a Borel measure on \( I \times U \) with a property: \( \pi^1_\sharp u = \mathfrak{L}^1 \), where \( \pi^1 \colon I \times U \to I \) denotes the projection onto the time interval.  
Any generalized control is uniquely represented by its \emph{disintegration}, i.e., a weakly* measurable family \( (u_t)_{t \in I} \) of Borel probability measures \( u_t \in \mathcal{P}(U) \) satisfying the identity 
\[
\int_{I \times U} \phi(t,\mathrm{u}) \, \mathrm{d} u(t,\mathrm{u}) = \int_I \int_U \phi(t,\mathrm{u}) \, \mathrm{d} u_t(\mathrm{u}) \, \mathrm{d}t,
\]
for any Borel measurable function \( \phi \colon I \times U \to \mathbb{R} \).

The use of generalized controls is motivated by the following notable properties: 
\begin{enumerate}[i)]
    \item The set \( {\mathcal{U}} \) is compact in the topology of weak convergence of probability measures.

    \item The collection \( \{u[v] \colon v \in L^\infty(I;U)\} \) is dense in \( {\mathcal{U}} \).
\end{enumerate}

\section{Feedback controls. Sample-and-hold solutions}\label{ssec:fbm}

The functions $u\in \mathcal U$ are so-called \emph{open-loop} controls (or program strategies): the corresponding control mechanism consists in pre-setting an instruction (a program) for the system evolution over the whole planned period. We now recall a different concept, which assumes the option to observe current system's states and intervene in the dynamics ``on-flight''. 
\begin{definition}
By a \emph{feedback control} of system $\Phi$ we mean a family $\mathfrak{u} \doteq (u^{\rm x})_{\mathrm x \in \mathcal X}$ of admissible controls $u^{\rm x} \in \mathcal U$, parameterized by the points of the state space $\mathcal X$.
\end{definition}
Given a feedback control $\mathfrak u$, we can design the corresponding trajectory $x[\mathfrak{u}]$ via the following ``sample-and-hold'' algorithm. 

Consider a sequence $(\pi^N)_{N \in \mathbb N}$ of partitions $\pi^N=\{0=t_0^N < t_1^N<\ldots t_N^N =T\}$ of the time segment $I$ such that $\pi^{N} \subset \pi^{N+1}$, and $|\pi^N| \doteq \max\limits_{k=0}^N\left(t_{k+1}^N-t_k^N\right) \to 0$ as $N \to \infty$. For each $\pi^N$, we define a polygonal arc $x^N\doteq x[\mathfrak{u}; \pi^N]\colon I \to {\mathcal X}$ recurrently in $0\leq k \leq N-1$:
\[
    x^N_t := \Phi_{t_k^N, t}\left[u^{\mathrm x^N_k}\right]({\mathrm x}^N_k), \quad  t \in [t_k^N, t_{k+1}^N]; \quad {\mathrm x}_k^N:= x^N({t_k}^N),
\]
initialized by the known value ${\mathrm x}_0$ at $t_0^N=0$. Simultaneously, we design an open-loop control $u^N \doteq u[\mathfrak u; \pi^N] \in \mathcal U$:
\[
    u^N_t := u^{N,k}_t \doteq u^{\mathrm x_k^N}_t, \quad  t \in [t_k^N, t_{k+1}^N],
\]
and notice that $x^N = x^{u^N}$.
\begin{remark}

In the literature, the term ``feedback control" typically refers to a loop of the form \( u = u_t(\mathrm{x}) \). While our notion deviates from this classical interpretation, we retain the same terminology for convenience.

Similarly, our definition of a sample-and-hold solution is non-standard. While resembling the well-known Krasovskii-Subbotin framework \cite{krasovskii2011game}, it operates with ``piecewise open-loop'' approximations rather than piecewise constant ones.

\end{remark}

Recall that $\mathcal U$ is a compactly metrizable space, and, for metric spaces, the notions of topological and sequential compactness are equivalent. Thus, the sequence $(u^N)$ contains a subsequence converging to some element $u\doteq u[\mathfrak u] \in \mathcal U$. Passing to this subsequence (that we not relabel) and leveraging the continuity of $u \mapsto x^{u}$, we 
conclude that the corresponding trajectories $x^N$ converge to $x[\mathfrak u]\doteq x^{u[\mathfrak u]} \doteq x^{u}$ in the pointwise sense.
\begin{remark}
    The process $(x[\mathfrak{u}], u[\mathfrak{u}])$ is a partial limit of $(x^N, u^N)$, i.e., the map $\mathfrak{u} \mapsto (x[\mathfrak{u}], u[\mathfrak{u}])$ is, generically, multi-valued. 
\end{remark}

We now suggest a constructive approach to resolving the operator equation \eqref{u-comp} based on the following lemma. 
\begin{lemma}\label{lem:sampl}
Let a function $h \colon I \times \mathcal X \times U \to \R$, $h = h_t({\rm x}, {\mathrm u})$, be non-negative, measurable in $t$, continuous in $({\rm x, u})$,  and locally Lipschitz in $\rm x$ uniformly w.r.t. $(t, \mathrm u) \in I \times U$. Furthermore, assume that, for any converging sequence $(u^k) \subset \mathcal U$ with a limit $u \in \mathcal U$, it holds:
\[
    \lim_{k \to \infty}\int_I \left[h_t(x_t^{u}, u_t) - h_t(x_t^{u}, u^k_t)\right] \d t = 0,
\]
and there exists a family $\mathfrak{u}=(u^{\rm x})$ of controls $u^{\rm x} \in \mathcal U$ such that
\[
 h_t(\mathrm x, u^{\mathrm x}_t) = 0 \mbox{ for all }\mathrm x\in \mathcal S,\mbox{ and a.e. } t \in I.
\]    
Then, the equality 
\[
 h_t(x_t^{u}, u_t) = 0\mbox{ for a.e. }t\in I,
\]
holds for any open-loop control \(u = u[\mathfrak{u}]\) generated by $\mathfrak{u}$.
\end{lemma}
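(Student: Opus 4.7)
The plan is to exploit the sample-and-hold construction introduced just above the lemma together with a triangle-inequality argument, reducing the desired equality to three limit statements of mild type. Fix a partition sequence $(\pi^N)$ with $|\pi^N|\to 0$, let $(x^N, u^N)$ be the corresponding sample-and-hold processes driven by $\mathfrak u$, and (by passing to a subsequence) assume $u^N\to u\doteq u[\mathfrak u]$ in $\mathcal U$ so that $x^N_t\to x^u_t$ pointwise; by Assumption~\ref{a6}(c) all states $x^N_t$ and $x^u_t$ lie in a common bounded subset $K\subset\mathcal X$ on which the hypothesis gives a single Lipschitz constant $L$ for $h$ in the spatial variable.

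The first step is to show that $\int_I h_t(x^N_t,u^N_t)\d t\to 0$. By construction, on each cell $[t_k^N,t_{k+1}^N]$ the control $u^N$ coincides with $u^{\mathrm x^N_k}$, so the hypothesis on $\mathfrak u$ gives $h_t(\mathrm x^N_k, u^N_t)=0$ for a.e.\ $t$ in that cell. Using the Lipschitz assumption,
\begin{equation*}
    \bigl|h_t(x^N_t,u^N_t)\bigr| = \bigl|h_t(x^N_t,u^N_t)-h_t(\mathrm x^N_k,u^N_t)\bigr| \leq L\, d_{\mathcal X}(x^N_t,\mathrm x^N_k).
\end{equation*}
By Assumption~\ref{a4} applied to each $\Phi^{u^{\mathrm x^N_k}}$, the oscillation $d_{\mathcal X}(x^N_t,\mathrm x^N_k)$ is bounded by $\Lip(\Phi)\,|\pi^N|$, uniformly in $k$ and in $t\in[t^N_k,t^N_{k+1}]$, hence $\int_I h_t(x^N_t,u^N_t)\d t\leq L\,\Lip(\Phi)\,T\,|\pi^N|\to 0$.

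The second step is to identify the limit of the same integral with $\int_I h_t(x^u_t,u_t)\d t$. Splitting
\begin{equation*}
    h_t(x^N_t,u^N_t) - h_t(x^u_t,u_t) = \bigl[h_t(x^N_t,u^N_t) - h_t(x^u_t,u^N_t)\bigr] + \bigl[h_t(x^u_t,u^N_t) - h_t(x^u_t,u_t)\bigr],
\end{equation*}
the first bracket is bounded by $L\,d_{\mathcal X}(x^N_t,x^u_t)$, which tends to $0$ pointwise and is dominated by the diameter of $K$; dominated convergence handles its integral. The second bracket vanishes in the integral sense by the standing assumption of the lemma. Combining with the first step yields $\int_I h_t(x^u_t,u_t)\d t = 0$, and since $h\geq 0$ this forces $h_t(x^u_t,u_t)=0$ for a.e.\ $t\in I$.

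The anticipated technical nuisance is not any single estimate but the bookkeeping that makes them compatible. Specifically: one needs $x^N_t$ (for all $N$) to lie in a fixed bounded set so that a single Lipschitz constant $L$ is available; one needs the measurability of $t\mapsto h_t(x^N_t,u^N_t)$, which follows from the Carath\'{e}odory-type regularity of $h$ and the measurability of $(x^N,u^N)$; and one needs the dominating function for $h_t(x^N_t,u^N_t)-h_t(x^u_t,u^N_t)$, which is $L\cdot\diam(K)$. All three reduce to Assumption~\ref{a6}(c), so the argument is essentially as above without further obstruction.
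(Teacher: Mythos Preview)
Your proof is correct and follows essentially the same route as the paper: both arguments show $\int_I h_t(x^N_t,u^N_t)\,dt\to 0$ via the Lipschitz estimate against the cell oscillation $d_{\mathcal X}(x^N_t,\mathrm x^N_k)\le\Lip(\Phi)\,|\pi^N|$, then pass to the limit by splitting $h_t(x^N_t,u^N_t)-h_t(x^u_t,u_t)$ into a spatial part (handled by Lipschitz continuity and $x^N_t\to x^u_t$) and a control part (handled by the lemma's standing hypothesis), and finally invoke non-negativity. The only cosmetic difference is that the paper bounds each cell integral by $O(|\pi^N|^2)$ and sums, whereas you bound the integrand pointwise by $L\,\Lip(\Phi)\,|\pi^N|$ and integrate once; the resulting estimates are equivalent.
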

\begin{proof}
     Consider the sequence $(x^N, u^N)$ defined by the sample-and-hold method, using the feedback $\mathfrak u$. Represent:
    \[
      \int_I h_t(x^N_t, u^N_t) \d t = \sum_k \int_{{I_k^N}}\left[h_t(x^N_t, u^{N,k}_t) - h_{t}({\mathrm x}^N_{k}, u^{N,k}_t)\right]\d t, 
    \]
    and recall that      \(h_{t}({\mathrm x}^N_{k}, u^{N,k}_t) = 0\) for a.e. $t \in {I_k^N} \doteq [t_k^N, t_{k+1}^N]$, all $k = 0, \ldots, N$, and any $N \geq 1$, by assumption of the assertion.  

 Each term in the latter sum is estimated as
    \begin{align*}
                \int_{{I_k^N}}\left[h_t(x^N_t, u^{N,k}_t) - h_{t}(\mathrm x^N_{k}, u^{N,k}_t)\right]\d t \leq & \ \Lip_{\mathcal K}(h) \int_{{I_k^N}}d(x^N_t,\mathrm x^N_{k})\notag\\ 
                \leq & \  \Lip_{\mathcal K}(h) \Lip(\Phi) |\pi^N|^2\notag\\ 
                \doteq & \,  O\left(\frac{1}{N^2}\right)>0.
    \end{align*}
    Therefore,
\[
        0\leq \int_I h_t(x^N_t, u^N_t) \d t \leq O\left(\frac{1}{N^2}\right) N \doteq  O\left(\frac{1}{N}\right)>0.
    \]
    Let $x=x^{u}$. As a consequence,
    \begin{align*}
    0 & \leq 
        \int_I h_t(x_t, u_t)\d t\\ 
        & \leq  \left|\int_I \left[h_t(x_t, u_t) - h_t(x^N_t, u^N_t)\right] \d t\right|+ O\left(\frac{1}{N}\right)\\ 
              & \leq \left|\int_I \left[h_t(x_t, u_t) - h_t(x_t, u^N_t)\right] \d t\right| + \int_I \left|h_t(x_t, u_t^N) - h_t(x^N_t, u^N_t)\right| \d t+ O\left(\frac{1}{N}\right)\\
              & \leq \left|\int_I \left[h_t(x_t, u_t) - h_t(x_t, u^N_t)\right] \d t\right| + \Lip(h)\int_I d\big(x_t, x^N_t\big)\d t+ O\left(\frac{1}{N}\right).
    \end{align*}
    Passing to a partial limit as $u^{N_j} \to u$,
    we conclude that
    \[
      \int_I h_t(x_t, u_t)\d t =  0.
    \]
  To complete the proof, it only remains to leverage the non-negativity of the integrand.
  
\end{proof}
\begin{remark}
    A natural example of the map $h$ with the regularity, required by the above assertion, is
$$h_t(\mathrm x, \mathrm u)= \left\langle{\rm u}, \mathfrak a_t(\mathrm x)\right\rangle _{(\bm V', \bm V)} + \bm b_t(\mathrm x),
$$
where $\mathfrak a\colon I \times \mathcal X \to \bm V$, $\mathfrak a={\mathfrak a}_t(\rm x)$, and ${\bm b}\colon I \times \mathcal X \to \R$, $\bm b={\bm b}_t(\mathrm x)$ --- ensuring the non-negativity of $h$ --- are measurable in $t$, and locally Lipschitz in $\rm x$ uniformly w.r.t. $t \in I$; moreover, the function $t \mapsto \sup_{\rm x \in \mathcal X}\|\mathfrak{a}_t (\mathrm x)\|_{\bm{V}}$ is in $L^1(I)$. Note that these assumptions imply, in particular, the convergence
\[
    \int_I \left[h_t(x_t^{u}, u_t) - h_t(x_t^{u}, u^k_t)\right] \d t \doteq \langle\!\langle u - u^{k},   \mathfrak{a}\circ x\rangle\!\rangle \to 0\mbox{ as }u^k \to u\mbox{ within }\mathcal U.
\]

\end{remark}

\begin{proposition}
    Let $\bar H$ satisfy all hypotheses of Lemma~\ref{lem:sampl}, except for non-negativity, and $\mathfrak{u}=(u^{\rm x})$ be defined as a measurable family of solutions $u^{\rm x} \in \mathcal U$ to~
\[
 \bar H_t (\mathrm x, u^{\mathrm x}_t) = \min_{{\mathrm u} \in U} \bar H_t(\mathrm{ x}, {\mathrm u}).
\]    
Then, any open-loop control \(u = u[\mathfrak{u}]\) generated by $\mathfrak{u}$ validates \eqref{u-comp}, i.e., $u\in {\bar{\mathcal U}}_\text{ext}$.   
\end{proposition}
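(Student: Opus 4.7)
The approach is to reduce the assertion to Lemma~\ref{lem:sampl} applied to the auxiliary function
\[
  h_t(\mathrm x, \mathrm u) \doteq \bar H_t(\mathrm x, \mathrm u) - m_t(\mathrm x), \qquad m_t(\mathrm x) \doteq \min_{\mathrm v \in U} \bar H_t(\mathrm x, \mathrm v).
\]
By construction $h \ge 0$ on $I \times \mathcal X \times U$, and the defining property of the feedback $\mathfrak u = (u^{\mathrm x})$ yields $h_t(\mathrm x, u^{\mathrm x}_t) = 0$ for every $\mathrm x \in \mathcal X$ and a.a. $t \in I$. Once $h$ is shown to satisfy the remaining hypotheses of Lemma~\ref{lem:sampl}, its conclusion $h_t(x_t, u_t) = 0$ for a.e.\ $t \in I$ is precisely~\eqref{u-comp}.

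The core verification is that the infimal map $m_t$ inherits the regularity of $\bar H$. For measurability in $t$, the plan is to use the continuity of $\bar H$ in $\mathrm u$ together with the compactness of $U$ to write $m_t(\mathrm x) = \inf_n \bar H_t(\mathrm x, \mathrm u_n)$ along a countable dense sequence $\{\mathrm u_n\} \subset U$, which presents $m_\cdot(\mathrm x)$ as a countable infimum of measurable functions. The uniform (in $\mathrm u$) local Lipschitz continuity of $\bar H$ in $\mathrm x$ then transfers to $m_t$ via the elementary estimate
\[
  |m_t(\mathrm x) - m_t(\mathrm y)| \le \sup_{\mathrm u \in U}\left|\bar H_t(\mathrm x, \mathrm u) - \bar H_t(\mathrm y, \mathrm u)\right| \le \Lip_{\mathcal K}(\bar H)\, d_{\mathcal X}(\mathrm x, \mathrm y),
\]
which also supplies local Lipschitzness of $h_t(\cdot, \mathrm u)$ in $\mathrm x$ uniformly in $(t, \mathrm u)$ and, combined with the continuity of $\bar H$ in $(\mathrm x, \mathrm u)$, the joint continuity of $h$.

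The sequential-limit hypothesis of Lemma~\ref{lem:sampl} will come for free: because $m_t(\mathrm x)$ is independent of $\mathrm u$, for any $u^k \to u$ in $\mathcal U$ the difference
\[
  \int_I \bigl[h_t(x_t^u, u_t) - h_t(x_t^u, u^k_t)\bigr]\d t = \int_I \bigl[\bar H_t(x_t^u, u_t) - \bar H_t(x_t^u, u^k_t)\bigr]\d t
\]
vanishes in the limit by the analogous assumption already imposed on $\bar H$. Applying Lemma~\ref{lem:sampl} to $h$ and the feedback $\mathfrak u$ then closes the argument. The main, if mild, obstacle is the measurability and continuity of $m_t$, which is a routine but non-trivial consequence of the Carath\'eodory structure of $\bar H$ and the compactness of $U$; the existence of the measurable selection $\mathfrak u$ is given in the statement and so requires no further argument.
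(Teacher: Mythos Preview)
Your proposal is correct and follows exactly the same route as the paper: define $h_t(\mathrm x,\mathrm u)=\bar H_t(\mathrm x,\mathrm u)-\min_{\mathrm v\in U}\bar H_t(\mathrm x,\mathrm v)$ and apply Lemma~\ref{lem:sampl}. The paper's own proof is a one-liner that simply names this choice of $h$; you have additionally spelled out the routine verifications (measurability, continuity, Lipschitz regularity of the infimal function, and the sequential-limit condition), which the paper leaves implicit.
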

\begin{proof}
    The result follows from the previous assertion, provided by the choice
    \(
        \displaystyle h_t(\mathrm x, \mathrm{u}) \doteq  \bar H_t ({\rm x, u}) - \min_{{\mathrm u} \in U} \bar H_t(\mathrm{ x}, {\mathrm u}).
    \)
\end{proof}
A similar result is obtained in \cite{pogodaevExactFormulaeIncrement2024} for the classical setting and Krasovskii-Subboting formalism.

\section{Proofs related to Section~\ref{sec:nonloc_cont}}\label{sec:Wflowdiff} 

\subsection{Proof of Proposition~\ref{prop:Wflowdiff}}

The proof consists of three steps, each addressed in Lemmas~\ref{lem:O2vf}--\ref{lem:s2}. In the first step, we establish a result analogous to Lemma~\ref{lem:testlip}. Without loss of generality, we assume that \( t_0 = 0 \).

\begin{lemma}
  \label{lem:O2vf}
  Let \( F_t(\mathrm{x},\mu) = F_{t}(\mathrm{x},\mu,u_{t}) \) and \( \Phi \) be the corresponding flow.
  Fix \( \mu\in \mathcal{P}_2 \) and let \( X_t \doteq X_{0,t}^{\mu} \).
  The estimate
  \begin{multline}\label{eq:L1est}
    \Big\|F_{t} \left( X_{t} + \varepsilon v, (X_t + \varepsilon v)_{\sharp}\mu  \right) - F_{t}(X_t,X_{t\sharp}\mu)  - \varepsilon DF_{t}\left(X_t,X_{t\sharp}\mu\right)v\\
    - \varepsilon\int \mathbf{D}F_t\left(X_t,X_{t\sharp}\mu,X_{t}(\mathrm{y})\right)v(\mathrm{y})\d\mu(\mathrm{y})\Big\|_{L^1_\mu} \le 2\left(\Lip(DF_t)+\Lip(\mathbf{D}F_t)\right)\|v\|^2_{\mu}\varepsilon^2
  \end{multline}
  holds for all \( t \in I \), \( v \in {L}^2_{\mu} \), and \( \varepsilon \in \mathbb{R} \).
\end{lemma}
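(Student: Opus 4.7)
\textbf{Proof plan for Lemma~\ref{lem:O2vf}.} The idea is a standard two-step Taylor expansion, applied separately to the two arguments of the nonlocal vector field $F_t$, combined with a change of variables to rewrite the measure-derivative term as an integral against $\mu$ rather than against $X_{t\sharp}\mu$.

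Denote $\nu \doteq X_{t\sharp}\mu$ and $\nu^\varepsilon \doteq (X_t + \varepsilon v)_\sharp\mu$. I would split
\[
F_t(X_t + \varepsilon v, \nu^\varepsilon) - F_t(X_t, \nu) = \underbrace{\bigl[F_t(X_t + \varepsilon v, \nu^\varepsilon) - F_t(X_t, \nu^\varepsilon)\bigr]}_{(A)} + \underbrace{\bigl[F_t(X_t, \nu^\varepsilon) - F_t(X_t, \nu)\bigr]}_{(B)}.
\]
For $(A)$, I apply the fundamental theorem of calculus in the first argument to write it as $\int_0^1 DF_t(X_t + s\varepsilon v, \nu^\varepsilon)\,\varepsilon v\,ds$, and subtract off $\varepsilon DF_t(X_t,\nu) v$. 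The resulting remainder is bounded pointwise by
\[
\Lip(DF_t)\,\varepsilon |v|\,\bigl(s\varepsilon |v| + W_2(\nu^\varepsilon,\nu)\bigr),
\]
using Assumption \ref{F2}. Since $W_2(\nu^\varepsilon,\nu)\le \varepsilon\|v\|_\mu$ (use the plan $\Pi = (X_t,X_t+\varepsilon v)_\sharp\mu$), taking the $L^1_\mu$-norm and the Cauchy-Schwartz inequality (with $\|v\|_{L^1_\mu}\le \|v\|_\mu$) yields a bound of order $\Lip(DF_t)\,\|v\|_\mu^2\,\varepsilon^2$.

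For $(B)$, I would similarly interpolate along the curve $s\mapsto \nu^{s\varepsilon}=(X_t + s\varepsilon v)_\sharp\mu = (\id + s\varepsilon v\circ X_t^{-1})_\sharp \nu$ in $\mathcal{P}_2$, and use the Wasserstein-derivative representation
\[
\frac{d}{ds}F_t\bigl(X_t,\nu^{s\varepsilon}\bigr) = \int \mathbf{D}F_t\bigl(X_t,\nu^{s\varepsilon}, \mathrm{z}+s\varepsilon v(\mathrm{z}')\bigr)\,\varepsilon v(\mathrm{z}')\,d\mu(\mathrm{z}'),
\]
(after the change of variable $\mathrm{y} = X_t(\mathrm{z}')$, so that $\mathrm{z} = X_t(\mathrm{z}')$). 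Subtracting the value at $s=0$ and integrating in $s$ gives $\varepsilon\int \mathbf{D}F_t(X_t,\nu,X_t(\mathrm{y}))v(\mathrm{y})\,d\mu(\mathrm{y})$ plus an error controlled, via the Lipschitz property of $\mathbf{D}F_t$, by $\Lip(\mathbf{D}F_t)\,\varepsilon |v(\mathrm{y})|\,(s\varepsilon|v(\mathrm{z}')| + W_2(\nu^{s\varepsilon},\nu))$. Taking the $L^1_\mu$-norm and using Cauchy-Schwartz with the probability measure $\mu$ produces a contribution of order $\Lip(\mathbf{D}F_t)\,\|v\|_\mu^2\,\varepsilon^2$.

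Adding the two contributions and collecting constants yields the claimed bound with the factor $2\bigl(\Lip(DF_t)+\Lip(\mathbf{D}F_t)\bigr)$. The main obstacle I expect is the measure-derivative term $(B)$: one needs to apply the chain rule for $\mathbf{D}F_t$ along the straight-line interpolation in $\mathcal{P}_2$, which is only rigorous because the perturbation $\nu^{s\varepsilon}-\nu$ arises from the pushforward by a Lipschitz perturbation of the identity; the change of variables from integration against $\nu$ to integration against $\mu$ (via $X_t$) is essential to obtain the form appearing in the statement.
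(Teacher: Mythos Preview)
Your decomposition into $(A)$ and $(B)$ and the treatment of $(A)$ match the paper's proof exactly. The handling of $(B)$, however, differs: the paper interpolates \emph{linearly} in the space of measures, setting $\mu_{\varepsilon,\tau}=(1-\tau)\nu+\tau\nu^\varepsilon$, invokes the defining property of the flat derivative $\frac{\delta F_t}{\delta\mu}$ to write
\[
(B)=\int_0^1\int\frac{\delta F_t}{\delta\mu}(X_t,\mu_{\varepsilon,\tau},y)\,d(\nu^\varepsilon-\nu)(y)\,d\tau,
\]
and only then performs the change of variables $y=X_t(\mathrm z')$ together with a second mean-value step in $y$ to produce $\mathbf{D}F_t$. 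This introduces two auxiliary parameters $(\tau,s)$ and relies on the convexity bound $W_2(\nu,\mu_{\varepsilon,\tau})\le\sqrt\tau\,\varepsilon\|v\|_\mu$. Your route via the displacement curve $s\mapsto\nu^{s\varepsilon}=(X_t+s\varepsilon v)_\sharp\mu$ and the Wasserstein chain rule is more streamlined (a single integration, and the simpler estimate $W_2(\nu^{s\varepsilon},\nu)\le s\varepsilon\|v\|_\mu$), but it presupposes a chain rule for the vector-valued map $\mu\mapsto F_t(\mathrm{x},\mu)$ along absolutely continuous curves in $\mathcal P_2$---essentially an analogue of Lemma~\ref{lem:testlip}(4) applied componentwise to $F_t$---which you should state and justify under Assumption~\ref{F2}. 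The paper's version is more elementary in that it uses only the flat-derivative definition and the ordinary fundamental theorem of calculus.

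Two cosmetic points: the third argument of $\mathbf{D}F_t$ in your displayed derivative should read $X_t(\mathrm{z}')+s\varepsilon v(\mathrm{z}')$, not $\mathrm{z}+s\varepsilon v(\mathrm{z}')$; and in your error bound for $(B)$ the symbols $\mathrm{y}$ and $\mathrm{z}'$ are the same integration variable of $\mu$ and should be unified.
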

\begin{proof}
  We split the proof into several steps.

  \textbf{1.} Consider the identity:
  \begin{align}
    F_{t} &\left(X_t(\mathrm{x}) + \varepsilon v(\mathrm{x}), (X_t + \varepsilon v)_{\sharp}\mu  \right) - F_{t}\left(X_t(\mathrm{x}),X_{t\sharp}\mu\right)\notag\\
    &= F_{t} \left( X_t(\mathrm{x}) + \varepsilon v(\mathrm{x}), (X_t + \varepsilon v)_{\sharp}\mu  \right) - F_{t} \left( X_t(\mathrm{x}), (X_t + \varepsilon v)_{\sharp}\mu  \right)\notag\\
    &+ F_{t} \left( X_t(\mathrm{x}), (X_t + \varepsilon v)_{\sharp}\mu  \right) - F_{t}(X_t(\mathrm{x}),X_{t\sharp}\mu).
    \label{eq:aux0}
  \end{align}
  By the mean value theorem, the first difference in the right-hand side takes the form:
  \begin{displaymath}
    \varepsilon\int_{0}^{1} DF_{t} \left( X_t(\mathrm{x}) + s\varepsilon v(\mathrm{x}), (X_t + \varepsilon v)_{\sharp}\mu \right)v(\mathrm{x}) \d s,
  \end{displaymath}
 and, due to the definition of \( \frac{\delta F_t}{\delta \mu} \), the second difference writes:
  \begin{align*}
    \int_{0}^{1}
    &\int \frac{\delta F_{t}}{\delta \mu} \left(X_t(\mathrm{x}), \mu_{\varepsilon,\tau}, \mathrm{y} \right) \d \left( (X_t + \varepsilon v)_{\sharp}\mu - X_{t\sharp}\mu \right)(\mathrm{y}) \d \tau\\
    &= \int_{0}^{1}\int \left[\frac{\delta F_{t}}{\delta \mu} \left(X_t(\mathrm{x}), \mu_{\varepsilon,\tau}, X_t(\mathrm{y}) + \varepsilon v(\mathrm{y}) \right) - \frac{\delta F_{t}}{\delta \mu} \left(X_t(\mathrm{x}), \mu_{\varepsilon,\tau}, X_t(\mathrm{y}) \right)\right] \d \mu(\mathrm{y}) \d \tau\\
    &= \varepsilon\int_{0}^{1}\int_{0}^{1}\int \mathbf{D}F_{t} \left(X_t(\mathrm{x}), \mu_{\varepsilon,\tau}, X_t(\mathrm{y}) + s\varepsilon v(\mathrm{y})\right)v(\mathrm{y}) \d \mu(\mathrm{y}) \d s \d \tau,
  \end{align*}
  where \( \mu_{\varepsilon,\tau} = (1-\tau) X_{t\sharp} \mu + \tau (X_t + \varepsilon v)_{\sharp}\mu \).

  \textbf{2.} Since \( DF_t \) and \( \mathbf{D}F_t \) are Lipschitz, it holds:
  \begin{align}
    \left| DF_{t} \left( X_t(\mathrm{x}) + s\varepsilon v(\mathrm{x}), (X_t + \varepsilon v)_{\sharp}\mu \right) v(\mathrm{x}) - DF_{t}(X_t(\mathrm{x}), X_{t\sharp}\mu) v(\mathrm{x}) \right|\notag \\
    \leq \Lip(DF_{t}) \left( s\varepsilon |v(\mathrm{x})| + W_2\left( (X_t + \varepsilon v)_{\sharp}\mu, X_{t\sharp}\mu \right) \right) |v(\mathrm{x})| \notag \\
    \leq \varepsilon \, \Lip(DF_{t}) \left( |v(\mathrm{x})|^2 + \|v\|_{\mu} |v(\mathrm{x})| \right),
    \label{eq:aux1}\\
    \hspace{-12pt}\Big| \int \mathbf{D}F_{t} \left(X_t(\mathrm{x}), \mu_{\varepsilon,\tau}, X_t(\mathrm{y}) + s\varepsilon v(\mathrm{y})\right) v(\mathrm{y}) \d \mu(\mathrm{y}) \notag \\
    - \int \mathbf{D}F_{t}\left(X_t(\mathrm{x}), X_{t\sharp}\mu, X_t(\mathrm{y})\right) v(\mathrm{y}) \d \mu(\mathrm{y}) \Big| \notag \\
    \leq \Lip(\mathbf{D}F_{t}) \left( W_2\left( X_{t\sharp}\mu, \mu_{\varepsilon,\tau} \right) \int |v(\mathrm{y})| \d \mu(\mathrm{y}) + \varepsilon \|v\|_{\mu}^2 \right). \label{eq:aux2}
  \end{align}

  \textbf{3.} Let us estimate the value \( W_2 \left( \mu, \mu_{h,\tau} \right) \).
  To this end, recall that
  \begin{equation}
    \label{eq:convex}
    W_{2}^{2} \left( (1 - \tau) \mu_0 + \tau \mu_1, \nu \right) \leq (1 - \tau) W_{2}^{2} (\mu_0, \nu) + \tau W_{2}^{2} (\mu_1, \nu),
  \end{equation}
  for all \( \mu_0, \mu_1, \nu \in \mathcal{P}_{2}(\mathbb{R}^n) \) and all \( \tau \in [0,1] \).
  This inequality becomes evident if we note that, for any \( \Pi_0 \in \Gamma_o(\mu_0, \nu) \) and \( \Pi_1 \in \Gamma_o(\mu_1, \nu) \), the convex combination \( (1 - \tau)\Pi_0 + \tau \Pi_1 \) is a transport plan between \( (1 - \tau) \mu_0 + \tau \mu_1 \) and \( \nu \).
  In our case,~\eqref{eq:convex} implies that
  \begin{equation}
    \label{eq:aux3}
    W_2 \left( X_{t\sharp}\mu, \mu_{\varepsilon,\tau} \right) \leq \sqrt{\tau} W_2 \left( X_{t\sharp}\mu, (X_t + \varepsilon v)_{\sharp}\mu \right) \leq \sqrt{\tau} \varepsilon \|v\|_{\mu}.
  \end{equation}
  
  \textbf{4.} By combining the inequalities~\eqref{eq:aux0}--\eqref{eq:aux2}, \eqref{eq:aux3} and integrating with respect to \( \mu \), we obtain the desired estimate.
\end{proof}

\begin{remark}\label{rem:problem}
  The estimate~\eqref{eq:L1est} provided by Lemma~\ref{lem:O2vf} is crucial in showing that the flow \( \Phi \) is differentiable in the sense of Definition~\ref{def:diff_mp}. Suppose that we would like to demonstrate that \( \Phi \) is differentiable in the stronger sense, i.e., when the distance \( W_1 \) in Definition~\ref{def:diff_mp} is replaced by \( W_2 \) (see Remark~\ref{rem:stronger}). In this case, if we decide to follow our proof strategy, the \( L^1_{\mu} \) norm in~\eqref{eq:L1est} would need to be replaced by the \( L^2_{\mu} \) norm. However, this is not possible as the following example shows.

  Consider the vector field $F(\mathrm{x})=\mathrm{x}^2$ on $\mathbb R$. In this case,
    \[
      F(X_t(\mathrm{x})+\varepsilon v(\mathrm{x})) - F(X_t(\mathrm{x})) - \varepsilon DF(X_t(\mathrm{x}))v(\mathrm{x}) = \varepsilon^2 v^2(\mathrm{x}).
    \]
    This expression belongs to $L^2_\mu$ only if \( v\in L^4_{\mu} \).
\end{remark}

In the second step, we demonstrate that the linear equation~\eqref{eq:Ww} for \( w \) is well-defined, meaning that it admits a unique solution for any \( v \in L^1_{\mu} \).
\begin{lemma}
  \label{lem:s1}
  Let \( A \colon I \times \mathbb{R}^n \to \mathcal{L}(\mathbb{R}^n;\mathbb{R}^n) \) and \( B\colon I \times \mathbb{R}^n \times \mathbb{R}^n \to \mathcal{L}(\mathbb{R}^n;\mathbb{R}^n) \) be bounded Carath\'eodory functions, and \( v\in L^1_{\mu} \).
  There exists a unique function \( w\colon I \times \mathbb{R}^n\to \mathbb{R}^n \) that satisfies, for all \( \mathrm{x}\in \mathbb{R}^n \), the equation
  \begin{equation}
    \label{eq:Alin}
    \partial_t w_t(\mathrm{x}) = A_t(\mathrm{x}) w_t(\mathrm{x}) + \int B_t(\mathrm{x},\mathrm{y}) w_t(\mathrm{y})\d \mu(\mathrm{y})\quad \text{a.e. on } I
  \end{equation}
  and the initial condition \( w_{0}(\mathrm{x}) = v(\mathrm{x}) \).
  Moreover, \( v \mapsto w_t \) is a linear bounded map from \( {L}^2_{\mu} \) to itself.
\end{lemma}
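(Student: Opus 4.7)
The plan is to reformulate the linear integro-differential equation as a Volterra-type fixed-point problem in $C(I; L^1_\mu)$, apply a standard contraction-mapping argument to obtain existence and uniqueness, and then derive the $L^2_\mu$-continuity bound via Grönwall.

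Define on $L^1_\mu$ the bounded linear operator $\mathcal{A}_t \doteq M_t + \mathcal{K}_t$, where $M_t g(\mathrm{x}) \doteq A_t(\mathrm{x}) g(\mathrm{x})$ and $\mathcal{K}_t g(\mathrm{x}) \doteq \int B_t(\mathrm{x}, \mathrm{y}) g(\mathrm{y}) \d \mu(\mathrm{y})$. Since $\mu$ is a probability measure and $A$, $B$ are uniformly bounded (say, $\|A\|_\infty + \|B\|_\infty \doteq C_0$), $\mathcal{A}_t$ is bounded on every $L^p_\mu$ with operator norm $\le C_0$; for the nonlocal part this uses Jensen's inequality in the form $\|\mathcal{K}_t g\|_{L^\infty} \le \|B\|_\infty \|g\|_{L^1_\mu} \le \|B\|_\infty \|g\|_{L^p_\mu}$. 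Rewriting the equation in Duhamel form $w_t = v + \int_0^t \mathcal{A}_s w_s \d s$ and endowing $C(I; L^1_\mu)$ with the weighted norm $\|w\|_\lambda \doteq \sup_{t \in I} e^{-\lambda t}\|w_t\|_{L^1_\mu}$, the map $w \mapsto v + \int_0^\cdot \mathcal{A}_s w_s \d s$ becomes a strict contraction for any $\lambda > C_0$, so Banach's fixed-point theorem supplies a unique continuous $L^1_\mu$-valued solution. The pointwise absolute continuity of $t \mapsto w_t(\mathrm{x})$ and the a.e.\ validity of the differential equation in $\mathrm{x}$ are then immediate consequences of the integral representation and the continuity in $t$ of the forcing term $t \mapsto \mathcal{K}_t w_t(\mathrm{x})$.

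For the second claim, I restrict to $v \in L^2_\mu$ and apply Grönwall's inequality to $t \mapsto \|w_t\|_{L^2_\mu}$: the estimate $\|\mathcal{A}_t w_t\|_{L^2_\mu} \le C_0 \|w_t\|_{L^2_\mu}$ yields $\|w_t\|_{L^2_\mu} \le e^{C_0 T} \|v\|_{L^2_\mu}$, and linearity of $v \mapsto w_t$ is inherited from the equation. The only subtlety, rather than a genuine obstacle, is the pointwise-in-$\mathrm{x}$ interpretation: since $v \in L^1_\mu$ is only defined $\mu$-a.e., one first fixes a representative of $v$, and for $\mathrm{x} \notin \mathrm{supp}(\mu)$ the equation decouples into a standard inhomogeneous linear Cauchy problem in $\mathbb{R}^n$ with forcing $\mathcal{K}_t w_t(\mathrm{x})$ depending only on the $L^1_\mu$-equivalence class of $w$, so its unique solvability is classical.
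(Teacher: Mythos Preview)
Your proposal is correct and follows essentially the same route as the paper: both set up the integral operator on $C(I;L^1_\mu)$, equip it with the exponentially weighted norm $\|w\|_\lambda = \sup_t e^{-\lambda t}\|w_t\|_{L^1_\mu}$, and invoke Banach's fixed-point theorem. The only difference is in the $L^2$ bound: the paper first derives an $L^1$ Gr\"onwall estimate, bootstraps it to a pointwise bound $|w_t(\mathrm{x})|\le C(|v(\mathrm{x})|+\|v\|_{L^1_\mu})$, and then squares and integrates, whereas you apply Gr\"onwall directly in $L^2_\mu$ using $\|\mathcal{A}_t\|_{L^2_\mu\to L^2_\mu}\le C_0$; your version is shorter, though to make it airtight you should remark that the same contraction argument runs verbatim in $C(I;L^2_\mu)$ when $v\in L^2_\mu$, so that $\|w_t\|_{L^2_\mu}$ is finite \emph{a priori} before invoking Gr\"onwall.
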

\begin{proof}
\textbf{1.} Consider the following operator:
\[
\mathcal{F}(w)_t(\mathrm{x}) \doteq v(\mathrm{x}) + \int_{0}^t \left\{A_s(\mathrm{x}) w_s(\mathrm{x}) + \int B_s(\mathrm{x},\mathrm{y}) w_s(\mathrm{y}) \d \mu(\mathrm{y}) \right\} \, ds, \quad \mathrm{x} \in \mathbb{R}^n,
\]
which acts from the Banach space \( C\left(I;L^1_{\mu}\right) \) to itself.

Indeed, if \( w \in C\left(I;L^1_{\mu}\right) \), then
\[
|\mathcal{F}(w)_t(\mathrm{x})| \le |v(\mathrm{x})| + \int_{0}^t \left( \|A_s\|_{\infty} |w_s(\mathrm{x})| + \|B_s\|_{\infty} \int |w_s(\mathrm{y})| \d \mu(\mathrm{y}) \right) \, ds.
\]
After integrating, we obtain:
\[
\int |\mathcal{F}(w)_t(\mathrm{x})| \, \mu(\mathrm{x}) \le \int |v(\mathrm{x})| \d \mu(\mathrm{x}) + \int_{0}^t \left( \|A_s\|_{\infty} + \|B_s\|_{\infty} \right) \int |w_s(\mathrm{y})| \d \mu(\mathrm{y}) \, ds.
\]
Therefore, \( \mathcal{F}(w)_t \in L^1_{\mu} \) whenever \( w \in C\left(I;L^1_{\mu}\right) \).

Next, let us check that \( t \mapsto \mathcal{F}(w)_t \) is continuous as a map from \( I \) to \( L^1_{\mu} \).

Indeed, if \( t_1 < t_2 \), then
\[
\int |\mathcal{F}(w)_{t_2}(\mathrm{x}) - \mathcal{F}(w)_{t_1}(\mathrm{x})| \d \mu(\mathrm{x}) \le \int_{t_1}^{t_2} \left( \|A_s\|_{\infty} + \|B_s\|_{\infty} \right) \|w_s\|_{L^1_{\mu}} \, ds.
\]
The integral on the right-hand side clearly vanishes as \( t_1 \to t_2 \).

\textbf{2.} We claim that \( w \to \mathcal{F}(w) \) is a contraction when \( C(I;L^1_{\mu}) \) is equipped with the norm
\begin{equation}
  \label{eq:lambdanorm}
\|w\|_{\lambda} = \max_{t \in I} \left\{ e^{-\lambda t} \|w_t\|_{L^1_{\mu}} \right\},
\end{equation}
and \( \lambda > 0 \) is sufficiently large.

Indeed, for any pair \( w^1, w^2 \in C(I;L^1_{\mu}) \), we have:
\begin{align*}
&\int \left| \mathcal{F}(w^1)_t(\mathrm{x}) - \mathcal{F}(w^2)_t(\mathrm{x}) \right| \d \mu(\mathrm{x})\\ 
&\quad \le \int_{0}^t \left( \|A_s\|_{\infty} + \|B_s\|_{\infty} \right) \int |w^1_s(\mathrm{x}) - w^2_s(\mathrm{x})| \d \mu(\mathrm{x}) \d s.
\end{align*}
Equivalently,
\[
\left\| \mathcal{F}(w^1)_t - \mathcal{F}(w^2)_t \right\|_{L^1_{\mu}} \le \int_{0}^t \left( \|A_s\|_{\infty} + \|B_s\|_{\infty} \right) e^{\lambda s} e^{-\lambda s} \|w^1_s - w^2_s\|_{L^1_{\mu}} \, ds,
\]
which implies
\[
\left\| \mathcal{F}(w^1)_t - \mathcal{F}(w^2)_t \right\|_{L^1_{\mu}} \le \int_{0}^t \left( \|A_s\|_{\infty} + \|B_s\|_{\infty} \right) e^{\lambda s} \, ds \cdot \|w^1 - w^2\|_{\lambda}.
\]
Using the Cauchy-Schwarz inequality, we then get
\begin{align*}
\int_{0}^t \left( \|A_s\|_{\infty} + \|B_s\|_{\infty} \right) e^{\lambda s} \, ds
&\le \sqrt{\int_{0}^t \left( \|A_s\|_{\infty} + \|B_s\|_{\infty} \right)^2 \, ds} \cdot \sqrt{\int_{0}^t e^{2\lambda s} \, ds} \\
&\le \sqrt{\int_{0}^t \left( \|A_s\|_{\infty} + \|B_s\|_{\infty} \right)^2 \, ds} \cdot \frac{e^{\lambda t}}{\sqrt{2\lambda}}.
\end{align*}
Therefore,
\[
  \left\| \mathcal{F}(w^1) - \mathcal{F}(w^2) \right\|_{\lambda} \le \frac{1}{\sqrt{2\lambda}} \sqrt{\int_{0}^T \left( \|A_s\|_{\infty} + \|B_s\|_{\infty} \right) \, ds} \cdot \|w^1 - w^2\|_{\lambda},
\]
which shows that \( \mathcal{F} \) is a contraction if \( \lambda \) is sufficiently large.

Now, by the Banach fixed-point theorem, \( \mathcal{F} \) has a unique fixed point, which, by construction, is the only solution of~\eqref{eq:Alin} that satisfies the initial condition \( w_0 = u \).

 \textbf{3.} The linearity of the map \( v \mapsto w_t \) is obvious.
 Let us verify that \( v\in {L}^2_{\mu} \) implies that \( w_t\in {L}^2_{\mu} \), for all \( t\in I \).

 Fix an arbitrary \( \mathrm{x}\in \mathbb{R}^n \).
Then, for all \( t\in I \), it holds
\begin{equation}
  \label{eq:Awle}
    |w_t(\mathrm{x})| \le |v(\mathrm{x})| + \int_{0}^t \left(\|A_s\|_{\infty} |w_s(\mathrm{x})| + \|B_{s}\|_{\infty}\int |w_s(\mathrm{y})|\d \mu(\mathrm{y})\right)\d s.
\end{equation}
  Next we integrate with respect to \( \mathrm{x} \):
  \[
   \int |w_t(\mathrm{x})|\d \mu(\mathrm{x}) \le \int|v(\mathrm{x})|\d \mu(\mathrm{x}) + \int_{0}^t \left(\|A_s\|_{\infty}+ \|B_{s}\|_{\infty}\right)\int |w_s(\mathrm{y})|\d \mu(\mathrm{y})\d s,
 \]
 so that Gr\"onwall's inequality yields:
 \[
   \int |w_t(\mathrm{x})|\d \mu(\mathrm{x})\le \exp \left\{\int_{0}^t \left(\|A_s\|_{\infty}+ \|B_{s}\|_{\infty}\right)\d s  \right\}\int|v(\mathrm{x})|\d \mu(\mathrm{x}).
 \]
 We use the previous inequality to estimate the last term form the right in~\eqref{eq:Awle}:
 \[
    |w_t(\mathrm{x})| \le |v(\mathrm{x})| + \int_{0}^t \|A_s\|_{\infty}|w_s(\mathrm{x})|\d s + g(t)\int|v(\mathrm{x})|\d \mu(\mathrm{x}),
  \]
  where
  \[
    g(t) \doteq \int_{0}^t\|B_s\|_{\infty}\,\exp \left\{\int_{0}^s \left(\|A_\tau\|_{\infty}+ \|B_{\tau}\|_{\infty}\right)\d \tau  \right\}\d s.
  \]
  Once again Gr\"onwall's inequality gives
  \begin{align}
    |w_t(\mathrm{x})| \le \left( |v(\mathrm{x})| + g(t) \int|v(\mathrm{x})|\d \mu(\mathrm{x}) \right) e^{\int_{0}^t\|A_s\|_{\infty}\d s}.\label{w-infti}
  \end{align}
  Squaring yields that
  \[
    |w_t(\mathrm{x})|^2 \le C\left( |v(\mathrm{x})|^{2} + |v(\mathrm{x})| \int|v(\mathrm{x})|\d \mu(\mathrm{x}) + \left(\int|v(\mathrm{x})|\d \mu(\mathrm{x})\right)^2\right)
  \]
  for some \( C>0 \) depending only on \( \|A\|_{\infty} \) and \( \|B\| _{\infty}\).
  By integrating, we obtain:
  \begin{align}
    \int|w_t(\mathrm{x})|^2 \d \mu(\mathrm{x})
    &\le C\left( \int|v(\mathrm{x})|^{2}\d\mu(\mathrm{x}) + 2\left(\int|v(\mathrm{x})|\d \mu(\mathrm{x})\right)^2\right)\notag\\
    &\le 3C \int|v(\mathrm{x})|^{2}\d\mu(\mathrm{x}),\label{eq:wu}
  \end{align}
completing the proof.
\end{proof}

The functions
\begin{align*}
  A_{t}(\mathrm{x}) \doteq DF_t\left(X^{\mu}_{0,t}(\mathrm{x}), X^{\mu}_{0,t\sharp}\mu\right), \quad
  B_t(\mathrm{x},\mathrm{y})\doteq \bm{D}F_t\left(X^{\mu}_{0,t}(\mathrm{x}), X^{\mu}_{0,t\sharp}\mu, X^{\mu}_{0,t}(\mathrm{y})\right)
\end{align*}
obviously satisfy the assumptions of the previous lemma.
Hence,~\eqref{eq:Ww} is indeed well-defined and \( v\mapsto w_t \) is a linear bounded map from \( {L}^2_{\mu} \) to itself.
The representation formula \( \mu_t = (X^{\mu}_{0,t})_{\sharp}\mu \) implies the identity
\[
  \int\left|w_t\circ (X_{0,t}^{\mu})^{-1}\right|^{2}\d \mu_t = \int |w_t(\mathrm{x})|^2\d \mu(\mathrm{x}),
\]
meaning that \( v \mapsto w_t\circ (X_{0,t}^{\mu})^{-1} \) is a linear bounded map from \( {L}^2_{\mu} \) to \( L^2_{\mu_t} \).

In the third step, we show that this map is indeed a derivative of \( \Phi_{0,t} \) at \( \mu \).

\begin{lemma}
  \label{lem:s2}
Under the assumptions of Proposition~\ref{prop:Wflowdiff} and for any \( v\in {L}^2_{\mu} \) and \( \varepsilon\in \mathbb{R} \), it holds
\[
W_1\left(\Phi_{0,t}\circ (\id + \varepsilon v)_{\sharp}\mu, (\id + \varepsilon w_t)_{\sharp} \Phi_{0,t}(\mu)\right) \le C\|v\|^2_{\mu}\varepsilon^2,
\]
for some \( C>0 \) depending only on \( F \).
\end{lemma}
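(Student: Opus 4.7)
The strategy is to estimate $W_1$ via an explicit coupling and close the resulting integral inequality by Grönwall. Set $\nu\doteq (\id+\varepsilon v)_\sharp \mu$ and introduce two maps from $\R^n$ to $\R^n$,
\[
  Z_t(y) \doteq X^{\nu}_{0,t}\bigl(y+\varepsilon v(y)\bigr),\qquad \tilde X_t(y) \doteq X^{\mu}_{0,t}(y) + \varepsilon w_t(y),
\]
so that $Z_{t\sharp}\mu = \Phi_{0,t}(\nu)$ and $\tilde X_{t\sharp}\mu = (\id+\varepsilon w_t)_\sharp\Phi_{0,t}(\mu)$ (in the sense intended by Proposition~\ref{prop:Wflowdiff}). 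The joint push-forward $(Z_t,\tilde X_t)_\sharp\mu$ is a transport plan between these two measures, yielding
\[
  W_1\bigl(\Phi_{0,t}(\nu),(\id+\varepsilon w_t)_\sharp \Phi_{0,t}(\mu)\bigr)\le \|E_t\|_{L^1_\mu},\quad E_t\doteq Z_t-\tilde X_t,
\]
with $E_0\equiv 0$. It therefore suffices to prove $\|E_t\|_{L^1_\mu}\le C\|v\|^2_\mu\varepsilon^2$ for all $t\in I$.

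Differentiating along the ODEs satisfied by $X^\nu$, $X^\mu$, and $w_t$ and inserting an intermediate term produces the splitting
\[
  \dot E_t = \bigl[F_t(Z_t,Z_{t\sharp}\mu) - F_t(\tilde X_t,\tilde X_{t\sharp}\mu)\bigr] + \bigl[F_t(\tilde X_t,\tilde X_{t\sharp}\mu) - F_t(X^\mu_{0,t},X^\mu_{0,t\sharp}\mu) - \varepsilon\dot w_t\bigr].
\]
The second bracket is precisely what Lemma~\ref{lem:O2vf} controls: applied with $w_t$ in place of $v$, the linear part of its expansion matches $\varepsilon\dot w_t$ by the defining equation~\eqref{eq:Ww} for $w_t$, leaving only an $L^1_\mu$-remainder bounded by $2(\Lip(DF_t)+\Lip(\mathbf DF_t))\|w_t\|^2_\mu\varepsilon^2$; combined with the uniform a priori estimate $\|w_t\|_\mu\le C_1\|v\|_\mu$ extracted from Lemma~\ref{lem:s1} (see~\eqref{eq:wu}), this contributes at most $C_2\|v\|^2_\mu\varepsilon^2$ to $\|\dot E_t\|_{L^1_\mu}$. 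For the first bracket, the crucial observation is that $F_t$ is Lipschitz in $\mu$ with respect to the \emph{$W_1$} distance: the boundedness $|\mathbf DF_t|\le M$ in~\ref{F2} makes $y\mapsto \tfrac{\delta F_t}{\delta\mu}(x,\mu,y)$ an $M$-Lipschitz function, so that Kantorovich--Rubinstein duality yields $|F_t(x,\mu)-F_t(x,\mu')|\le M\,W_1(\mu,\mu')$. Reusing the coupling $(Z_t,\tilde X_t)_\sharp\mu$ to bound $W_1(Z_{t\sharp}\mu,\tilde X_{t\sharp}\mu)\le \|E_t\|_{L^1_\mu}$, the first bracket integrates to at most $2M\|E_t\|_{L^1_\mu}$.

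Putting these estimates together yields the Grönwall inequality
\[
  \tfrac{d}{dt}\|E_t\|_{L^1_\mu}\le 2M\,\|E_t\|_{L^1_\mu} + C_2\,\|v\|^2_\mu\,\varepsilon^2,
\]
whose integration from $E_0=0$ delivers the claim with a constant $C$ depending only on $F$ and $T$. The principal technical subtlety, already foreshadowed in Remark~\ref{rem:problem}, is the $W_1$/$W_2$ dichotomy: the remainder from Lemma~\ref{lem:O2vf} is genuinely $L^1$-controlled and \emph{not} $L^2$-controlled (it involves pointwise terms in $|v|^2$), which forces us to close the estimate in $L^1_\mu$. This in turn requires the stronger $W_1$-Lipschitzness of $F$ in $\mu$ available under~\ref{F2}, rather than the $W_2$-version postulated in~\ref{F1}; assumption~\ref{F2} thus plays a double role, both in enabling the quadratic-in-$\varepsilon$ expansion of Lemma~\ref{lem:O2vf} and in providing the correct Lipschitz regularity to close the loop.
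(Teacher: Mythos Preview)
Your argument is correct and follows the same overall architecture as the paper: introduce the error
\(E_t = X^{\mu_\varepsilon}_{0,t}\circ(\id+\varepsilon v) - (X^\mu_{0,t}+\varepsilon w_t)\), bound \(W_1\) by \(\|E_t\|_{L^1_\mu}\) via the obvious coupling, and use Lemma~\ref{lem:O2vf} together with the defining equation for \(w_t\) to identify the quadratic remainder.

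The difference lies in how the integral inequality is closed. The paper views \(\varphi^\varepsilon_t\doteq X^{\mu_\varepsilon}_{0,t}\circ(\id+\varepsilon v)\) as the fixed point of a contraction \(\mathcal T(\varepsilon,\cdot)\) on \(C(I;L^1_\mu)\) (equipped with a weighted norm) and invokes the ``approximate fixed point'' estimate from~\cite[Theorem~A.2.1]{ABressan_BPiccoli_2007a} to compare \(\varphi^\varepsilon\) with the near-fixed-point \(\varphi^0+\varepsilon w\). You instead differentiate \(E_t\) directly and run Gr\"onwall, which is more elementary and avoids the auxiliary fixed-point machinery. To make this close in \(L^1_\mu\) you observe that the boundedness of \(\mathbf D F\) in~\ref{F2} yields \(W_1\)-Lipschitzness of \(F_t\) in \(\mu\) via Kantorovich--Rubinstein; this is a clean point the paper does not make explicit (it works with the \(W_2\)-Lipschitz bound from~\ref{F1}, which mixes \(L^2_\mu\) into the estimate). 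Your route is thus slightly more transparent about why the argument lives naturally in \(L^1_\mu\), reinforcing the remark you make at the end. One cosmetic remark: rather than writing \(\tfrac{d}{dt}\|E_t\|_{L^1_\mu}\le\ldots\), it is safer to phrase the Gr\"onwall step in integral form \(\|E_t\|_{L^1_\mu}\le\int_0^t(\ldots)\,ds\), since absolute differentiability of \(t\mapsto\|E_t\|_{L^1_\mu}\) has not been established.
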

\begin{proof}
  \textbf{1.} Let \( v\in {L}^2_{\mu} \) and \( \mu_{\varepsilon}\doteq(\id+\varepsilon v)_{\sharp}\mu\).
  We will write, for brevity, \( X^{\varepsilon}_t \doteq X^{\mu_{\varepsilon}}_{0,t} \).
Recall that \( X^\varepsilon \) satisfies
\[
  \dot X^{\varepsilon}_{t}(\mathrm{x}) = F(X^{\varepsilon}_{t}(\mathrm{x}),X^{\varepsilon}_{t\sharp}\mu_{\varepsilon}),\quad X^{\varepsilon}_{0}(\mathrm{x})=\mathrm{x}.
\]
By replacing \( \mathrm{x} \) with \( \mathrm{x}+\varepsilon v(\mathrm{x}) \), we obtain
\[
  \dot X^{\varepsilon}_t\left(\mathrm{x}+\varepsilon v(\mathrm{x})\right) = F_t\left(X^{\varepsilon}_t\left(\mathrm{x}+\varepsilon v(\mathrm{x})\right),X^{\varepsilon}_{t\sharp}\mu_{\varepsilon}\right),\quad X^{\varepsilon}_0\left(\mathrm{x}+\varepsilon v(\mathrm{x})\right)=\mathrm{x}+\varepsilon v(\mathrm{x}).
\]
Now let \( \varphi^{\varepsilon}_t \doteq X_t^{\varepsilon}\circ (\id + \varepsilon v) \).
Then it holds
\[
  \dot \varphi_t^{\varepsilon}(\mathrm{x}) = F_t\left(\phi_t^{\varepsilon}(\mathrm{x}), \phi_{t\sharp}^{\varepsilon}\mu\right), \quad \phi^{\varepsilon}_0(\mathrm{x}) = \mathrm{x} + \varepsilon v(\mathrm{x}).
\]

\textbf{2.} Consider the following map
\[
  \mathcal{T}(\varepsilon,\varphi)_t(\mathrm{x}) \doteq \mathrm{x} + \varepsilon v(\mathrm{x}) + \int_0^t F_s\left(\phi_s(\mathrm{x}),\phi_{s\sharp}\mu\right)\d s,
\]
where \( \epsilon\in [0,1] \) and \( \varphi \in C(I;{L}^1_{\mu}) \).
The inclusion \( v\in L^2_\mu \) and the boundedness of \( F \) imply that the image of \( \mathcal{T} \) lies in \( C(I;{L}^1_{\mu}) \).

We state that \( \varphi\mapsto \mathcal{T}(\varepsilon,\phi) \) is a contraction if \( C(I;{L}^1_{\mu}) \) is equipped with the norm~\eqref{eq:lambdanorm} and \( \lambda>0 \) is sufficiently large.

Indeed, for any pair of functions \( \phi^{1},\phi^{2} \in C(I;{L}^1_{\mu})\) it holds
\begin{align*}
  \left| \mathcal{T}(\varepsilon,\phi^1)_t(\mathrm{x}) - \mathcal{T}(\varepsilon,\phi^2)_t(\mathrm{x})\right|
  &\le M\int_0^t\left\{|\phi^1_s(\mathrm{x}) - \phi^2_s(\mathrm{x})| + W_2\left(\phi^1_{s\sharp}\mu,\phi^2_{s\sharp}\mu\right)\right\}\d s\\
  &\le M\int_0^t\left\{|\phi^1_s(\mathrm{x}) - \phi^2_s(\mathrm{x})| + \left\|\phi^1_{s}-\phi^2_{s}\right\|_{\mu}\right\}\d s,
\end{align*}
thanks to Assumption \ref{F1}.
After integrating, we obtain
\begin{align*}
  \left\| \mathcal{T}(\varepsilon,\phi^1)_t - \mathcal{T}(\varepsilon,\phi^2)_t\right\|_{L^1_\mu}
  &\le M\int_0^t\left\{\int\left|\phi^1_s - \phi^2_s\right|\d \mu + \left\|\phi^1_{s}-\phi^2_{s}\right\|_{\mu}\right\}\d s.
\end{align*}
Now it follows from
\[
\int \left|\phi^1_s-\phi^2_s\right|\d \mu \le \left\|\phi^1_s-\phi^2_s\right\|_{\mu}
\]
that
\begin{align*}
  \left\| \mathcal{T}(\varepsilon,\phi^1)_t - \mathcal{T}(\varepsilon,\phi^2)_t\right\|_{L^1_\mu}
  &\le 2M\int_0^t \left\|\phi^1_{s}-\phi^2_{s}\right\|_{\mu}\d s\\
  &\le 2M\int_0^t e^{\lambda s} e^{-\lambda s}\left\|\phi^1_{s}-\phi^2_{s}\right\|_{\mu}\d s\\
  &\le 2M\int_0^t e^{\lambda s} \d s \left\|\phi^1-\phi^2\right\|_{C(I;{L}^1_{\mu})}\\
  &\le \frac{2M}{\lambda}e^{\lambda t} \left\|\phi^1-\phi^2\right\|_{C(I;{L}^1_{\mu})}
\end{align*}
Therefore,
\[
  \left\| \mathcal{T}(\varepsilon,\phi^1) - \mathcal{T}(\varepsilon,\phi^2)\right\|_{C(I;{L}^1_{\mu})}\le \frac{2M}{\lambda}\left\|\phi^1-\phi^2\right\|_{C(I;{L}^1_{\mu})},
\]
meaning that \( \varphi\mapsto \mathcal{T}(\varepsilon,\phi) \) is a contraction if \( \lambda > 2M\).

\textbf{3.} Now we will use Banach's contraction principle as it is stated in~\cite[Theorem A.2.1]{ABressan_BPiccoli_2007a}.

By definition, \( \phi^{\varepsilon} \) defined in the first step is a fixed point of \( \mathcal{T}(\varepsilon,\cdot) \).
In terms of~\cite[Theorem A.2.1]{ABressan_BPiccoli_2007a}, we choose \( y = \phi^0 + \varepsilon w \) and \( x(\varepsilon) = \phi^{\varepsilon} \) thus getting
\begin{equation}
  \label{eq:Aphi}
\left\|\phi^0+\varepsilon w - \phi^{\varepsilon}\right\|_{\lambda} \le \kappa \left\|\phi^0+ \varepsilon w - \mathcal{T}\left(\varepsilon,\phi^0+\varepsilon w\right)\right\|_{\lambda}
\end{equation}
for \( \kappa = \frac{\lambda}{\lambda-2M} \).
Since \( \phi^0_t = X_t\doteq X^{\mu}_{0,t} \), the quantity inside the norm in the right-hand side reads:
\[
 \int_0^t F_s\left(X_s, X_{s\sharp}\mu\right)\d s + \varepsilon w_t - \varepsilon v - \int_0^t F_s\left(X_s+\varepsilon w_{s}, (X_s+\varepsilon w_s)_{\sharp}\mu\right)\d s.
\]
By~Lemma~\ref{lem:O2vf}, it holds
\begin{align*}
  \Big\|
F_s&\left(X_s+\varepsilon w_{s}, (X_s+\varepsilon w_s)_{\sharp}\mu\right)-
  F_s\left(X_s, X_{s\sharp}\mu\right) \\
  &-
  \varepsilon DF\left(X_s,X_{s\sharp}\mu\right)w_s + \varepsilon\int \mathbf{D}F\left(X_s,X_{s\sharp}\mu,X_s(\mathrm{y})\right)w_s(\mathrm{y})\d \mu\Big\|_{L^1_{\mu}} \le 4M\|w_s\|^2_{\mu}\varepsilon^{2}.
\end{align*}
Recalling the definition of \( w \) and the estimate~\eqref{eq:wu}, we conclude that the right-hand side of~\eqref{eq:Aphi} is less that \( C\|v\|^2_{\mu}\varepsilon^{2} \) for some \( C>0 \) depending only on \( F \).

\textbf{4.} Recall the relation with the original notation: \(\phi^0_t = X^{\mu}_{0,t} \) and \( \phi^{\varepsilon}_t = X_{0,t}^{\mu_\varepsilon}\circ(\id +\varepsilon v) \).
Hence \( \|\phi^0+\varepsilon w - \phi^{\varepsilon}\|_{\lambda}\le 4C\|v\|^2_{\lambda}\varepsilon^{2} \) implies that
\[
\left\|X_{0,t}^{\mu}+\varepsilon w_t - X_{0,t}^{\mu_{\varepsilon}}\circ (\id + \varepsilon u)\right\|_{L^1_\mu}\le C\|v\|^2_{\mu}\varepsilon^{2},
\]
for all \( t\in I \).
Therefore,
\begin{align*}
  W_1&\left(\Phi_{0,t}\circ (\id + \varepsilon v)_{\sharp}\mu, (\id + \varepsilon w_t)_{\sharp} \Phi_{0,t}(\mu)\right)\\
  &=
    W_1\left(\left(X^{\mu_{\varepsilon}}_{0,t}\circ (\id + \varepsilon v)\right)_{\sharp}\mu, \left((\id + \varepsilon w_t) \circ X^{\mu}_{0,t}\right)_{\sharp}\mu\right)\\
     &\le
     \left\|X_{0,t}^{\mu}+\varepsilon w_t - X_{0,t}^{\mu_{\varepsilon}}\circ (\id + \varepsilon v)\right\|_{L^1_{\mu}}\\
  &\le C\|v\|^2_{\mu}\varepsilon^{2},
\end{align*}
completing the proof.
\end{proof}

\subsection{Proof of Proposition~\ref{prop:Wpull}}

Let \( v_a\in L^2_{\mu_a} \) and \( p_b\in L^2_{\mu_b} \) be arbitrary tangent vectors.
Suppose that \( v_t \), \( t\in [a,b] \), is constructed as in Proposition~\ref{prop:Wflowdiff}.
Thanks to the standard well-posedness results~\cite{bonnetDifferentialInclusionsWasserstein2021,bonnetNecessaryOptimalityConditions2021,averboukhPontryaginMaximumPrinciple2022,pogodaevNonlocalBalanceEquations2022,chertovskihOptimalControlNonlocal2023,MR4097258} for nonlocal continuity equations, the Hamiltonian equation~\eqref{eq:ham} has a unique solution \( \gamma \).
Hence \(\psi_t\) is well-defined by
\[
  p_t(\mathrm{x}) = \int y \d \gamma^\mathrm{x}_t(\mathrm{y}).
\]
Let us show that \( p_t \in L^2_{\mu_t} \).
We first employ Jensen's theorem for vector-valued functions~\cite{perlmanJensensInequalityConvex1974} to get
\[
  \left|\int \mathrm{y} \d \gamma^\mathrm{x}_t(\mathrm{y})\right|^2\le \int |\mathrm{y}|^2\d \gamma_t^\mathrm{x}(\mathrm{y}).
\]
Then, by the definition of the disintegration, we obtain
\[
  \int|p_t(\mathrm{x})|^2\d\mu_t(\mathrm{x})\le \int\left(\int |\mathrm{y}|^2\d \gamma_t^\mathrm{x}(\mathrm{y})\right) \d \mu_t(\mathrm{x}) = \iint |\mathrm{y}|^2\d \gamma_t(\mathrm{x},\mathrm{y}).
\]
The right-hand side is finite because \( \gamma_t\in \mathcal{P}_2(\mathbb{R}^n \times \mathbb{R}^n) \).

If the initial tangent vector \( v_a \) is continuously differentiable as a function of \( x \), then any \( v_t \), \( t\in [a,b] \), is also continuously differentiable (one can employ the arguments similar to those in~\cite[Remark 3.1]{chertovskihOptimalControlNonlocal2023}).
Computations from~\cite[Section 3.4]{chertovskihOptimalControlNonlocal2023} now imply that
\[
  \int \mathrm{y} \cdot v_t(\mathrm{x})\d\gamma_t(\mathrm{x},\mathrm{y}) = \int \mathrm{y} \cdot v_a(\mathrm{x})\d \gamma_a(\mathrm{x},\mathrm{y}) \quad \forall t\in [a,b]
\]
or equivalently
\[
\left<p_t, v_t \right>_{\mu_t} = \left< p_a,v_a \right>_{\mu_a} \quad \forall t\in [a,b].
\]
In the general case, where \( v_a \) is an arbitrary element of \( L^2_{\mu_a} \), the approximation argument can be used: we construct a sequence \( v_a^k\) of continuously differentiable maps converging to \( v_a \) in \( L^2_{\mu_a} \).
This sequence satisfies
\[
\left<p_t, v^k_t \right>_{\mu_t} = \left< p_a,v^k_a \right>_{\mu_a} \quad \forall t\in [a,b].
\]
Since the \( v_{t}^k=(\Phi_{a,t})_{\star,\mu_{a}}v_{a}^k \) and the derivative \( (\Phi_{a,t})_{\star,\mu_{a}} \colon L^2_{\mu_a}\to L^2_{\mu_t} \) is continuous, we conclude that \( v^k_t\to v_{t} \) in \( L^2_{\mu_t} \), for any \( t\in [a,b] \).

We temporarily set \( A\doteq (\Phi_{a,b})_{\star,\mu}\) in order to simplify the notation and note that \( A\colon L^2_{\mu_{a}}\to L^2_{\mu_b} \) is a linear bounded operator.
Its adjoint \( A^{\prime}\colon L^2_{\mu_b} \to L^2_{\mu_{a}} \) is defined by the rule
\[
  \left\langle A \xi, \eta\right\rangle_{\mu_b} = \left< \xi, A^{\prime}\eta \right>_{\mu_a},\quad \xi\in L^2_{\mu_a},\quad \eta\in L^2_{\mu_b}.
\]
If we choose \( \xi = v_a \) and \( \eta = p_b \), we obtain
\[
  \left\langle v_{a},p_a\right\rangle_{\mu_a} = \left\langle v_{b},p_b\right\rangle_{\mu_b} =
  \left\langle Av_{a},p_b\right\rangle_{\mu_b} = \left\langle v_{a},A^{\prime}p_b\right\rangle_{\mu_a}.
\]
Since this holds for an arbitrarily chosen \( v_a\in L^2_{\mu_a} \), we conclude that \( p_a = A^{*}p_b \).
It remains to note that \( A^{*} \) is exactly the pullback operator \( (\Phi_{a,b})^{\star}_{\mu_b} \).

\subsection{Proof of Proposition~\ref{prop:dp_cont}}\label{app:pr-dp_cont}

  First, recall that \( \ell\in \bm{\mathcal D} \) guaranties that gradient \( (\mu,x)\mapsto\bm\nabla\ell(\mu,x) \) is bounded and Lipschitz continuous.

  \begin{lemma}
    \label{lem:terminal_map}
    The map \( \mathcal{T} \colon \mathcal{P}_2(\mathbb{R}^n)\to \mathcal{P}_2(\mathbb{R}^n \times \mathbb{R}^n) \) defined by
    \[
      \mathcal{T}(\vartheta) = (\id, \bm\nabla \ell(\vartheta))_{\sharp}\vartheta
    \]
    is continuous.
  \end{lemma}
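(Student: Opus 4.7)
The plan is to prove the stronger statement that $\mathcal{T}$ is in fact Lipschitz continuous with respect to the $W_2$ distances on $\mathcal{P}_2(\mathbb{R}^n)$ and $\mathcal{P}_2(\mathbb{R}^n\times \mathbb{R}^n)$. The proof will rely only on two ingredients: the defining properties of $\bm{\mathcal D}$ (which give that the map $G(\mu,\mathrm{x})\doteq \nabla\frac{\delta \ell}{\delta \mu}(\mu,\mathrm{x})$ is Lipschitz and bounded as a map $\mathcal{P}_2\times \mathbb{R}^n\to \mathbb{R}^n$), and the basic machinery of transport plans.

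The first step is to fix $\vartheta,\nu\in \mathcal{P}_2(\mathbb{R}^n)$ and select an optimal transport plan $\pi\in \Gamma_o(\vartheta,\nu)$ realizing $W_2(\vartheta,\nu)$. Introduce the map
\[
  \Theta\colon \mathbb{R}^n\times \mathbb{R}^n\to (\mathbb{R}^n\times \mathbb{R}^n)\times(\mathbb{R}^n\times \mathbb{R}^n),\quad \Theta(\mathrm{x},\mathrm{y}) = \bigl((\mathrm{x},G(\vartheta,\mathrm{x})),(\mathrm{y},G(\nu,\mathrm{y}))\bigr),
\]
and observe that the marginals of $\Theta_\sharp \pi$ coincide with $\mathcal{T}(\vartheta)$ and $\mathcal{T}(\nu)$, so that $\Theta_{\sharp}\pi\in \Gamma\bigl(\mathcal{T}(\vartheta),\mathcal{T}(\nu)\bigr)$.

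The second step is to estimate, using this admissible plan,
\[
  W_2^2\bigl(\mathcal{T}(\vartheta),\mathcal{T}(\nu)\bigr)\le \iint \Bigl(|\mathrm{x}-\mathrm{y}|^{2} + |G(\vartheta,\mathrm{x})-G(\nu,\mathrm{y})|^{2}\Bigr)\d\pi(\mathrm{x},\mathrm{y}).
\]
The first term integrates to $W_2^2(\vartheta,\nu)$ by optimality of $\pi$. For the second term, we invoke the Lipschitz property of $G$ on $\mathcal{P}_2\times \mathbb{R}^n$ (noting that $W_1\le W_2$, so Lipschitzness w.r.t.\ either Wasserstein distance works) to get
\[
  |G(\vartheta,\mathrm{x})-G(\nu,\mathrm{y})|^{2} \le 2\Lip(G)^{2}\bigl(W_2^{2}(\vartheta,\nu) + |\mathrm{x}-\mathrm{y}|^{2}\bigr).
\]
Integrating against $\pi$ and combining the two contributions yields
\[
  W_2^2\bigl(\mathcal{T}(\vartheta),\mathcal{T}(\nu)\bigr) \le \bigl(1+4\Lip(G)^{2}\bigr)W_2^{2}(\vartheta,\nu),
\]
which is the desired Lipschitz bound and finishes the proof.

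There is essentially no obstacle here: the only delicate point is that the Lipschitz constant of $G=\nabla\frac{\delta\ell}{\delta\mu}$ refers to a product metric on $\mathcal{P}_2\times\mathbb{R}^n$, but since the paper does not specify $W_1$ versus $W_2$ for this argument, the inequality $W_1\le W_2$ guarantees that a single constant $\Lip(G)$ controls both cases uniformly. The boundedness of $G$ only serves to ensure $\mathcal{T}(\vartheta)\in\mathcal{P}_2(\mathbb{R}^n\times\mathbb{R}^n)$ (its second moment is finite), which is implicit in the statement.
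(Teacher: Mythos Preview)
Your proof is correct and, in fact, establishes the stronger Lipschitz property rather than mere continuity. The approach differs genuinely from the paper's. The paper argues sequentially: given $\vartheta_k\to\vartheta$ in $W_2$, it invokes the characterization of $W_2$-convergence via integration against quadratically bounded continuous test functions, splitting $\int\varphi\,\d\mathcal{T}(\vartheta_k)-\int\varphi\,\d\mathcal{T}(\vartheta)$ into a term controlled by uniform continuity of $\varphi$ together with the Lipschitz bound on $\bm\nabla\ell$, and a second term handled by showing the integrand is itself quadratically bounded. Your argument bypasses test functions entirely by pushing an optimal plan forward through $\Theta$ to obtain an admissible coupling of $\mathcal{T}(\vartheta)$ and $\mathcal{T}(\nu)$, then estimating the transport cost directly. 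This is shorter, quantitative, and yields the explicit Lipschitz constant $(1+4\Lip(G)^2)^{1/2}$; the paper's route is softer and would still work if only continuity (not Lipschitzness) of $\bm\nabla\ell$ in the measure argument were available, but under the stated hypotheses your method is strictly more informative.
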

  \begin{proof}
    \textbf{1.}
  Let \( \vartheta_k\to \vartheta \).
  Thanks to~\cite[Theorem 6.9]{zbMATH05306371}, it suffices to show that \( \int \varphi \d \mathcal{T}(\vartheta_k)\to \int \phi \d \mathcal{T}(\vartheta) \), for any continuous quadratically bounded function \( \phi \).
  Recall that \( \varphi \) is quadratically bounded if
\[
  |\varphi(\mathrm{x},\mathrm{y})| \le C\left(1+|\mathrm{x}|^2 + |\mathrm{y}|^{2}\right) \quad \forall \mathrm{x},\mathrm{y} \in \mathbb{R}^{n},
\]
for a certain \( C>0 \).

Now, due to the definition of \( \mathcal{T} \), we must demonstrate that
\begin{align*}
  \int \varphi \left( \mathrm{x}, \bm\nabla \ell\left(\vartheta_k,\mathrm{x}\right) \right)\vartheta_k(\mathrm{x}) \to
  \int \varphi \left( \mathrm{x}, \bm\nabla \ell\left(\vartheta,\mathrm{x}\right) \right)\d \vartheta(\mathrm{x}).
  \end{align*}
  We will prove this in two steps, by showing that
  \begin{align}
    \int \left[\varphi \left( \mathrm{x}, \bm\nabla \ell\left(\vartheta_k,\mathrm{x}\right) \right) -
    \varphi \left( \mathrm{x}, \bm\nabla \ell\left(\vartheta,\mathrm{x}\right) \right)\right]\d \vartheta_k(\mathrm{x})\to 0,\label{eq:gamma1}\\
    \int \varphi \left( \mathrm{x}, \bm\nabla \ell\left(\vartheta,\mathrm{x}\right) \right)\d\left[ \vartheta_k - \vartheta\right](\mathrm{x})\to 0.\label{eq:gamma2}
  \end{align}

  \textbf{2.} Let us prove~\eqref{eq:gamma1}.
  Since \( \varphi \) is continuous, for any \( \varepsilon>0 \) there exists \( \delta>0 \) such that
  \[
    \left|\varphi \left( \mathrm{x}, \bm\nabla \ell\left(\vartheta_k,\mathrm{x}\right)\right) -
    \varphi \left( \mathrm{x}, \bm\nabla \ell\left(\vartheta_k,\mathrm{x}\right) \right)\right|< \varepsilon
  \]
  as soon as
  \begin{align*}
    \left|\bm\nabla \ell\left(\vartheta_k,\mathrm{x}\right)
    -\bm\nabla \ell\left(\vartheta,\mathrm{x}\right)
    \right|\le \Lip(\bm\nabla \ell)W_2\left(\vartheta_k,\vartheta\right)<\delta.
  \end{align*}
  Therefore, for any \( \varepsilon>0 \) there exists \( N\in \mathbb{N} \) such that
\[
  \int \left[\varphi \left( \mathrm{x}, \bm\nabla \ell\left(\vartheta_k,\mathrm{x}\right) \right) -
  \varphi \left( \mathrm{x}, \bm\nabla \ell\left(\vartheta, \mathrm{x}\right) \right)\right]\d \vartheta_k(\mathrm{x})< \varepsilon,
\]
for all \( k\ge N \).
Thus~\eqref{eq:gamma1} indeed holds.

\textbf{3.} Since both functions \( \varphi \) and \( x\mapsto \bm \nabla \ell(\vartheta,x) \) are continuous, their composition \( \mathrm{x}\mapsto \phi \left( \mathrm{x}, \bm\nabla \ell\left(\vartheta,\mathrm{x}\right)\right) \)  is continuous as well.
  Let us check that the composition is quadratically bounded.
  In fact, it follows from
  \[
    \left|\varphi \left( \mathrm{x}, \bm\nabla \ell\left(\vartheta,\mathrm{x}\right)\right)   \right|\le C \left( 1+ |\mathrm{x}|^2 + \left|\bm\nabla \ell\left(\vartheta,\mathrm{x}\right)\right|^2 \right) \quad \forall \mathrm{x}\in \mathbb{R}^n
   \]
   and the boundedness of \( \bm \nabla \ell \).
   Now,~\eqref{eq:gamma2} follows from~\cite[Theorem 6.9]{zbMATH05306371}.
 \end{proof}

 Denote by \( \bm \Gamma \) the set of all trajectories of the Hamiltonian equation~\eqref{eq:ham} (without any fixed terminal condition).
 It is known that \( \bm \Gamma \) is a closed subset of \( C\left(I;\mathcal{P}_2(\mathbb{R}^n \times \mathbb{R}^n)\right) \).
 The standard (supremum) distance on this space we denote by \( d_{C} \).

 In oreder to prove the second statement of Proposition~\ref{prop:dp_cont}, we fix some \( s\in I \). 

 \begin{lemma}
   \label{lem:rhs}
   The map \( \mathcal{F}\colon I \times \bm \Gamma \mapsto \mathbb{R} \) defined by
   \[
     \mathcal{F}(t,\gamma) = \iint y\cdot g_s(\pi^1_{\sharp}\gamma_t,\mathrm{x})\d \gamma_t(\mathrm{x},\mathrm{y})
   \]
   is continuous.
 \end{lemma}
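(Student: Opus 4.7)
The plan is to show continuity by the sequential criterion: take an arbitrary sequence $(t_k,\gamma^k)\to (t,\gamma)$ in $I\times \bm \Gamma$, and verify that $\mathcal F(t_k,\gamma^k)\to \mathcal F(t,\gamma)$. The first preliminary step is to establish that $\gamma^{k}_{t_k}\to \gamma_t$ in $\mathcal{P}_2(\mathbb{R}^n\times \mathbb{R}^n)$. Indeed, convergence $\gamma^k\to \gamma$ in $C(I;\mathcal{P}_2)$ means $\sup_{\tau\in I}W_2(\gamma^k_\tau,\gamma_\tau)\to 0$, and since $\gamma \in \bm \Gamma \subset C(I;\mathcal{P}_2)$ is itself continuous, the triangle inequality
\[
W_2(\gamma^k_{t_k},\gamma_t)\le W_2(\gamma^k_{t_k},\gamma_{t_k})+W_2(\gamma_{t_k},\gamma_t)
\]
yields the claim. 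Applying $\pi^1_\sharp$, which is $1$-Lipschitz w.r.t.\ $W_2$, we obtain $\pi^1_\sharp \gamma^k_{t_k}\to \pi^1_\sharp \gamma_t$ in $\mathcal{P}_2(\mathbb{R}^n)$ as well.

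Next, I would split the increment $\mathcal F(t_k,\gamma^k)-\mathcal F(t,\gamma)$ into two natural pieces,
\[
\underbrace{\iint y\!\cdot\![g_s(\pi^1_\sharp \gamma^k_{t_k},\mathrm x)-g_s(\pi^1_\sharp \gamma_t,\mathrm x)]\d\gamma^k_{t_k}(\mathrm x,\mathrm y)}_{\mathrm I_k}
+\underbrace{\iint y\!\cdot\! g_s(\pi^1_\sharp \gamma_t,\mathrm x)\d[\gamma^k_{t_k}-\gamma_t](\mathrm x,\mathrm y)}_{\mathrm{II}_k}.
\]
For $\mathrm I_k$, the Lipschitz continuity of $g_s$ in its measure argument (assumption \ref{F1}) gives $|g_s(\pi^1_\sharp \gamma^k_{t_k},\mathrm x)-g_s(\pi^1_\sharp \gamma_t,\mathrm x)|\le M\,W_2(\pi^1_\sharp \gamma^k_{t_k},\pi^1_\sharp \gamma_t)$, whence
\[
|\mathrm I_k|\le M\,W_2(\pi^1_\sharp \gamma^k_{t_k},\pi^1_\sharp \gamma_t)\cdot \iint |\mathrm y|\d\gamma^k_{t_k}(\mathrm x,\mathrm y).
\]
The last integral is bounded uniformly in $k$ since convergence in $\mathcal{P}_2$ entails convergence (and uniform boundedness) of second moments along $\gamma^k_{t_k}$; combined with the first step, $\mathrm I_k\to 0$.

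For $\mathrm{II}_k$, the integrand $(\mathrm x,\mathrm y)\mapsto \mathrm y\cdot g_s(\pi^1_\sharp \gamma_t,\mathrm x)$ is continuous (by continuity of $g_s$ in $\mathrm x$) and quadratically bounded, since $|\mathrm y\cdot g_s(\pi^1_\sharp \gamma_t,\mathrm x)|\le M|\mathrm y|\le \tfrac{M}{2}(1+|\mathrm x|^2+|\mathrm y|^2)$. The characterization of $W_2$-convergence via integration against continuous quadratically bounded test functions (see the equivalence cited right after the definition of $W_2$) then yields $\mathrm{II}_k\to 0$, and the proof is complete. The statement is essentially a routine verification; the only subtlety is keeping track of the joint convergence $(t_k,\gamma^k)\to (t,\gamma)$, which is handled by the preliminary step together with continuity of $\gamma$.
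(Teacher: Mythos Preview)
Your proof is correct and follows essentially the same approach as the paper: both use the sequential criterion, establish $\gamma^k_{t_k}\to\gamma_t$ in $\mathcal P_2$ via the triangle inequality, split the increment into the same two terms, and handle them respectively by the Lipschitz continuity of $g_s$ in the measure argument (together with uniform control of $\iint|\mathrm y|\d\gamma^k_{t_k}$) and by the characterization of $W_2$-convergence through quadratically bounded continuous test functions. If anything, your treatment of the first term is slightly more explicit than the paper's.
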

 \begin{proof}
   Let \( t^k\to t \) and \( \gamma^k\to \gamma \).
  Consider the following difference
  \begin{align}
    \iint \mathrm{y}\cdot g_s(\pi^1_{\sharp}\gamma^k_{t^k},\mathrm{x})\d \gamma^k_{t^k}(\mathrm{x},\mathrm{y}) &-
    \iint \mathrm{y}\cdot g_s(\pi^1_{\sharp}\gamma_t,\mathrm{x})\d \gamma_{t}(\mathrm{x},\mathrm{y})\notag\\
    &\le
    \iint \mathrm{y}\cdot \left[g_s\left(\pi^1_{\sharp}\gamma^k_{t^k},\mathrm{x}\right)-g_s\left(\pi^1_{\sharp}\gamma_{t},\mathrm{x}\right)\right]\d \gamma^{k}_{t^k}(\mathrm{x},\mathrm{y})\notag\\
    &+
    \iint y\cdot g_s\left(\pi^1_{\sharp}\gamma_t,\mathrm{x}\right)\d \left[\gamma^k_{t^k}(\mathrm{x},\mathrm{y})-\gamma_t(\mathrm{x},\mathrm{y})\right]\label{eq:key1}
  \end{align}
  and show that every term in the right-hand side vanishes as \( k\to \infty \).

  Due to Lipschitz continuity of \( g_s \) and \( \pi^1 \), it holds
\begin{align*}
\left|g_s\left(\pi^1_{\sharp}\gamma^k_{t^k},\mathrm{x}\right)-g_s\left(\pi^1_{\sharp}\gamma_{t},\mathrm{x}\right)\right| & \leq MW_2(\gamma^k_{t^k},\gamma_t)\\
& \leq M \left( W_2(\gamma^k_{t^k},\gamma_{t^k}) + W_2(\gamma_{t^k},\gamma_t) \right)\to 0,
\end{align*}
for all \( \mathrm{x}\in \mathbb{R}^n \).
Moreover, by~\cite[Theorem 6.9]{zbMATH05306371}, \( \iint \mathrm{y}\d\gamma^k_{t^k}\to \iint \mathrm{y}\d \gamma_{t} \).
  Therefore,
  \begin{equation*}
    \iint \mathrm{y}\cdot \left[g_s\left(\pi^1_{\sharp}\gamma^k_{t^k},\mathrm{x}\right)-g_s\left(\pi^1_{\sharp}\gamma_{t},\mathrm{x}\right)\right]\d \gamma^{k}_{t^k}(\mathrm{x},\mathrm{y})\to 0.
  \end{equation*}

  Consider now the second term in the right-hand side of~\eqref{eq:key1}.
  By our assumptions, the map \( (\mathrm{x},\mathrm{y})\mapsto \mathrm{y}\cdot g_s(\pi^1_{\sharp}\gamma_t,\mathrm{x}) \) is continuous.
  Moreover, it is linearly (and thus quadratically) bounded since \( |\mathrm{y}\cdot g_s(\mu,\mathrm{x})|\le |\mathrm{y}|M \), where \( M \) is an upper bound of \( |g| \).
  Thus
  \begin{equation}
    \label{eq:key}
    \iint \mathrm{y}\cdot g_s\left(\pi^1_{\sharp}\gamma_t,\mathrm{x}\right)\d \left[\gamma^k_{t_k}(\mathrm{x},\mathrm{y})-\gamma_t(\mathrm{x},\mathrm{y})\right]\to 0,
  \end{equation}
by~\cite[Theorem 6.9]{zbMATH05306371}.
\end{proof}

\subsubsection{Proof of Statement (2)}
\label{sssec:statement2}

The rest of the proof of Statement (2) consists in two steps.

\textbf{Step 1.} Denote by \( \mu[t_0,\vartheta_0] \) the trajectory of the original system corresponding to the condition \( \mu_{t_0}=\vartheta_0 \).
Since trajectories depend continuously on the initial data, the map
\[
  (t_0,\vartheta_0)\mapsto \mu_T[t_0,\vartheta_0]
\]
is continuous as \( I \times \mathcal{P}_{2}\to \mathcal{P}_2 \).
Then we apply \( \mathcal{T} \) to the terminal measure \( \mu_T[t_0,\vartheta_0] = \Phi_{t_0,T}(\vartheta_0) \) and obtain a terminal measure \( \gamma_T=\gamma_T[t_0,\vartheta_0] \) for~\eqref{eq:ham}.
Finally, we construct a trajectory \( \gamma = \gamma[t_0,\vartheta_0] \) of~\eqref{eq:ham} issuing from this terminal measure, see the scheme below:
\[
  I \times \mathcal{P}_2(\mathbb R^n)\xrightarrow{(t_0,\vartheta_0)\mapsto \mu_T[t_0,\vartheta_0]} \mathcal{P}_2(\mathbb R^n) \xrightarrow{\quad\mathcal{T}\quad}\mathcal P_2(\mathbb R^n \times\mathbb R^n)
\]
\[  
\xrightarrow{\;\eqref{eq:ham}\;}\bm \Gamma \subset C\left(I;\mathcal P_2(\mathbb R^n \times\mathbb R^n)\right).
\]
It follows from Lemma~\ref{lem:terminal_map} and the continuous dependence of trajectories of~\eqref{eq:ham} on initial data that the map \( (t_0,\vartheta_0)\mapsto \gamma[t_0,\vartheta_0] \) is continuous.
Now, we apply the maps
\[
  I \times \mathbf{\Gamma} \xrightarrow{\quad\mathcal{F}\quad} \mathbb{R}
\]
constructed in Lemma~\ref{lem:rhs} to the pair \( (t_0,\gamma[t_0,\vartheta_0]) \).
Since \( \mathcal{F} \) is continuous, we conclude that
\[
  (t_0,\vartheta_0) \mapsto \iint \mathrm{y}\cdot g_s\left(\pi^1_{\sharp}\gamma_{t_0}[t_0,\vartheta_0],\mathrm{x}\right)\d \gamma_{t_0}[t_0,\vartheta_0](\mathrm{x},\mathrm{y})
\]
is continuous.
It remains to recall~\cite{chertovskihOptimalControlNonlocal2023} that
\[
\pi^1_{\sharp} \gamma_{t_0}[t_0,\vartheta_0] = \mu_{t_0}[t_0,\vartheta_0] = \vartheta_0.
\]
That is,
\[
  (t_0,\vartheta_0) \mapsto \iint \mathrm{y}\cdot g_s\left(\vartheta_0,\mathrm{x}\right)\d \gamma_{t_0}[t_0,\vartheta_0](\mathrm{x},\mathrm{y})
\]
is continuous.

\begin{figure}
  \centering
  \includegraphics[width=0.5\textwidth]{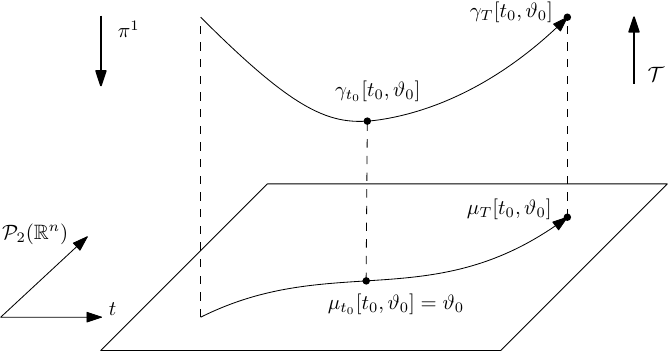}
  \caption{Relation between \( \mu[t_0,\vartheta_0] \) and its lift \( \gamma[t_0,\vartheta] \).}
  \label{fig:lift}
\end{figure}

\textbf{Step 2.} Now we are going to understand how the above map is related with \( \mathbf{d}p_t(g) \).

By Lemmas~\ref{lem:Wcommut}, \ref{lem:testlip} and Proposition~\ref{prop:Wflowdiff}, the function \( \mathbf{p}_{t_0} \) is differentiable and
  \[
    \mathbf{d}\mathbf{p}_{t_0} = - \mathbf{d}\left(\Phi_{t_0,T}^{\star}\ell\right) = \Phi_{t_0,T}^{\star}(\mathbf{d}\ell).
  \]
  Recall that \( (\Phi_{t_0,T})^{\star}_{\vartheta_0} \) maps the tangent vector \( p_T \doteq \bm \nabla\ell(\Phi_{t_0,T}(\vartheta_0)) = \bm \nabla\ell(\mu_{T}[t_0,\vartheta_0]) \) at the point \( \Phi_{t_0,T}(\vartheta_0) \) into the tangent vector
  \[
    p_{t_0}(\mathrm{x}) \doteq \int \mathrm{y}\d \gamma_{t_0}^x[t_0,\vartheta_0](\mathrm{y})
  \]
  at \( \vartheta_0 \).
  Therefore,
\[
  \bm d \mathbf{p}_{t_0}(g_s)\big|_{\vartheta_0} = \left< p_{t_0},g_s(\vartheta_0) \right>_{\vartheta_0} =
  \iint \mathrm{y}\cdot g_s(\vartheta_0,\mathrm{x})\d \gamma_{t_0}[t_0,\vartheta_0](\mathrm{x},\mathrm{y}),
\]
completing the proof.

\subsubsection{Proof of Statement (3)}
\label{sssec:statement3}

Using the notation of Section~\ref{sssec:statement2}, we are going to prove that the family of maps
  \[
    f_{t_0}(\vartheta_0) \doteq \left< \bm \nabla \mathbf{p}_{t_0}(\vartheta_0), g_{t_0}(\vartheta_0) \right>_{\vartheta_0} = \iint \mathrm{y}\cdot g_{t_0}(\vartheta_0,\mathrm{x})\d \gamma_{t_0}[t_0,\vartheta_0](\mathrm{x},\mathrm{y}),
    \]
    \[
    t_0\in I,\quad  \vartheta_0\in \mathcal{P}_2,
  \]
  is equicontinuous, that is, for any \( \varepsilon>0 \) there exists \( \delta>0 \) such that the implication
  \[
    W_2(\vartheta_0,\vartheta_0')<\delta \quad \implies \quad |f_{t_0}(\vartheta_0)-f_{t_0}(\vartheta_0')|<\varepsilon
  \]
  holds for all \( t_0\in I \) and \( \vartheta_0,\vartheta_0'\in \mathcal{P}_2 \).

  Let \( \Pi\in \mathcal{P}_2(\mathbb{R}^{2n} \times \mathbb{R}^{2n}) \) be an optimal transport plan between \( \gamma_{t_0}[t_0,\vartheta_0] \) and \( \gamma_{t_0}[t_0,\vartheta_0'] \). 
    Thus, \( \pi^1_\sharp \Pi = \gamma_{t_0}[t_0,\vartheta_0]  \) and \( \pi^2_\sharp \Pi = \gamma_{t_0}[t_0,\vartheta_0']  \). 
    Then 
    \begin{align*}
      &\hspace{-0.3cm}\left|f_{t_0}(\vartheta_0) - f_{t_0}(\vartheta_0')\right|\\
       &= 
       \left|\iint \mathrm{y}\cdot g_{t_0}(\vartheta_0,\mathrm{x})\d\gamma_{t_0}[t_0,\vartheta_0](\mathrm{x},\mathrm{y}) -\iint \mathrm{y}\cdot g_{t_0}(\vartheta_0',\mathrm{x})\d\gamma_{t_0}[t_0,\vartheta_0'](\mathrm{x},\mathrm{y})\right|\\
       &=
       \iiiint \left|\mathrm{y}_1\cdot g_{t_0}(\vartheta_0,\mathrm{x}_1) - \mathrm{y}_2\cdot g_{t_0}(\vartheta_0',\mathrm{x}_2)\right|\d \Pi(\mathrm{x}_1,\mathrm{y}_1,\mathrm{x}_2,\mathrm{y}_2)  \\
       &\le
       \iiiint \left|\mathrm{y}_1\cdot \left(g_{t_0}(\vartheta_0,\mathrm{x}_1)-  g_{t_0}(\vartheta_0',\mathrm{x}_2)\right)\right|\d \Pi(\mathrm{x}_1,\mathrm{y}_1,\mathrm{x}_2,\mathrm{y}_2)\\
       &+
       \iiiint \left|\left(\mathrm{y}_1- \mathrm{y}_2\right)\cdot g_{t_0}(\vartheta_0',\mathrm{x}_2)\right|\d \Pi(\mathrm{x}_1,\mathrm{y}_1,\mathrm{x}_2,\mathrm{y}_2)\\
       &\le
       \left(\iiiint \left|\mathrm{y}_1\right|^2\d \Pi(\mathrm{x}_1,\mathrm{y}_1,\mathrm{x}_2,\mathrm{y}_2)\right)^{1/2}
     \\&\qquad\cdot \left(\iiiint \left|g_{t_0}(\vartheta_0,\mathrm{x}_1)-  g_{t_0}(\vartheta_0',\mathrm{x}_2)\right|^2\d\Pi(\mathrm{x}_1,\mathrm{y}_1,\mathrm{x}_2,\mathrm{y}_2)\right)^{1/2}\\
       &+
       \left(\iiiint \left|\mathrm{y}_1- \mathrm{y}_2\right|^2\d \Pi(\mathrm{x}_1,\mathrm{y}_1,\mathrm{x}_2,\mathrm{y}_2)\right)^{1/2}\\ 
       &\qquad\cdot
       \left(\iiiint \left|g_{t_0}(\vartheta_0',\mathrm{x}_2)\right|^2\d \Pi(\mathrm{x}_1,\mathrm{y}_1,\mathrm{x}_2,\mathrm{y}_2)\right)^{1/2}\\ 
       &\le
       \left(\iint \left|\mathrm{y}_1\right|^2\d \gamma_{t_0}[t_0,\vartheta_0](\mathrm{x}_1,\mathrm{y}_1)\right)^{1/2}\\ 
       &\qquad\cdot M\left(\iiiint \left(|\mathrm{x}_1-\mathrm{x}_2| + W_2(\vartheta_0,\vartheta_0')\right)^2\d\Pi(\mathrm{x}_1,\mathrm{y}_1,\mathrm{x}_2,\mathrm{y}_2)\right)^{1/2}\\
       &+
       \left(\iiiint |\mathrm{y}_1- \mathrm{y}_2|^2\d \Pi(\mathrm{x}_1,\mathrm{y}_1,\mathrm{x}_2,\mathrm{y}_2)\right)^{1/2}\cdot
M.
    \end{align*}

  Recall that \( (t_0,\vartheta_0)\mapsto \gamma[t_0,\vartheta_0] \) is continuous. Hence the continuous function 
  \[
    (t_0,\vartheta_0)\mapsto \iint \left|\mathrm{y}_1\right|^2\d \gamma_{t_0}[t_0,\vartheta_0](\mathrm{x}_1,\mathrm{y}_1)
  \]
  maps \( I \times \mathcal{K} \) to a compact subset of \( \mathbb{R} \). 
  In particular, there exists \( C_1>0 \) such that 
\[
  \iint \left|\mathrm{y}_1\right|^2\d \gamma_{t_0}[t_0,\vartheta_0](\mathrm{x}_1,\mathrm{y}_1) \le C_1 \quad \forall (t_0,\vartheta_0)\in I \times \mathcal{K}.
\] 
Moreover, the map  \( (t_0,\vartheta_0)\mapsto \gamma[t_0,\vartheta_0] \) restricted to the compact set \( I \times \mathcal{K} \) is uniformely continuous.
In particular, for each \( \epsilon>0 \) there exists \( \delta>0 \) such that
\[
\forall \vartheta_0,\vartheta_0'\in \mathcal{K} \quad \forall t_0\in I \quad
  W_2(\vartheta_0,\vartheta_0')< \delta \quad\implies \quad W_2\left(\gamma_{t_0}[t_0,\vartheta_0], \gamma_{t_0}[t_0,\vartheta_0']\right)<\epsilon.
\]

By the very definition of \( \Pi \), the quantities 
\[
  \iiiint |\mathrm{y}_1- \mathrm{y}_2|^2\d \Pi(\mathrm{x}_1,\mathrm{y}_1,\mathrm{x}_2,\mathrm{y}_2), \quad 
  \iiiint |\mathrm{x}_1- \mathrm{x}_2|^2\d \Pi(\mathrm{x}_1,\mathrm{y}_1,\mathrm{x}_2,\mathrm{y}_2) 
\]
are no greater than \(  W_2^2 \left(\gamma_{t_0}[t_0,\vartheta_0], \gamma_{t_0}[t_0,\vartheta_0']\right)\).

Therefore, for any \( \epsilon>0 \) there exists \( \delta>0 \) such that the conditions \(\vartheta_0,\vartheta_0'\in \mathcal{K}\), \(t_0\in I\),
  \(W_2(\vartheta_0,\vartheta_0')< \delta\) imply that 
\[
  \left|f_{t_0}(\vartheta_0) - f_{t_0}(\vartheta_0')\right|\le C_1 M (\epsilon^2+2 \epsilon \delta +\delta^2)^{1/2} + \epsilon M,
\]
which proves equicontinuity.

\subsection{Proof of Proposition~\ref{prop:sysdif}}

As a preliminary step, we prove a technical lemma.

\begin{lemma}
  \label{lem:carat}
  Let \( f \colon [0,1] \times [0,1] \to \mathbb{R} \) be Lebesgue measurable in the first variable and equicontinuous in the second, i.e., for any \( \varepsilon>0 \) there exists \( \delta>0 \) such that
\[
|t_1-t_2|<\delta \quad \implies \quad  |f(s,t_{1}) - f(s,t_2)|< \varepsilon,
\]
for all \( t_{1},t_2\in [0,1] \) and almost all \( s\in [0,1] \).
Then
\[
\lim_{h\to 0}\frac{1}{h}\int_t^{t+h}\left|f(s,t)-f(t,t)\right|\d s=0,
\]
for almost all \( t\in [0,1] \).
\end{lemma}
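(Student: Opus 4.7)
The plan is to split the integrand via the triangle inequality as
\[
|f(s,t) - f(t,t)| \le |f(s,t) - f(s,s)| + |f(s,s) - f(t,t)|,
\]
and to show that the average over \( s \in [t, t+h] \) of each of the two pieces tends to \( 0 \) as \( h\to 0+ \) --- for every \(t\) in the first case and for a.e.\ \(t\) in the second.

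For the first summand, I would extract a common modulus of continuity by a standard diagonal argument. Applying the equicontinuity hypothesis with \(\varepsilon_n = 1/n\) and corresponding \(\delta_n > 0\) yields null sets \( N_n \subset [0,1]\) such that, for \(s \notin N_n\), the inequality \(|f(s,t_1)-f(s,t_2)| < 1/n\) holds whenever \(|t_1-t_2| < \delta_n\). Setting \( N \doteq \bigcup_n N_n \), still a null set, one obtains for every \(s \notin N\) a common modulus \(\omega \colon [0,1] \to [0,+\infty)\), with \(\omega(r) \to 0\) as \(r\to 0+\), such that \(|f(s,t_1)-f(s,t_2)| \le \omega(|t_1-t_2|)\). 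Consequently,
\[
\frac{1}{h}\int_t^{t+h} |f(s,t) - f(s,s)|\,ds \le \omega(h) \xrightarrow{h \to 0+} 0
\]
uniformly in \(t \in [0,1]\).

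For the second summand, introduce \( g(s) \doteq f(s,s) \), well defined outside the null set \( N \). Since \( f \) is measurable in \( s \) and equicontinuous in \( t \) (off \( N \)), it is a Carath\'eodory function --- hence jointly Borel measurable --- so \( g \) is Lebesgue measurable. Assuming \( g \in L^{1}_{\mathrm{loc}}[0,1] \), Lebesgue's differentiation theorem gives
\[
\frac{1}{h}\int_t^{t+h}|g(s)-g(t)|\,ds \xrightarrow{h\to 0+} 0
\]
at every Lebesgue point of \( g \), i.e., for a.e.\ \( t \in [0,1] \). Combining the two estimates closes the proof.

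The main obstacle I anticipate is justifying the local integrability of \( g \). The equicontinuity hypothesis alone gives no pointwise or integral control on \( f(\cdot, t_0) \) for any fixed \( t_0 \), so the statement of the lemma implicitly requires some integrability input. In the applications of interest (notably in the proof of Proposition~\ref{prop:sysdif}), this is supplied by the uniform bounds coming from \ref{F1}--\ref{F2}, which yield \(|f(s,t)| \le |f(s,0)| + \omega(1)\) with \(s \mapsto f(s,0)\) locally integrable; in a fully self-contained treatment, one would add a standing boundedness or integrability assumption.
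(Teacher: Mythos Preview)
Your proposal is correct and follows essentially the same route as the paper's proof: the same triangle-inequality split, equicontinuity handling the term $|f(s,t)-f(s,s)|$, and Lebesgue differentiation applied to $g(s)=f(s,s)$ for the other. Your explicit diagonal argument to extract a single null set and your remark on the tacit integrability assumption are refinements the paper omits, but the underlying argument is the same.
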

\begin{proof}
  Note that
  \begin{align*}
\frac{1}{h}\int_t^{t+h}\left|f(s,t)-f(t,t)\right|\d s & \le
\frac{1}{h}\int_t^{t+h}\left|f(s,t)-f(s,s)\right|\d s\\
& +
\frac{1}{h}\int_t^{t+h}\left|f(s,s)-f(t,t)\right|\d s.      
  \end{align*}
  Since \( f \) is a Carath\'eodory function, the map \( t \mapsto f(t,t) \) is Lebesgue measurable.
  Therefore,
\[
\frac{1}{h}\int_t^{t+h}\left|f(s,s)-f(t,t)\right|\d s \to 0,
\]
for a.e. \( t\in [0,1] \).

By our assumptions, for any \( \varepsilon>0 \) there exists \( \delta>0 \) such that
\[
|s-t|<\delta \quad \implies \quad |f(s,t)-f(s,s)|<\varepsilon.
\]
Thus for all \( h<\delta \) we have
\[
  \frac{1}{h}\int_t^{t+h}\left|f(s,t)-f(s,s)\right|\d s<
  \frac{1}{h}\int_t^{t+h}\varepsilon\d s = \varepsilon.
\]
Hence \( \frac{1}{h}\int_t^{t+h}\left|f(s,t)-f(s,s)\right|\d s \to 0\) for all \( t\in [0,1] \).
This observation completes the proof.
\end{proof}

\begin{proofof}{Proposition~\ref{prop:sysdif}}
  \textbf{1.} We start with the obvious identity:
  \begin{align*}
    \mathbf{p}_{t+h}(\mu_{t+h}) - \mathbf{p}_t(\mu_t) =
    \left[\mathbf{p}_{t+h}(\mu_{t+h}) - \mathbf{p}_{t+h}(\mu_t)\right] + \left[\mathbf{p}_{t+h}(\mu_t) - \mathbf{p}_t(\mu_t)\right].
  \end{align*}
  Consider the first difference on the right-hand side:
  \begin{align*}
   \mathbf{p}_{t+h}(\mu_{t+h}) - \mathbf{p}_{t+h}(\mu_t) &=  \mathbf{p}_{t+h}(\mu_{t+h}) - \mathbf{p}_{t+h}((\id+h v_t)_{\sharp}\mu_t)
   \\&+ \mathbf{p}_{t+h}((\id+h v_{t})_{\sharp} \mu_t) - \mathbf{p}_{t+h}(\mu_t).
  \end{align*}
It is known that
  \[
    \lim_{h\to 0}\frac{W_2\left(\mu_{t+h},(\id + h g_t)_{\sharp}\mu_t\right)}{h} = 0,
  \]
  for a.e. \( t\in I \).
  Thus for all such time moments \( t \) it holds
  \begin{align*}
    \lim_{h\to0} & \frac{\left|\mathbf{p}_{t+h}(\mu_{t+h}) - \mathbf{p}_{t+h}((\id +h v_{t})_{\sharp}\mu_t)\right|}{h}\\
    &\le
    \Lip(\mathbf{p}_{t+h})\lim_{h\to0}\frac{W_2\left(\mu_{t+h},(\id +h v_{t})_{\sharp}\mu_t\right)}{h} = 0.
  \end{align*}

  Thanks to Proposition~\ref{prop:dp_cont}(1), the family \( \mathbf{p}_s = \ell \circ \Phi_{s,T}\) is uniformly equidifferentiable w.r.t. $s$, meaning that
  \[
    \left|\mathbf{p}_{s}((\id +h v_t)_{\sharp}\mu_t) - \mathbf{p}_{s}(\mu_t) - \left<\bm \nabla \mathbf{p}_{s}(\mu_t), v_t\right>_{\mu_t}\right|\le C\|v_t\|^2_{\mu_t}h^2,
  \]
  for all \( s\in I \) and \( h\in \mathbb{R} \) (here we use the fact that \( C \) depends only on \( F \) and \( \ell \)).
  As a consequence,
 \[
   \lim_{h\to 0}\frac{\mathbf{p}_{t+h}((\id+h v_{t})_{\sharp} \mu_t) - \mathbf{p}_{t+h}(\mu_t)}{h} = \lim_{h\to 0} \left< \bm \nabla \mathbf{p}_{t+h}(\mu_t),v_t \right>_{\mu_t} = \left< \bm \nabla \mathbf{p}_{t}(\mu_t),v_t \right>_{\mu_t}.
 \]
 The latter identity follows from Proposition~\ref{prop:dp_cont}(2).

 \textbf{2.} In the second step we study the limit
  \[
    \lim_{h\to0}\frac{\mathbf{p}_{t+h}(\mu_t) - \mathbf{p}_t(\mu_t)}{h}.
  \]

  We start with the identity
  \[
    \mathbf{p}_{t+h}(\mu) - \mathbf{p}_t(\mu) = \int_t^{t+h} \partial_t \mathbf{p}_{s}(\mu)\d s
  \]
  holding for all \( t\in I \) and \( \mu\in \mathcal{P}_2 \) due to Lipschitz continuity of \( t\mapsto \mathbf{p}_t(\mu) \).
  Now, that fact that \( \mathbf{p} \) satisfies~\eqref{eq:Wadjoint} allows us to write
  \[
    \mathbf{p}_{t+h}(\mu) - \mathbf{p}_t(\mu) = \int_t^{t+h} \left< \bm \nabla \mathbf{p}_{s}(\mu), F_s(\mu,u_s) \right>_{\mu}\d s.
  \]
  As a consequence,
  \[
    \frac{\mathbf{p}_{t+h}(\mu_t) - \mathbf{p}_t(\mu_t)}{h} = \frac{1}{h}\int_t^{t+h} f(s,t) \d \tau,\quad \text{where} \quad f(s,t) \doteq \left< \bm \nabla \mathbf{p}_{s}(\mu_t), F_s(\mu_t,u_s) \right>_{\mu_t}.
  \]
  Thanks to Proposition~\ref{prop:dp_cont}(3), \( f \) is measurable in \( s \) and equicontinuous in \( t \) (here we use the fact that \( t\mapsto \mu_t \) maps \( I \) into a compact subset of \( \mathcal{P}_2 \)).
  Thus, by Lemma~\ref{lem:carat},
\[
  \lim_{h\to 0}\frac{\mathbf{p}_{t+h}(\mu_t) - \mathbf{p}_t(\mu_t)}{h} = f(t,t) = \left< \bm \nabla \mathbf{p}_{t}(\mu_t), F_t(\mu_t,u_t) \right>_{\mu_t},
  \]
  for almost all \( t \in I \).

\section{Proofs related to Section~\ref{sec:generapp}}\label{app:4}

\subsection{Proof of the assertion in Example~\ref{ex:6}}\label{apex4}

The integrand in the corresponding condition \eqref{Ls-2}, written as
\[
    \lim_{h \to 0+}\frac{1}{h}\int_t^{t+h}\sup_{\mu \in \mathcal P_2, \, \alpha \in \Delta}\big|[\mathfrak{L}_s- \mathfrak{L}_t](\phi\circ \Phi_\alpha)(\mu)\big| \d s=0,
\]
is majorated by the function
\[
     \big|u(t)-u(s)\big| \,\max_j \|F^j(\mu)\|_{\mu} \left\|\bm \nabla (\phi\circ \Phi_\alpha)(\mu)\right\|_{\mu},
\]
where we employed the definition \eqref{Lnlc} and the Cauchy–Schwarz inequality. 

Now, we observe that
\[
    \bm \nabla (\phi\circ \Phi_\alpha) = [(\Phi_\alpha)_{\star}]' \bm \nabla \phi\left(\Phi_\alpha\right)
\]
where the prime denotes taking the adjoint of an operator, and recall that, for any bounded linear map $\mathfrak{A}$, its operator norm $\|\mathfrak A\|$ equals the norm of \(\mathfrak A'\). Thus,  
\[
    \|[(\Phi_\alpha)_{\star}]' \bm \nabla \phi\left(\Phi_\alpha\right)\|_{\mu} \leq \|[(\Phi_\alpha)_{\star}]'\| \|\bm \nabla \phi \left(\Phi_\alpha\right)\|_{{\Phi_\alpha(\mu)}} \leq \|(\Phi_\alpha)_{\star}\| \|\bm \nabla \phi\|_{\infty}.
\]

Now, by leveraging Proposition~\ref{prop:Wflowdiff} and using the inequality \eqref{w-infti}, the expression on the right is estimated by
 \begin{align*}
\|\bm \nabla \phi\|_{\infty}\left(1+T\|\bm D F\|_{\infty}\,e^{T\big(\|DF\|_{\infty}+ \|\bm DF\|_{\infty}\big)}\right)
    e^{T\|DF\|_\infty},
\end{align*}
and the result follows.

\subsection{Proof of Proposition~\ref{propo:ex2}}\label{app:proofex2}

The first part of the proposition was established in Example~\ref{ex_5}. To prove the second part, we analyze the following expression:
\begin{align*}
    &\left|\nabla \phi \circ \Phi_{s+h, t} \, D \Phi_{s+h, t} f_s - \nabla \phi \circ \Phi_{s, t} \, D \Phi_{s, t} f_s\right| \leq \\
    &\qquad \, |\nabla \phi \circ \Phi_{s+h, t} - \nabla \phi \circ \Phi_{s, t}|\exp\left(\int_{s+h}^t |Df_\tau \circ \Phi_{s+h,\tau}| \, d \tau \right) \|f\|_\infty \\
    &\quad + |\nabla \phi \circ \Phi_{s, t}| \, |y_t| \, \|f\|_\infty,
\end{align*}
where \( y_t \doteq D\Phi_{s+h,t} - D\Phi_{s,t} \). 

The first term can be estimated using the Lipschitz continuity of \( \nabla \phi \) and \( \Phi \), leading to:
\[
|\nabla \phi \circ \Phi_{s+h, t} - \nabla \phi \circ \Phi_{s, t}| \leq \Lip(\nabla \phi) \Lip(\Phi)^2 h,
\]
and the exponential term accounts for the growth induced by \( D\Phi_{s+h,t} \). Thus, this term is bounded by
\[
\|f\|_\infty \Lip(\nabla \phi) \Lip(\Phi)^2 \exp(\|Df\|_\infty h) h.
\]

To estimate the second term, observe that \( y_t \) satisfies the following ODE:
\[
\dot{y}_t = Df_t(\Phi_{s+h,t}) y_t + \big(Df_t(\Phi_{s+h,t}) - Df_t(\Phi_{s,t})\big) D\Phi_{s,t}, \quad y_{s+h} = \id - D\Phi_{s, s+h}.
\]
Using the Lipschitz continuity of \( Df \), we have:
\[
|Df_t(\Phi_{s+h,t}) - Df_t(\Phi_{s,t})| \leq \Lip(Df) \Lip(\Phi)^2 h \doteq C_1 h.
\]
Applying Grönwall's lemma to this ODE yields:
\[
|y_t| \leq h \, |\id - D\Phi_{s,s+h}| \, C_1 |D\Phi_{s,t}| \exp(\|Df\|_\infty t).
\]
Since \( |D\Phi_{s,t}| \) is bounded by \( \exp(\|Df\|_\infty (t-s)) \), we can conclude:
\[
|y_t| \leq C_1 h \exp(\|Df\|_\infty t),
\]
where \( C_1 \) depends on \( \Lip(Df) \) and \( \Lip(\Phi) \).

Combining these results, we obtain the bound:
\[
\big|\mathfrak{L}_s \left(\bm{\Psi}_{t, s+h} - \bm{\Psi}_{t,s}\right)\phi\big| \doteq \big|\nabla \phi \circ \Phi_{s+h, t} \, D \Phi_{s+h, t} f_s - \nabla \phi \circ \Phi_{s, t} \, D \Phi_{s, t} f_s\big| \leq C_2 h,
\]
where \( C_2 \) is a constant depending only on \( \|f\|_\infty \), \( \|Df\|_\infty \), and the Lipschitz constants of \( \nabla \phi, Df \), and \( \Phi \). This completes the proof.

\end{proofof}

\subsection{Proof of Proposition~\ref{prop:ex12}}\label{app:proof-ex12}

\textbf{1.} In Example~\ref{ex1}, the control set $U$ is specified as a closed ball in the space \( \text{Lip}_0(\mathbb{R}^n; \mathbb{R}^m) \).

\begin{remark}\label{AES}
Recall that the pre-dual of the space \( \text{Lip}_0(\mathbb{R}^n; \mathbb{R}^m) \) of vector-valued Lipschitz functions is identified as the direct sum
\[
\bm{\mathcal{F}}(\mathbb{R}^n; \mathbb{R}^m) = \bigoplus_{i=1}^m \mathcal{F}(\mathbb{R}^n),
\]
where \( \mathcal{F}(\mathbb{R}^n) \) denotes the Arens-Eells space over \( \mathbb{R}^n \) \cite{Godefroy}.

The Arens-Eells space is  the completion of the space of finitely supported signed measures on \( \mathbb{R}^n \) with respect to the norm
\[
\|\mu\|_{\bm{\mathcal{F}}} = \sup \left\{ \left| \langle f, \mu \rangle \right|: f \in \text{Lip}_0(\mathbb{R}^n; \mathbb{R}), \|f\|_{\text{Lip}} \leq 1 \right\},
\]
where the pairing \( \langle f, \mu \rangle \) is established via the Lebesgue integral
\[
\langle f, \mu \rangle \doteq \int f \, \mathrm{d} \mu.
\]
We emphasize that, here, the functions \( f \) act on signed measures \( \mu \) as linear functionals.
\end{remark}

Without loss of generality, we can take: \( U \doteq \{\mathrm{u} \in \bm{Lip}_0 : \Lip(\mathrm{u}) \leq 1\} \). 

The existence of a unique solution of the ODE \eqref{ODE-ex1} on the entire interval \( I \) is evident; thus, it suffices to verify the continuity of the mapping \( u \mapsto x^u \).

For any \( \bar{u}, u \in \mathcal{U} \), we estimate:
\begin{align*}
    \big|x(t) - \bar{x}(t)\big| &= \left|\int_{0}^t \big[u_s(x(s)) - \bar{u}_s(\bar{x}(s))\big] \d{s}\right| \\
    &\leq \int_{0}^t \big|u_s(x(s)) - u_s(\bar{x}(s))\big| \d{s} + \left|\int_{0}^t \big[u_s(\bar{x}(s)) - \bar{u}_s(\bar{x}(s))\big] \d{s}\right| \\
    &\leq \int_{0}^t \big|x(s) - \bar{x}(s)\big| \d{s} + \left|\int_{0}^t \langle u_s - \bar{u}_s, \delta_{\bar{x}(s)}\rangle_{(\bm{V}', \bm{V})} \right|,
\end{align*}
which, by Gr\"onwall's lemma, yields the bound
\[
    \big|x(t) - \bar{x}(t)\big| \leq e^t \left|\int_{0}^t \langle u_s - \bar{u}_s, \delta_{\bar{x}(s)}\rangle_{(\bm{V}', \bm{V})} \right| \leq e^T \big|\langle\!\langle u - \bar{u}, \delta_{\bar{x}(\cdot)}\rangle\!\rangle\big|.
\]

To complete the proof, replace \( u \) with an element of the sequence \( (u^k) \subset \mathcal{U} \) converging to \( \bar{u} \), and leverage the definitions of the space $\bm{\mathcal F}$ and the weak* convergence.



\textbf{2.} In Example~\ref{ex:2}, the set $U$ is chosen as a closed ball of the dual $\bm V' = {L}^{\rm q}(\R^n; \R^m)$ of the Lebesgue space $\bm V = {L}^{\mathrm{p}}(\R^n; \R^m)$, $1\leq p < \infty$, $\frac{1}{\mathrm p}+\frac{1}{\rm q}=1$.

Let \( F \) be a matrix having \( f^k \) as the $k^{\mbox{\footnotesize th}}$ column. By assumption, $F$ is Carath\'{e}odory and sublinear with a certain constant $C_F$. A straightforward analysis based on the estimates
\begin{align*}
    \left|x(t)\right| &\leq \int_{0}^t \left|F_s(x(s))v_s(x(s))\right|\d s\\
    &\leq C_F \int_{0}^t (1+|x(s)|) \left|\int \eta(x(s) - {\rm x})u_s({\rm x})\d {\mathrm{x}}\right| \d{s}\\
    &\leq C_F \int_{0}^t (1+|x(s)|) \|\eta(x(s)-\cdot)\|_{L^{\mathrm p}}\,\|u_s\|_{L^{\mathrm q}} \d{s},
\end{align*}
and on the Gr\"onwall inequality shows that the system \eqref{ODEex} admits a unique solution on the entire interval \( I \). Furthermore, the trajectory tube of this control system is contained in a ball \( K \doteq \mathbb{B}_r \) of a suitably large radius \( r \). 

To establish the continuity of the operator \( u \mapsto x^u \), we observe that, for each \( \mathrm{p} \geq 1 \), the elements \( \mathrm{u} \) of \( U \) are uniformly bounded in \( L^1(K) \). Then, for any \( u, \bar{u} \in \mathcal{U} \), we have the following estimates:
\begin{align*}
    \big|x(t) - \bar{x}(t)\big| &\leq \int_{0}^t \big|F_s(x(s)) - F_s(\bar{x}(s))\big|\cdot \big|v_s(x(s))\big| \d{s} \\
    &\quad + \int_{0}^t \big|F_s(\bar{x}(s))\big|\cdot \big|v_s(x(s)) - v_s(\bar{x}(s))\big| \d{s} \\
    &\quad + \left|\int_{0}^t F_s(\bar{x}(s)) \big[v_s(\bar{x}(s)) - \bar{v}_s(\bar{x}(s))\big] \d{s}\right| \\
    &\leq \Lip_{K}(F) \int_{0}^t \big|x(s) - \bar{x}(s)\big| \left|\int u_s({\rm x}) \, \eta(x(s)-{\rm x}) \d{\rm x}\right| \d{s} \\
    &\quad + \sup_{I \times K}|F_s| \int_{0}^t \left|\int  u_s({\rm x}) \big[\eta(x(s) - \mathrm x)- \eta(\bar x(s) - {\rm x})\big]\d\mathrm{x}\right| \d{s} \\
    &\quad + \left|\int_{0}^t F(\bar{x}(s)) \left(\int \eta(\bar{x}(s) - {\rm x})\big[u_s({\rm x}) - \bar{u}_s({\rm x})\big] \d {\rm x}\right)\d{s}\right|\\
    &\leq \Lip_{K}(F) \ \sup_{K}|\eta| \int_{0}^t \|{u}_s\|_{L^1} \big|x(s) - \bar{x}(s)\big| \d{s} \\
    &\quad + \sup_{I \times K}|F_s| \ \Lip_{K}(\eta) \int_{0}^t \|u_s\|_{L^1} \big|x(s) - \bar{x}(s)\big| \d{s} \\
    &\quad + \left|\int_{0}^t F(\bar{x}(s)) \left(\int \eta(\bar{x}(s) - {\rm x})\big[u_s({\rm x}) - \bar{u}_s({\rm x})\big] \d {\rm x}\right)\d{s}\right|,
\end{align*}
where \( |\cdot| \) denotes any norm in \( \R^n \), and the corresponding matrix norm is denoted similarly. Applying Gr\"onwall's lemma, we obtain:
\[
    \big|x(t) - \bar{x}(t)\big| \leq C \big|\langle\!\langle u - \bar{u}, F(\bar{x}) \eta(\bar{x} - \cdot)\rangle\!\rangle\big|
\]
for a certain \( K \)-uniform constant \( C > 0 \). Proceeding as above completes the proof.

\small{

	\bibliography{references.bib}


\begin{thebibliography}{60}
\ifx \bisbn   \undefined \def \bisbn  #1{ISBN #1}\fi
\ifx \binits  \undefined \def \binits#1{#1}\fi
\ifx \bauthor  \undefined \def \bauthor#1{#1}\fi
\ifx \batitle  \undefined \def \batitle#1{#1}\fi
\ifx \bjtitle  \undefined \def \bjtitle#1{#1}\fi
\ifx \bvolume  \undefined \def \bvolume#1{\textbf{#1}}\fi
\ifx \byear  \undefined \def \byear#1{#1}\fi
\ifx \bissue  \undefined \def \bissue#1{#1}\fi
\ifx \bfpage  \undefined \def \bfpage#1{#1}\fi
\ifx \blpage  \undefined \def \blpage #1{#1}\fi
\ifx \burl  \undefined \def \burl#1{\textsf{#1}}\fi
\ifx \doiurl  \undefined \def \doiurl#1{\url{https://doi.org/#1}}\fi
\ifx \betal  \undefined \def \betal{\textit{et al.}}\fi
\ifx \binstitute  \undefined \def \binstitute#1{#1}\fi
\ifx \binstitutionaled  \undefined \def \binstitutionaled#1{#1}\fi
\ifx \bctitle  \undefined \def \bctitle#1{#1}\fi
\ifx \beditor  \undefined \def \beditor#1{#1}\fi
\ifx \bpublisher  \undefined \def \bpublisher#1{#1}\fi
\ifx \bbtitle  \undefined \def \bbtitle#1{#1}\fi
\ifx \bedition  \undefined \def \bedition#1{#1}\fi
\ifx \bseriesno  \undefined \def \bseriesno#1{#1}\fi
\ifx \blocation  \undefined \def \blocation#1{#1}\fi
\ifx \bsertitle  \undefined \def \bsertitle#1{#1}\fi
\ifx \bsnm \undefined \def \bsnm#1{#1}\fi
\ifx \bsuffix \undefined \def \bsuffix#1{#1}\fi
\ifx \bparticle \undefined \def \bparticle#1{#1}\fi
\ifx \barticle \undefined \def \barticle#1{#1}\fi
\bibcommenthead
\ifx \bconfdate \undefined \def \bconfdate #1{#1}\fi
\ifx \botherref \undefined \def \botherref #1{#1}\fi
\ifx \url \undefined \def \url#1{\textsf{#1}}\fi
\ifx \bchapter \undefined \def \bchapter#1{#1}\fi
\ifx \bbook \undefined \def \bbook#1{#1}\fi
\ifx \bcomment \undefined \def \bcomment#1{#1}\fi
\ifx \oauthor \undefined \def \oauthor#1{#1}\fi
\ifx \citeauthoryear \undefined \def \citeauthoryear#1{#1}\fi
\ifx \endbibitem  \undefined \def \endbibitem {}\fi
\ifx \bconflocation  \undefined \def \bconflocation#1{#1}\fi
\ifx \arxivurl  \undefined \def \arxivurl#1{\textsf{#1}}\fi
\csname PreBibitemsHook\endcsname

\bibitem[\protect\citeauthoryear{Koopman}{1931}]{koopman1931hamiltonian}
\begin{barticle}
\bauthor{\bsnm{Koopman}, \binits{B.O.}}:
\batitle{Hamiltonian systems and transformation in {H}ilbert space}.
\bjtitle{Proceedings of the National Academy of Sciences of the United States
  of America}
\bvolume{17}(\bissue{5}),
\bfpage{315}--\blpage{318}
(\byear{1931})
\doiurl{10.1073/pnas.17.5.315}
\end{barticle}
\endbibitem

\bibitem[\protect\citeauthoryear{Saperstone}{1981}]{saperstone1981semidynamical}
\begin{bbook}
\bauthor{\bsnm{Saperstone}, \binits{S.H.}}:
\bbtitle{Semidynamical Systems in Infinite Dimensional Spaces}.
\bsertitle{Lecture Notes in Mathematics},
vol. \bseriesno{1406}.
\bpublisher{Springer},
\blocation{Berlin, Heidelberg}
(\byear{1981}).
\doiurl{10.1007/BFb0079205}
\end{bbook}
\endbibitem

\bibitem[\protect\citeauthoryear{Pogodaev and
  Staritsyn}{2024}]{pogodaevExactFormulaeIncrement2024}
\begin{barticle}
\bauthor{\bsnm{Pogodaev}, \binits{N.I.}},
\bauthor{\bsnm{Staritsyn}, \binits{M.V.}}:
\batitle{Exact formulae for the increment of the objective functional and
  necessary optimality conditions, alternative to {{Pontryagin}}'s maximum
  principle}.
\bjtitle{Sbornik: Mathematics}
\bvolume{215}(\bissue{6}),
\bfpage{790}--\blpage{822}
(\byear{2024})
\doiurl{10.4213/sm9967e}
\end{barticle}
\endbibitem

\bibitem[\protect\citeauthoryear{Bressan and
  Piccoli}{2007}]{ABressan_BPiccoli_2007a}
\begin{bbook}
\bauthor{\bsnm{Bressan}, \binits{A.}},
\bauthor{\bsnm{Piccoli}, \binits{B.}}:
\bbtitle{Introduction to the Mathematical Theory of Control}.
\bsertitle{AIMS Series on Applied Mathematics},
vol. \bseriesno{2}.
\bpublisher{American Institute of Mathematical Sciences (AIMS)},
\blocation{Springfield, MO}
(\byear{2007})
\end{bbook}
\endbibitem

\bibitem[\protect\citeauthoryear{Ambrosio
  et~al.}{2005}]{ambrosioGradientFlowsMetric2005}
\begin{bbook}
\bauthor{\bsnm{Ambrosio}, \binits{L.}},
\bauthor{\bsnm{Gigli}, \binits{N.}},
\bauthor{\bsnm{Savar{\'e}}, \binits{G.}}:
\bbtitle{Gradient Flows: In Metric Spaces and in the Space of Probability
  Measures}.
\bsertitle{Lectures in Mathematics {{ETH Z{\"u}rich}}}.
\bpublisher{Birkh{\"a}user},
\blocation{Boston}
(\byear{2005})
\end{bbook}
\endbibitem

\bibitem[\protect\citeauthoryear{Cavagnari
  et~al.}{2023}]{cavagnariLagrangianApproachTotally2023}
\begin{botherref}
\oauthor{\bsnm{Cavagnari}, \binits{G.}},
\oauthor{\bsnm{Savar{\'e}}, \binits{G.}},
\oauthor{\bsnm{Sodini}, \binits{G.E.}}:
A {{Lagrangian}} Approach to Totally Dissipative Evolutions in {{Wasserstein}}
  Spaces.
arXiv
(2023)
\end{botherref}
\endbibitem

\bibitem[\protect\citeauthoryear{Bogachev et~al.}{2015}]{BOGACHEV20153854}
\begin{barticle}
\bauthor{\bsnm{Bogachev}, \binits{V.I.}},
\bauthor{\bsnm{{Da Prato}}, \binits{G.}},
\bauthor{\bsnm{R{\"o}ckner}, \binits{M.}},
\bauthor{\bsnm{Shaposhnikov}, \binits{S.V.}}:
\batitle{On the uniqueness of solutions to continuity equations}.
\bjtitle{Journal of Differential Equations}
\bvolume{259}(\bissue{8}),
\bfpage{3854}--\blpage{3873}
(\byear{2015})
\doiurl{10.1016/j.jde.2015.05.003}
\end{barticle}
\endbibitem

\bibitem[\protect\citeauthoryear{Crippa and
  Lécureux-Mercier}{2012}]{rippa2012}
\begin{barticle}
\bauthor{\bsnm{Crippa}, \binits{G.}},
\bauthor{\bsnm{Lécureux-Mercier}, \binits{M.}}:
\batitle{Existence and uniqueness of measure solutions for a system of
  continuity equations with non-local flow}.
\bjtitle{Nonlinear Differential Equations and Applications NoDEA}
\bvolume{20}(\bissue{3}),
\bfpage{523}--\blpage{537}
(\byear{2012})
\doiurl{10.1007/s00030-012-0164-3}
\end{barticle}
\endbibitem

\bibitem[\protect\citeauthoryear{Pogodaev and
  Staritsyn}{2022}]{pogodaevNonlocalBalanceEquations2022}
\begin{botherref}
\oauthor{\bsnm{Pogodaev}, \binits{N.I.}},
\oauthor{\bsnm{Staritsyn}, \binits{M.V.}}:
Nonlocal balance equations with parameters in the space of signed measures.
Sbornik: Mathematics
\textbf{213}(1)
(2022)
\doiurl{10.1070/SM9516}
\end{botherref}
\endbibitem

\bibitem[\protect\citeauthoryear{Lavrentiev and
  Lusternik}{1938}]{lavrentiev1938}
\begin{bbook}
\bauthor{\bsnm{Lavrentiev}, \binits{M.A.}},
\bauthor{\bsnm{Lusternik}, \binits{L.A.}}:
\bbtitle{A Course in the Calculus of Variations}.
\bpublisher{ONTI},
\blocation{Moscow}
(\byear{1938})
\end{bbook}
\endbibitem

\bibitem[\protect\citeauthoryear{Pogodaev}{2018}]{Pogodaev2018}
\begin{barticle}
\bauthor{\bsnm{Pogodaev}, \binits{N.}}:
\batitle{Program strategies for a dynamic game in the space of measures}.
\bjtitle{Optimization Letters}
\bvolume{13}(\bissue{8}),
\bfpage{1913}--\blpage{1925}
(\byear{2018})
\doiurl{10.1007/s11590-018-1318-y}
\end{barticle}
\endbibitem

\bibitem[\protect\citeauthoryear{Gamkrelidze}{1978}]{Gamkrelidze1978}
\begin{bbook}
\bauthor{\bsnm{Gamkrelidze}, \binits{R.V.}}:
\bbtitle{Principles of Optimal Control Theory}.
\bpublisher{Springer},
\blocation{New York}
(\byear{1978}).
\doiurl{10.1007/978-1-4684-7398-8}
\end{bbook}
\endbibitem

\bibitem[\protect\citeauthoryear{Agrachev and
  Sachkov}{2004}]{agrachevControlTheoryGeometric2004}
\begin{bbook}
\bauthor{\bsnm{Agrachev}, \binits{A.A.}},
\bauthor{\bsnm{Sachkov}, \binits{Y.L.}}:
\bbtitle{Control {{Theory}} from the {{Geometric Viewpoint}}}.
\bpublisher{Springer},
\blocation{Berlin, Heidelberg}
(\byear{2004})
\end{bbook}
\endbibitem

\bibitem[\protect\citeauthoryear{Frankowska and
  Hoehener}{2017}]{frankowskaPointwiseSecondorderNecessary2017}
\begin{barticle}
\bauthor{\bsnm{Frankowska}, \binits{H.}},
\bauthor{\bsnm{Hoehener}, \binits{D.}}:
\batitle{Pointwise second-order necessary optimality conditions and
  second-order sensitivity relations in optimal control}.
\bjtitle{Journal of Differential Equations}
\bvolume{262}(\bissue{12}),
\bfpage{5735}--\blpage{5772}
(\byear{2017})
\doiurl{10.1016/j.jde.2017.02.013}
\end{barticle}
\endbibitem

\bibitem[\protect\citeauthoryear{Dykhta}{2014}]{Dykhta2014}
\begin{barticle}
\bauthor{\bsnm{Dykhta}, \binits{V.A.}}:
\batitle{Nonstandard duality and nonlocal necessary optimality conditions in
  nonconvex optimal control problems}.
\bjtitle{Automation and Remote Control}
\bvolume{75}(\bissue{11}),
\bfpage{1906}--\blpage{1921}
(\byear{2014})
\doiurl{10.1134/s0005117914110022}
\end{barticle}
\endbibitem

\bibitem[\protect\citeauthoryear{Krasovskii et~al.}{2011}]{krasovskii2011game}
\begin{bbook}
\bauthor{\bsnm{Krasovskii}, \binits{N.N.}},
\bauthor{\bsnm{Kotz}, \binits{S.}},
\bauthor{\bsnm{Subbotin}, \binits{A.I.}}:
\bbtitle{Game-theoretical Control Problems}.
\bsertitle{Springer Series in Soviet Mathematics}.
\bpublisher{Springer},
\blocation{New York}
(\byear{2011})
\end{bbook}
\endbibitem

\bibitem[\protect\citeauthoryear{Li}{2011}]{liEnsembleControlFiniteDimensional2011}
\begin{barticle}
\bauthor{\bsnm{Li}, \binits{J.-S.}}:
\batitle{Ensemble {{Control}} of {{Finite-Dimensional Time-Varying Linear
  Systems}}}.
\bjtitle{IEEE Transactions on Automatic Control}
\bvolume{56}(\bissue{2}),
\bfpage{345}--\blpage{357}
(\byear{2011})
\doiurl{10.1109/TAC.2010.2060259}
\end{barticle}
\endbibitem

\bibitem[\protect\citeauthoryear{Panaretos and
  Zemel}{2020}]{panaretosInvitationStatisticsWasserstein2020}
\begin{bbook}
\bauthor{\bsnm{Panaretos}, \binits{V.M.}},
\bauthor{\bsnm{Zemel}, \binits{Y.}}:
\bbtitle{An {{Invitation}} to {{Statistics}} in {{Wasserstein Space}}}.
\bsertitle{{{SpringerBriefs}} in {{Probability}} and {{Mathematical
  Statistics}}}.
\bpublisher{Springer},
\blocation{Cham}
(\byear{2020}).
\doiurl{10.1007/978-3-030-38438-8}
\end{bbook}
\endbibitem

\bibitem[\protect\citeauthoryear{Santambrogio}{2015}]{santambrogioOptimalTransportApplied2015}
\begin{bbook}
\bauthor{\bsnm{Santambrogio}, \binits{F.}}:
\bbtitle{Optimal {{Transport}} for {{Applied Mathematicians}}}.
\bsertitle{Progress in {{Nonlinear Differential Equations}} and {{Their
  Applications}}},
vol. \bseriesno{87}.
\bpublisher{Springer},
\blocation{Cham}
(\byear{2015}).
\doiurl{10.1007/978-3-319-20828-2}
\end{bbook}
\endbibitem

\bibitem[\protect\citeauthoryear{Cardaliaguet et~al.}{2019}]{CardMaster2019}
\begin{bbook}
\bauthor{\bsnm{Cardaliaguet}, \binits{P.}},
\bauthor{\bsnm{Delarue}, \binits{F.}},
\bauthor{\bsnm{Lasry}, \binits{J.-M.}},
\bauthor{\bsnm{Lions}, \binits{P.-L.}}:
\bbtitle{The Master Equation and the Convergence Problem in Mean Field Games}.
\bsertitle{Ann. {{Math}}. {{Stud}}.},
vol. \bseriesno{201}.
\bpublisher{Princeton University Press},
\blocation{Princeton, NJ}
(\byear{2019}).
\doiurl{10.1515/9780691193717}
\end{bbook}
\endbibitem

\bibitem[\protect\citeauthoryear{Chertovskih
  et~al.}{2023}]{chertovskihOptimalControlNonlocal2023}
\begin{barticle}
\bauthor{\bsnm{Chertovskih}, \binits{R.}},
\bauthor{\bsnm{Pogodaev}, \binits{N.}},
\bauthor{\bsnm{Staritsyn}, \binits{M.}}:
\batitle{Optimal {{Control}} of {{Nonlocal Continuity Equations}}: {{Numerical
  Solution}}}.
\bjtitle{Applied Mathematics \& Optimization}
\bvolume{88}(\bissue{3}),
\bfpage{86}
(\byear{2023})
\doiurl{10.1007/s00245-023-10062-w}
\end{barticle}
\endbibitem

\bibitem[\protect\citeauthoryear{Piccoli and
  Rossi}{2013}]{piccoliTransportEquationNonlocal2013}
\begin{barticle}
\bauthor{\bsnm{Piccoli}, \binits{B.}},
\bauthor{\bsnm{Rossi}, \binits{F.}}:
\batitle{Transport {{Equation}} with {{Nonlocal Velocity}} in {{Wasserstein
  Spaces}}: {{Convergence}} of {{Numerical Schemes}}}.
\bjtitle{Acta Applicandae Mathematicae}
\bvolume{124}(\bissue{1}),
\bfpage{73}--\blpage{105}
(\byear{2013})
\doiurl{10.1007/s10440-012-9771-6}
\end{barticle}
\endbibitem

\bibitem[\protect\citeauthoryear{Kolokoltsov}{2019}]{kolokoltsov2019differential}
\begin{bbook}
\bauthor{\bsnm{Kolokoltsov}, \binits{V.}}:
\bbtitle{Differential Equations on Measures and Functional Spaces}.
\bsertitle{Birkh{\"a}user Advanced Texts}.
\bpublisher{Springer},
\blocation{Basler Lehrb{\"u}cher}
(\byear{2019})
\end{bbook}
\endbibitem

\bibitem[\protect\citeauthoryear{Bogachev}{2007}]{bogachevMeasureTheoryVolume2007}
\begin{bbook}
\bauthor{\bsnm{Bogachev}, \binits{V.I.}}:
\bbtitle{Measure Theory. {{Volume II}}}.
\bpublisher{Springer},
\blocation{Berlin-New York}
(\byear{2007})
\end{bbook}
\endbibitem

\bibitem[\protect\citeauthoryear{Lee}{2013}]{Lee2013}
\begin{bbook}
\bauthor{\bsnm{Lee}, \binits{J.M.}}:
\bbtitle{Introduction to Smooth Manifolds},
\bedition{2nd} edn.
\bsertitle{Graduate Texts in Mathematics},
vol. \bseriesno{218}.
\bpublisher{Springer},
\blocation{New York}
(\byear{2013}).
\doiurl{10.1007/978-1-4419-9982-5}
\end{bbook}
\endbibitem

\bibitem[\protect\citeauthoryear{Stepanov and
  Trevisan}{2017}]{STEPANOV20171044}
\begin{barticle}
\bauthor{\bsnm{Stepanov}, \binits{E.}},
\bauthor{\bsnm{Trevisan}, \binits{D.}}:
\batitle{Three superposition principles: Currents, continuity equations and
  curves of measures}.
\bjtitle{Journal of Functional Analysis}
\bvolume{272}(\bissue{3}),
\bfpage{1044}--\blpage{1103}
(\byear{2017})
\doiurl{10.1016/j.jfa.2016.10.025}
\end{barticle}
\endbibitem

\bibitem[\protect\citeauthoryear{Gigli and Han}{2014}]{Gigli2014}
\begin{barticle}
\bauthor{\bsnm{Gigli}, \binits{N.}},
\bauthor{\bsnm{Han}, \binits{B.-X.}}:
\batitle{The continuity equation on metric measure spaces}.
\bjtitle{Calculus of Variations and Partial Differential Equations}
\bvolume{53}(\bissue{1–2}),
\bfpage{149}--\blpage{177}
(\byear{2014})
\doiurl{10.1007/s00526-014-0744-7}
\end{barticle}
\endbibitem

\bibitem[\protect\citeauthoryear{Bogachev}{2007}]{bogachevMeasureTheoryVolume2007a}
\begin{bbook}
\bauthor{\bsnm{Bogachev}, \binits{V.I.}}:
\bbtitle{Measure Theory. {{Volume I}}}.
\bpublisher{Springer},
\blocation{Berlin-New York}
(\byear{2007})
\end{bbook}
\endbibitem

\bibitem[\protect\citeauthoryear{Dinculeanu
  et~al.}{2014}]{dinculeanu2014vector}
\begin{bbook}
\bauthor{\bsnm{Dinculeanu}, \binits{N.}},
\bauthor{\bsnm{Sneddon}, \binits{I.N.}},
\bauthor{\bsnm{Stark}, \binits{M.}}:
\bbtitle{Vector Measures}.
\bsertitle{International Series in Pure and Applied Mathematics}.
\bpublisher{Pergamon},
\blocation{Oxford}
(\byear{2014})
\end{bbook}
\endbibitem

\bibitem[\protect\citeauthoryear{Mauroy et~al.}{2020}]{mauroy2020koopman}
\begin{bbook}
\bauthor{\bsnm{Mauroy}, \binits{A.}},
\bauthor{\bsnm{Mezi{\'c}}, \binits{I.}},
\bauthor{\bsnm{Susuki}, \binits{Y.}}:
\bbtitle{The Koopman Operator in Systems and Control: Concepts, Methodologies,
  and Applications}.
\bsertitle{Lecture Notes in Control and Information Sciences}.
\bpublisher{Springer},
\blocation{Cham}
(\byear{2020})
\end{bbook}
\endbibitem

\bibitem[\protect\citeauthoryear{Kipka and
  Ledyaev}{2015}]{kipkaExtensionChronologicalCalculus2015}
\begin{barticle}
\bauthor{\bsnm{Kipka}, \binits{R.J.}},
\bauthor{\bsnm{Ledyaev}, \binits{Y.S.}}:
\batitle{Extension of {{Chronological Calculus}} for dynamical systems on
  manifolds}.
\bjtitle{Journal of Differential Equations}
\bvolume{258}(\bissue{5}),
\bfpage{1765}--\blpage{1790}
(\byear{2015})
\doiurl{10.1016/j.jde.2014.11.014}
\end{barticle}
\endbibitem

\bibitem[\protect\citeauthoryear{Nestruev}{2003}]{nestruev2003smooth}
\begin{bbook}
\bauthor{\bsnm{Nestruev}, \binits{J.}}:
\bbtitle{Smooth Manifolds and Observables}.
\bsertitle{Graduate Texts in Mathematics}.
\bpublisher{Springer},
\blocation{Cham}
(\byear{2003})
\end{bbook}
\endbibitem

\bibitem[\protect\citeauthoryear{Burago et~al.}{2001}]{burago2001course}
\begin{bbook}
\bauthor{\bsnm{Burago}, \binits{D.}},
\bauthor{\bsnm{Burago}, \binits{I.U.D.}},
\bauthor{\bsnm{Ivanov}, \binits{S.}}:
\bbtitle{A Course in Metric Geometry}.
\bsertitle{Crm Proceedings \& Lecture Notes}.
\bpublisher{American Mathematical Society},
\blocation{Providence, RI}
(\byear{2001})
\end{bbook}
\endbibitem

\bibitem[\protect\citeauthoryear{Kirchheim}{1994}]{Kirchheim1994}
\begin{barticle}
\bauthor{\bsnm{Kirchheim}, \binits{B.}}:
\batitle{Rectifiable metric spaces: local structure and regularity of the
  {H}ausdorff measure}.
\bjtitle{Proceedings of the American Mathematical Society}
\bvolume{121}(\bissue{1}),
\bfpage{113}--\blpage{123}
(\byear{1994})
\doiurl{10.1090/s0002-9939-1994-1189747-7}
\end{barticle}
\endbibitem

\bibitem[\protect\citeauthoryear{Lytchak}{2005}]{Lytchak2005}
\begin{barticle}
\bauthor{\bsnm{Lytchak}, \binits{A.}}:
\batitle{Differentiation in metric spaces}.
\bjtitle{St. Petersburg Mathematical Journal}
\bvolume{16}(\bissue{6}),
\bfpage{1017}--\blpage{1041}
(\byear{2005})
\doiurl{10.1090/s1061-0022-05-00888-5}
\end{barticle}
\endbibitem

\bibitem[\protect\citeauthoryear{Colombo and Guerra}{2009}]{ColomboR009}
\begin{barticle}
\bauthor{\bsnm{Colombo}, \binits{R.M.}},
\bauthor{\bsnm{Guerra}, \binits{G.}}:
\batitle{Differential equations in metric spaces with applications}.
\bjtitle{Discrete and Continuous Dynamical Systems}
\bvolume{23}(\bissue{3}),
\bfpage{733}--\blpage{753}
(\byear{2009})
\doiurl{10.3934/dcds.2009.23.733}
\end{barticle}
\endbibitem

\bibitem[\protect\citeauthoryear{Lakshmikantham and
  Nieto}{2003}]{LakshmikanthamDiffeqmetr}
\begin{barticle}
\bauthor{\bsnm{Lakshmikantham}, \binits{V.}},
\bauthor{\bsnm{Nieto}, \binits{J.}}:
\batitle{Differential equations in metric spaces: An introduction and an
  application to fuzzy differential equations}.
\bjtitle{Dynamics of Continuous, Discrete \& Impulsive Systems. Series A:
  Mathematical Analysis}
\bvolume{10},
\bfpage{991}--\blpage{1000}
(\byear{2003})
\end{barticle}
\endbibitem

\bibitem[\protect\citeauthoryear{Tabor}{2002}]{tabordiff}
\begin{botherref}
\oauthor{\bsnm{Tabor}, \binits{J.}}:
Differential equations in metric spaces.
Mathematica Bohemica
\textbf{127}
(2002)
\doiurl{10.21136/MB.2002.134163}
\end{botherref}
\endbibitem

\bibitem[\protect\citeauthoryear{Plotnikov}{1998}]{Plotnikov1998}
\begin{barticle}
\bauthor{\bsnm{Plotnikov}, \binits{V.A.}}:
\batitle{Quasidifferential equations in semilinear metric spaces}.
\bjtitle{Ukrainian Mathematical Journal}
\bvolume{50}(\bissue{1}),
\bfpage{170}--\blpage{176}
(\byear{1998})
\doiurl{10.1007/BF02514697}
\end{barticle}
\endbibitem

\bibitem[\protect\citeauthoryear{Panasyuk}{1985}]{Panasyuk1985}
\begin{barticle}
\bauthor{\bsnm{Panasyuk}, \binits{A.I.}}:
\batitle{Quasi-differential equations in metric spaces}.
\bjtitle{Differential Equations}
\bvolume{21}(\bissue{0}),
\bfpage{914}--\blpage{921}
(\byear{1985})
\doiurl{10.1007/BF02514697}
\end{barticle}
\endbibitem

\bibitem[\protect\citeauthoryear{Aubin}{1993}]{aubin1993mutational}
\begin{barticle}
\bauthor{\bsnm{Aubin}, \binits{J.-P.}}:
\batitle{Mutational equations in metric spaces}.
\bjtitle{Set-Valued Analysis}
\bvolume{1},
\bfpage{3}--\blpage{46}
(\byear{1993})
\end{barticle}
\endbibitem

\bibitem[\protect\citeauthoryear{Diestel and Uhl}{1977}]{diestel1977vector}
\begin{bbook}
\bauthor{\bsnm{Diestel}, \binits{J.}},
\bauthor{\bsnm{Uhl}, \binits{J.J.}}:
\bbtitle{Vector Measures}.
\bsertitle{Mathematical surveys and monographs}.
\bpublisher{American Mathematical Society},
\blocation{Providence, RI}
(\byear{1977})
\end{bbook}
\endbibitem

\bibitem[\protect\citeauthoryear{Warga}{1972}]{warga1972optimal}
\begin{bbook}
\bauthor{\bsnm{Warga}, \binits{J.}}:
\bbtitle{Optimal Control of Differential and Functional Equations}.
\bpublisher{Academic Press},
\blocation{New York}
(\byear{1972})
\end{bbook}
\endbibitem

\bibitem[\protect\citeauthoryear{Gabasov and Kirillova}{1972}]{GK-SIAM1972}
\begin{barticle}
\bauthor{\bsnm{Gabasov}, \binits{R.}},
\bauthor{\bsnm{Kirillova}, \binits{F.M.}}:
\batitle{High order necessary conditions for optimality}.
\bjtitle{SIAM Journal on Control}
\bvolume{10}(\bissue{1}),
\bfpage{127}--\blpage{168}
(\byear{1972})
\doiurl{10.1137/0310012}
\end{barticle}
\endbibitem

\bibitem[\protect\citeauthoryear{Srochko}{1982}]{srochko1982computational}
\begin{botherref}
\oauthor{\bsnm{Srochko}, \binits{V.}}:
Computational methods of optimal control.
ISU, Irkutsk
(1982)
\end{botherref}
\endbibitem

\bibitem[\protect\citeauthoryear{Aubin and
  Cellina}{1984}]{aubin1984differential}
\begin{bbook}
\bauthor{\bsnm{Aubin}, \binits{J.P.}},
\bauthor{\bsnm{Cellina}, \binits{A.}}:
\bbtitle{Differential Inclusions: Set-Valued Maps and Viability Theory}.
\bsertitle{Grundlehren der mathematischen Wissenschaften}.
\bpublisher{Springer},
\blocation{Cham}
(\byear{1984})
\end{bbook}
\endbibitem

\bibitem[\protect\citeauthoryear{Aubin and Frankowska}{2009}]{Aubin2009}
\begin{bbook}
\bauthor{\bsnm{Aubin}, \binits{J.-P.}},
\bauthor{\bsnm{Frankowska}, \binits{H.}}:
\bbtitle{Set-Valued Analysis}.
\bpublisher{Birkh\"{a}user},
\blocation{Boston}
(\byear{2009}).
\doiurl{10.1007/978-0-8176-4848-0}
\end{bbook}
\endbibitem

\bibitem[\protect\citeauthoryear{Staritsyn et~al.}{2022}]{CSP-neur}
\begin{barticle}
\bauthor{\bsnm{Staritsyn}, \binits{M.}},
\bauthor{\bsnm{Pogodaev}, \binits{N.}},
\bauthor{\bsnm{Chertovskih}, \binits{R.}},
\bauthor{\bsnm{Pereira}, \binits{F.L.}}:
\batitle{Feedback maximum principle for ensemble control of local continuity
  equations: An application to supervised machine learning}.
\bjtitle{IEEE Control Systems Letters}
\bvolume{6},
\bfpage{1046}--\blpage{1051}
(\byear{2022})
\doiurl{10.1109/LCSYS.2021.3089139}
\end{barticle}
\endbibitem

\bibitem[\protect\citeauthoryear{Chertovskih
  et~al.}{2023}]{chertovskihOptimalControlDistributed2023}
\begin{barticle}
\bauthor{\bsnm{Chertovskih}, \binits{R.}},
\bauthor{\bsnm{Pogodaev}, \binits{N.}},
\bauthor{\bsnm{Staritsyn}, \binits{M.}},
\bauthor{\bsnm{Aguiar}, \binits{A.P.}}:
\batitle{Optimal {{Control}} of {{Distributed Ensembles With Application}} to
  {{Bloch Equations}}}.
\bjtitle{IEEE Control Systems Letters}
\bvolume{7},
\bfpage{2059}--\blpage{2064}
(\byear{2023})
\doiurl{10.1109/LCSYS.2023.3284205}
\end{barticle}
\endbibitem

\bibitem[\protect\citeauthoryear{Chertovskih
  et~al.}{2024}]{chertovskihOptimalControlDiffusion2024}
\begin{barticle}
\bauthor{\bsnm{Chertovskih}, \binits{R.}},
\bauthor{\bsnm{Pogodaev}, \binits{N.}},
\bauthor{\bsnm{Staritsyn}, \binits{M.}},
\bauthor{\bsnm{Pedro~Aguiar}, \binits{A.}}:
\batitle{Optimal {{Control}} of {{Diffusion Processes}}: {{Infinite-Order
  Variational Analysis}} and {{Numerical Solution}}}.
\bjtitle{IEEE Control Systems Letters}
\bvolume{8},
\bfpage{1469}--\blpage{1474}
(\byear{2024})
\doiurl{10.1109/LCSYS.2024.3410632}
\end{barticle}
\endbibitem

\bibitem[\protect\citeauthoryear{Averboukh}{2024}]{Averboukh2024}
\begin{barticle}
\bauthor{\bsnm{Averboukh}, \binits{Y.}}:
\batitle{Nonlocal balance equation: Representation and approximation of
  solution}.
\bjtitle{Journal of Dynamics and Differential Equations}
(\byear{2024})
\doiurl{10.1007/s10884-024-10373-8}
\end{barticle}
\endbibitem

\bibitem[\protect\citeauthoryear{Pogodaev and
  Staritsyn}{2025}]{pogodaevOptimalControlNonlocal2025}
\begin{botherref}
\oauthor{\bsnm{Pogodaev}, \binits{N.}},
\oauthor{\bsnm{Staritsyn}, \binits{M.}}:
Optimal {{Control}} of {{Nonlocal Balance Equations}}.
arXiv
(2025).
\doiurl{10.48550/arXiv.2501.17674}
\end{botherref}
\endbibitem

\bibitem[\protect\citeauthoryear{Novikov and Ta{\u
  \i}manov}{2006}]{novikovModernGeometricStructures2006}
\begin{bbook}
\bauthor{\bsnm{Novikov}, \binits{S.P.}},
\bauthor{\bsnm{Ta{\u \i}manov}, \binits{I.A.}}:
\bbtitle{Modern Geometric Structures and Fields},
\bedition{English ed.} edn.
\bsertitle{Graduate Studies in Mathematics},
vol. \bseriesno{v. 71}.
\bpublisher{American Mathematical Society},
\blocation{Providence, R.I}
(\byear{2006})
\end{bbook}
\endbibitem

\bibitem[\protect\citeauthoryear{Papageorgiou and
  Kyritsi-Yiallourou}{2009}]{papageorgiou2009handbook}
\begin{bbook}
\bauthor{\bsnm{Papageorgiou}, \binits{N.S.}},
\bauthor{\bsnm{Kyritsi-Yiallourou}, \binits{S.T.}}:
\bbtitle{Handbook of Applied Analysis}.
\bsertitle{Advances in Mechanics and Mathematics}.
\bpublisher{Springer},
\blocation{Boston, MA}
(\byear{2009})
\end{bbook}
\endbibitem

\bibitem[\protect\citeauthoryear{Ionescu~Tulcea and
  Ionescu~Tulcea}{1969}]{tulcea1969}
\begin{bbook}
\bauthor{\bsnm{Ionescu~Tulcea}, \binits{A.}},
\bauthor{\bsnm{Ionescu~Tulcea}, \binits{C.}}:
\bbtitle{Topics in the Theory of Lifting},
\bedition{1969} edn.
\bpublisher{Springer},
\blocation{Berlin-Heidelberg}
(\byear{1969})
\end{bbook}
\endbibitem

\bibitem[\protect\citeauthoryear{Young}{1969}]{young1969}
\begin{bbook}
\bauthor{\bsnm{Young}, \binits{L.}}:
\bbtitle{Lectures on the Calculus of Variations and Optimal Control Theory}.
\bpublisher{Saunders},
\blocation{Philadelphia-London-Toronto}
(\byear{1969})
\end{bbook}
\endbibitem

\bibitem[\protect\citeauthoryear{Pogodaev and Staritsyn}{2020}]{MR4097258}
\begin{barticle}
\bauthor{\bsnm{Pogodaev}, \binits{N.}},
\bauthor{\bsnm{Staritsyn}, \binits{M.}}:
\batitle{Impulsive control of nonlocal transport equations}.
\bjtitle{Journal of Differential Equations}
\bvolume{269}(\bissue{4}),
\bfpage{3585}--\blpage{3623}
(\byear{2020})
\doiurl{10.1016/j.jde.2020.03.007}
\end{barticle}
\endbibitem

\bibitem[\protect\citeauthoryear{Perlman}{1974}]{perlmanJensensInequalityConvex1974}
\begin{barticle}
\bauthor{\bsnm{Perlman}, \binits{M.D.}}:
\batitle{Jensen's inequality for a convex vector-valued function on an
  infinite-dimensional space}.
\bjtitle{Journal of Multivariate Analysis}
\bvolume{4}(\bissue{1}),
\bfpage{52}--\blpage{65}
(\byear{1974})
\doiurl{10.1016/0047-259X(74)90005-0}
\end{barticle}
\endbibitem

\bibitem[\protect\citeauthoryear{Villani}{2009}]{zbMATH05306371}
\begin{bbook}
\bauthor{\bsnm{Villani}, \binits{C.}}:
\bbtitle{Optimal Transport. {{Old}} and New}.
\bsertitle{Grundlehren Math. {{Wiss}}.},
vol. \bseriesno{338}.
\bpublisher{Springer},
\blocation{Berlin}
(\byear{2009}).
\doiurl{10.1007/978-3-540-71050-9}
\end{bbook}
\endbibitem

\bibitem[\protect\citeauthoryear{Godefroy and Kalton}{2003}]{Godefroy}
\begin{barticle}
\bauthor{\bsnm{Godefroy}, \binits{G.}},
\bauthor{\bsnm{Kalton}, \binits{N.J.}}:
\batitle{Lipschitz-free {B}anach spaces}.
\bjtitle{Studia Mathematica}
\bvolume{159}(\bissue{1}),
\bfpage{121}--\blpage{141}
(\byear{2003})
\end{barticle}
\endbibitem

\end{thebibliography}

}
\end{document}